\documentclass{amsart}
\usepackage{amssymb}
\usepackage{amsmath, amscd}
\usepackage{amsthm}
\usepackage{mathtools}
\usepackage{mathrsfs}
\usepackage{booktabs}
\usepackage{perpage}
\usepackage[all,cmtip]{xy}
\usepackage{setspace}
\usepackage{amsbsy}
\usepackage{url}
\usepackage[T1]{fontenc}
\usepackage{verbatim}
\usepackage{mathdots}
\usepackage{mathrsfs}
\usepackage{ifthen}
\usepackage{blkarray}
\usepackage{multirow}
\usepackage{enumerate}
\usepackage{enumitem}
\usepackage[foot]{amsaddr}
\usepackage[left=2cm,head=13pt,
top=1.5cm,right=2cm, bottom=1.5cm]{geometry}
\usepackage{pgf,tikz}\usetikzlibrary{arrows}

\AtEndDocument{\bigskip{\footnotesize%

\noindent (Jean-Stefan Koskivirta) \textsc{Department of Mathematics, University of Tokyo} \par 
\noindent \texttt{jeanstefan.koskivirta@gmail.com}
}}

\usepackage{lipsum}

\numberwithin{equation}{subsection}

\newtheorem{theorem}{Theorem}[subsection]
\newtheorem{lemma}[theorem]{Lemma}
\newtheorem{conjecture}[theorem]{Conjecture}

\newtheorem{corollary}[theorem]{Corollary}
\newtheorem{definition}[theorem]{Definition}
\newtheorem{question}[theorem]{Question}

\newtheorem{proposition}[theorem]{Proposition}

\newtheorem{assumption}[theorem]{Assumption}

\newtheorem*{question1}{Question 1}

\newtheorem*{thm1}{Theorem 1}
\newtheorem*{thm2}{Theorem 2}
\newtheorem*{thm3}{Theorem 3}

\newtheorem*{cor1}{Corollary 1}
\newtheorem*{cor2}{Corollary 2}

\newtheorem*{conj1}{Conjecture 1}
\newtheorem*{conj2}{Conjecture 2}

\newtheorem*{prop1}{Proposition 1}

\theoremstyle{remark}

\newtheorem{rmk}[theorem]{Remark}

\title[zip-cone]{Automorphic forms on the stack of $G$-zips}

\DeclarePairedDelimiter\floor{\lfloor}{\rfloor}

% Jean-Stefan commands/macros
\newcommand{\GZip}{\mathop{\text{$G$-{\tt Zip}}}\nolimits}

\newcommand{\GF}{\mathop{\text{$G$-{\tt ZipFlag}}}\nolimits}

\newskip\procskipamount
\procskipamount=6pt plus1pt minus1pt

\newskip\interskipamount
\interskipamount=12pt plus1pt minus1pt

\newskip\refskipamount
\refskipamount=6pt plus1pt minus1pt

\newcommand{\procskip}{\vskip\procskipamount}
\newcommand{\interskip}{\vskip\interskipamount}
\newcommand{\refskip}{\vskip\refskipamount}

\newcommand{\procbreak}{\par
   \ifdim\lastskip<\procskipamount\removelastskip
   \penalty-100
   \procskip\fi
   \noindent\ignorespaces}

\newcommand{\titlebreak}{\par%
\ifdim\lastskip<\interskipamount\removelastskip%
\penalty10000%
\interskip\fi%
\noindent}%

\newcommand{\interbreak}{\par%
\ifdim\lastskip<\interskipamount\removelastskip%
\penalty-100%
\interskip\fi%
\noindent\ignorespaces}%

\newcommand{\refbreak}{\par%
\ifdim\lastskip<\refskipamount\removelastskip%
\penalty-100%
\refskip\fi%
\noindent\ignorespaces}%

%================Lists
\newcounter{listcounter}
\newcounter{deflistcounter}
\newcounter{equivcounter}

\newskip{\itemsepamount}
\itemsepamount=0pt plus1pt minus0pt

\newskip{\topsepamount}
\topsepamount=0pt plus2pt minus0pt

% assertionlist: Liste für mehrere Aussagen, die unabhängig
%     voneinander gültig sind
% durch arabische Zahlen nummeriert

\newenvironment{assertionlist}{%
  \begin{list}
    {\upshape (\arabic{listcounter})}
    {\setlength{\leftmargin}{18pt}
     \setlength{\rightmargin}{0pt}
     \setlength{\itemindent}{0pt}
     \setlength{\labelsep}{5pt}
     \setlength{\labelwidth}{13pt}
     \setlength{\listparindent}{\parindent}
     \setlength{\parsep}{0pt}
     \setlength{\itemsep}{\itemsepamount}
     \setlength{\topsep}{\topsepamount}
     \usecounter{listcounter}}}
  {\end{list}}

% Short version

% definitionlist: Liste für mehrere Punkte für ein- und dieselbe Definition
% durch kleine Buchstaben nummeriert

\newenvironment{definitionlist}{%
  \begin{list}
    {\upshape (\alph{deflistcounter})}
    {\setlength{\leftmargin}{18pt}
     \setlength{\rightmargin}{0pt}
     \setlength{\itemindent}{0pt}
     \setlength{\labelsep}{5pt}
     \setlength{\labelwidth}{13pt}
     \setlength{\listparindent}{\parindent}
     \setlength{\parsep}{0pt}
     \setlength{\itemsep}{\itemsepamount}
     \setlength{\topsep}{\topsepamount}
     \usecounter{deflistcounter}}}
  {\end{list}}

% Short version

% % exlist: Liste für Unterteilung innerhalb einer Übungsaufgabe
% % durch kleine Buchstaben nummeriert
%
% \newenvironment{exlist}{%
%   \begin{list}
%     {\upshape (\alph{exlistcounter})}
%     {\setlength{\leftmargin}{18pt}
%      \setlength{\rightmargin}{0pt}
%      \setlength{\itemindent}{0pt}
%      \setlength{\labelsep}{5pt}
%      \setlength{\labelwidth}{13pt}
%      \setlength{\listparindent}{\parindent}
%      \setlength{\parsep}{0pt}
%      \setlength{\itemsep}{\itemsepamount}
%      \setlength{\topsep}{\topsepamount}
%      \usecounter{exlistcounter}}}
%   {\end{list}}

% equivlist: Liste für äquivalente Aussagen
% durch kleine römische Zahlen nummeriert

\newenvironment{equivlist}{%
  \begin{list}
    {\upshape (\roman{equivcounter})}
    {\setlength{\leftmargin}{18pt}
     \setlength{\rightmargin}{0pt}
     \setlength{\itemindent}{0pt}
     \setlength{\labelsep}{5pt}
     \setlength{\labelwidth}{13pt}
     \setlength{\listparindent}{\parindent}
     \setlength{\parsep}{0pt}
     \setlength{\itemsep}{\itemsepamount}
     \setlength{\topsep}{\topsepamount}
     \usecounter{equivcounter}}}
  {\end{list}}

% Short version

% bulletlist: Liste durch bullets ``nummeriert''

% simplelist: Liste mit manuell vorgegebener Bezeichnung der einzelnen
%   Punkte
% Keine Abstände vor der Liste oder zwischen den Punkten
% Syntax:
% \begin{simplelist}
% \item[<Bezeichnung>] <Text>
% ...
% \end{simplelist}

% spacedlist: Wie simplelist, aber mit Abständen

%================================================================

\newcommand{\Bcal}{{\mathcal B}}

\newcommand{\Fcal}{{\mathcal F}}

\newcommand{\Mcal}{{\mathcal M}}

\newcommand{\Ocal}{{\mathcal O}}
\newcommand{\Pcal}{{\mathcal P}}

\newcommand{\Rcal}{{\mathcal R}}
\newcommand{\Scal}{{\mathcal S}}

\newcommand{\Vcal}{{\mathcal V}}

\newcommand{\Xcal}{{\mathcal X}}
\newcommand{\Ycal}{{\mathcal Y}}
\newcommand{\Zcal}{{\mathcal Z}}

\newcommand{\gfr}{{\mathfrak g}}

\newcommand{\pfr}{{\mathfrak p}}

\newcommand{\Sfr}{{\mathfrak S}}

\renewcommand{\AA}{\mathbf{A}}

\newcommand{\CC}{\mathbf{C}}

\newcommand{\FF}{\mathbf{F}}
\newcommand{\GG}{\mathbf{G}}

\newcommand{\NN}{\mathbf{N}}

\newcommand{\QQ}{\mathbf{Q}}
\newcommand{\RR}{\mathbf{R}}

\newcommand{\ZZ}{\mathbf{Z}}

\DeclareMathOperator{\Pic}{Pic}

\newcommand{\Ascr}{{\mathscr A}}

\newcommand{\Escr}{{\mathscr E}}

\newcommand{\Lscr}{{\mathscr L}}

\newcommand{\Vscr}{{\mathscr V}}

%%%%%%%%%%%%%%%%%%%%%%%%%%%%%%%%%%%%%%%%

% Shortcuts for L-groups-------------------------------------

%----------------------------------------------
% Shortcuts for Hecke algebras

%-------------------------------------------------------

% Shortcuts for frequently used specific groups

%-----------------------------------------------------

% Shortcuts for L-groups of frequently used specific groups

% Character and cocharacter groups

%
%

%DeclareMathOperator-----------------------------------------

\DeclareMathOperator{\Gal}{Gal}

\DeclareMathOperator{\rank}{rank}
\DeclareMathOperator{\Span}{Span}

\DeclareMathOperator{\Lie}{Lie}

\DeclareMathOperator{\Stab}{Stab}

\DeclareMathOperator{\Ker}{Ker}

\DeclareMathOperator{\rk}{rk}

\DeclareMathOperator{\Schub}{Sbt}
\DeclareMathOperator{\Sbt}{Sbt}

\DeclareMathOperator{\spec}{Spec}

\DeclareMathOperator{\Sch}{Sbt}

\DeclareMathOperator{\zip}{zip}
\DeclareMathOperator{\GS}{GS}

\DeclareMathOperator{\pol}{pol}
%---------------------------------------------------------------

%Frequently used rings, fields and their Galois groups----------------------

%--------------------------------------------------------

%Superscripts for groups e.g. Adjoint and derived subgroups----------------

%Galois representation associated to an automorphic representation-----------

%---------------------------------------------------------------

%Lie algebras, root spaces etc.

%Harish-Chandra parameters------------------------------------------------

%------------------------------------------------------------

\newcommand{\id}{{\rm Id}}

% Functors between representations and equivariant vector bundles

%--------------------------------------------------------------------------------------

%Theorem etc. abbreviations--------------------------------------------------------------

\newcommand{\Th}{{\rm Th.}}

\newcommand{\Rmk}{{\rm Rmk.}}

\newcommand{\Cor}{{\rm Cor.}}

\newcommand{\Lem}{{\rm Lem.}}

\newcommand{\Conj}{{\rm Conj.}}

\newcommand{\Def}{{\rm Def.}}

\newcommand{\Prop}{{\rm Prop.}}

\newcommand{\loccit}{{\em loc.\ cit. }}
\newcommand{\loccitn}{{\em loc.\ cit.}}

%-----------------------------------------------------

\newcommand{\diag}{{\rm diag}}

%Shimura varieties----------------------

%Auto-vector-bundles-------------------

%canonical-extension

%subcanonical-extension

%Siegel Shimura varieties--------------------------------

%----------------------------------------------------------

\renewcommand{\Im}{{\rm Im}}

%Structure sheaves-----------------------------------------------------------------------%

\DeclareMathOperator{\Std}{Std}

\renewcommand{\div}{{\rm div}}

\DeclareMathOperator{\Det}{det}
\DeclareMathOperator{\Vol}{Vol}
\DeclareMathOperator{\iden}{id}
\DeclareMathOperator{\Norm}{Norm}
\DeclareMathOperator{\Ind}{Ind}

\author{Jean-Stefan Koskivirta}

\begin{document}

\date{\today}

\pagestyle{plain}

\begin{abstract}
We define automorphic vector bundles on the stack of $G$-zips introduced by Moonen-Pink-Wedhorn-Ziegler and study their global sections. In particular, we give a combinatorial condition on the weight for the existence of nonzero mod $p$ automorphic forms on Shimura varieties of Hodge-type. We attach to the highest weight of the representation $V(\lambda)$ a mod $p$ automorphic form and we give a modular interpretation of this form in some cases.
\end{abstract}

\maketitle

\section*{Introduction}

Following our previous article \cite{Goldring-Koskivirta-Diamond-I}, we continue to investigate mod $p$ automorphic forms. One possible definition of such automorphic forms is the following. Let $p$ be a prime number and let $\Scal_K$ denote the Kisin-Vasiu integral model of a Hodge-type Shimura variety with hyperspecial level $K$ at $p$, defined over a ring $\Ocal$. Let $G$ denote the reductive group over $\QQ$ attached to $\Scal_K$ and let $T\subset G$ be a maximal torus (chosen appropriately). For each character $\lambda \in X^*(T)$, one defines an automorphic vector bundle $\Vscr(\lambda)$ on $\Scal_K$ attached to $\lambda$. If $R$ is any $\Ocal$-algebra, global sections of $\Vscr(\lambda)$ over $\Scal_K\otimes R$ are called automorphic forms of level $K$ and weight $\lambda$ over $R$. When $R$ is a field of characteristic $p$, we call them mod $p$ automorphic forms. A natural question is the following:

\begin{question1}
For which $\lambda \in X^*(T)$ are there non-zero automorphic forms over $R$ ?
\end{question1}

The two cases that we consider are $R=\CC$ and $R=\overline{\FF}_p$. The set of such $\lambda$ forms a cone inside $X^*(T)$, denoted by $C_{K}(R)$. This problem is part of a more general project to establish cohomology vanishing results of the type $H^i(S_K,\Vscr_K(\lambda))=0$ for automorphic vector bundles. This question is developed in our upcoming paper \cite{Brunebarbe-Goldring-Koskivirta-Stroh-ampleness}, with applications to lifting mod $p$ automorphic forms to characteristic zero ones.

In this paper, we consider the case $i=0$. The answer to Question 1 depends of course on the level $K$, as one can see already with modular forms. However, define the saturated cone $\langle C_K(R) \rangle$ as the set of $\lambda \in X^*(T)$ such that $m\lambda \in C_K(R)$ for some $m\geq 1$. Since the change of level maps are finite \'{e}tale, the saturated cone $\langle C_K(R) \rangle$ is independent of $K$. In characteristic zero, it is expected that this coarser question has a simple answer. Denote by $\Phi_+$ is the set of positive roots, $\Delta$ the simple roots, and let $I\subset \Delta$ be the type of the parabolic subgroup of $G$ attached to the Shimura datum (the stabilizer of the Hodge filtration). The work of Griffiths-Schmid (\cite{Griffiths-Schmid-homogeneous-complex-manifolds}) suggests that $\langle C_K(\CC) \rangle $ should be given as follows:
\begin{equation}\label{GSintro} \tag{1}
\langle C_K(\CC) \rangle=\left\{\lambda \in X^*(T) \ | \
\begin{aligned}
&\langle \lambda, \alpha^\vee \rangle \geq 0 \ \textrm{ for }\alpha\in I \\
&\langle \lambda, \alpha^\vee \rangle \leq 0 \ \textrm{ for }\alpha\in \Phi_+ \setminus I
\end{aligned}
 \right\}.
\end{equation}
For the modular curve, this is just the set of nonnegative integers, as expected. In characteristic $p$ however, very little is known. In this paper, we give a conjectural answer to the characteristic $p$ case.

The stack of $G$-zips has proved to be a useful tool to study questions related to Shimura varieties, as a number of recent papers have demonstrated (\cite{Goldring-Koskivirta-Strata-Hasse-v2}, \cite{Goldring-Koskivirta-zip-flags}, \cite{Goldring-Koskivirta-Diamond-I}). It is a finite smooth stack denoted by $\GZip^\mu$, where $\mu$ pertains to the cocharacter attached to the Shimura datum. A fundamental property is the existence of a smooth map of algebraic stacks $\zeta:S_K \to \GZip^\mu$, where $S_K:=\Scal_K\otimes \overline{\FF}_p$. The fibers of $\zeta$ define the Ekedahl-Oort stratification of $S_K$ (\cite{ZhangEOHodge}). For each $\lambda \in X^*(T)$, there exists a vector bundle $\Vscr(\lambda)$ on $\GZip^\mu$ whose pull-back by $\zeta$ is the automorphic vector bundle $\Vscr(\lambda)$ introduced before. It is thus natural to determine which mod $p$ automorphic forms can be obtained from the stack $\GZip^\mu$ by pullback. For example, the papers \cite{Koskivirta-Wedhorn-Hasse} and \cite{Goldring-Koskivirta-Strata-Hasse-v2} construct certain sections called Hasse invariants on this stack, with useful geometric and arithmetic applications. Hence we want to understand how much information is encoded in the group-theoretical object $\GZip^\mu$. One advantage of this method is that all sections that are produced in this way are automatically Hecke-equivariant.

Similarly to the definition of $C_K$, denote by $C_{\rm zip}$ the cone of characters $\lambda \in X^*(T)$ such that $\Vscr(\lambda)$ admits nonzero global sections on $\GZip^\mu$. The map $\zeta$ induces an injection $H^0(\GZip^\mu, \Vscr(\lambda))\to H^0(S_K,\Vscr(\lambda))$ and hence an inclusion $C_{\rm zip}\subset C_{K}(\overline{\FF}_p)$. We conjectured in \cite{Goldring-Koskivirta-Diamond-I}:

\begin{conj1}\label{conjShzipcone}
One has $\langle C_{\rm zip} \rangle = \langle C_{K}(\overline{\FF}_p)\rangle $.
\end{conj1}

Note that the first cone is a much more tractable, group-theoretical object, whereas the second cone is related to the geometry of the Shimura variety, and more generally to number theory via automorphic representations. We gave evidence for this conjecture by proving it for groups $G$ which are $\FF_p$-forms of $GL_2^n$, unitary groups of signature $(2,1)$ at a split prime, and $GSp(4)$. Note also that in general the equality $C_{\rm zip} = C_{K}(\overline{\FF}_p)$ does not hold (as the latter depends on $K$). The conjecture says that the coarser information about the saturated cone $\langle C_{K}(\overline{\FF}_p)\rangle$ is encoded in the stack $\GZip^\mu$. In this paper, we will not discuss Conjecture 1 (for which we refer the interested reader to \cite{Goldring-Koskivirta-Diamond-I}). Instead, one objective of this paper is to determine $C_{\rm zip}$ and $\langle C_{\rm zip} \rangle$ for general groups. This will at least give a lower bound for $C_K(\overline{\FF}_p)$. The general expected phenomenon is that the cone $C_{\rm zip}$ "converges" towards $C_{\GS}$ as $p$ goes to infinity.

The determination of $C_{\rm zip}$ is still a difficult problem. One of the main results of this paper is a description of $H^0(\GZip^\mu,\Vscr(\lambda))$ in terms of the representation theory of the $L$-representation $V(\lambda)=\Ind^P_B(\lambda)$ (see \eqref{repV}). Here we assume that $L$ is defined over $\FF_p$ and that $T$ is $\FF_p$-split. We define a subspace $V(\lambda)_{\leq 0}\subset V(\lambda)$ as the sum of certain weight spaces (see \S\ref{sec-zipreptheo}).

\begin{thm1}[\Th~\ref{thminter}]
There is an isomorphism $H^0(\GZip^\mu,\Vscr(\lambda))\simeq V(\lambda)_{\leq 0}\cap V(\lambda)^{L(\FF_p)}$.
\end{thm1}

In particular, the cone $C_{\zip}$ coincides with the set of $\lambda\in X^*(T)$ such that $V(\lambda)_{\leq 0}\cap V(\lambda)^{L(\FF_p)}\neq 0$. This question is now formulated in terms of representation theory of $V(\lambda)$, for which many tools are available. We have the following obvious corollary:

\begin{cor1}
Let $S_K$ be the special fiber of a Shimura variety of Hodge-type at a place of good reduction. For $\lambda\in X^*(T)$, one has an inclusion
\begin{equation*}
V(\lambda)_{\leq 0}\cap V(\lambda)^{L(\FF_p)}\subset H^0(S_K,\Vscr(\lambda)).
\end{equation*}
\end{cor1}

For the groups $Sp(4)$ and $Sp(6)$, we are able to determine completely $C_{\zip}$ and $\langle C_{\rm zip}\rangle$ (see below). For general groups, we show the following partial result. Denote by $W$ (resp. $W_L$) the Weyl group of $G$ (resp. $L$) and by $\ell:W\to \NN$ the length function with respect to $\Delta$. Assume that $L$ is defined over $\FF_p$. For $\alpha \in \Delta$, let $r_\alpha$ the smallest integer $r\geq 1$ such that $\alpha$ is defined over $\FF_{p^r}$.

\begin{thm2}[\Cor~\ref{lamisin}]
Let $\lambda\in X^*_{+,L}(T)$ be an $L$-dominant character. Assume that for all $\alpha \in \Delta\setminus I$, the following holds:
\begin{equation}\label{zipintro}\tag{2}
\sum_{w\in W_L(\FF_p)} \sum_{i=0}^{r_\alpha-1} p^{i+\ell(w)} \ \langle w\lambda, \sigma^i\alpha^\vee \rangle\leq 0, \quad \forall \alpha \in \Delta \setminus I.
\end{equation}
Then $\lambda \in \langle C_{\rm zip}\rangle$. In particular, there exists a non-zero mod $p$ automorphic form of weight $m\lambda$ for some $m\geq 1$.
\end{thm2}

Furthermore, one can give an explicit integer $m$ satisfying $H^0(\GZip^\mu,\Vscr(m\lambda))\neq 0$. \Th~2 illustrates the difference with the characteristic 0 situation. Indeed, the Griffiths-Schmid conditions \eqref{GSintro} imply that each of the summands in \eqref{zipintro} is $\leq 0$. Hence \eqref{zipintro} is a much weaker condition than \eqref{GSintro}. In particular, a lot of these mod $p$ automorphic forms do not lift to characteristic zero.

Also, \Th~2 describes only part of the cone $\langle C_{\rm zip}\rangle$, which we call the "highest weight cone" (we explain the terminology later, making a link to representation theory). In general, there exist global sections whose weights do not satisfy the inequality of \Th~2, which are hence even "further away" from the characteristic zero possible weights. This part of the cone remains mysterious in general, but we can give some good approximations by several natural subcones. A particular case of interest is the Siegel modular variety, attached to the group $G=Sp(2n)$ (actually one should work with $GSp(2n)$, but this makes no difference for these questions). In this case, we obtain the following corollary:

\begin{cor2}\label{thm-intro}
In the case $G=Sp(2n)$, identify $X^*(T)\simeq \ZZ^n$ in the usual way. Let $\lambda=(a_1,...,a_n)\in \ZZ^n$ and assume that the inequality $\sum_{i=1}^n p^{-i}a_i \leq 0$ holds. Then $\lambda \in \langle C_{\rm zip}\rangle$. In particular, this gives a non-zero mod $p$ Hecke-equivariant automorphic form of weight $m\lambda$ for some (explicit) $m\geq 1$.
\end{cor2}

%The problem of determining the cones $C_{\rm zip}$ and $\langle C_{\rm zip}\rangle$ can also be viewed from the standpoint of representation theory of reductive groups. The vector bundles $\Vscr(\lambda)$ that we consider are modeled on $L$-representations $V(\lambda)$. It is not quite true that $V(\lambda)$ is irreducible in characteristic $p$ (this defect arises for small values of $p$). The question then amounts to determining which of these representations satisfy a certain condition. From this point of view, we can naturally attach to the highest weight vector in the representation $V(\lambda)$ a mod $p$ automorphic form. The weights of these sections span the "highest weight cone", defined by the inequalities of \Th~2.
The sections constructed to prove Th. 2 have a highly complicated vanishing locus, in contrast to the sections constructed in previous papers \cite{Goldring-Koskivirta-Strata-Hasse-v2}, \cite{Goldring-Koskivirta-zip-flags}, \cite{Goldring-Koskivirta-Diamond-I}. We compute the cone $\langle C_{\rm zip}\rangle$ in the Siegel case (see \S\ref{sec-GScone}), where $G=Sp(2n)$ for $n=2,3$:

\begin{thm3}[\Cor~\ref{corSp4}, \Cor~\ref{cor2Sp4}, \Prop~\ref{propconeSp6}] Let $G$ be the group $G=Sp(2n)$ endowed with the usual Hodge-type zip datum.
\begin{enumerate}
\item For $n=2$, one has $C_{\zip}=\NN (1,-p)+\NN(1-p,1-p)+\NN(0,-p(p-1))$ and $\langle C_{\zip} \rangle$ is the set of pairs $(a_1,a_2)\in \ZZ^2$ satisfying the inequalities
\begin{align*}
& a_1\geq a_2  \\
& p a_1+a_2\leq 0  
\end{align*}
\item For $n=3$, the cone $\langle C_{\zip} \rangle$ is the set of $(a_1,a_2,a_3)\in \ZZ^3$ satisfying the inequalities
\begin{align*}
& a_1\geq a_2 \geq a_3  \\
& p^2 a_1+pa_3+a_2\leq 0  \\
& p^2 a_2+pa_1+a_3\leq 0
\end{align*}
\end{enumerate}
\end{thm3}

Let us now explain another direction of this paper. Let $S_K(G,X)$ be a Hodge-type Shimura variety with reflex field $F$ which has good reduction at a prime $p$. Write $\Scal_K$ for the Kisin-Vasiu integral canonical model over $\Ocal_{F,\pfr}$, where $\pfr$ is a prime of $F$ dividing $p$. For an $\Ocal_{F,\pfr}$-algebra $\Ascr$, one can define a ring of automorphic forms over $\Ascr$ by taking the following direct sum: 
\begin{equation*}
R_{K}(\Ascr):=\bigoplus_{\lambda \in X^*(T)} H^0(\Scal_K\otimes \Ascr, \Vscr(\lambda)).
\end{equation*}
This space inherits a structure of $\Ascr$-algebra via the natural maps $\Vscr(\lambda)\otimes \Vscr(\nu)\to \Vscr(\lambda+\nu)$ for all $\lambda,\nu\in X^*(T)$. This algebra captures the whole family of automorphic forms over $\Ascr$ for all weights, and is therefore a highly interesting object. The formation of $R_K(\Ascr)$ is functorial in $\Ascr$. Similarly, we may define an analog for the stack of $G$-zips:
\begin{equation*}
R_{\rm zip}:=\bigoplus_{\lambda \in X^*(T)} H^0(\GZip^\mu, \Vscr(\lambda)).
\end{equation*}
Again, $R_{\rm zip}$ is an algebra over the residue field $\kappa$ of $\pfr$, but we will implicitly base change to $\overline{\FF}_p$. The map $\zeta$ yields an injection $\zeta^*:R_{\rm zip} \to R_{K}(\overline{\FF}_p)$, which commutes with change of level $K$. In particular, the elements of $R_{\rm zip}$ commute with Hecke operators.

The graded algebras $R_{\rm zip}$ and $R_K:=R_K(\overline{\FF}_p)$ carry much finer information that the cones $C_{\rm zip}$ and $C_K:=C_K(\overline{\FF}_p)$. These cones can be recovered as the gradings of the graded algebras $R_{\rm zip}$ and $R_K$, respectively. We conjecture the following:
\begin{conj2}\label{Rzipfg}
The $\overline{\FF}_p$-algebra $R_{\rm zip}$ is finitely generated.
\end{conj2}
We will see that this conjecture is related to Hilbert's 14th problem. In particular, the conjecture implies that the cone $C_{\rm zip}$ is finitely generated as a monoid. We were able to prove some cases of the conjecture and determine $R_{\zip}$ explicitly in these cases. More specifically, we show:
\begin{prop1}[\Th~\ref{thmRzipSp4}]
Assume $G=Sp(4)$, endowed with the usual Hodge-type zip datum. The $k$-algebra $R_{\zip}$ is isomorphic to a polynomial ring in three indeterminates.
\end{prop1}
We give explicit mod $p$ automorphic forms that generate $R_{\rm zip}$, and we give a modular interpretation of these sections. An interesting question is to determine the structure of $R_{K}$ as an $R_{\rm zip}$-algebra. For example, one can ask whether it is smooth, finitely generated, etc. 

\section*{Acknowledgements}
We would like to thank Wushi Goldring for very useful discussions. This work is a continuation of a joint project with him. We are also grateful to David Helm for helpful advice. Finally, we would like to thank the referee for his remarks and suggestions on a first version of the paper.

\section{Global sections of automorphic vector bundles on the stack of $G$-zips}
\subsection{The stack of $G$-zips}
\subsubsection{Zip datum} \label{review}
We fix an algebraic closure $k$ of $\FF_p$. For an $\FF_p$-scheme $X$, we denote by $X^{(p)}$ its Frobenius twist and by $\varphi:X\to X^{(p)}$ its relative Frobenius. Let $G$ be a connected reductive group over $\FF_p$ and $\varphi:G\to G$ the $p$-th power Frobenius homomorphism. Let $\Zcal:=(G,P,L,Q,M,\varphi)$ be a zip datum, i.e $P,Q$ are parabolic subgroups of $G_k$, $L\subset P$ and $M\subset Q$ are Levi subgroups such that $\varphi(L)=M$. The zip group is the subgroup of $P\times Q$ defined by
\begin{equation}
E:=\{(x,y)\in P\times Q, \ \varphi(\overline{x})=\overline{y}\}
\end{equation}
where $\overline{x}\in L$ and $\overline{y}\in M$ denote the Levi components of the elements $x$ and $y$ respectively. Let $G\times G$ act on $G$ by $(a,b)\cdot g:=agb^{-1}$, let $E$ act on $G$ by restricting this action. The stack of $G$-zips of type $\Zcal$ (\cite{Pink-Wedhorn-Ziegler-zip-data},\cite{PinkWedhornZiegler-F-Zips-additional-structure}) is isomorphic to the quotient stack
\begin{equation}
\GZip^\Zcal \simeq \left[E\backslash G\right].
\end{equation}
In this paper, a frame is a triple $(B,T,z)$ where $(B,T)$ is a Borel pair of $G$ defined over $\FF_p$ such that $B\subset P$ and $z$ is an element of the Weyl group $W=W(G,T)$ satisfying the conditions:
\begin{equation}\label{eqBorel}
{}^z \! B \subset Q \quad \textrm{and} \quad
\varphi(B\cap L) = B\cap M= {}^z \! B\cap M. 
\end{equation}

\subsubsection{Zip stratification} \label{zipstrata}
Fix a frame $(B,T,z)$. Let $\Phi\subset X^*(T)$ be the set of $T$-roots in $G$. Let $\Phi^+$ be the system of positive roots given by putting $\alpha \in \Phi^+$ when the $\alpha$-root group $U_{-\alpha}$ is contained in $B$. Write $\Delta\subset \Phi_+$ for the set of simple roots. For $\alpha \in \Phi$, let $s_\alpha \in W$ be the corresponding reflection. Then $(W,\{s_\alpha, \alpha \in \Delta\})$ is a Coxeter group and we denote by $\ell :W\to \NN$ the length function. We denote by $w_0$ the longest element of $W$.

For $K\subset \Delta$, let $W_K$ denote the subgroup of $W$ generated by $\{s_\alpha, \ \alpha \in K\}$, and let $w_{0,K}$ be its longest element. Let ${}^KW$ (resp. $W^K$) denote the subset of elements $w\in W$ which have minimal length in the coset $W_K w$ (resp. $wW_K$). Then ${}^K W$ (resp. $W^K$) is a set of representatives of $W_K\backslash W$ (resp. $W/W_K$). Let $I\subset \Delta$ (resp. $J\subset \Delta$) be the type of $P$ (resp. $Q$).  We denote by $X_{+,I}^*(T)$ the set of characters $\lambda\in X^*(T)$ such that $\langle \lambda,\alpha^\vee \rangle \geq 0$ for $\alpha \in I$, and call them $L$-dominant or $I$-dominant characters.

For $w\in W$, fix a representative $\dot{w}\in N_G(T)$, such that $(w_1w_2)^\cdot = \dot{w}_1\dot{w}_2$ whenever $\ell(w_1 w_2)=\ell(w_1)+\ell(w_2)$ (this is possible by choosing a Chevalley system, \cite[ XXIII, \S6]{SGA3}). For $w\in W$, define $G_w$ as the $E$-orbit of $\dot{w}\dot{z}^{-1}$. The $E$-orbits in $G$ form a stratification of $G$ by locally closed subsets. By \Th~7.5 and \Th~11.2 in \cite{Pink-Wedhorn-Ziegler-zip-data}, the map $w\mapsto G_w$ restricts to two bijections:
\begin{align} \label{orbparam}
{}^I W &\to \{E \textrm{-orbits in }G\}\\  
\label{dualorbparam} W^J &\to \{E \textrm{-orbits in }G\}
\end{align}
Furthermore, for $w\in {}^I W \cup W^J$, one has $\dim(G_w)= \ell(w)+\dim(P)$. In particular, there is a unique open $E$-orbit $U_\Zcal\subset G$ corresponding to the longest elements $w_{0,I}w_0\in {}^I W$ via \eqref{orbparam} and $w_0w_{0,J}\in W^J$ via \eqref{dualorbparam}.

\subsubsection{Cocharacters} \label{subsec-cochar}
A cocharacter datum is a pair $(G,\mu)$ where $G$ is a reductive group over $\FF_p$ and $\mu:\GG_{m,k}\to G_k$ is a cocharacter. Such a pair $(G,\mu)$ gives rise to a zip datum as follows. First, $\mu$ yields a pair of opposite parabolics $P_\pm(\mu)$ such that $P_+(\mu)\cap P_-(\mu)=L(\mu)$ is the centralizer of $\mu$. The parabolic $P_+(\mu)$ consists of elements $g\in G$ such that the limit $\lim_{t\to 0}\mu(t)g\mu(t)^{-1}$ exists, i.e such that the map $\GG_{m,k} \to G_{k}$,  $t\mapsto\mu(t)g\mu(t)^{-1}$ extends to a morphism of varieties $\AA_{k}^1\to G_{k}$. The unipotent radical of $P_+(\mu)$ is the set of such elements $g$ for which this limit is $1\in G$.

Set $P:=P_-(\mu)$, $Q:=( P_+(\mu))^{(p)}$, $L:=L(\mu)$ and $M:= (L(\mu))^{(p)}$. The tuple $\Zcal_\mu:=(G,P,L,Q,M,\varphi)$ is the zip datum attached to the cocharacter $\mu$. In the following we will consider mostly $G$-zips arising in this way. We write simply $\GZip^\mu$ for $\GZip^{\Zcal_\mu}$.% Replacing $\mu$ by a conjugate cocharacter does not change the isomorphism class of $\GZip^{\mu}$ (\cite[\Prop~1.5.4 ]{Goldring-Koskivirta-zip-flags}).

%Hence the following remark shows that it is harmless to assume that $\Zcal$ admits an $\FF_p$-frame.

%\begin{rmk}\label{framecochar}
%Let $\mu:\GG_{m,k}\to G_k$ be a cocharacter and $\Zcal_\mu:=(G,P,L,Q,M,\varphi)$ the zip datum attached to $\mu$.
%\begin{enumerate}
%\item \label{itemframe1} There exists a conjugate cocharacter $\mu':=\ad(g)\circ \mu$ (for $g\in G$) such that $\Zcal_{\mu'}$ admits an $\FF_p$-frame.
%\item \label{itemframe2} Assume $(B,T)$ is a Borel pair defined over $\FF_p$ such that $B\subset P$. Then $(B,T,w_0 w_{0,J})$ is an $\FF_p$-frame.
%\end{enumerate}
%\end{rmk}

\subsection{Automorphic vector bundles on $\GZip^\Zcal$} \label{sec-vector-bundles-gzipz}
Fix a zip datum $\Zcal=(G,P,L,Q,M,\varphi)$. An algebraic representation $(\rho,V)$ of $E$ (where $V$ is a finite-dimensional $k$-vector space) gives rise naturally to a vector bundle $\Vscr(\rho)$ on the stack $\GZip^\Zcal\simeq [E\backslash G]$ by the associated sheaf construction (\cite[\S5.8]{jantzen-representations}). The space of global sections of $\Vscr(\rho)$ is
\begin{equation}\label{secequa}
H^0(\GZip^\Zcal,\Vscr(\rho))=\left\{f:G\to V, \ f(\epsilon \cdot g)=\epsilon f(g)  \right\}, \quad \forall\epsilon\in E, \ \forall g\in G.
\end{equation}

If $(V,\rho)$ is an algebraic representation of $P$, we view it as an $E$-representation via the first projection $E\to P$. We denote again the attached vector bundle on $\GZip^\Zcal$ by $\Vscr(\rho)$.

\begin{lemma}
Let $(V,\rho)$ be an $E$-representation. The $k$-vector space $H^0(\GZip^\Zcal,\Vscr(\rho))$ is finite-dimensional, of dimension $\leq \dim(V)$.
\end{lemma}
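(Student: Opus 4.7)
The plan is to exploit the existence of the unique open $E$-orbit $U_\Zcal\subset G$ described in \S\ref{zipstrata}. Sections of $\Vscr(\rho)$ are, by \eqref{secequa}, regular $E$-equivariant maps $f:G\to V$. Since $G$ is smooth (hence reduced and integral) and $U_\Zcal$ is open and dense in $G$, the restriction map
\[
H^0(\GZip^\Zcal,\Vscr(\rho))\longrightarrow H^0([E\backslash U_\Zcal],\Vscr(\rho)|_{U_\Zcal})
\]
is injective: any regular section vanishing on the dense open $U_\Zcal$ vanishes on $G$.

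Next, I would identify the target. Pick a base point $g_0\in U_\Zcal$ (for instance $g_0=\dot{w}_{0,I}\dot{w}_0\dot{z}^{-1}$) and let $H:=\Stab_E(g_0)$ be its stabilizer in $E$. Since $U_\Zcal$ is a single $E$-orbit, the orbit map $E\to U_\Zcal$, $\epsilon\mapsto \epsilon\cdot g_0$, induces an isomorphism of stacks $[E\backslash U_\Zcal]\simeq BH$. Under this identification, the restriction of $\Vscr(\rho)$ corresponds to the vector bundle on $BH$ attached to the $H$-representation obtained by restricting $\rho$ to $H$. Consequently,
\[
H^0([E\backslash U_\Zcal],\Vscr(\rho)|_{U_\Zcal}) \;\simeq\; V^{H},
\]
which is a subspace of $V$, hence of dimension at most $\dim(V)$.

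Combining the two steps yields the desired inequality $\dim H^0(\GZip^\Zcal,\Vscr(\rho))\leq \dim(V)$, and in particular finiteness. The only step that requires a little care is the stack-theoretic identification $[E\backslash U_\Zcal]\simeq BH$, but this is the standard description of a single-orbit quotient stack. Concretely, the argument can also be phrased directly by observing that the equivariance relation $f(\epsilon\cdot g_0)=\rho(\epsilon)f(g_0)$ shows that $f$ on $U_\Zcal$ is entirely determined by $f(g_0)\in V$, and well-definedness forces $f(g_0)\in V^{H}$; thus the evaluation map $f\mapsto f(g_0)$ gives an injection $H^0(\GZip^\Zcal,\Vscr(\rho))\hookrightarrow V$.
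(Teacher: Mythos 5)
Your argument is correct and is essentially the paper's own: restrict to the dense open $E$-orbit $U_\Zcal$ (injective by density and reducedness of $G$), then observe that $E$-equivariance makes a section on $U_\Zcal$ determined by its value at a single point, giving an injection $H^0(\GZip^\Zcal,\Vscr(\rho))\hookrightarrow V$ via evaluation. The extra identification $[E\backslash U_\Zcal]\simeq BH$ with $H=\Stab_E(g_0)$ and the refinement to $V^H$ is a harmless elaboration (though one should keep in mind that $H$ need not be smooth, so $V^H$ means scheme-theoretic invariants); the "concretely" paragraph at the end is exactly the paper's proof.
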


\begin{proof}
Let $U_\Zcal \subset G$ denote the unique Zariski open $E$-orbit. A function $f:G\to V$ satisfying the property of equation \eqref{secequa} is uniquely determined by its restriction to $U_\Zcal$ (by density and reducedness), and hence also by its value at a fixed element $g\in U_\Zcal$ (by $E$-equivariance). Hence the dimension of $H^0(\GZip^\Zcal,\Vscr(\rho))$ is bounded above by the dimension of $V$.
\end{proof}

For each character $\lambda\in X^*(T)$, denote by $\Lscr(\lambda)$ the line bundle attached to $\lambda$ on the variety $L/ B_L$, again by the associated sheaf construction (\cite[\S5.8]{jantzen-representations}). We obtain an $L$-representation 
\begin{equation}\label{repV}
V(\lambda):=H^0(L/ B_L,\Lscr(\lambda)).
\end{equation}
It is the induced representation $\Ind_{B_L}^{L}(\lambda)$. In general, $V(\lambda)$ is not irreducible, but it has a unique irreducible $L$-subrepresentation. We obtain a vector bundle on $\GZip^\Zcal$ which we denote by $\Vscr(\lambda)$. We call $\Vscr(\lambda)$ an \emph{automorphic vector bundle} on $\GZip^\Zcal$. When $\lambda \in X^*(L)$, the vector bundle $\Vscr(\lambda)$ is a line bundle. Note that if $\lambda\in X^*(T)$ is not $L$-dominant, then $\Vscr(\lambda)=0$. In this paper, we ask the following question:
\begin{question}\label{zipquestion}
For which $\lambda\in X^*(T)$ does the vector bundle $\Vscr(\lambda)$ admit nonzero global sections on $\GZip^\Zcal$ ?
\end{question}
In view of Question \ref{zipquestion}, it is relevant to define the following subset 
\begin{equation}
C_{\rm zip}:=\left\{ \lambda\in X^*(T)  \ | \  H^0(\GZip^\Zcal, \Vscr(\lambda))\neq 0 \right\}.
\end{equation}
We will prove later that $C_{\zip}$ is a cone in $X^*(T)$ (i.e an additive submonoid of $X^*(T)$). Since $\Vscr(\lambda)=0$ whenever $\lambda$ is not $L$-dominant, we have an obvious inclusion $C_{\zip}\subset X^*_{+,I}(T)$.

\subsection{The stack of $G$-zip flags} \label{sec-zipflag}
The stack of $G$-zip flags (attached to $(\Zcal,B)$), denoted by $\GF^\Zcal$, was defined in \cite{Goldring-Koskivirta-zip-flags}. It parametrizes $G$-zips endowed with a full flag refining the Hodge filtration (\loccitn, \Def~3.1.1). By \Th~3.1.2 of \loccitn, it is isomorphic to the quotient stack $\left[E'\backslash G\right]$ where $E'$ is the subgroup of $E$ defined by $E':=E\cap (B\times G)$, where the subgroup $E'$ acts on $G$ by restricting the action of $E$ on $G$ defined in \S\ref{review}. The inclusion $E'\subset E$ induces a natural projection
\begin{equation}
\pi:\GF^\Zcal \to \GZip^\Zcal
\end{equation}
whose fibers are flag varieties, isomorphic to $E'\backslash E \simeq B \backslash P$.

Using again the associated sheaf construction (\cite[\S5.8]{jantzen-representations}), we can attach to each $\lambda \in X^*(T)$ a line bundle $\Lscr(\lambda)$ on the quotient stack $[E'\backslash G]\simeq\GF^\Zcal$. One has the formula
\begin{equation}\label{linebndmult}
\Lscr(\lambda) \otimes \Lscr(\nu)=\Lscr(\lambda+\nu).
\end{equation}
for all $\lambda,\nu\in X^*(T)$. Furthermore, we have
\begin{equation}\label{pushforward}
\pi_*(\Lscr(\lambda))=\Vscr(\lambda).
\end{equation}
In particular, this formula implies:
\begin{equation}
H^0(\GZip^\Zcal,\Vscr(\lambda))=H^0(\GF^\Zcal,\Lscr(\lambda)).
\end{equation}

\begin{rmk}\label{equivdivrmk}
It follows that elements of $H^0(\GZip^\Zcal, \Vscr(\lambda))$ identify with regular functions $f:G\to \AA^1$ satisfying the relation $f(\varepsilon\cdot g)=\lambda(\varepsilon)f(g)$ for all $\varepsilon\in E'$ and $g\in G$. Conversely, let $f:G\to \AA^1$ be a regular function. By a theorem of Rosenlicht, the following are equivalent
\begin{enumerate}
\item There exists $\lambda \in X^*(T)$ such that $f\in H^0(\GZip^\Zcal, \Vscr(\lambda))$.
\item The Weil divisor $\div(f)$ is stable by $E'$.
\end{enumerate}
\end{rmk}

\begin{rmk}\label{secprodrmk}
In particular, \Rmk~\ref{equivdivrmk} has the following consequence: Assume $f:G\to \AA^1$ is an element of $H^0(\GZip^\Zcal,\Vscr(\lambda))$. Assume that $f=f_1 f_2$ where $f_1,f_2:G\to \AA^1$ are regular functions. Then there exists characters $\lambda_1,\lambda_2\in X^*(T)$ such that $f_i\in H^0(\GZip^\Zcal,\Vscr(\lambda_i))$ for $i=1,2$ and $\lambda=\lambda_1+\lambda_2$. 
\end{rmk}

\begin{comment}
More generally, stacks of partial zip flags were defined in \cite{Goldring-Koskivirta-zip-flags}. For each parabolic $B\subset P_0\subset P$, the stack $\GF^{(\Zcal,P_0)}$ is isomorphic to $[E'_{P_0}\backslash G]$ where $E'_{P_0}=E\cap(P_0\times G)$. We obtain a tower of stacks above $\GZip^\Zcal$. For $P_0=P$ one has $\GF^{(\Zcal,P)}=\GZip^\Zcal$ and for $P_0=B$, we recover the stack $\GF^\Zcal$ introduced previously. There is a natural projection 
\begin{equation}
\pi_{B,P_0}:\GF^{\Zcal}\to \GF^{(\Zcal,P_0)}
\end{equation}
whose fibers are isomorphic to $B\backslash P_0$. As before, each character $\lambda\in X^*(T)$ gives rise to a vector bundle $\Vscr_{P_0}(\lambda)$ on $\GF^{(\Zcal,P_0)}$. This vector bundle is a line bundle on $\GF^{(\Zcal,P_0)}$ if and only if $\lambda \in X^*(P_0)$. Similarly, one has the formula 
\begin{equation}\label{pushforinter}
\pi_{B,P_0,*}(\Lscr(\lambda))=\Vscr_{P_0}(\lambda).
\end{equation}

This implies the following fact:

\begin{lemma}\label{Pzerolem}
Let $\lambda \in X^*(P_0)$ and let $f:G\to \AA^1$ be a regular function satisfying the relation $f(\varepsilon \cdot g)=\lambda(\varepsilon)f(g)$ for all $\epsilon \in E'$ and all $g\in G$. Then $f$ satisfies the same relation for all $g\in G$ and $\varepsilon \in E'_{P_0}$.
\end{lemma}

\begin{proof}
Such a function identifies with an element of $H^0(\GF^\Zcal, \Lscr(\lambda))$. By \eqref{pushforinter}, this space identifies with $H^0(\GF^{(\Zcal,P_0)}, \Vscr_{P_0}(\lambda))$. Since $ \Vscr_{P_0}(\lambda)$ is a line bundle, the result follows.
\end{proof}
\end{comment}

\subsection{Cone terminology}\label{coneter}
Let $M$ be a free abelian group of finite-type. In this paper, a \emph{cone} in $M$ is a subset $C\subset M$ containing $0$ and stable by addition (i.e an additive submonoid). We say that $C$ is generated by $m_1,...,m_n\in C$ if any element of $C$ can be written as $\sum_{i=1}^n \lambda_i m_i$ for $\lambda_i\in \NN$. In this case, we write
\begin{equation}\label{strictgen}
C=\NN m_1+...+\NN m_n.
\end{equation}
For a cone $C\subset M$, we define $\langle C \rangle $ by 
\begin{equation}\label{satcone}
\langle C \rangle :=\{x\in M, \exists n\geq 1, nx\in C\}.
\end{equation}
It is easy to see that $\langle C \rangle$ is again a cone. We say that $C$ is saturated in $M$ if $\langle C \rangle=C$. For elements $m_1,...,m_n\in C$, we denote by $\langle m_1,...,m_n\rangle$ the smallest saturated cone in $M$ containing $m_1,...,m_n$. If $C=\langle m_1,...,m_n\rangle$, we say that $C$ is s-generated by $m_1,...,m_n$ or that $\{m_1,...,m_m\}$ is an s-generating set for $C$. If $C\subset M$ is a cone, we denote by $C_{\RR^+}\subset M\otimes \RR$ the set
\begin{equation}\label{defCR}
C_{{\RR^+}}:=\left\{\sum_{i=1}^r \lambda_i m_i, \ \forall i=1,...,r, \ \lambda_i\in \RR_{\geq 0}, \ m_i\in C \right\}.
\end{equation}

The saturated cone depends of course on the ambient free abelian group $M$. In this paper, $M$ will always be the group of characters $X^*(T)$ of a fixed torus $T$.

\begin{lemma}
The set $C_{\zip}\subset X^*(T)$ is a cone.
\end{lemma}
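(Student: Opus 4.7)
The plan is to verify the two defining conditions of a cone: $0 \in C_{\zip}$ and closure under addition. The first is immediate: the trivial character corresponds to the trivial line bundle on $\GZip^\Zcal$, for which the constant function $1: G \to \mathbb{A}^1$ is a nonzero global section (it trivially satisfies the equivariance condition of \eqref{secequa}). So $0 \in C_{\zip}$.

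The key step is closure under addition. Given $\lambda, \nu \in C_{\zip}$, I want to produce a nonzero section of $\Vscr(\lambda+\nu)$. The cleanest way is to pass to the flag stack $\GF^\Zcal$, where $\Lscr(\lambda)$ is a \emph{line} bundle and multiplication of sections corresponds to the additive formula \eqref{linebndmult}. By \eqref{pushforward}, a nonzero section of $\Vscr(\lambda)$ on $\GZip^\Zcal$ corresponds to a nonzero section of $\Lscr(\lambda)$ on $\GF^\Zcal$, which by Remark~\ref{equivdivrmk} identifies with a nonzero regular function $f_\lambda: G \to \mathbb{A}^1$ satisfying $f_\lambda(\varepsilon \cdot g) = \lambda(\varepsilon) f_\lambda(g)$ for all $\varepsilon \in E'$, $g \in G$. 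Similarly, pick $f_\nu: G \to \mathbb{A}^1$ nonzero with the analogous property for $\nu$.

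Now consider the pointwise product $f_\lambda \cdot f_\nu: G \to \mathbb{A}^1$. Since $G$ is an integral scheme (being a connected reductive group) and both factors are nonzero regular functions, the product $f_\lambda f_\nu$ is a nonzero regular function on $G$. Moreover, for every $\varepsilon \in E'$ and $g \in G$,
\begin{equation*}
(f_\lambda f_\nu)(\varepsilon \cdot g) = \lambda(\varepsilon) f_\lambda(g) \cdot \nu(\varepsilon) f_\nu(g) = (\lambda+\nu)(\varepsilon) \, (f_\lambda f_\nu)(g),
\end{equation*}
so $f_\lambda f_\nu$ defines a nonzero global section of $\Lscr(\lambda+\nu)$ on $\GF^\Zcal$, hence of $\Vscr(\lambda+\nu)$ on $\GZip^\Zcal$. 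This shows $\lambda + \nu \in C_{\zip}$.

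There is essentially no obstacle here: the whole argument rides on the identification of sections as equivariant functions on the irreducible variety $G$ (Remark~\ref{equivdivrmk}), combined with the fact that $\Lscr(\lambda)$ is a line bundle on the flag stack so that tensor product is additive in the character. The small subtlety to be careful about is that one should multiply sections at the level of $\GF^\Zcal$ rather than at the level of $\GZip^\Zcal$, where $\Vscr(\lambda)$ is only a vector bundle and the natural map $\Vscr(\lambda) \otimes \Vscr(\nu) \to \Vscr(\lambda+\nu)$ is less transparent.
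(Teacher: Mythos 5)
Your proof is correct and follows essentially the same route as the paper: pass to the flag stack $\GF^\Zcal$ where $\Lscr(\lambda)$ is a line bundle, and multiply nonzero sections using \eqref{linebndmult}, with the nonvanishing of the product guaranteed by the integrality of $G$. The paper's proof is just a terser version of the same argument; you additionally spell out the trivial check that $0 \in C_{\zip}$, which the paper leaves implicit.
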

\begin{proof}
Let $f$ and $g$ be nonzero global section of $\Lscr(\lambda)$ and $\Lscr(\nu)$ respectively over $\GF^\Zcal$. Then by \eqref{linebndmult} $fg$ is a nonzero global section of $\Lscr(\lambda+\nu)$, hence the result.
\end{proof}

\begin{definition}
The saturated zip cone is defined as $\langle C_{\zip} \rangle$.
\end{definition}

Since $C_{\zip}\subset X^*_{+,I}(T)$ and the latter cone is clearly saturated in $X^*(T)$, we have $\langle C_{\zip} \rangle \subset X^*_{+,I}(T)$.

\subsection{Motivation}\label{sec-motiv}
Let $X$ be a scheme endowed with a dominant morphism of stacks $\zeta:X\to \GZip^\Zcal$. For $\lambda\in X^*(T)$, define a vector bundle $\Vscr_{X}(\lambda):=\zeta^*\Vscr(\lambda)$ and a subset
\begin{equation}
C_X:=\left\{\lambda \in X^*(T), \ H^0(X,\Vscr_{X}(\lambda))\neq 0 \right\}.
\end{equation}
For example, $X$ can be the special fiber of a Hodge-type Shimura variety at a place of good reduction, and $\zeta$ the map defined in \cite[\Th~2.4.1]{ZhangEOHodge} (in this case the map $\zeta$ is even surjective). Since $\zeta$ is dominant, pull-back gives an inclusion $C_{\rm zip}\subset C_X$.

When $X$ satisfies some good properties and for certain reductive groups $G$, it was proved in \cite{Goldring-Koskivirta-Diamond-I} that one has an equality $\langle C_{\zip} \rangle= \langle C_X \rangle$. More specifically, we formulated the conjecture:

\begin{conjecture}\label{conjzipX}
Assume $X$ is an irreducible, proper $k$-variety endowed with a smooth and surjective map $\zeta:X\to \GZip^\Zcal$. Assume that $(G,\mu)$ is of Hodge-type (\cite[\Def~1.3.1]{Goldring-Koskivirta-Strata-Hasse-v2}). Then one has $\langle C_{\zip} \rangle= \langle C_X \rangle$.
\end{conjecture}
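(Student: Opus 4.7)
The inclusion $\langle C_{\zip}\rangle \subset \langle C_X \rangle$ is immediate: any nonzero $f \in H^0(\GZip^\Zcal, \Vscr(\lambda))$ pulls back along the dominant $\zeta$ to a nonzero section of $\zeta^{\ast}\Vscr(\lambda)=\Vscr_X(\lambda)$. The content of the conjecture is the reverse inclusion, for which my plan is as follows.

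Given nonzero $f \in H^0(X, \Vscr_X(\lambda))$, I want to produce $m\geq 1$ and a nonzero $g\in H^0(\GZip^\Zcal, \Vscr(m\lambda))$. Passing from $X$ to $X^{\mathrm{flag}}:= X\times_{\GZip^\Zcal} \GF^\Zcal$, and using \eqref{pushforward}, it suffices to work with line bundles on a smooth stack: any section of $\Vscr_X(\lambda)$ pulls back to a section of the line bundle obtained by pulling back $\Lscr(\lambda)$ to $X^{\mathrm{flag}}$, which is still a proper irreducible $k$-variety since $\GF^\Zcal\to \GZip^\Zcal$ has proper flag-variety fibers. I would then analyze the divisor of this section on $X^{\mathrm{flag}}$: writing $\mathrm{div}(f) = D_{\mathrm{hor}} + D_{\mathrm{vert}}$, where $D_{\mathrm{hor}}$ is supported on closures of codimension-one strata pulled back from $\GF^\Zcal$ and $D_{\mathrm{vert}}$ is the complement, I would invoke the generalized Hasse invariants of \cite{Goldring-Koskivirta-Strata-Hasse-v2} and \cite{Goldring-Koskivirta-zip-flags}---whose construction requires the Hodge-type hypothesis---to produce, for each codimension-one stratum closure, an explicit section of some $\Lscr(\nu)$ on $\GF^\Zcal$ with divisor a positive multiple of that closure. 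Multiplying $f$ by suitable pullbacks of such sections and passing to a sufficiently divisible power $f^m$, I can reduce to the case $D_{\mathrm{hor}} = 0$.

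The decisive step is to absorb $D_{\mathrm{vert}}$. After the above reductions, $f^m$ is invertible in a neighborhood of the preimage of the open stratum of $\GF^\Zcal$, and on that open stratum the map to $\GF^\Zcal$ factors through the classifying stack of a finite group related to $L(\FF_p)$. Theorem~\ref{thminter} identifies the sections of $\Vscr(m\lambda)$ on $\GZip^\Zcal$ with the explicit group-theoretic subspace $V(m\lambda)^{L(\FF_p)} \cap V(m\lambda)_{\leq 0}$; restricting $f^m$ to the open ordinary locus and tracing through this identification would provide a candidate $g$ on $G$. By the Rosenlicht criterion of Remark~\ref{equivdivrmk}, to conclude that $g$ extends to a global section on $\GZip^\Zcal$ it suffices to check that its divisor on $G$ is $E'$-stable, which is forced by the $E'$-equivariance of the pullback under $\zeta$.

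The main obstacle is precisely the control of $D_{\mathrm{vert}}$: for a general irreducible proper variety $X$ mapping smoothly and surjectively to $\GZip^\Zcal$, vertical divisors need not be numerically equivalent to horizontal ones, and nothing in the definition of $\GZip^\Zcal$ alone forces them to be absorbable into pullbacks. The Hodge-type hypothesis must therefore be used essentially, presumably via ampleness and positivity results of the forthcoming \cite{Brunebarbe-Goldring-Koskivirta-Stroh-ampleness} combined with a careful analysis of how the Kodaira--Spencer map for $(G,\mu)$ of Hodge-type constrains the geometry of divisors on $X$. Settling this step would also settle Conjecture~\ref{conjShzipcone}.
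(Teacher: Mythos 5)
This statement is labeled as a \emph{Conjecture} in the paper; no proof is given there, and the text explicitly states that the conjecture has only been verified in special cases ($\FF_p$-forms of $GL_2^n$, unitary groups of signature $(2,1)$ at a split prime, $GSp(4)$, in \cite{Goldring-Koskivirta-Diamond-I}). So there is no proof in the paper to compare your proposal against; and as you yourself acknowledge, your plan is a sketch with an open gap, not a proof.

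Beyond the gap you flag, there is a more basic problem with the descent step. The map $\zeta$ points from $X$ to $\GZip^\Zcal$, so sections pull back from $\GZip^\Zcal$ to $X$, not the other way. Your plan says that after absorbing horizontal poles, restricting $f^m$ to the preimage of the $\mu$-ordinary stratum and ``tracing through'' the identification of \Th~\ref{thminter} ``would provide a candidate $g$ on $G$.'' But $f^m$ is a section on $X^{\mathrm{flag}}$, not on $\GF^\Zcal$ or on $G$; restricting it to the preimage of the ordinary locus still gives a section on (an open subset of) $X^{\mathrm{flag}}$. There is no canonical way to produce from it an $E'$-eigenfunction on $U_\mu \subset G$. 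Smoothness and surjectivity of $\zeta$ do not give a section of $\zeta$, and there is no trace/norm map available in this generality that would descend a section of $\Vscr_X(m\lambda)$ to one of $\Vscr(m\lambda)$ on $\GZip^\Zcal$. (Contrast Lemma~\ref{indeplem}, which uses a norm along a \emph{finite} morphism; $\zeta$ has positive-dimensional fibers.) Likewise, the Rosenlicht criterion of Remark~\ref{equivdivrmk} applies to divisors on $G$, and you have not produced a function on $G$ whose divisor to test.

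So the proposal's ``decisive step'' is not a reduction at all: the real content of the conjecture is precisely the existence of such a descent, and your plan does not supply it. Your preliminary reductions (passing to $X^{\mathrm{flag}}$, using \eqref{pushforward}, killing horizontal poles via partial Hasse invariants) are all consistent with the structure of the theory and would likely be part of any eventual proof, but they do not touch the core difficulty, and the paper itself does not claim to resolve it.
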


It is possible to modify this conjecture to allow non-irreducible, non-proper schemes, see \loccit However, it is in general not true that $C_{\zip} = C_X$. This suggests that only the saturated cone $\langle C_X \rangle$ is encoded in the stack $\GZip^\Zcal$, but the finer information about $C_X$ cannot be recovered in this way.

Let us expand slightly on the example of Shimura varieties Shimura varieties. Let $Sh_K(G,X)$ be a Shimura variety of Hodge-type attached to a reductive $\QQ$-group $G$. Let $A\to Sh_K(G,X)$ be the universal abelian variety. Then we may consider the flag space $Fl_K(G,X)$ of $Sh_K(G,X)$, which parametrizes points $x\in Sh_K(G,X)$ endowed with a full flag in $H^1_{dR}(A_x)$ refining the  Hodge filtration. In the Siegel case, these spaces were introduced in \cite{EkedahlGeerEO}, and in the general case, they were used in our previous articles \cite{Goldring-Koskivirta-Strata-Hasse-v2}, \cite{Goldring-Koskivirta-Diamond-I}.

If $p$ is a place of good reduction, Kisin and Vasiu have constructed a canonical, smooth integral model $\Scal_K$ of $Sh_K(G,X)$ over the ring $\Ocal_{E_v}$ where $v|p$ is any place of the reflex field $E$. Then, the flag space $Fl_K(G,X)$ also admits a similar model $\Fcal_K$ (see \cite{Goldring-Koskivirta-Strata-Hasse-v2}). By abuse of notation, denote also by $G$ the special fiber of a reductive $\ZZ_p$-model of $G_{\QQ_p}$ and write $S_K:=\Scal_K\otimes \kappa(v)$ and $F_K:=\Fcal_K\otimes \kappa(v)$. There is a smooth surjective map $\zeta:S_K\to \GZip^\mu$ defined in \cite[\Th~2.4.1]{ZhangEOHodge}, where $\mu$ is the cocharacter attached to the Shimura datum. Let $C_K$ denote the cone $C_X$ for $X=S_K$. Although the cone $C_K$ depends on the level $K$ (this is already clear for modular forms), we will now see that the saturated cone $\langle C_K \rangle$ does not. Hence, dependence on the level is not a valid obstruction for Conjecture \ref{conjzipX} to hold.

\begin{lemma}\label{indeplem}
Let $f:X\to Y$ be a finite surjective morphism of integral schemes with $Y$ normal. Let $\Lscr$ be a line bundle over $Y$ and assume that there exists a nonzero section $h\in H^0(X,f^*\Lscr)$. Then there exists $n\geq 1$ such that $H^0(Y,\Lscr^n)\neq 0$.
\end{lemma}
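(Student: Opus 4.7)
The plan is to use the norm map associated to the finite extension of function fields $K(Y)\subset K(X)$ to convert the section $h$ of $f^*\Lscr$ into a global section of $\Lscr^n$, where $n:=[K(X):K(Y)]$. This extension is indeed finite because $f$ is finite surjective and both schemes are integral.

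Concretely, I would choose an affine open cover $\{U_i\}$ of $Y$ on which $\Lscr$ is trivial, with transition units $g_{ij}\in \Ocal_Y^\times(U_i\cap U_j)$. Then $h$ restricted to $f^{-1}(U_i)$ corresponds to a regular function $h_i\in \Ocal_X(f^{-1}U_i)\subset K(X)$, and on overlaps the cocycle calculation gives $h_j=f^*(g_{ij})\, h_i$. I would set $H_i:=\Norm_{K(X)/K(Y)}(h_i)\in K(Y)$ and verify two things: (i) $H_i$ actually lies in $\Ocal_Y(U_i)$, and (ii) on overlaps $H_j=g_{ij}^n H_i$, which is the transition relation for a section of $\Lscr^n$ with respect to the $n$-th tensor powers of the chosen trivializations. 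Claim (ii) is immediate from the multiplicativity of the norm together with the identity $\Norm_{K(X)/K(Y)}(a)=a^n$ for $a\in K(Y)$. Claim (i) is where normality enters: since $f$ is finite, $h_i$ is integral over $\Ocal_Y(U_i)$, so by the standard fact that the norm of an integral element over a normal base lies in that base, one concludes $H_i\in \Ocal_Y(U_i)$. Thus the $H_i$ glue to a global section $H\in H^0(Y,\Lscr^n)$.

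For non-vanishing: since $X$ is integral and $h\neq 0$, the image of $h$ at the generic point is a nonzero element of $K(X)$, and the norm $\Norm_{K(X)/K(Y)}\colon K(X)^\times\to K(Y)^\times$ is a group homomorphism, so $H\neq 0$ in $K(Y)$, and a fortiori as a section of $\Lscr^n$. The main obstacle is really step (i): it is precisely here that both hypotheses of the lemma are needed, the finiteness of $f$ (for integrality of $h_i$ over $\Ocal_Y$) and the normality of $Y$ (to descend from integral closure back into the structure sheaf). Without normality the construction would only yield a rational, not regular, section of $\Lscr^n$.
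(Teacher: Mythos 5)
Your proof is correct, and it follows the same fundamental strategy as the paper: push $h$ down to $Y$ via the norm. The difference is one of packaging rather than substance. The paper simply invokes the Stacks Project's norm map $\Norm\colon f_*\Ocal_X\to\Ocal_Y$ and the induced map $\Norm\colon\Pic(X)\to\Pic(Y)$ (\cite[\Lem~30.17.3, \Lem~30.17.7]{stacks-project}), getting a section $h'\in H^0(Y,\Lscr^n)$ whose vanishing locus is $f(Z(h))$, hence $h'\neq 0$. You instead reconstruct that norm map by hand: trivialize $\Lscr$ on an affine cover $\{U_i\}$ of $Y$, take the field norm $\Norm_{K(X)/K(Y)}$ of the local representatives $h_i$, check regularity of $H_i$ from integrality of $h_i$ over $\Ocal_Y(U_i)$ plus normality of $Y$, and check the cocycle relation $H_j=g_{ij}^n H_i$ from multiplicativity and $\Norm(f^*a)=a^n$. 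That is precisely the content of the cited Stacks Project lemmas, so both proofs are the same at bottom; your version has the pedagogical benefit of making visible exactly where finiteness (integrality of $h_i$) and normality (descent of $\Norm(h_i)$ into $\Ocal_Y$) are each consumed, while the paper's version is shorter and yields the stronger divisorial statement $\div(h')=f_*\div(h)$ essentially for free. One small remark: your non-vanishing argument via the generic point is perfectly adequate here, but it gives only $h'\neq 0$, not the precise identification of the vanishing locus that the cited lemma provides.
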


\begin{proof}
By the assumption on $f:X\to Y$, it admits a norm map $\Norm:f_*\Ocal_X\to \Ocal_Y$ of degree $n=[k(X):k(Y)]$ (see \cite[\Lem~30.17.7]{stacks-project}) and the induced map $\Norm:\Pic(X)\to \Pic(Y)$. The section $h$ gives rise to a morphism of $\Ocal_X$-linear map $\Ocal_X\to f^*\Lscr$. We can the use \loccitn, \Lem~30.17.3 to obtain an $\Ocal_Y$-linear map 
\begin{equation}
\Ocal_Y\simeq\Norm(\Ocal_X)\to \Norm(f^*\Lscr)\simeq \Lscr^n
\end{equation}
This yields a global section $h'\in H^0(Y,\Lscr^n)$. By the second part of \loccitn, \Lem~30.17.3, the section $h'$ vanishes exactly on $f(Z)$, where $Z\subset X$ is the vanishing locus of $Y$. In particular $h'\neq 0$, which proves the result.
\end{proof}

\begin{corollary}
The saturated cone $\langle C_K \rangle$ is independent of $K$.
\end{corollary}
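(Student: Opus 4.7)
The plan is to apply Lemma~\ref{indeplem} to the change-of-level map, after reducing first to two comparable levels and then, via the flag space, to a statement about line bundles.

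I would start by reducing to the case of two levels $K'\subset K$, both hyperspecial at $p$: any two such levels admit a common refinement, so independence across all $K$ follows from the equality $\langle C_K \rangle = \langle C_{K'} \rangle$ whenever $K'\subset K$. In this situation, the change-of-level map $\pi_S\colon S_{K'}\to S_K$ and its analogue $\pi_F\colon F_{K'}\to F_K$ on flag spaces are finite étale and surjective, and the bundles match under pullback: $\pi_S^*\Vscr_K(\lambda)=\Vscr_{K'}(\lambda)$, and likewise $\pi_F^*\Lscr_K(\lambda)=\Lscr_{K'}(\lambda)$.

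The inclusion $\langle C_K\rangle\subset\langle C_{K'}\rangle$ is immediate, since $\pi_S$ is faithfully flat and so pullback is injective on global sections. For the reverse inclusion, I would invoke \eqref{pushforward} to identify $H^0(S_K,\Vscr_K(\lambda))=H^0(F_K,\Lscr_K(\lambda))$, reducing the question to line bundles on the flag space so that Lemma~\ref{indeplem} becomes applicable. Given $\lambda\in C_{K'}$, a nonzero section $s\in H^0(F_{K'},\Lscr_{K'}(\lambda))$ must be nonzero on some irreducible component $X_0$ of $F_{K'}$. Its image $Y_0=\pi_F(X_0)$ is an irreducible component of $F_K$, and the restriction $\pi_F|_{X_0}\colon X_0\to Y_0$ is a finite surjective morphism between integral normal schemes. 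Applying Lemma~\ref{indeplem} to this map and to $\Lscr_K(\lambda)|_{Y_0}$ would produce some $n\geq 1$ together with a nonzero section of $\Lscr_K(n\lambda)|_{Y_0}$; extending by zero across the remaining components would then yield a nonzero element of $H^0(F_K,\Lscr_K(n\lambda))$, showing $n\lambda\in C_K$ and hence $\lambda\in\langle C_K\rangle$.

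The only mildly subtle point is that $S_K$ and $F_K$ may fail to be irreducible, so that one cannot apply Lemma~\ref{indeplem} directly on the nose. This is handled by working one component at a time, which is legitimate because smoothness (coming from good reduction at $p$) forces the irreducible and connected components of $S_K$ and $F_K$ to coincide; sections therefore extend by zero across components, making the component-wise output of Lemma~\ref{indeplem} assemble into a genuine global section. Once this observation is in place, the argument is a formal combination of the flag-space reduction with Lemma~\ref{indeplem}.
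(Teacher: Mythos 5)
Your proposal is correct and follows essentially the same path as the paper's own proof: reduce to $K'\subset K$, pass via \eqref{pushforward} from $\Vscr(\lambda)$ on $S_K$ to the line bundle $\Lscr(\lambda)$ on the flag space, restrict attention to a single component where the section does not vanish, apply Lemma~\ref{indeplem} to the induced finite surjective map of normal integral schemes, and extend by zero. Your version is slightly more explicit on two points the paper glosses over — that irreducible and connected components of $F_K$ coincide by smoothness, and the correct direction of the component correspondence under $\pi_F$ (the paper's phrasing of which component maps to which has the arrow reversed) — but the substance is identical.
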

\begin{proof}
Let $K'$, $K$ be two compact subgroups, we may assume $K'\subset K$. We have a finite \'{e}tale surjective map $\pi:F_{K'}\to F_K$. We clearly have $C_K\subset C_{K'}$ by pullback. Conversely, if $\lambda \in C_{K'}$, then $\Lscr(\lambda)$ admits a nonzero global section over $S_{K'}$. It is nonzero on at least one connected component $F_{K'}^\circ \subset F_{K'}$. Choose a connected component $F_K^\circ \subset F_K$ mapping to $F_{K'}^\circ$. By \Lem~\ref{indeplem}, there is a nonzero global section of $\Lscr(n\lambda)$ over $F_K^\circ$. We may extend it by zero on the other connected components of $F_K$, and this shows $n\lambda \in C_K$.
\end{proof}

\subsection{The case of line bundles}

Recall that $\Vscr(\lambda)$ is a line bundle on $\GZip^\Zcal$ if and only if $\lambda\in X^*(L)$. In this section, we restrict our attention to automorphic line bundles, i.e we consider the cone $C_{\zip}\cap X^*(L)$.

By \cite[\Th~3.1]{Koskivirta-compact-hodge}, there exists $n_0\geq 1$ such that the line bundle $\Vscr(\lambda)^{n_0}=\Vscr(n_0\lambda)$ admits a unique (up to scalar) nonzero section $h_\lambda$ over the open stratum $[E\backslash U_\Zcal]\subset \GZip^\Zcal$. The integer $n_0$ is determined explicitly in \cite[\Def~3.2.4]{Koskivirta-Wedhorn-Hasse}.

Recall (\cite[\Def~4.1.3]{Goldring-Koskivirta-zip-flags}) that $\lambda \in X^*(L)$ is called $\Zcal$-ample if $\lambda \in X^*(L)\cap X_-^*(T)$. If $\lambda \in X^*(L)$ is $\Zcal$-ample, the section $f_{\lambda}$ extends to a global section of $\GZip^\Zcal$ (\cite[\Th~5.1.4]{Koskivirta-Wedhorn-Hasse}). In other words, this shows:
\begin{proposition}\label{Zamplecont}
One has $ X^*(L)\cap X_-^*(T)\subset \langle C_{\rm zip} \rangle$.
\end{proposition}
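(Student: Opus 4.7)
The plan is essentially to apply the two results cited in the paragraph immediately preceding the statement, which together yield the inclusion in one line. Given $\lambda \in X^*(L)\cap X_-^*(T)$, I first invoke \cite[Th.~3.1]{Koskivirta-compact-hodge} to produce an integer $n_0 \geq 1$ and a nonzero section $h_\lambda$ of $\Vscr(n_0\lambda)$ on the open zip stratum $[E\backslash U_\Zcal]\subset \GZip^\Zcal$. Since $\lambda$ is $\Zcal$-ample by hypothesis, I then apply \cite[Th.~5.1.4]{Koskivirta-Wedhorn-Hasse}, which guarantees that $h_\lambda$ extends to a global section of $\Vscr(n_0\lambda)$ on the entire stack $\GZip^\Zcal$. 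This yields $n_0 \lambda \in C_{\zip}$, and hence $\lambda \in \langle C_{\zip}\rangle$ by the definition \eqref{satcone} of the saturated cone.

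If one wanted to sketch what actually lies behind the two cited results, the plan would be as follows. For the first step, one considers the base point $g_0 := \dot{w}_{0,I}\dot{w}_0\dot{z}^{-1}$ in the unique open $E$-orbit $U_\Zcal$, so that $U_\Zcal \simeq E/\Stab_E(g_0)$. A direct computation of the stabilizer shows that its reductive quotient is a finite group whose character group receives $X^*(L)$ with finite cokernel; thus some positive multiple $n_0 \lambda$ of any $\lambda \in X^*(L)$ becomes trivial on $\Stab_E(g_0)$, which produces an $E$-equivariant function $h_\lambda: U_\Zcal \to \AA^1$ with weight $n_0 \lambda$, unique up to scalar.

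For the second step, one has to check that the rational function $h_\lambda$ on $G$ (obtained by extending $h_\lambda$ by zero outside $U_\Zcal$) is in fact regular, i.e.\ has no poles along the codimension-one boundary strata $G_w$ with $\ell(w) = \dim U_\Zcal - 1 - \dim P$. The order of vanishing (or pole) of $h_\lambda$ along each such stratum is computable via local coordinates coming from the Bruhat decomposition at a point of $G_w$, and turns out to be a non-negative linear combination of pairings $-\langle \lambda, \alpha^\vee\rangle$ with $\alpha \in \Phi^+$; the $\Zcal$-ampleness condition $\lambda \in X_-^*(T)$ then forces all these orders to be $\geq 0$, so $h_\lambda$ extends regularly across the whole stack.

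The main obstacle is entirely contained in this second step: checking regularity along the boundary requires the explicit local structure of the strata closures and a positivity argument that is not visible from the abstract set-up. The first step, by contrast, is a straightforward stabilizer computation, and the passage from the two ingredients to the proposition as stated is purely formal.
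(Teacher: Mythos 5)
Your proposal is correct and follows exactly the paper's own argument: the preceding paragraph in the paper cites \cite[Th.~3.1]{Koskivirta-compact-hodge} for the section $h_\lambda$ of $\Vscr(n_0\lambda)$ on the open stratum and \cite[Th.~5.1.4]{Koskivirta-Wedhorn-Hasse} for its extension under $\Zcal$-ampleness, and the proposition is stated as an immediate restatement of those two facts. Your additional sketch of what lies behind the two cited theorems is accurate in spirit but is extra material; the paper treats both as black boxes and so does your first paragraph, which is the actual proof.
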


\subsection{The Schubert cone} \label{schubcone}
We will define a subcone $C_{\Schub}\subset C_{\zip}$ which has the advantage of being easily computable. It can sometimes happen that $\langle C_{\Schub} \rangle =\langle C_{\zip} \rangle$. The Schubert stack is defined as the quotient stack
\begin{equation}\label{schubertstack}
\Schub:=[B\backslash G /B].
\end{equation}
This stack is naturally stratified by the $B\times B$-orbits in $G$, called the Schubert cells. They are parametrized by the set $W$ via the bijection $w\mapsto S_w:=B\dot{w}B$. Then $\Schub_w:=[B\backslash S_w /B]$ is a locally closed substack of $\Schub$. A fundamental property is that the Zariski closure $\overline{S}_w$ is normal (\cite[\Th~3]{Ramanan-Ramanathan-projective-normality}).

Again using \cite[\S5.8]{jantzen-representations}, for any pair of characters $(\lambda,\nu)\in X^*(T)\times X^*(T)$, there is an associated line bundle $\Lscr(\lambda,\nu)$ on the stack $\Schub$. One has an equivalence (\cite[\Th~2.2.1(a)]{Goldring-Koskivirta-Strata-Hasse-v2}):
\begin{equation}
H^0(\Schub_w, \Lscr(\lambda,\nu))\neq 0 \ \Longleftrightarrow \ \nu=-w^{-1}\lambda.
\end{equation}

If this condition is satisfied, the space $H^0(\Schub_w, \Lscr(\lambda,\nu))$ is one-dimensional, and the divisor of any nonzero element is given by Chevalley's formula (\loccit \Th~2.2.1(c)). In particular for $w=w_0$, the line bundle $\Lscr(\lambda,\nu)$ admits a nonzero section on the open Schubert stratum $\Schub_{w_0}$ if and only if $\nu=-w_0\lambda.$ The divisor of any nonzero $h_\lambda\in H^0(\Schub_{w_0}, \Lscr(\lambda,-w_0\lambda))$ is given by
\begin{equation}\label{divflam}
\div(h_\lambda)=-\sum_{\alpha\in \Delta}\langle \lambda, w_0 \alpha^\vee \rangle \overline{S}_{w_0s_\alpha}.
\end{equation}
In particular, $h_\lambda$ extends to $\Schub$ if and only if $\lambda\in X^*_+(T)$. It is explained in \cite[\S2.3]{Goldring-Koskivirta-Strata-Hasse-v2} that there is a natural map
\begin{equation}\label{psimap}
\psi:\GF^\Zcal \to \Schub.
\end{equation}
We obtain a natural stratification on $\GF^\Zcal$ defined as preimages of the Schubert strata in $\Schub$: for $w\in W$, set $\Xcal_w:=\psi^{-1}(\Schub_w)$. The stratum $\Xcal_w$ is isomorphic to $[E'\backslash BwBz^{-1}]$ ($E'$ acts via the inclusion $E'\subset B\times {}^zB$). It is locally closed in $\GF\simeq [E'\backslash G]$. Furthermore, for all $\lambda,\nu \in X^*(T)$, one has the formula
\begin{equation}\label{pullbackformula}
\psi^*\Lscr(\lambda,\nu)=\Lscr(\lambda+(z\nu) \circ \varphi)
\end{equation}
where $\varphi:T\to T$ is the Frobenius homomorphism (recall that $T$ is defined over $\FF_p$). For $\lambda\in X^*(T)$, denote by $h_\lambda$ an arbitrary nonzero element of $H^0(\Schub_{w_0},\Lscr(\lambda,-w_0\lambda))$. We obtain a rational section $\psi^*(h_\lambda) \in H^0(\Xcal_{w_0},\Lscr(h(\lambda)))$ where $h:X^*(T)\to X^*(T)$ is defined by $h(\lambda)=\lambda-(zw_0 \lambda)\circ \varphi$.

\begin{definition}\label{defSbtcone}
The Schubert cone is defined by $C_{\Schub}:= h(X^*_+(T))$. The saturated Schubert cone is $\langle C_{\Schub}\rangle$.
\end{definition}

By the above discussion, we have inclusions $C_{\Schub}\subset C_{\rm zip}$ and  $\langle C_{\Schub} \rangle \subset \langle C_{\rm zip} \rangle$. In certain easy cases, it can happen that $\langle C_{\Schub} \rangle = \langle C_{\rm zip} \rangle$, but they are usually distinct. For example, equality holds for Hilbert-Blumenthal zip data (\cite{Goldring-Koskivirta-Diamond-I}) and for symplectic zip data attached to the group $Sp(4)$ (\S\ref{section symplectic}). \Prop~\ref{Zamplecont} above can be made more precise when $P$ is defined over $\FF_p$:

\begin{lemma}\label{lemmacontainedFp}
Assume that $P$ is defined over $\FF_p$. Then $X^*(L)\cap X^*_-(T) \subset  \langle C_{\Schub}\rangle $.
\end{lemma}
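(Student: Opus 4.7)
The plan is to construct, for each $\nu \in X^*(L) \cap X^*_-(T)$, a dominant character $\lambda \in X^*_+(T)$ together with an integer $m \geq 1$ such that $h(\lambda) = m\nu$; this will place $m\nu$ in $C_{\Sbt}$, and hence $\nu$ in $\langle C_{\Sbt}\rangle$.

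The first step is to identify the Weyl element $u := zw_0$ explicitly. Because $P$ is defined over $\FF_p$, the Levi $L$ is also $\FF_p$-defined, and consequently $M = \varphi(L) = L$ and $Q = P^{\mathrm{opp}}$. Translating the frame conditions ${}^zB \subset Q$ and $B \cap L = {}^zB \cap M$ into root systems, one gets the requirement $z(\Phi_+) = \Phi_{L,+} \cup (\Phi_- \setminus \Phi_{L,-})$. This identity is satisfied by $z = w_{0,L}\,w_0$: indeed $w_0$ sends $\Phi_+$ to $\Phi_-$, the longest element $w_{0,L}$ swaps $\Phi_{L,+}$ with $\Phi_{L,-}$, and $w_{0,L}$ preserves $\Phi_+ \setminus \Phi_L$ since any element of $W_L$ leaves the $(\Delta\setminus I)$-coefficients of a root unchanged. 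Consequently $u = zw_0 = w_{0,L}$ belongs to $W_L$, so $u$ fixes $X^*(L) = X^*(T)^{W_L}$ pointwise.

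Next, let $\tau$ denote the permutation of $X^*(T)$ characterized by $\chi \circ \varphi = p\,\tau(\chi)$. It preserves $X^*_+(T)$ (as a diagram automorphism of $\Delta$) and preserves $X^*(L)$ (since $L$ is $\FF_p$-defined), and it has finite order, say $r \geq 1$. Given $\nu \in X^*(L) \cap X^*_-(T)$, I would set
\begin{equation*}
\lambda := \sum_{k=0}^{r-1} p^k\,\tau^k(-\nu) \in X^*(T).
\end{equation*}
Since $-\nu \in X^*_+(T)$ and $\tau$ preserves the dominant chamber, each summand is dominant, so $\lambda \in X^*_+(T)$; moreover every $\tau^k(\nu)$ lies in $X^*(L)$, so $\lambda \in X^*(L)$ and thus $u\lambda = \lambda$. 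A direct telescoping calculation then yields
\begin{equation*}
h(\lambda) = \lambda - (u\lambda)\circ\varphi = \lambda - p\,\tau(\lambda) = (p^r - 1)\,\nu,
\end{equation*}
so $(p^r-1)\nu \in C_{\Sbt}$ and $\nu \in \langle C_{\Sbt}\rangle$.

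The main obstacle is the combinatorial identification $z = w_{0,L}w_0$; once that is secured, the choice of $\lambda$ and the evaluation of $h(\lambda)$ are essentially forced. In the split case the whole argument collapses nicely to $r = 1$, $\lambda = -\nu$, and $h(-\nu) = (p-1)\nu$, which matches the result of the previous proposition and makes the general construction look like a Frobenius-averaged version of it.
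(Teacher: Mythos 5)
Your argument is correct and follows the same strategy as the paper's proof: identify $zw_0 = w_{0,L}$ when $P$ is $\FF_p$-rational, invert $h$ by a Frobenius-twisted geometric series, and verify that the resulting preimage is dominant. The only difference is a genuine tightening of the same idea: the paper's formula carries $w_{0,L}^i$ factors through a sum of length $2r$ and works rationally before clearing denominators, whereas you observe upfront that $w_{0,L}$ acts trivially on $X^*(L)$, which yields an integral $r$-term sum whose dominance is immediate because $\tau$ permutes $\Delta$ and hence preserves the dominant chamber.
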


\begin{proof}
Since $P$ is defined over $\FF_p$, the map $h:X^*(T)\to X^*(T)$ is given by $\lambda \mapsto \lambda -p w_{0,I} {}^\sigma \lambda$, and restricts to a map $X^*(L)\to X^*(L)$. Let $\chi\in X^*(L)\cap X^*_-(T)$ be a character and $\lambda \in X^*(L)_\QQ$ such that $\chi=h(\lambda)$. Fix $r\geq 1$ such that $\sigma^r\chi =\chi$. Then it is easy to see that
\begin{equation}
\lambda =-\frac{1}{p^{2r}-1}\sum_{i=0}^{2r-1} p^i (w_{0,L})^i\sigma^i\chi.
\end{equation}
Hence for all $\alpha \in \Delta \setminus I$, one has $(p^{2r}-1)\langle \lambda, \alpha^\vee \rangle =-\sum_{i=0}^{2r-1}p^i\langle \chi,(w_{0,L})^i\sigma^{-i}\alpha^\vee \rangle$. Since $\alpha \in \Delta \setminus I$, the root $(w_{0,L})^i\sigma^{-i}\alpha$ is again positive for all $i$. Hence we obtain $\lambda \in X^*_+(T)$, and this shows $\chi \in \langle C_{\Schub}\rangle $.
\end{proof}

\begin{lemma}\label{lemmasplitSchub}
Assume that $T$ is $\FF_p$-split. For $\lambda \in X^*(T)$, we have an equivalence
\begin{equation}\label{equivSbt}
\lambda\in  \langle C_{\Schub} \rangle  \ \Longleftrightarrow \ p w_{0,I}\lambda+\lambda \in X_-^*(T).
\end{equation}
\end{lemma}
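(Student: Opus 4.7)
The plan is to exploit the simplification of the map $h$ afforded by the $\FF_p$-split hypothesis, and then reduce the equivalence to an elementary algebraic identity based on the involution $w_{0,I}^2=1$.

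First, since $T$ is $\FF_p$-split, the cocharacter lattice $X_*(T)$ carries trivial Galois action, so $\mu$ (and hence $P$, $L$, $Q$, $M$) is defined over $\FF_p$; in particular $Q=P$ and $M=L$. A direct root-theoretic check shows that the element $z=w_{0,I}w_0$ satisfies the frame conditions \eqref{eqBorel}: using that $w_{0,I}$ preserves signs of roots outside $\Phi_L$ and reverses signs within $\Phi_L$, one finds
\begin{equation*}
w_{0,I}w_0(\Phi^-)=(\Phi^+\cap \Phi_L)\cup(\Phi^-\setminus\Phi_L),
\end{equation*}
which lies inside the roots of $P=P_-(\mu)$ and meets $\Phi_L$ in the positive roots of $L$. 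Since $\varphi^*$ acts on $X^*(T)$ as multiplication by $p$, this yields $zw_0=w_{0,I}$ and therefore
\begin{equation*}
h(\lambda)=(1-pw_{0,I})\lambda.
\end{equation*}

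The key algebraic input is the identity $(1-pw_{0,I})(1+pw_{0,I})=1-p^2w_{0,I}^2=1-p^2$ in $\End(X^*(T))$, where the two operators commute. For the forward direction, if $\lambda\in \langle C_\Schub\rangle$, then $n\lambda=(1-pw_{0,I})\mu$ for some $n\geq 1$ and some $\mu\in X^*_+(T)$. Applying $(1+pw_{0,I})$ gives $n(1+pw_{0,I})\lambda=(1-p^2)\mu$, which lies in $X^*_-(T)$ since $\mu$ is dominant and $1-p^2<0$. Because negativity against simple coroots is preserved under $\QQ_{>0}$-scaling and $(1+pw_{0,I})\lambda\in X^*(T)$ automatically, dividing by $n$ yields $(1+pw_{0,I})\lambda\in X^*_-(T)$. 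Conversely, if $(1+pw_{0,I})\lambda\in X^*_-(T)$, set $\mu:=-(1+pw_{0,I})\lambda\in X^*_+(T)$; then the same identity gives
\begin{equation*}
h(\mu)=-(1-pw_{0,I})(1+pw_{0,I})\lambda=(p^2-1)\lambda,
\end{equation*}
so $(p^2-1)\lambda\in C_\Schub$ and hence $\lambda\in \langle C_\Schub\rangle$.

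The only nontrivial ingredient is the identification of a valid frame element $z=w_{0,I}w_0$ in the split case; once $h$ is made explicit, the argument is a short computation exploiting that $w_{0,I}$ is an involution on $X^*(T)$.
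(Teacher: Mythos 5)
Your algebraic core is correct and is essentially the paper's argument: once $h(\lambda)=\lambda-p\,w_{0,I}\lambda$ is established, the identity $(1-pw_{0,I})(1+pw_{0,I})=1-p^2$ immediately yields both directions of the equivalence, exactly as the paper's one-line computation $(p^2-1)\chi=-(pw_{0,L}\lambda+\lambda)$ does. The scaling-by-$n$ and $\QQ_{>0}$-invariance observations are fine and needed, and the converse direction with $\mu:=-(1+pw_{0,I})\lambda$ is also what the paper implicitly does.

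However, the preparatory paragraph where you re-derive $h(\lambda)=(1-pw_{0,I})\lambda$ by verifying that $z=w_{0,I}w_0$ is a valid frame element contains genuine errors. First, you assert $Q=P$; this is wrong. In the paper's cocharacter zip datum one has $P=P_-(\mu)$ and $Q=(P_+(\mu))^{(p)}$, so when everything is $\FF_p$-defined $Q=P_+(\mu)$ is the \emph{opposite} parabolic of $P$, not $P$ itself; only $M=L$ is correct. Second, the root computation is off by a sign: since $w_0(\Phi^-)=\Phi^+$ and $w_{0,I}$ sends $\Phi^+_L$ to $\Phi^-_L$ while permuting $\Phi^+\setminus\Phi_L$, one gets
\begin{equation*}
w_{0,I}w_0(\Phi^-)=\Phi^-_L\cup(\Phi^+\setminus\Phi_L),
\end{equation*}
not $(\Phi^+\cap\Phi_L)\cup(\Phi^-\setminus\Phi_L)$ as you wrote. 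Third, the frame condition \eqref{eqBorel} requires ${}^zB\subset Q$, not ${}^zB\subset P$; the correct set above lies in the root set of $Q=P_+(\mu)$, namely $\Phi^+\cup\Phi^-_L$, not in that of $P_-(\mu)$. The overall conclusion that $z=w_{0,I}w_0$ works and hence $zw_0=w_{0,I}$ is true, but your verification does not establish it as written. Note that the paper itself does not carry out this verification in the proof of the lemma; it instead reads off the simplified formula for $h$ from (the proof of) the preceding \Lem~\ref{lemmacontainedFp}, which already records $h(\lambda)=\lambda-pw_{0,I}{}^\sigma\lambda$ when $P$ is $\FF_p$-defined. Since $T$ being $\FF_p$-split makes the Galois action on $\Delta$ trivial, $P$ (determined by a Galois-stable subset $I\subset\Delta$) is automatically $\FF_p$-defined and ${}^\sigma\lambda=\lambda$, which is all you actually need; the explicit root-system check is unnecessary and, as carried out, incorrect.
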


\begin{proof}
In this case, the Galois action on $X^*(T)$ is trivial, so the map $h$ is simply given by $h(\lambda)=\lambda-p w_{0,L}\lambda$. Hence if $\lambda=h(\chi)$, one sees easily that $(p^2-1)\chi=-(pw_{0,L}\lambda+\lambda)$, and the result follows.
\end{proof}

\subsection{The Griffith-Schmidt cone}\label{sec-GScone}

We define the following subcone of $X^*(T)$ :

\begin{definition}\label{GSdef}
Let $C_{\GS}$ denote the set of characters $\lambda\in X^*(T)$ satisfying the conditions
\begin{align}
\langle \lambda, \alpha^\vee \rangle &\geq 0 \ \textrm{ for }\alpha\in I \label{ineq1} \\
\langle \lambda, \alpha^\vee \rangle &\leq 0 \ \textrm{ for }\alpha\in \Phi_+ \setminus I. \label{ineq2}
\end{align}
\end{definition}

It is easy to see that $C_{\GS}$ is a saturated subcone of $X^*(T)$, which we call the Griffith-Schmidt cone. Let us give some motivation for introducing this cone. The conditions defining $C_{\GS}$ were first understood by Griffith-Schmidt in their work \cite{Griffiths-Schmid-homogeneous-complex-manifolds}. They consider certain manifolds (called Griffith-Schmidt manifolds) which generalize Shimura varieties to non-minuscule cocharacters. In particular, their result applies in our situation to the flag space $Fl_K(G,X)$ of the Shimura variety. Since they only work in the compact case, assume $Sh_K(G,X)$ is a compact Shimura variety of Hodge type over a number field $E$. Denote by $C^\circ_{\GS}$ the "interior" of $C_{\GS}$, where the inequalities \eqref{ineq1} and \eqref{ineq2} are strict. The following result is shown in \loccit
\begin{theorem}
For all $\lambda\in C^\circ_{\GS}$, the line bundle $\Lscr(\lambda)$ is ample on $Fl_K(G,X)$.
\end{theorem}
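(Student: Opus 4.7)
The plan is to reduce the claim to Kodaira's embedding theorem by constructing a hermitian metric on $\Lscr(\lambda)$ whose Chern curvature is positive definite, using the curvature computation of Griffiths--Schmid on the compact dual flag manifold. Because $Sh_K(G,X)$ is assumed compact of Hodge type, so is $Fl_K(G,X)$, and since it is projective, ampleness can be tested on the complex analytification. First I would identify $Fl_K(G,X)^{\an}_{\CC}$ with a quotient $\Gamma \backslash D$, where $D$ is the open $G(\RR)$-orbit (a Griffiths--Schmid domain) inside the compact dual $\check D = G_\CC/B$ corresponding to the Hodge cocharacter $\mu$ together with a compatible full flag refinement, and $\Gamma \subset G(\RR)$ is a congruence subgroup acting freely and cocompactly on $D$. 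The Borel $B$ should be chosen so that the parabolic containing it which is the stabilizer of the Hodge filtration has type $I \subset \Delta$.

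Next I would realize $\Lscr(\lambda)$ analytically as the descent to $\Gamma \backslash D$ of the restriction to $D$ of the standard homogeneous line bundle $\check\Lscr(\lambda)$ on $\check D$ associated to the character $\lambda \in X^*(T)$, and equip $\check\Lscr(\lambda)|_D$ with the unique (up to scalar) $G(\RR)$-invariant hermitian metric. The key input is then the Griffiths--Schmid curvature formula in \cite{Griffiths-Schmid-homogeneous-complex-manifolds}: under the root space decomposition $\gfr = \tfr \oplus \bigoplus_{\alpha \in \Phi} \gfr_\alpha$ of $\Lie(G_\CC)$, the Chern curvature of this metric at the base point $o \in D$ decomposes as a sum over $\alpha \in \Phi$, where the contribution from the root $\alpha$ is, up to a positive constant, $\langle \lambda,\alpha^\vee\rangle$ when $\alpha \in I$ (the "compact" directions coming from $L$) and $-\langle \lambda, \alpha^\vee\rangle$ when $\alpha \in \Phi_+ \setminus I$ (the "horizontal" directions). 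The strict inequalities defining $C_{\GS}^\circ$ are exactly what is needed to make each contribution strictly positive, so the Chern form is positive definite at $o$, and by $G(\RR)$-homogeneity on all of $D$. Positivity descends to $\Gamma \backslash D$, and then Kodaira's embedding theorem combined with GAGA yields algebraic ampleness of $\Lscr(\lambda)$.

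The main obstacle is sign bookkeeping. The paper works with $P = P_-(\mu)$, while Griffiths--Schmid's original reference tacitly uses the opposite convention; moreover, the natural Hodge filtration and its "anti-Hodge" partner give opposite-signed curvature contributions in the non-compact directions. One must carefully check that the horizontal tangent directions appearing in the Griffiths--Schmid computation correspond to $\Phi_+ \setminus I$ in the sign convention fixed in Definition \ref{GSdef}, so that the inequalities \eqref{ineq1} and \eqref{ineq2} translate into positive (rather than negative) definiteness of the Chern form of $\Lscr(\lambda)$. Once this dictionary is pinned down, the curvature computation is the substance of the proof and the rest is standard descent plus Kodaira.
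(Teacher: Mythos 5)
The paper does not prove this statement: it is imported from Griffiths--Schmid with the sentence ``The following result is shown in \emph{loc.\ cit.}'', so there is no in-paper proof to compare against. What you have written is a faithful reconstruction of the argument in \cite{Griffiths-Schmid-homogeneous-complex-manifolds}: identify the analytification of the (compact) flag space with a quotient $\Gamma\backslash D$ of a Griffiths--Schmid domain $D\subset \check D = G_\CC/B$, descend the $G(\RR)$-invariant hermitian metric on the homogeneous line bundle, compute the Chern form root by root, and read off positivity from the strict inequalities defining $C^\circ_{\GS}$; Kodaira plus GAGA then give algebraic ampleness. The two caveats you raise are exactly the real ones. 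First, at an arbitrary level $K$ the group $\Gamma$ need not act freely; one either shrinks to a neat level and uses that ampleness can be tested after a finite \'etale cover, or works in the orbifold/stack setting. Second, the sign conventions do flip: the paper takes $P = P_-(\mu)$ and declares $\alpha\in\Phi^+$ when $U_{-\alpha}\subset B$, so what Griffiths--Schmid call the horizontal (non-compact) directions correspond here to $\Phi_+\setminus I$, and positivity of the Chern form is precisely $\langle\lambda,\alpha^\vee\rangle>0$ on $I$ together with $\langle\lambda,\alpha^\vee\rangle<0$ on $\Phi_+\setminus I$, i.e.\ the strict version of Definition \ref{GSdef}. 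Neither point is a gap; they are bookkeeping. In short, your proposal is correct in approach, but bear in mind that you are re-deriving the cited theorem rather than reproducing a proof the paper itself supplies.
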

In particular, for any $\lambda \in C^\circ_{\GS}$, there exists $N\geq 1$ such that $\Lscr(N\lambda)$ admits a global section on $Fl_K(G,X)$. Now, let $p$ be a place of good reduction, and retain the notation of \S\ref{sec-motiv}. In particular, recall that we have an $\Ocal_{E_v}$-scheme $\Fcal_K$ whose base change to $E_v$ is $Fl_K(G,X)\otimes_E E_v$.

\begin{proposition}
Assume that $H^0(Fl_K(G,X)\otimes \CC,\Lscr(\lambda))\neq 0$. Then one has also $H^0(\Fcal_{K}\otimes \overline{\FF}_p,\Lscr(\lambda))\neq 0$.
\end{proposition}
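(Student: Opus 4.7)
The plan is to use that $\Fcal_K$ is proper and flat over the DVR $\Ocal_{E_v}$ (the flag bundle over the compact Kisin--Vasiu integral model $\Scal_K$), and to propagate a nonzero section from $\CC$ down to $\overline{\FF}_p$ through a succession of flat base changes, combined with the short exact sequence for reduction mod the uniformizer. First I would transfer the hypothesis from $\CC$ to $E_v$: since $Fl_K(G,X)$ is proper over $E$, the $E$-vector space $H^0(Fl_K(G,X),\Lscr(\lambda))$ is finite-dimensional, and flat base change along the chosen embedding $E\hookrightarrow \CC$ gives
\[
H^0(Fl_K(G,X),\Lscr(\lambda))\otimes_E \CC \;\simeq\; H^0(Fl_K(G,X)\otimes \CC,\Lscr(\lambda)) \;\neq\; 0.
\]
Hence $H^0(Fl_K(G,X),\Lscr(\lambda))\neq 0$ as an $E$-vector space, and tensoring with $E_v$ remains nonzero. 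Identifying $\Fcal_K\otimes_{\Ocal_{E_v}} E_v$ with $Fl_K(G,X)\otimes_E E_v$ yields $H^0(\Fcal_K\otimes E_v,\Lscr(\lambda))\neq 0$.

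Next, set $R:=\Ocal_{E_v}$ and $M:=H^0(\Fcal_K,\Lscr(\lambda))$. By properness of $\Fcal_K/R$ the module $M$ is finitely generated; by flatness of $\Fcal_K/R$ the invertible sheaf $\Lscr(\lambda)$ is $R$-flat, so $M$ is $R$-torsion-free, hence a free $R$-module of finite rank. Flat base change gives $M\otimes_R E_v \simeq H^0(\Fcal_K\otimes E_v,\Lscr(\lambda))$, which is nonzero by the previous step, so $M$ has positive rank. Let $\varpi$ be a uniformizer of $R$. Flatness of $\Lscr(\lambda)$ over $R$ makes multiplication by $\varpi$ injective on $\Lscr(\lambda)$, yielding the short exact sequence on $\Fcal_K$:
\[
0 \longrightarrow \Lscr(\lambda) \xrightarrow{\;\varpi\;} \Lscr(\lambda) \longrightarrow \Lscr(\lambda)\big|_{\Fcal_K\otimes \kappa(v)} \longrightarrow 0.
\]
The associated long exact sequence in cohomology produces an injection $M/\varpi M \hookrightarrow H^0(\Fcal_K\otimes \kappa(v),\Lscr(\lambda))$. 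Since $M$ is free of positive rank, $M/\varpi M\neq 0$, and a final faithfully flat base change along $\kappa(v)\hookrightarrow \overline{\FF}_p$ gives $H^0(\Fcal_K\otimes \overline{\FF}_p,\Lscr(\lambda))\neq 0$, as required.

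The argument is essentially formal, relying only on flat base change, properness, and the standard reduction-mod-uniformizer exact sequence for locally free sheaves on a flat scheme over a DVR. The only nontrivial input is verifying that $\Fcal_K$ is proper and flat over $\Ocal_{E_v}$, which is where I expect the main subtlety to lie: this uses the compactness hypothesis on $Sh_K(G,X)$ (so that $\Scal_K$ is proper over $\Ocal_{E_v}$) together with the fact that $\Fcal_K\to \Scal_K$ is a flag bundle, hence smooth and projective with geometrically connected fibers. Everything else follows by unwinding the standard dictionary between global sections and flat base change.
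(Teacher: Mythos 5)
Your argument is correct and follows essentially the same route as the paper: propagate nonvanishing from $\CC$ down to $H^0(\Fcal_K,\Lscr(\lambda))$ by flat base change, then reduce modulo a uniformizer. The paper phrases the last step as ``write $f=\varpi^n f'$ with $n$ maximal and observe $f'$ has nonzero reduction,'' whereas you make the same point via the short exact sequence $0\to\Lscr(\lambda)\xrightarrow{\varpi}\Lscr(\lambda)\to\Lscr(\lambda)|_{\Fcal_K\otimes\kappa(v)}\to 0$ giving $M/\varpi M\hookrightarrow H^0(\Fcal_K\otimes\kappa(v),\Lscr(\lambda))$ and then $M/\varpi M\neq 0$ from $M$ being a finite free $\Ocal_{E_v}$-module of positive rank -- a cleaner way of justifying the same divisibility claim.
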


\begin{proof}
By flat base change along the map $\spec(E_v)\to \spec(\Ocal_{E_v})$, we have
\begin{equation}
H^0(Fl_K(G,X)_{E_v},\Lscr(\lambda)) = H^0(\Fcal_K,\Lscr(\lambda))\otimes_{\Ocal_{E_v}}E_v.
\end{equation}
Using again flat base change for $\spec(\CC)\to \spec(E_v)$, we deduce that $H^0(Fl_K(G,X)_{E_v},\Lscr(\lambda))$ and $H^0(\Fcal_K,\Lscr(\lambda))$ are nonzero. Denote by $f\in H^0(\Fcal_K,\Lscr(\lambda))$ an arbitrary nonzero element. Let $\kappa(v)$ be the residue field of $\Ocal_{E_v}$. If $f$ maps to a nonzero element via the natural map $H^0(\Fcal_K,\Lscr(\lambda))\to H^0(\Fcal_K\otimes \kappa(v),\Lscr(\lambda))$, then we deduce the result. If $f$ maps to zero, then it is divisible by an uniformizer $\varpi\in \Ocal_{E_v}$. There exists an integer $n\geq 1$ such that $\varpi^n$ divides $f$, but $\varpi^{n+1}$ does not, write $f=\varpi^n f'$. Then $f'$ reduces to a nonzero section over $\Fcal_K\otimes \kappa(v)$, which terminates the proof.
\end{proof}

Recall our general Conjecture \ref{conjzipX} explained earlier. Denote by $C_K$ the set of $\lambda \in X^*(T)$ such that $\Vscr(\lambda)$ admits nonzero global section over $S_K=\Scal_K\otimes \overline{\FF}_p$. Then we conjectured that $\langle C_K \rangle = \langle C_{\zip} \rangle$. We have just seen that $C^\circ_{\GS}\subset \langle C_K \rangle $. Hence, if our conjecture is correct, we must have also $C^\circ_{\GS}\subset \langle C_{\zip} \rangle $. It seems reasonable that the cone $\langle C_{\zip} \rangle_{\RR^+} $ is closed in $X^*(T)\otimes \RR$ for the real topology. Hence one should also expect:

\begin{conjecture}\label{conjGS}
One has $C_{\GS} \subset \langle C_{\zip} \rangle $.
\end{conjecture}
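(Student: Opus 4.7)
The plan is to deduce the conjecture directly from Theorem~2 (Corollary~\ref{lamisin}), at least in the setting in which $L$ is defined over $\FF_p$, and then to indicate the obstacle in the general case. Let $\lambda \in C_{\GS}$. The first condition defining $C_{\GS}$, namely $\langle \lambda, \alpha^\vee\rangle \geq 0$ for $\alpha \in I$, is exactly the $L$-dominance hypothesis of Theorem~2, so what remains is to verify
\begin{equation*}
\sum_{w \in W_L(\FF_p)} \sum_{i=0}^{r_\alpha - 1} p^{i + \ell(w)} \langle w\lambda, \sigma^i \alpha^\vee\rangle \leq 0
\end{equation*}
for every $\alpha \in \Delta \setminus I$. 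Since every scalar $p^{i+\ell(w)}$ is strictly positive, I would establish the stronger pointwise statement that each individual summand $\langle w\lambda, \sigma^i \alpha^\vee\rangle$ is nonpositive.

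Rewriting this pairing as $\langle \lambda, (w^{-1}\sigma^i\alpha)^\vee\rangle$, the task reduces to the combinatorial claim that $w^{-1}\sigma^i\alpha$ is a positive root not belonging to the root subsystem $\Phi_I$ of $L$, for every $w \in W_L$, $i\geq 0$, and $\alpha \in \Delta\setminus I$. This claim rests on two observations. Since $L$ is defined over $\FF_p$, the subset $I \subset \Delta$ is $\sigma$-stable, so $\sigma^i \alpha \in \Delta \setminus I$. Moreover, $W_L$ preserves the set of positive roots outside $\Phi_I$: a simple reflection $s_\gamma$ with $\gamma \in I$ changes only the $\gamma$-coefficient when applied to any root, so the coefficient of $\sigma^i\alpha \in \Delta \setminus I$ in the expansion of $w^{-1}\sigma^i\alpha$ in simple roots remains equal to $1$, which forces $w^{-1}\sigma^i\alpha$ to be a positive root outside $\Phi_I$. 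Condition \eqref{ineq2} in the definition of $C_{\GS}$ then yields $\langle \lambda, (w^{-1}\sigma^i\alpha)^\vee\rangle \leq 0$, and Theorem~2 applied to $\lambda$ concludes $\lambda \in \langle C_{\zip}\rangle$.

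The genuine obstacle lies outside this special case: Theorem~2 requires $L$ to be defined over $\FF_p$, whereas Conjecture~\ref{conjGS} is stated in full generality. Two natural avenues present themselves. First, one could try to extend Theorem~1 (and therefore Theorem~2) to a Galois-twisted setting by replacing the invariant subspace $V(\lambda)^{L(\FF_p)}$ with an appropriate fixed subspace accommodating a nontrivial $\sigma$-action on $L$; the combinatorial step above would then transport essentially unchanged. Alternatively, one could first establish the closedness of $\langle C_{\zip}\rangle_{\RR^+}$ in $X^*(T)\otimes\RR$, as suggested just before the conjecture, and then reduce the general case to the $\FF_p$-defined one by approximating an arbitrary $\lambda \in C_{\GS}$ by rational characters defined over an appropriate finite extension where the Galois action trivializes. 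Both strategies appear to require substantial additional input beyond the techniques developed in the present paper, and in particular the closedness statement in the second strategy seems closely tied to the finite-generation Conjecture~\ref{Rzipfg}.
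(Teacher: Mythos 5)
Your proof is correct and follows essentially the same route as the paper: the paper establishes $C_{\GS} \subset C_{\rm hw} \subset \langle C_{\zip}\rangle$ (Corollary~\ref{inclusions}, resting on Corollary~\ref{lamisin}) by precisely the same termwise nonpositivity argument --- observing that $\sigma^i\alpha$ stays in $\Delta\setminus I$ because $B$ and $L$ are defined over $\FF_p$, and that $w^{-1}(\sigma^i\alpha)$ is a positive root outside $\Phi_I$ for any $w\in W_L$. Like you, the paper only verifies the conjecture under the hypothesis that $P$ is defined over $\FF_p$, and leaves the general case open.
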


Note that Conjecture \ref{conjGS} involves entirely group-theoretical objects. We will even see a formulation of $\langle C_{\zip} \rangle $ in representation theory terms. Hence it is quite striking that we have come to the conclusion that the inclusion $C_{\GS} \subset \langle C_{\zip} \rangle $ must hold by means of the Shimura variety and the result of Griffith-Schmidt, which uses analytical methods and tools from the theory of automorphic representations.

We will check Conjecture \ref{conjGS} in \S\ref{sec-hwc} in the case when the Levi subgroup $L$ is defined over $\FF_p$. This gives some evidence that the more ambitious Conjecture \ref{conjzipX} should hold (at least for Shimura varieties).

\section{The $\mu$-ordinary cone} \label{subsec-muord}

\subsection{Sections on the $\mu$-ordinary locus} \label{sec-ordsec}
Let $(G,\mu)$ be a cocharacter datum with associated zip datum $\Zcal$. In this case, we prefer to denote the unique open $E$-orbit by $U_\mu\subset G$ (instead of the previous notation $U_\Zcal$), and we call it the $\mu$-ordinary zip stratum. The open substack 
\begin{equation}\label{gzipmu}
\GZip^{\mu\textrm{-ord}}:=[E\backslash U_\mu]
\end{equation}
is called the $\mu$-ordinary locus of $\GZip^\mu$. To avoid confusion between $\GZip^\mu$ and $\GZip^{\mu\textrm{-ord}}$, we will simply write $\GZip$ for the stack of $G$-zips (The cocharacter $\mu$ will be fixed once and for all).
The stabilizer of an arbitrary element $x\in U_\mu$ is a finite group scheme $S:=\Stab_E(x)\subset E$ (not necessarily \'{e}tale). The map $E\to U_\mu$, $\epsilon \mapsto \epsilon \cdot x$ yields an isomorphism $E/S\simeq U_\mu$. One can show that the element $1$ lies in $U_\mu$. Its stabilizer takes the form
\begin{equation}\label{stabone}
S:=\Stab_E(1)=\{(a,b)\in E, \ a=b\}.
\end{equation}
Identify $S$ with a subgroup of $P$ via the first projection $S\to P$. The stabilizer $S$ is explicitly determined in \cite{Koskivirta-Wedhorn-Hasse}. It particular, it is proved that $S\subset Q\cap L$ and can be written as $S=L_0(\FF_p)\ltimes S^\circ$, where $L_0\subset L$ is the largest Levi subgroup containing $T$ which is defined over $\FF_p$. The group $S^\circ$ is a finite unipotent (non-smooth) subgroup. Define the $\mu$-ordinary locus in $\GF$ by
\begin{equation}
\GF^{\mu\textrm{-ord}}:=\pi^{-1}(\GZip^{\mu\textrm{-ord}}).
\end{equation}

\begin{lemma}\label{isommuord}
There are isomorphisms $\GF^{\mu\textrm{-ord}} \simeq [E'\backslash U_\mu] \simeq  [B_L\backslash L/S]$.
\end{lemma}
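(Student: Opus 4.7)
The plan is to break the double isomorphism into three steps. First, the identification $\GF^{\mu\textrm{-ord}} \simeq [E'\backslash U_\mu]$ is formal: since $\pi$ is induced by the inclusion $E' \subset E$, the preimage of the open substack $[E\backslash U] \subset \GZip$ under $\pi$ is $[E'\backslash U]$ for any $E$-stable open $U \subset G$. Applying this to $U = U_\mu$ yields the first isomorphism.

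Second, one uses the stabilizer description. The element $1 \in G$ lies in $U_\mu$ with $\Stab_E(1) = S$ by \eqref{stabone}, so the orbit map $\epsilon \mapsto \epsilon \cdot 1$ is an $E$-equivariant isomorphism $U_\mu \simeq E/S$, where $S$ acts on $E$ on the right by multiplication in $P \times Q$. Consequently $[E'\backslash U_\mu] \simeq [E'\backslash E / S]$, meaning the double quotient of $E$ by $E' \times S$ acting via $(\alpha,\sigma)\cdot e = \alpha\, e\, \sigma^{-1}$.

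The heart of the argument is to show that the morphism $f \colon E \to L$, $(x,y) \mapsto \bar{x}$ (projection onto the Levi component in $P = L \ltimes U_P$) induces an equivalence $[E'\backslash E] \simeq [B_L\backslash L]$. I plan to establish this via three observations: (a) the fiber of $f$ over $\ell \in L$ equals $\{(u\ell, v\varphi(\ell)) : u \in U_P,\ v \in U_Q\}$, so $f$ is a Zariski-trivial $(U_P \times U_Q)$-torsor; (b) the zip group $E' = E \cap (B\times G)$ fits into a split exact sequence $1 \to U_P \times U_Q \to E' \to B_L \to 1$, the splitting being $\beta \mapsto (\beta, \varphi(\beta))$, which lands in $E'$ by \eqref{eqBorel}; (c) the left $E'$-action on $E$ is $f$-equivariant with respect to $E' \twoheadrightarrow B_L$, since $f((ax,by)) = \bar{a}\bar{x}$, and by (a) the kernel $U_P \times U_Q$ acts simply transitively on fibers. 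Together (a)–(c) give $[E'\backslash E] \simeq [B_L\backslash L]$.

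Finally, one checks that the right $S$-action on $E$ descends along $f$ to right translation on $L$ through the inclusion $S \subset L$ afforded by $S \subset Q \cap L$ (recalled just after \eqref{stabone}). Indeed, for $(a,a) \in S$ and $(x,y) \in E$ one computes $f((xa,ya)) = \overline{xa} = \bar{x}\, a = f(x,y)\cdot a$, since $a \in L$. Thus the previous equivalence upgrades to $[E'\backslash E/S] \simeq [B_L\backslash L/S]$, completing the proof. The main obstacle is the combination of (a) and (c), namely identifying $E$ as a torsor under the unipotent radical of $E'$ in a way compatible with $B_L$; once this is in place, matching the residual $S$-actions is a direct calculation with the Levi projections defining $E$.
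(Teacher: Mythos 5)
Your proof is correct and follows essentially the same route as the paper's: after identifying $\GF^{\mu\text{-ord}}\simeq[E'\backslash U_\mu]\simeq[E'\backslash E/S]$ via the orbit map through $1\in U_\mu$, you project $E$ onto its Levi part to identify $E'\backslash E$ with $B_L\backslash L$. The only cosmetic difference is that the paper factors through the intermediate quotient $B\backslash P$ (using the first projection $E\to P$) before passing to $B_L\backslash L$, while you go directly to $L$ and spell out the $(U_P\times U_Q)$-torsor structure and the $S$-compatibility; these are the same argument with different bookkeeping.
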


\begin{proof}
The isomorphism $\GF^{\mu\textrm{-ord}}\simeq [E'\backslash U_\mu]$ is clear by definition. Hence, we obtain $\GF^{\mu\textrm{-ord}}\simeq [E'\backslash (E / S)] \simeq [(E'\backslash E) /S]$ and the first projection $E\to P$ induces an isomorphism $E'\backslash E\simeq B\backslash P\simeq B_L\backslash L$, hence the result.
\end{proof}

Denote by $\pi_\mu:\GF^{\mu\textrm{-ord}}\to \GZip^{\mu\textrm{-ord}}$ the natural projection. The relation $(\pi_{\mu})_{*}(\Lscr(\lambda))=\Vscr(\lambda)$  continues to hold since $\pi_\mu$ is a base-change of $\pi$ by an open embedding. In particular, we obtain a similar identification
\begin{equation}
H^0(\GZip^{\mu\textrm{-ord}},\Vscr(\lambda))\simeq H^0(\GF^{\mu\textrm{-ord}},\Lscr(\lambda)).
\end{equation}

\begin{rmk}\label{rem-muord}
Using the isomorphisms of Lemma \ref{isommuord}, these spaces also identify with the following objects:
\begin{enumerate}
\item\label{item-mu1} Regular functions $f:U_\mu\to \AA^1$ such that $f(\varepsilon\cdot g)=\lambda(\varepsilon)f(g)$ for all $g\in U_\mu$ and $\varepsilon\in E'$.
\item\label{item-mu2} Regular functions $f':L\to \AA^1$ such that $f'(bhs)=\lambda(b)f'(h)$ for all $b\in B_L$, $h\in L$ and $s\in S$.
\end{enumerate}
If $f:U_\mu \to \AA^1$ is a function as in \eqref{item-mu1}, then $f':L\to \AA^1$, $f'(a):=f(a\varphi(a)^{-1})$ is the corresponding function as in \eqref{item-mu2}. Conversely, one recovers $f$ from $f'$ as follows: For $g\in U_\mu$, write $g=ab^{-1}$ with $(a,b)\in E$. Then $f(g)=f'(\overline{a})$ (this is independent of the choice of $(a,b)\in E$ such that $g=ab^{-1}$ because of the $S$-invariance of $f'$).
\end{rmk}

Define the \emph{$\mu$-ordinary cone} $C_\mu\subset X^*(T)$ as the set of characters $\lambda \in X^*(T)$ such that $H^0(\GZip^{\mu\textrm{-ord}},\Vscr(\lambda))\neq 0$. The inclusion $\GZip^{\mu\textrm{-ord}}\subset \GZip$ induces an inclusion $C_{\zip}\subset C_\mu$. By similar arguments as before, the set $C_\mu$ is a cone in $X^*(T)$ and is contained in $X^*_{+,I}(T)$. Define also the \emph{saturated $\mu$-ordinary cone} by $\langle C_\mu \rangle$.

\begin{lemma} \label{lemmuordsat}
One has $\langle C_\mu \rangle =X^*_{+,I}(T)$.
\end{lemma}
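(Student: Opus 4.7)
The inclusion $\langle C_\mu \rangle \subset X^*_{+,I}(T)$ has already been observed, since $\Vscr(\lambda)=0$ whenever $\lambda$ is not $L$-dominant. For the reverse inclusion, the crucial remark is that the $\mu$-ordinary zip stratum is a classifying stack. Indeed, $U_\mu$ is a single $E$-orbit with $S = \Stab_E(1)$ by \eqref{stabone}, so $U_\mu \simeq E/S$ and hence
\begin{equation*}
\GZip^{\mu\textrm{-ord}} \;=\; [E\backslash U_\mu] \;\simeq\; [E\backslash (E/S)] \;\simeq\; BS.
\end{equation*}
Under this identification, a vector bundle on $\GZip^{\mu\textrm{-ord}}$ is the same as a finite-dimensional algebraic $S$-representation, and $\Vscr(\lambda)$ corresponds to $V(\lambda)$ viewed as an $S$-representation through the natural inclusion $S\subset L$. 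Global sections on $BS$ are simply $S$-invariants, so
\begin{equation*}
H^0(\GZip^{\mu\textrm{-ord}},\Vscr(\lambda)) \;\simeq\; V(\lambda)^S.
\end{equation*}

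Now fix $\lambda\in X^*_{+,I}(T)$ and introduce the graded $k$-algebra
\begin{equation*}
R_\lambda \;:=\; \bigoplus_{n\geq 0} V(n\lambda),
\end{equation*}
with multiplication coming from the canonical maps $\Lscr(n\lambda)\otimes\Lscr(m\lambda)\to\Lscr((n+m)\lambda)$ on $L/B_L$. Since $\lambda$ is $L$-dominant, $V(n\lambda)\neq 0$ for every $n\geq 0$ by Kempf's vanishing theorem, and $R_\lambda$ is a finitely generated graded $k$-algebra (this is the section ring of a semi-ample line bundle on the flag variety $L/B_L$). In particular, $R_{\lambda,0}=k$ and $R_\lambda$ has Krull dimension at least $1$.

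The $L$-action on $R_\lambda$ restricts to an action of $S$ by graded $k$-algebra automorphisms, with invariant subring
\begin{equation*}
R_\lambda^S \;=\; \bigoplus_{n\geq 0} V(n\lambda)^S.
\end{equation*}
Because $S$ is a finite flat group scheme over $k$, the classical invariant theory for such actions (the analogue of Noether's theorem via the norm construction) guarantees that $R_\lambda$ is a finite module over $R_\lambda^S$. Hence $R_\lambda^S$ is itself a finitely generated graded $k$-algebra of the same Krull dimension as $R_\lambda$, so of Krull dimension $\geq 1$. Since $R_{\lambda,0}^S=k$, this forces $R_\lambda^S$ to contain nonzero elements of positive degree, i.e.\ $V(n\lambda)^S\neq 0$ for some $n\geq 1$. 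We conclude that $n\lambda\in C_\mu$ and therefore $\lambda\in\langle C_\mu\rangle$. The only nontrivial ingredient is the identification $\GZip^{\mu\textrm{-ord}}\simeq BS$; once this is in place, finite-group-scheme invariant theory and a Krull-dimension comparison do the rest, so no serious obstacle arises.
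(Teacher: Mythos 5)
Your proof is correct, but it takes a genuinely different route from the paper. The paper's argument is constructive: for any nonzero $f\in V(\lambda)$ it exhibits an explicit $S$-invariant $\tilde f(x)=\bigl(\prod_{s\in L_0(\FF_p)}f(xs)\bigr)^{p^m}\in V(D\lambda)^S$ with $D=p^m|L_0(\FF_p)|=|S|$, combining a norm over the \'etale part $L_0(\FF_p)$ with a $p^m$-th power to kill the infinitesimal part $S^\circ$; this produces a concrete witness that $D\lambda\in C_\mu$. You instead observe that $\GZip^{\mu\textrm{-ord}}\simeq[E\backslash(E/S)]\simeq BS$, so that $H^0(\GZip^{\mu\textrm{-ord}},\Vscr(\lambda))\simeq V(\lambda)^S$ with $S$ acting through $S\subset L$, and then argue non-constructively: the section ring $R_\lambda=\bigoplus_n V(n\lambda)$ is a finitely generated graded $k$-domain of Krull dimension $\geq 1$, $R_\lambda$ is module-finite over $R_\lambda^S$ because $S$ is a finite flat group scheme, and hence $R_\lambda^S$ also has Krull dimension $\geq 1$, forcing invariants in some positive degree. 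Both arguments are sound. What your approach buys is conceptual cleanliness (once $\GZip^{\mu\textrm{-ord}}\simeq BS$ is in place, only soft commutative algebra remains); what it loses is the explicit multiplier $D$ and the explicit section $\tilde f$, which the paper reuses substantially in \S3 when analyzing valuations of $\tilde f_\lambda$. One small caution: the finiteness of $R_\lambda$ over $R_\lambda^S$ for a possibly non-smooth finite group scheme, which you cite as ``classical invariant theory,'' is true (e.g. by the quotient theorem for finite group schemes, or by the two-step reduction $R^S\subset R^{S^\circ}\subset R$ using the $L_0(\FF_p)$-norm and the $p^m$-th power), but it is exactly the content that the paper chose to make explicit rather than invoke; a reader would benefit from at least naming which result you are appealing to.
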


To prove this lemma, we introduce auxiliary functions. Denote by $p^m$ the order of the unipotent group scheme $S^\circ$, the connected part of the finite group scheme $S=\Stab_E(1)$. Let $\lambda\in X^*(T)$ be any $L$-dominant and let $V(\lambda)=H^0(B_L\backslash L, \Lscr(\lambda))$ be the attached $L$-representation \eqref{repV}. Let $f\in V(\lambda)$ be a nonzero element. For $x\in L$, set:
\begin{equation} \label{ftildadef}
\tilde{f}(x):=\left(\prod_{s\in L_0(\FF_p)} f(xs)\right)^{p^m}.
\end{equation}

\begin{lemma}\label{steinbergsection}\ Let $D:=p^m|L_0(\FF_p)|$. The function $\tilde{f}$ satisfies the relation $\tilde{f}(bxs)=\lambda(b)^{-D}\tilde{f}(x)$, for all $x\in L$, $b\in B_L$ and $s\in S$.
\end{lemma}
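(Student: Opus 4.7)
The plan is to split the identity $\tilde f(bxs) = \lambda(b)^{-D}\tilde f(x)$ into two independent steps: covariance under left multiplication by $B_L$ and invariance under right multiplication by $S$. Combining them via $\tilde f(bxs) = \lambda(b)^{-D}\tilde f(xs) = \lambda(b)^{-D}\tilde f(x)$ then finishes the argument.

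The covariance under $B_L$ follows immediately by applying the defining transformation law of $f \in V(\lambda)$ to each factor $f(bxs)$ in \eqref{ftildadef}. Pulling out the common scalar from the product over the $|L_0(\FF_p)|$-element group, and then raising everything to the $p^m$-th power, yields $\tilde f(bx) = \lambda(b)^{-p^m|L_0(\FF_p)|}\tilde f(x) = \lambda(b)^{-D}\tilde f(x)$, the sign being dictated by the chosen realization of $V(\lambda)$ as a function space on $L$.

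For the $S$-invariance I would use the semidirect decomposition $S = L_0(\FF_p) \ltimes S^\circ$ from \cite{Koskivirta-Wedhorn-Hasse} and treat the two factors separately. The action of $\ell \in L_0(\FF_p)$ on the right is harmless: the substitution $s \mapsto \ell s$ permutes $L_0(\FF_p)$ bijectively, so reindexing preserves the product defining $\tilde f$. The action of the infinitesimal part $S^\circ$ is the delicate point. Since $S^\circ$ is a finite connected group scheme of order $p^m$, its coordinate ring $\Ocal_{S^\circ}$ is a local Artinian $k$-algebra of length $p^m$; a standard length count (each strict inclusion in $\Ocal_{S^\circ} \supset \mathfrak{m} \supset \mathfrak{m}^2 \supset \cdots$ contributes at least one to the length) shows $\mathfrak{m}^{p^m}=0$. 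Fixing $x\in L$ and viewing $t \mapsto g_x(t) := \prod_{s \in L_0(\FF_p)} f(xts)$ as an element of $\Ocal_{S^\circ}$, one has $g_x(1) = \prod_s f(xs)$ and $\tilde f(xt) = g_x(t)^{p^m}$. Writing $g_x = g_x(1) + \varepsilon$ with $\varepsilon \in \mathfrak{m}$, additivity of the Frobenius in characteristic $p$ gives
\[ g_x^{p^m} = g_x(1)^{p^m} + \varepsilon^{p^m} = g_x(1)^{p^m}, \]
because $\varepsilon^{p^m}\in\mathfrak{m}^{p^m}=0$. Evaluating at $t=s^\circ$ yields $\tilde f(xs^\circ) = \tilde f(x)$, as required.

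The main obstacle is the $S^\circ$-invariance, and it is what justifies the apparently ad hoc shape of the definition of $\tilde f$: the $|L_0(\FF_p)|$-fold product absorbs the discrete part of $S$, while the outer $p^m$-th power exists precisely to combine with additivity of Frobenius and the bound $\mathfrak{m}^{p^m}=0$ in order to kill the infinitesimal part. The exponent $D = p^m|L_0(\FF_p)|$ in the final scalar is the natural product of these two contributions.
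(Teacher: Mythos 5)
Your proof is correct and follows essentially the same approach as the paper's one-line proof, which decomposes $S=L_0(\FF_p)\ltimes S^\circ$ and attributes the $L_0(\FF_p)$-invariance to averaging over the product and the $S^\circ$-invariance to the outer $p^m$-th power. You have simply spelled out the details the paper leaves implicit: the $B_L$-covariance, the reindexing for the \'etale part, and the length count in the local Artinian ring $\Ocal_{S^\circ}$ together with additivity of Frobenius, which show that $\varepsilon^{p^m}\in\mathfrak{m}^{p^m}=0$ kills the infinitesimal action.
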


\begin{proof}
The invariance under $L_0(\FF_p)$ is clear by the averaging and the invariance under $S^\circ$ is accomplished by the $p^m$-power in formula \eqref{ftildadef}.
\end{proof}

Using Remark \ref{rem-muord}~\eqref{item-mu2}, the function $\tilde{f}$ identifies with a (nonzero) element $\tilde{f}\in H^0(\GF^{\mu\textrm{-ord}},\Lscr(D\lambda))=H^0(\GZip^{\mu\textrm{-ord}},\Vscr(D\lambda))$. In particular, this shows that $D\lambda \in C_\mu$ and implies $\lambda \in \langle C_\mu \rangle$. Since $\lambda$ was taken to be an arbitrary $L$-dominant character of $T$, this proves Lemma \ref{lemmuordsat} above. We have constructed a (non-linear) map
\begin{equation}\label{constrftilde}
V(\lambda) \to H^0(\GF^{\mu\textrm{-ord}},\Lscr(D\lambda)).
\end{equation}
Let us now give another interpretation of $\tilde{f}$ which shows that they are very natural elements to consider. As we noted, the choice of $1\in U_\mu$ induces a surjective map $E\to U_\mu$, $\epsilon \mapsto \epsilon \cdot 1$. This map induces therefore a ring extension $k[U_\mu]\subset k[E]$ as well as a field extension $k(G) \subset k(E)$. Furthermore, the map $E\to U_\mu$ identifies with the projection $E\to E/S$, where $S=\Stab_E(1)$ (possibly non-smooth finite group scheme). Denote by $p^m$ the order of $S^\circ$.

\begin{lemma}
The extension $k[U_\mu]\subset k[E]$ is integral. The extension $k(G) \subset k(E)$ is finite and has degree the order of the finite group scheme $S$. Its separable degree is $|L_0(\FF_p)|$ and its inseparable degree $p^m$. The extension is normal and is a Galois extension if and only if $P$ is defined over $\FF_p$.
\end{lemma}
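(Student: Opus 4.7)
The plan is to exploit the orbit–stabilizer isomorphism $U_\mu\simeq E/S$ already recalled in \eqref{stabone}, and to analyze the finite quotient $E\to E/S$ through the decomposition $S = L_0(\FF_p) \ltimes S^\circ$. All four assertions reduce to general facts about quotients by finite (possibly non-smooth) group schemes.

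First I would handle the integrality and the total degree together. The orbit map $E\to U_\mu$, $\epsilon\mapsto\epsilon\cdot 1$, is a finite faithfully flat morphism of rank equal to the order of the finite flat group scheme $S$. Since $U_\mu$ is open in $G$, one has $k(G)=k(U_\mu)$, and finiteness gives at once the integrality of $k[U_\mu]\hookrightarrow k[E]$ and, after passage to fraction fields, the formula $[k(E):k(G)] = |S| = |L_0(\FF_p)|\cdot p^{m}$.

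Next I would extract the separable/inseparable degrees and normality simultaneously by factoring the quotient through the infinitesimal normal subgroup $S^\circ$:
\begin{equation*}
E \longrightarrow E/S^\circ \longrightarrow E/S = U_\mu.
\end{equation*}
The left arrow is a quotient by the connected (infinitesimal) finite group scheme $S^\circ$ of order $p^m$, hence is finite flat and purely inseparable of degree $p^m$. The right arrow is a quotient by the free action of the étale constant group $S/S^\circ \simeq L_0(\FF_p)$, hence an étale Galois cover of degree $|L_0(\FF_p)|$. Passing to function fields gives the claimed separable degree $|L_0(\FF_p)|$ and inseparable degree $p^m$. Moreover, the maximal separable subextension $k(E/S^\circ)/k(G)$ is Galois with group $L_0(\FF_p)$, and the residual extension $k(E)/k(E/S^\circ)$ is purely inseparable; therefore $k(E)/k(G)$ is normal.

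Finally, for the Galois criterion, normal+separable=Galois, so the extension is Galois iff $S^\circ=1$. The only non-formal ingredient, and in my view the main obstacle, is the equivalence $S^\circ = 1 \Longleftrightarrow P$ is defined over $\FF_p$. I would deduce this from the explicit description of $S$ given in \cite{Koskivirta-Wedhorn-Hasse}: in that description $S^\circ$ is visibly trivial precisely when $L_0=L$, i.e.\ when the Levi $L=L(\mu)$ is defined over $\FF_p$, which in turn is equivalent to $P=P_-(\mu)$ being defined over $\FF_p$. Everything outside this cited input is formal manipulation of finite-flat quotients.
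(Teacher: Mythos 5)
Your factorization $E \to E/S^\circ \to E/S$ is the natural way to flesh out the paper's terse ``simple field theory'' remark, and the degree computations and the Galois criterion $S^\circ = 1 \Leftrightarrow L=L_0 \Leftrightarrow P$ defined over $\FF_p$ are all handled correctly.

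The one genuine gap is the last step of your normality argument. The implication ``the maximal separable subextension is Galois and the top is purely inseparable, therefore the whole extension is normal'' is false in general. For instance, with $p$ odd let $K=\FF_p(s,t)$, $M=K(\alpha)$ with $\alpha^2=s$, and $L=M(u)$ with $u^p=t+\alpha$; then $M/K$ is Galois, $L/M$ is purely inseparable, and $M$ is the separable closure of $K$ in $L$, yet the $K$-embedding with $\alpha\mapsto-\alpha$ sends $u$ to $(u^p-2\alpha)^{1/p}$, which does not lie in $L=\FF_p(\alpha,u)$, so $L/K$ is not normal. What actually makes the present extension normal is an additional, non-formal fact that you did not record: the Galois action of $L_0(\FF_p)\simeq S/S^\circ$ on $E/S^\circ$ lifts to an action on $E$. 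Indeed, by the cited decomposition $S=L_0(\FF_p)\ltimes S^\circ$, the group $L_0(\FF_p)$ is a genuine subgroup of $S\subset E$; it acts on $E$ by right translation, normalizes $S^\circ$, and the induced action on $E/S^\circ$ is precisely the Galois action of the covering $E/S^\circ\to E/S$. Once one has such a lift, every $k(G)$-embedding of $k(E)$ into an algebraic closure restricts to some Galois element $\tau$ on $k(E/S^\circ)$, and---by uniqueness of $p$-power roots in characteristic $p$---must then coincide on all of $k(E)$ with the automorphism extending $\tau$ that the lifted $L_0(\FF_p)$-action provides; hence its image is $k(E)$. You should replace the ``therefore'' with this equivariance argument.
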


\begin{proof}
The statements about finiteness and the degree of the extension are simple field theory. The extension is Galois if and only if the stabilizer $S$ is smooth, which is equivalent to $L=L_0$, so $P$ defined over $\FF_p$.

%Then a regular function $f:E\to \AA^1$ induces a function $G_\mu \to \AA^1$ if and only if it is invariant by $S$. The group $E(k)$ acts naturally on $k[E]$ by multiplication on the right, and recall that $L$ is embedded into $E$ by $x\mapsto (x,\varphi(x))$. It is therefore clear that the extension $k(E)^{L_0(\FF_p)}\subset k(E)$ is Galois with Galois group $L_0(\FF_p)$. It suffices to show that the extension $k(G_\mu)\subset k(E)^{L_0(\FF_p)}$ is totally inseparable of degree $p^m$. Since $S^\circ$ is unipotent of order $p^m$, it is clear that for any element $f\in k(E)^{L_0(\FF_p)}$, one has $f^{p^m}\in k(G_\mu)$.
\end{proof}

Let $f\in H^0(B_L\backslash L, \Lscr(\lambda))$ be nonzero. View $f$ as a regular function on $E$ via the natural maps $E\to P \to L$. The auxiliary function $\tilde{f}\in H^0(\GZip^{\mu\textrm{-ord}},\Vscr(D\lambda))$ is then obtained by applying the norm map
\begin{equation}
N:k[E]\to k[U_\mu].
\end{equation}

\subsection{Hodge-type zip data}
In the rest of this section, we assume that $(G,\mu)$ is of Hodge-type. Recall that $(G,\mu)$ is of Hodge-type (\cite[\Def~1.3.1]{Goldring-Koskivirta-Strata-Hasse-v2}) if there is an embedding $\iota:(G,\mu)\to (GSp(V,\phi),\mu')$ where $(V,\phi)$ is a symplectic space and $\mu'$ is minuscule. This case is important in the applications to Shimura varieties and automorphic forms.

In this case, a special role is played by the Hodge character $\eta_\omega\in X^*(L)$. Strictly speaking, $\eta_\omega$ depends on the choice of the embedding $\iota$. However, we fix $\iota$  throughout so we drop it from the notation. By \cite[\Lem~5.1.2]{Goldring-Koskivirta-zip-flags}, the character $\eta_\omega$ satisfies $\langle \eta_\omega ,\alpha^\vee \rangle <0$ for all $\alpha \in \Delta \setminus I$. The line bundle $\omega=\Lscr(\eta_\omega)$ on $\GZip$ is called the Hodge line bundle. In the Hodge-type case, an important property is the existence of a $\mu$-ordinary Hasse invariant, i.e a global section $h_\mu\in H^0(\GZip,\omega^d)$ (some $d\geq 0$) such that the non-vanishing locus of $h_\mu$ is exactly $\GZip^{\mu\textrm{-ord}}$ (\cite{Koskivirta-Wedhorn-Hasse}). In particular, $\eta_\omega \in \langle C_{\zip} \rangle$. In \cite{Goldring-Koskivirta-Strata-Hasse-v2,Goldring-Koskivirta-zip-flags}, it is proved that a power of $\omega$ has actually Hasse invariants on all strata.

We endow the cone $X^{*}_{+,I}(T)_{\RR^+}$ with the subspace topology given by the inclusion $X^{*}_{+,I}(T)_{\RR^+}\subset X^*(T)_{\RR^+}$.

\begin{proposition}\label{propmuord}
The following assertions hold
\begin{assertionlist}
%\item \label{item-muordcone} One has $ \langle C_\mu \rangle =X^*_{+,I}(T)$.
\item \label{item-charzip} One has $X^*_{+,I}(T)=\ZZ\eta_\omega +  \langle C_{\rm zip} \rangle$.
\item \label{item-neighprop} The cone $ \langle C_{\zip} \rangle_{\RR^+}$ is a neighborhood of $\eta_\omega$ in $X^*_{+,I}(T)_{\RR^+}$.
\end{assertionlist}
\end{proposition}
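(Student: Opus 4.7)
My plan for part (1) is to establish the easy inclusion first and then exploit Lemma \ref{lemmuordsat} together with the Hasse invariant. For the easy direction, since $\eta_\omega \in X^*(L) \subset X^*_{+,I}(T)$ one has $\pm\eta_\omega \in X^*_{+,I}(T)$, and $\langle C_{\rm zip}\rangle \subset X^*_{+,I}(T)$ by \S\ref{coneter}; additive closure gives $\ZZ\eta_\omega + \langle C_{\rm zip}\rangle \subset X^*_{+,I}(T)$. For the reverse inclusion, fix $\lambda \in X^*_{+,I}(T)$. Lemma \ref{lemmuordsat} produces $N \geq 1$ and a nonzero $f \in H^0(\GZip^{\mu\textrm{-ord}}, \Vscr(N\lambda))$. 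Since $h_\mu \in H^0(\GZip, \Vscr(d\eta_\omega))$ vanishes exactly on the complement of $\GZip^{\mu\textrm{-ord}}$, the plan is to argue by a standard pole-cancellation argument (smoothness of $\GZip$, noetherianity, bounded pole order along codimension-one boundary strata) that for $k$ sufficiently large the rational section $h_\mu^k \cdot f$ extends to a global section of $\Vscr(N\lambda + kd\eta_\omega)$ on $\GZip$. Choosing $k$ to be a multiple of $N$, say $k = Nk'$, one gets $N(\lambda + k'd\eta_\omega) \in C_{\rm zip}$, and saturation yields $\lambda + k'd\eta_\omega \in \langle C_{\rm zip}\rangle$, so $\lambda \in \langle C_{\rm zip}\rangle + \ZZ\eta_\omega$.

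For part (2), the plan is to deduce the neighborhood statement from part (1) by a convex-geometric argument. Since $X^*_{+,I}(T)_{\RR^+}$ is rational polyhedral, I fix finitely many generators $v_1, \ldots, v_r \in X^*_{+,I}(T)$. Part (1) together with $\eta_\omega \in \langle C_{\rm zip}\rangle$ (a consequence of the existence of $h_\mu$) yields an integer $N \geq 0$ with $v_i + N\eta_\omega \in \langle C_{\rm zip}\rangle$ for every $i$. The key observation is that $\langle \eta_\omega, \alpha^\vee\rangle = 0$ for $\alpha \in I$, so for any $\lambda \in X^*_{+,I}(T)_{\RR^+}$ the difference $w := \lambda - \eta_\omega$ still lies in $X^*_{+,I}(T)_{\RR^+}$. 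Writing $w = \sum_i t_i v_i$ with $t_i \geq 0$, the identity
\begin{equation*}
\lambda \;=\; \eta_\omega + w \;=\; \Bigl(1 - N\textstyle\sum_i t_i\Bigr)\eta_\omega + \sum_i t_i(v_i + N\eta_\omega)
\end{equation*}
exhibits $\lambda$ as a non-negative $\RR$-combination of elements of $\langle C_{\rm zip}\rangle$ provided $\sum_i t_i \leq 1/N$. A standard lemma on finitely generated cones in finite-dimensional vector spaces provides $K > 0$ such that every $w \in X^*_{+,I}(T)_{\RR^+}$ admits a decomposition $w = \sum t_i v_i$ with $\sum t_i \leq K\|w\|$. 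Consequently the $1/(NK)$-ball around $\eta_\omega$, intersected with $X^*_{+,I}(T)_{\RR^+}$, is contained in $\langle C_{\rm zip}\rangle_{\RR^+}$, which is the required neighborhood.

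The main obstacle I anticipate is the pole-cancellation step in part (1): it requires either explicit control of the vanishing order of $h_\mu$ along each codimension-one non-$\mu$-ordinary stratum, or an abstract argument that bounds the pole orders of the meromorphic extension of $f$ across these strata. Given the explicit description of the zip stratification and of $h_\mu$ available in \cite{Koskivirta-Wedhorn-Hasse}, such control is expected to follow.
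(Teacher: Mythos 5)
Your proof takes the same route as the paper: for (1), obtain a section over the $\mu$-ordinary locus via Lemma~\ref{lemmuordsat} and kill its poles by multiplying by a high power of $\pi^*h_\mu$ --- the pole-cancellation you flag as the main obstacle is routine, since $G$ is smooth (hence normal) and the complement $G\setminus U_\mu$ has finitely many codimension-one components, along each of which $\pi^*h_\mu$ vanishes to positive order, so a sufficiently high power suffices. Your part (2) spells out the polyhedral-geometry step the paper dismisses as ``an immediate consequence,'' and the observation you isolate --- that $\lambda-\eta_\omega$ stays in $X^*_{+,I}(T)_{\RR^+}$ because $\langle\eta_\omega,\alpha^\vee\rangle=0$ for $\alpha\in I$ --- is exactly what makes that deduction work.
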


\begin{proof}
%Let $\lambda \in X^*_{+,I}(T)$ be a character. Since $\lambda$ is $L$-dominant, the space $H^0(B\backslash P, \Lscr(\lambda))$ is nonzero. Choose an arbitrary nonzero element $f$. By \Lem~\ref{steinbergsection}, the function $\tilde{f}$ identifies with a section $\tilde{f}\in H^0(\GZip^{\mu\textrm{-ord}},\Vscr(D\lambda))$. Hence $D\lambda \in C_\mu$, which terminates the proof of the first assertion.

We first show \eqref{item-charzip}. Let $\lambda\in X^*_{+,I}(T)$ be a character. By Lemma \ref{lemmuordsat} we may assume (after replacing $\lambda$ by a multiple) that there exists a nonzero section $f\in H^0(\GF^{\mu\textrm{-ord}},\Lscr(\lambda))$. Write $h'_\mu:=\pi^*(h_\mu)$ for the pullback of $h_\mu$ along the map $\pi:\GF \to \GZip$. The non-vanishing locus of $h'_\mu$ is exactly $\GF^{\mu\textrm{-ord}}$. Hence we can find $N\geq 1$ such that $f h'^{N}_{\mu}$ has no pole along the complement of $\GF^{\mu\textrm{-ord}}$, and hence extends to a global section of $\Vscr(\lambda) \otimes \omega^{dN}$. Therefore $\lambda+N\eta_\omega \in C_{\rm zip}$, which proves \eqref{item-charzip}. The second assertion is an immediate consequence.

\end{proof}

\subsection{A criterion}
By the previous proposition, $ \langle C_{\zip} \rangle_{\RR^+}$ is always a neighborhood of $\eta_\omega$ in $X^{*}_{+,I}(T)_{\RR^+}$. We give a criterion when the same holds for the subcone $ \langle C_{\Schub} \rangle_{\RR^+}\subset \langle C_{\zip} \rangle_{\RR^+}$. Note that in general, it is not even true that $\eta_\omega \in \langle C_{\Schub}\rangle$. However, we know it when $P$ is defined over $\FF_p$ by Lemma \ref{lemmacontainedFp}.

\begin{lemma}\label{lemzipSchub}
The following assertions are equivalent
\begin{assertionlist}
\item \label{item-neigh} The cone $ \langle C_{\Schub} \rangle_{\RR^+}$ is a neighborhood of $\eta_\omega$ in $X^*_{+,I}(T)_{\RR^+}$.
\item\label{item-zipSbt} One has $\eta_\omega\in  \langle C_{\Schub} \rangle$ and $ \langle C_{\Schub} \rangle  + \ZZ\eta_\omega = X^*_{+,I}(T)$.
\item \label{item-frob} $P$ is defined over $\FF_p$ and $\sigma$ acts on $I$ by $\alpha \mapsto -w_{0,L}\alpha$.
\item \label{item-GScone} One has the inclusion $C_{\GS}\subset \langle C_{\Schub} \rangle$.
\end{assertionlist}
\end{lemma}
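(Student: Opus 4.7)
The plan is to establish the four equivalences via the cycle (iii) $\Rightarrow$ (ii) $\Rightarrow$ (i) $\Rightarrow$ (iv) $\Rightarrow$ (iii). The central analytical input is the explicit formula for $h^{-1}$ when $P$ is defined over $\FF_p$: write $\tau := w_{0,L}\sigma$, which has finite order $s$ (since $\sigma$ commutes with $w_{0,L}$, the latter being the unique longest element of the $\sigma$-stable subgroup $W_L$). Then
\[
h^{-1}(\chi) = -\frac{1}{p^s-1}\sum_{k=0}^{s-1} p^k \tau^k \chi,
\]
and in particular $h^{-1}(\eta_\omega) = -\eta_\omega/(p-1)$ because $\eta_\omega$ is fixed by both $w_{0,L}$ and $\sigma$. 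Since $h$ is a $\QQ$-linear isomorphism, $\langle C_{\Schub}\rangle_{\RR^+}\cap X^*(T) = \langle C_{\Schub}\rangle$ and membership in $\langle C_{\Schub}\rangle$ is equivalent to $\langle h^{-1}(\chi),\alpha^\vee\rangle\geq 0$ for every $\alpha\in\Delta$.

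For (iii) $\Rightarrow$ (ii) and (iii) $\Rightarrow$ (iv), I carry out a root-by-root computation. Under (iii), $\sigma(\alpha) = -w_{0,L}\alpha$ for $\alpha\in I$ forces $\tau\alpha^\vee = -\alpha^\vee$, hence $\tau^k\alpha^\vee = (-1)^k\alpha^\vee$. Taking $s$ even, the geometric sum collapses to $\langle h^{-1}(\chi),\alpha^\vee\rangle = \langle\chi,\alpha^\vee\rangle/(p+1)$, which is nonnegative as soon as $\chi$ is $I$-dominant (in particular for $\chi\in C_{\GS}$). For $\alpha\in\Delta\setminus I$, the roots $\tau^k\alpha$ remain in $\Phi_+\setminus\Phi_L^+$ because $W_L$ preserves this subset and $\sigma$ preserves positivity; so $\langle\chi,\tau^k\alpha^\vee\rangle\leq 0$ for $\chi\in C_{\GS}$, giving $\langle h^{-1}(\chi),\alpha^\vee\rangle\geq 0$ directly and hence (iii) $\Rightarrow$ (iv). For (iii) $\Rightarrow$ (ii), I combine $\eta_\omega\in\langle C_{\Schub}\rangle$ (from Lemma \ref{lemmacontainedFp} applied to $\eta_\omega \in X^*(L)\cap X^*_-(T)$) with the observation that replacing $\chi$ by $\chi + N\eta_\omega$ adds $-N\langle\eta_\omega,\alpha^\vee\rangle/(p-1) > 0$ to $\langle h^{-1}(\chi),\alpha^\vee\rangle$ for $\alpha\in\Delta\setminus I$, dominating any fixed value for $N$ large, while the translation has no effect for $\alpha\in I$ since $\langle\eta_\omega,\alpha^\vee\rangle = 0$. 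Hence any $\chi\in X^*_{+,I}(T)$ satisfies $\chi + N\eta_\omega\in\langle C_{\Schub}\rangle$ for $N\gg 0$.

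The equivalence (ii) $\Leftrightarrow$ (i) is purely formal. For (ii) $\Rightarrow$ (i), given $\chi\in X^*_{+,I}(T)$ and writing $\chi = \chi' + N\eta_\omega$ with $\chi'\in\langle C_{\Schub}\rangle$, the element $\eta_\omega + \varepsilon\chi = (1+\varepsilon N)\eta_\omega + \varepsilon\chi'$ lies in $\langle C_{\Schub}\rangle_{\RR^+}$ for $\varepsilon>0$ small, which yields the neighborhood property as $\chi$ varies in a bounded subset. For (i) $\Rightarrow$ (ii), the point $\eta_\omega + \chi/N$ lies in $X^*_{+,I}(T)_{\RR^+}$ and, for $N$ large, in the neighborhood of $\eta_\omega$; scaling by $N$ gives $N\eta_\omega + \chi\in\langle C_{\Schub}\rangle_{\RR^+} \cap X^*(T) = \langle C_{\Schub}\rangle$, so $\chi\in\langle C_{\Schub}\rangle + \ZZ\eta_\omega$. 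Applying the argument first to $\chi = \eta_\omega$ yields $\eta_\omega\in\langle C_{\Schub}\rangle$.

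The hardest implication is (iv) $\Rightarrow$ (iii), which I handle by contrapositive. Starting from $\eta_\omega\in C_{\GS}\subset\langle C_{\Schub}\rangle$, the explicit shape of $h^{-1}$ in the general case (where the inverse is built from $zw_0$ twisted by $\sigma$, and not just $w_{0,L}\sigma$) shows that a non-trivial Galois permutation of $I$ would produce simple coroots $\alpha^\vee$ on which $\langle h^{-1}(\eta_\omega),\alpha^\vee\rangle < 0$, forcing $P$ to be defined over $\FF_p$. Then, assuming $P$ is over $\FF_p$ but $\sigma$ does not act on $I$ as $-w_{0,L}$, I would exhibit a specific $\chi\in C_{\GS}$---built from fundamental $L$-weights dual to a simple coroot $\alpha^\vee$ with $\tau\alpha^\vee\notin\{\pm\alpha^\vee\}$---and verify that the alternating sum $\sum p^k\langle\chi,\tau^k\alpha^\vee\rangle$ has the wrong sign, because the telescoping that rescued the computation in (iii) fails. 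The main technical obstacle is precisely this explicit construction and sign bookkeeping on the $\tau$-orbit of $\alpha^\vee$ within $\Phi_L$; everything else in the lemma reduces to the telescoping identity and the formula for $h^{-1}$.
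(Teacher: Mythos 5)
Your announced cycle is $(iii)\Rightarrow(ii)\Rightarrow(i)\Rightarrow(iv)\Rightarrow(iii)$, but the implications you actually establish are $(iii)\Rightarrow(ii)$, $(ii)\Leftrightarrow(i)$, $(iii)\Rightarrow(iv)$, and $(iv)\Rightarrow(iii)$. The link $(i)\Rightarrow(iv)$ that closes your cycle is never proved — you prove $(iii)\Rightarrow(iv)$ instead, which does not substitute for it. What you have is the two equivalence blocks $\{(i),(ii)\}$ and $\{(iii),(iv)\}$, plus a one-way arrow $(iii)\Rightarrow(ii)$ from the second block to the first; nothing takes you from $(i)$ or $(ii)$ back to $(iii)$ or $(iv)$. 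The implication $(i)\Rightarrow(iv)$ is also not trivial: knowing that $\langle C_{\Schub}\rangle_{\RR^+}$ is a neighborhood of $\eta_\omega$ inside $X^*_{+,I}(T)_{\RR^+}$ does not by itself force it to contain all of $C_{\GS}$, since a cone containing a neighborhood of one of its interior points need not swallow an entire prescribed subcone through that point.

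The missing step is precisely the hardest part of the lemma, and it is where the paper spends all its effort. The paper proves $(ii)\Rightarrow(iii)$ directly: starting from $\eta_\omega\in\langle C_{\Schub}\rangle$ and $\langle C_{\Schub}\rangle+\ZZ\eta_\omega=X^*_{+,I}(T)$, it shows that any $\chi\in X^*_+(T)$ with $h(\chi)\in X^*(L)$ must be orthogonal to $I\cup\sigma(I)$, then uses a dimension count on the subcone $K$ of such $\chi$ and the injectivity of $h$ to conclude $|I\cup\sigma(I)|=|I|$, hence $\sigma(I)=I$ and $P$ is defined over $\FF_p$; a second pass with test characters $\lambda_\alpha$ then pins down the Galois action as $\sigma(\alpha)=-w_{0,L}\alpha$. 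You would need to supply this argument, or an argument for $(i)\Rightarrow(iii)$ or $(ii)\Rightarrow(iv)$, to complete the proof; your sketch of $(iv)\Rightarrow(iii)$ (even if filled in — at present it is a plan, not a proof, since you defer the actual construction of the test character $\chi$ and the sign bookkeeping on the $\tau$-orbit) does not repair the cycle.

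The parts you do carry out look sound. The formula $h^{-1}(\chi)=-\tfrac{1}{p^s-1}\sum_{k=0}^{s-1}p^k\tau^k\chi$ when $P$ is defined over $\FF_p$, the collapse $\tau\alpha^\vee=-\alpha^\vee$ for $\alpha\in I$ under $(iii)$ and the resulting $\langle h^{-1}(\chi),\alpha^\vee\rangle=\langle\chi,\alpha^\vee\rangle/(p+1)$, and the observation that $W_L$ and $\sigma$ preserve $\Phi_+\setminus\Phi_L^+$ — these give a clean, slightly different derivation of $(iii)\Rightarrow(iv)$ compared with the paper's direct telescoping computation. The $(ii)\Leftrightarrow(i)$ argument is fine. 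But as it stands, the four statements are not shown equivalent.
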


\begin{proof}
As in \Prop~\ref{propmuord}, it is clear that \eqref{item-neigh} and \eqref{item-zipSbt} are equivalent. Assume that \eqref{item-zipSbt} holds. In particular, $X^*(L)\subset X^*_{+,I}(T)$, so for all $\lambda \in X^*(L)$, we can write $\lambda = \gamma + m\eta_\omega$ where $\gamma \in \langle C_{\Schub} \rangle$ and $m\in \ZZ$. Hence there is $d\geq 1$ such that $d\lambda =h(\chi) +dm\eta_\omega$, with $\chi \in X^*_+(T)$. In particular, $h(\chi)\in X^*(L)$, so for all $\alpha \in I$, we have 
\begin{equation}
0=\langle h(\chi),\alpha^\vee \rangle =\langle \chi, \alpha^\vee \rangle-\langle \chi,\sigma(w_{0,L}\alpha^\vee) \rangle.
\end{equation}
Since $\alpha\in \Delta_L$, the root $\sigma(w_{0,L}\alpha^\vee)$ is negative, hence we deduce $\langle \chi, \alpha^\vee \rangle=\langle \chi,\sigma(w_{0,L}\alpha^\vee) \rangle=0$. As $\alpha$ varies in $I$, the root $-\sigma(w_{0,L}\alpha^\vee)$ spans $\sigma(I)$. Hence $\chi$ is orthogonal to $I\cup \sigma(I)$. Denote by $K\subset X^*_+(T)$ the subcone orthogonal to $I\cup \sigma(I)$. It generates a $\QQ$-vector space of dimension $\rk(X^*(T))-|I\cup \sigma(I)|$.

Consider the map $K \to X^*(L)$, $\chi\mapsto h(\chi)$. Since $\eta_\omega\in \langle C_{\Schub}\rangle$, we may write $r\eta_\omega=h(\chi_\omega)$ for $r\geq 1$ and $\lambda_\omega\in K$ (by the above discussion). Hence for any $\lambda \in X^*(L)$, we have $d\lambda = h(\chi +dmr\chi_\omega)$. It follows that $h:\Span_\QQ(K)\to X^*(L)_\QQ$ is an isomorphism. In particular, one has $|I\cup \sigma(I)| = |I|$, thus $\sigma(I)=I$ and so $P$ is defined over $\FF_p$.

For $\alpha \in I$, let $\lambda_\alpha\in X^*(T)$ be a character such that $\langle \lambda_\alpha,\beta^\vee\rangle =0$ for all $\beta \in \Delta\setminus \{\alpha\}$ and $\langle \lambda_\alpha,\alpha^\vee\rangle >0$. Since $\lambda \in X_{+,I}^*(T)$, we can find $d\geq 1$, $m\geq 1$ and $\chi_\alpha \in X^*_+(T)$ such that $d\lambda_\alpha =h(\chi_\alpha) +dm\eta_\omega$. As above, we deduce $\langle \chi_\alpha, \beta^\vee \rangle=\langle \chi_\alpha,\sigma(w_{0,L}\beta^\vee) \rangle=0$ for all $\beta \in \Delta \setminus \{\alpha\}$. Furthermore, we have $\langle \chi_\alpha, \alpha^\vee \rangle-p\langle \chi_\alpha,\sigma(w_{0,L}\alpha^\vee) \rangle>0$. It follows that $\langle \chi_\alpha, \alpha^\vee \rangle >0$. The map $\beta \mapsto -\sigma(w_{0,L}\beta)$ is a bijection $I\to \sigma(I)=I$, so we must have $-\sigma(w_{0,L}\alpha)=\alpha$, hence $\sigma(\alpha)=-w_{0,L}\alpha$ for all $\alpha \in I$, which shows that \eqref{item-frob} holds. 

Conversely, assume that $P$ is defined over $\FF_p$ and that the Galois action on $I$ is given in this way. Let $\lambda \in C_{\GS}$. As in the proof of \Lem~\ref{lemmacontainedFp}, we have $h(\chi)=\lambda$ where $\chi \in X^*(T)_\QQ$ is defined by $(p^{2r}-1)\chi =-\sum_{i=0}^{2r-1} p^i (w_{0,L})^i\sigma^i\lambda$ and where $r\geq 1$ is an integer such that $\sigma^r\lambda =\lambda$. We need to show that $(p^{2r}-1)\chi\in X^*_+(T)$. For $\alpha \in \Delta$, the sign of $\langle \chi, \alpha^\vee\rangle$ is the same as the sign of $
-\sum_{i=0}^{2r-1} p^i \langle(w_{0,L})^i\sigma^i\lambda,\alpha^\vee \rangle=-\sum_{i=0}^{2r-1} p^i \langle \lambda,\sigma^i (w_{0,L})^i\alpha^\vee \rangle$. First, assume that $\alpha \in \Delta \setminus I$. Then for all $0\leq i \leq 2r-1$, one has $\sigma^i (w_{0,L})^i\alpha\in \Phi_+\setminus I$. By assumption, we have $\langle \lambda,\sigma^i (w_{0,L})^i\alpha^\vee \rangle \leq 0$, and hence $\langle \chi, \alpha^\vee\rangle \geq 0$. Next, if $\alpha \in I$, we have
\begin{equation}
-\sum_{i=0}^{2r-1} p^i \langle \lambda,\sigma^i (w_{0,L})^i\alpha^\vee \rangle=\left(\sum_{i=0}^{2r-1} p^i (-1)^{i+1}\right) \langle \lambda,\alpha^\vee \rangle \geq 0.
\end{equation}
This shows $\chi\in X^*_+(T)_\QQ$ as desired.

Finally, if \eqref{item-GScone} holds, then $\langle C_{\Schub} \rangle_{\RR^+}$ contains $C_{\GS,\RR^{+}}$, which is a neighborhood of $\eta_\omega$ in $X^*_{+,I}(T)_{\RR^+}$, so \eqref{item-neigh} holds.

%By \Lem~\ref{lemmacontainedFp}, we already know that $\eta_\omega\in \langle C_{\Schub}\rangle$. We claim that for all $\alpha \in I$, the character $\lambda_\alpha$ (defined above) lies in $\langle C_{\Schub} \rangle$. Using the previous discussion, we find that
%\begin{equation}
%\langle (p+1)\chi_\alpha-h(\chi_\alpha), \beta^\vee \rangle =0
%\end{equation}
%for all $\beta\in I$. In particular, $(p+1)\chi_\alpha-h(\chi_\alpha)\in X^*(L)$. Thus it suffices to show $X^*(L)\subset \langle C_{\Schub} \rangle + \ZZ\eta_\omega$. By \Lem~\ref{lemmacontainedFp}, one has $\eta_\omega\in \langle C_{\Schub} \rangle$, so we can write $d\eta_\omega=h(\chi_\omega)$ for some $\chi_\omega\in X^*(L)\cap X^*_+(T)$. Furthermore, we have $\langle \chi_\omega ,\alpha^\vee \rangle >0$ for all $\alpha \in \Delta \setminus I$, by the proof of \Lem~\ref{lemmacontainedFp}. For $\lambda\in X^*(L)$, we can write $d'\lambda=h(\chi)$ for some $\chi \in X^*(L)$ and $d'\geq 1$. Hence $ d'\lambda +md\eta_\omega =h(\chi+m\lambda_\omega)$. For large $m$, one has $\chi+m\lambda_\omega\in X^*_+(T)$, hence  $d'\lambda +md\eta_\omega \in C_{\Schub}$ for $m>>0$. Taking $m=d'm'$, we have $d'(\lambda +m'd\eta_\omega) \in C_{\Schub}$, hence $\lambda +m'd\eta_\omega \in \langle C_{\Schub}\rangle$. This shows that \eqref{item-zipSbt} holds.
\end{proof}

\begin{rmk}
Let $(V,q)$ be a quadratic space over $\FF_p$ of dimension $n=2r$, $r\geq 1$. Recall that there are two isomorphism classes of special orthogonal groups $SO(V,q)$, one of which is $\FF_p$-split and the other one is not. Assume that $G=SO(V,q)$ is non-split over $\FF_p$. In the case when $r$ is odd, the action of the Frobenius on $\Delta$ is given by $\alpha \mapsto -w_0\alpha$. Now, choose a quadratic space over $\FF_p$ of dimension $n=4d$, $d\geq 1$ such that $G=SO(V,q)$ is non-split over $\FF_p$. The group $G$ admits a Levi subgroup $L\subset G$ isomorphic to $SO(W,q)$, where $W\subset V$ is a subspace of dimension $n-2=4d-2$. By the previous discussion, the Levi $L$ satisfies Condition \eqref{item-frob} of \Lem~\ref{lemzipSchub}. This example arises in the theory of Shimura varieties of orthogonal Hodge-type.
\end{rmk}

\section{Highest weight cone}\label{sec-hwc}

In this section, we continue to assume that $(G,\mu)$ is a cocharacter datum of Hodge-type. We define another (saturated) cone $C_{\rm hw}\subset \langle C_{\zip} \rangle$. The sections that we construct are naturally attached to the highest weight vectors in the representations $V(\lambda)$ for $\lambda \in X_{+,I}^*(T)$. Therefore, it will be called the "highest weight cone".

\subsection{Valuations}

For each nonzero element $f\in H^0(B_L\backslash L, \Lscr(\lambda))$, we have defined in equation \eqref{ftildadef} a section $\tilde{f}$ in $H^0(\GF^{\mu\textrm{-ord}}, \Lscr(D\lambda))$ where $D=p^m|L_0(\FF_p)|$ and $p^m$ is the order of $S^\circ$ and $S=L_0(\FF_p) \ltimes S^\circ$. Previously (see proof of Lemma \ref{lemmuordsat}), we multiplied $\tilde{f}$ with a power of the Hasse invariant $h'_\mu=\pi^*(h_\mu)$ to remove the possible poles of $\tilde{f}$. This did not take into account that the divisor of $\tilde{f}$ may have different multiplicities along each irreducible component of the complement of $\GF^{\mu\textrm{-ord}}$, therefore it gave only a coarse result (\Prop~\ref{propmuord}).

In this section however, we want to multiply $\tilde{f}$ with "partial Hasse invariants", to remove the poles "one by one". To obtain the sharpest result, we want to multiply $\tilde{f}$ with the smallest possible power of these partial Hasse invariants. Hence we will study the valuation of $\tilde{f}$ along the codimension one strata. The outcome will be a global section on $\GF$ of a line bundle which does not vanish along any of the components of the complement of $\GF^{\mu\textrm{-ord}}$. We will see that when $f$ is the highest weight of the $L$-representation $H^0(B_L\backslash L, \Lscr(\lambda))$, we obtain an interesting family of zip-automorphic forms.

We need to assume that $P$ is defined over $\FF_p$. In particular, $L_0=L$ and $S$ is an \'{e}tale group scheme isomorphic to the constant group scheme $L(\FF_p)$. We also have $P\cap Q=L$. Recall that we fixed a frame $(B,T,z)$ (\S\ref{review}) where we take $z=w_0 w_{0,J}=w_{0,I}w_0$. The complement of $\GF^{\mu\textrm{-ord}}$ in $\GF$ is equi-dimensional of codimension one and is a union of closures of flag strata.
Specifically, the irreducible components are exactly the flag strata closures $\overline{\Xcal}_w$ with $w=s_\alpha w_0$ for $\alpha \in \Delta \setminus I$. Recall that $U_\mu \subset G$ denotes the unique open $E$-orbit and $\GF^{\mu\textrm{-ord}}=[E'\backslash U_\mu]$. Denote by $B^-$ the unique Borel such that $B\cap B^-=T$.

\begin{lemma}\label{complemlem}
We have the following properties
\begin{enumerate}
\item \label{item-Fp1} $U_\mu= E\cdot 1 = PQ$ (the set of elements of the form $xy$, $x\in P$, $y\in Q$).
\item \label{item-Fp2} One has $BB^-\subset U_\mu$.
\item \label{item-Fp3} For $\alpha \in \Delta$, set $ S_\alpha=Bs_\alpha w_0 B z^{-1}=Bs_\alpha w_{0,I}{}^z B$. Then one has
\begin{equation*}
U_\mu = G \setminus \bigcup_{\alpha \in \Delta \setminus I } \overline{S}_\alpha
\end{equation*}
\item \label{item-Fp4} Moreover, $\overline{S}_\alpha=\overline{E\cdot s_\alpha}=\overline{Ps_\alpha Q}$.
\end{enumerate} 
\end{lemma}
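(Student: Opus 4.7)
The plan is to handle the four assertions in order, using Lang's theorem together with the flag-space stratification recalled in \S\ref{sec-zipflag}.

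For \eqref{item-Fp1}, since $P$ is defined over $\FF_p$ one has $M=\varphi(L)=L$, so $P$ and $Q$ are opposite parabolics sharing the Levi $L$. The fact $1\in U_\mu$ recalled in \S\ref{sec-ordsec} gives $U_\mu=E\cdot 1=\{ab^{-1}:(a,b)\in E\}$, visibly contained in $PQ$. Conversely, given $g=pq\in PQ$, I parametrize candidate pairs as $(a,b)=(pk,k^{-1}q^{-1})$ with $k\in L$; then $a\in P$, $b\in Q$ and $ab^{-1}=g$ automatically, while the Levi condition $\varphi(\bar a)=\bar b$ reduces to a Lang equation of the form $k^{-1}\varphi(k)=c$ on the connected reductive group $L$, which is solvable by Lang--Steinberg.

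Part \eqref{item-Fp2} follows immediately: $B\subset P$ by the frame condition, and since $P,Q$ are opposite parabolics with common Levi $L$ and $B\subset P$, the root description of $Q$ shows that $Q$ contains $U_\alpha$ for every $\alpha\in\Phi^+$, hence $B^-\subset Q$ and $BB^-\subset PQ=U_\mu$. Part \eqref{item-Fp3} is obtained by pulling back along the quotient $G\to[E'\backslash G]=\GF$: the paragraph preceding the lemma records that the irreducible components of $\GF\setminus\GF^{\mu\textrm{-ord}}$ are exactly the $\overline{\Xcal}_{s_\alpha w_0}$ for $\alpha\in\Delta\setminus I$, and the preimage of $\Xcal_{s_\alpha w_0}=[E'\backslash S_\alpha]$ is $S_\alpha$.

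For \eqref{item-Fp4}, the subvariety $S_\alpha$ is irreducible of dimension $\ell(s_\alpha w_0)+\dim B=\dim G-1$, so its generic point lies in a unique codimension-one $E$-orbit $Z_\alpha$ and $\overline{S}_\alpha=\overline{Z}_\alpha$. The same Lang-theorem trick as in \eqref{item-Fp1}, applied to the re-parametrization $(p,q)\mapsto(ph,s_\alpha^{-1}h^{-1}s_\alpha q)$ with $h\in P\cap s_\alpha Qs_\alpha^{-1}$, together with a direct dimension count giving $\dim\Stab_E(s_\alpha)=1$, yields $E\cdot s_\alpha=Ps_\alpha Q$, of codimension one in $G$. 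Finally, the Bruhat-order inequality $s_\alpha z=s_\alpha w_{0,I}w_0\leq s_\alpha w_0$ (valid for $\alpha\in\Delta\setminus I$) places $s_\alpha\in\overline{S}_\alpha=\overline{Z}_\alpha$, so $E\cdot s_\alpha\subset\overline{Z}_\alpha$ is an orbit of codimension one, forcing $E\cdot s_\alpha=Z_\alpha$ and hence $\overline{E\cdot s_\alpha}=\overline{S}_\alpha$. The main obstacle is this final identification: the Lang argument alone does not rule out that $E\cdot s_\alpha$ lies strictly on the boundary of $\overline{S}_\alpha$, so the Bruhat-order check and the stabilizer dimension count are the decisive combinatorial inputs.
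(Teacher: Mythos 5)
Your approach diverges genuinely from the paper's on all four points, so let me address the two places where it breaks down.

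On part~\eqref{item-Fp3}, your argument is circular. The sentence you cite from the paragraph preceding the lemma — that the irreducible components of $\GF\setminus\GF^{\mu\textrm{-ord}}$ are precisely the $\overline{\Xcal}_{s_\alpha w_0}$ with $\alpha\in\Delta\setminus I$ — is the \emph{statement} of what the lemma is meant to establish, not an independently available fact; pulling it back along $G\to\GF$ just re-expresses part~\eqref{item-Fp3} itself. The paper's proof supplies an independent argument: since $P\times Q$ is connected and $U_\mu$ is $P\times Q$-stable, every irreducible component of $G\setminus U_\mu$ is $P\times Q$-stable, hence $B\times{}^zB$-stable; the codimension-one $B\times{}^zB$-orbits are exactly the $S_\alpha$, and one checks $S_\alpha\subset PQ\Leftrightarrow\alpha\in I$.

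On part~\eqref{item-Fp4}, the claim $E\cdot s_\alpha = Ps_\alpha Q$ is false, and the conclusion must be weakened to an equality of closures, which is all the statement asserts. The $E$-orbits are parametrized by ${}^IW$, while the $P\times Q$-orbits are parametrized by $W_I\backslash W/W_I$, and the former set is strictly larger whenever $W_I\neq 1$. Concretely for $G=Sp_4$ with $I=\{\alpha_1\}$: the codimension-one $P\times Q$-orbit $Ps_\beta Q$ (with $\beta=2e_2$) is the double coset $W_Is_\beta W_I=\{s_2,s_1s_2,s_2s_1,s_1s_2s_1\}$, which contains the two $E$-orbits $G_{s_2}$ and $G_{s_2s_1}$, and $E\cdot s_\beta = G_{s_2s_1}$ is a \emph{proper} open dense subset of $Ps_\beta Q$. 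So a Lang-type surjectivity argument of the kind you sketch cannot succeed, since the stabilizer you would need to solve through is not connected in the relevant way. The paper instead identifies $E\cdot s_\alpha = E\cdot(w_{0,I}s_\alpha w_{0,I}) = G_{w_{0,I}s_\alpha w_0}$ via the equivariance under $(w_{0,I},w_{0,I})\in E$, observes that $w_{0,I}s_\alpha w_0$ has colength $1$ in ${}^IW$ because $x\mapsto w_{0,I}xw_0$ is an order-reversing involution of ${}^IW$, deduces $E\cdot s_\alpha$ has codimension $1$, and only then concludes $\overline{E\cdot s_\alpha}=\overline{Ps_\alpha Q}$ by comparing irreducible closed subsets of the same dimension. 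You should drop the orbit equality and argue only at the level of closures; the Bruhat-order inequality $s_\alpha z\leq s_\alpha w_0$ you verify correctly is then unnecessary, since $s_\alpha\in Ps_\alpha Q\subset\overline S_\alpha$ is immediate once $\overline S_\alpha$ is shown to be $P\times Q$-stable (which again uses the connectedness argument from part~\eqref{item-Fp3}).

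Two smaller points. In part~\eqref{item-Fp1}, the parametrization $(a,b)=(pk,k^{-1}q^{-1})$ gives $ab^{-1}=pkqk\neq pq$; you want something like $(a,b)=(pk,\,q^{-1}k)$, after which the constraint $\varphi(\bar a)=\bar b$ becomes a Lang equation on $L$ and the idea goes through. (The paper simply cites \cite[\Cor~2.12]{Wedhorn-bruhat} for this.) In part~\eqref{item-Fp2}, the root-theoretic argument for $B^-\subset Q$ is correct and is an acceptable alternative to the paper's use of the frame condition ${}^zB\subset Q$ together with $w_{0,I}\in L$.
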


\begin{proof}
The first statement is \cite[\Cor~2.12]{Wedhorn-bruhat}. For \eqref{item-Fp2}, recall that ${}^z B\subset Q$ and $z=w_{0,I}w_0$. Hence $w_{0,I}B^-w_{0,I}\subset Q$. Since $w_{0,I}\in L=P\cap Q$ (as $P$ is defined over $\FF_p$), we obtain $B^-\subset Q$, so $BB^-\subset PQ=U_\mu$. To show \eqref{item-Fp3}, note that since $P\times Q$ is connected, every irreducible component of $G\setminus U_\mu$ is also stable by $P\times Q$. The $B\times {}^zB$-orbits of codimension one are exactly the $S_\alpha$, and it is easy to show that $S_\alpha \subset PQ \Longleftrightarrow \alpha \in I$. Finally, for \eqref{item-Fp4}, we already know that $\overline{S}_\alpha$ is stable by $P\times Q$. Since $S_\alpha=Bs_\alpha w_{0,I}{}^z B$ and $B\times {}^zB\subset P\times Q$, we must have $\overline{S}_\alpha=\overline{Ps_\alpha w_{0,I} Q}=\overline{Ps_\alpha Q}$. Clearly $\overline{E\cdot s_\alpha}\subset \overline{Ps_\alpha Q}$, hence it suffices to show that $E\cdot s_\alpha$ has codimension one in $G$. First of all $E\cdot s_\alpha = E\cdot (w_{0,I} s_\alpha w_{0,I})$ since $(w_{0,I},w_{0,I})\in E$. Hence $E\cdot s_\alpha=G_{w}$ with $w=w_{0,I}s_\alpha w_0$ (notation as in \S\ref{zipstrata}). This element has colength 1 in ${}^I W$ because $x\mapsto w_{0,I}x w_0$ is an order-reversing involution of ${}^I W$. We deduce that $E\cdot s_\alpha$ has codimension 1 and thus $\overline{E\cdot s_\alpha}=\overline{Ps_\alpha Q}$.
\end{proof}

For each $\alpha \in \Delta \setminus I$, denote by $v_\alpha$ the valuation on $k(G)$ given by the irreducible divisor $\overline{B s_\alpha w_0 Bz^{-1}}\subset G$. Any section of the line bundle $\Lscr(\lambda)$ over $\GF^{\mu\textrm{-ord}}$ can be viewed as regular function on $U_\mu$, so we obtain maps
\begin{equation}\label{valpha}
v_\alpha : H^0(\GF^{\mu\textrm{-ord}}, \Vscr(\lambda)) \to \ZZ.
\end{equation}
The space $H^0(\GF, \Lscr(\lambda))$ can be interpreted as the subspace consisting of elements $f$ such that $v_\alpha(f)\geq 0$ for all $\alpha \in \Delta \setminus I$. Since we are only interested in the sign of these valuations, we introduce the following notation: For $a,b\in \ZZ$, write $a\sim b$ if $a,b$ have the same sign $\in \{+,-,0\}$.

\subsection{Adapted morphisms}
Let $X$ be an irreducible normal $k$-variety and let $U\subset X$ be an open subset such that $S=X\setminus U$ is irreducible of codimension one. Let $f\in H^0(U,\Ocal_X)$ be a regular section on $U$. Denote by $Z_U(f)\subset U$ the vanishing locus of $f$ in $U$ and let $\overline{Z_U(f)}$ be its Zariski closure in $X$. In this section, we endow all locally closed subsets of schemes with the reduced structure. Let $Y$ be an irreducible $k$-variety with a $k$-morphism $\psi:Y\to X$.
\begin{definition}
We say that $\psi$ is adapted to $f$ (with respect to $U$) if
\begin{enumerate}
\item $\psi(Y)\cap U\neq \emptyset$
\item $\psi(Y)\cap S$ is not contained in $\overline{Z_U(f)}$.
\end{enumerate}

\end{definition}

Note that $Y$ if is adapted to $f$, then in particular $\psi(Y)$ is not contained in $\overline{Z_U(f)}$ and $\psi(Y)\cap S\neq \emptyset$. The reason for this definition is the following easy lemma that we will use:

\begin{lemma}
Let $\psi:Y\to X$ adapted to $f$. Then $f$ extends to a section $f\in H^0(X,\Ocal_X)$ if and only if the section $\psi^*(f)\in H^0(\psi^{-1}(U),\Ocal_Y)$ extends to $Y$.
\end{lemma}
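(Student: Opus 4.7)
The forward direction is immediate: if $f$ extends to a regular section on $X$, then $\psi^*(f)$ is automatically a regular section on $Y$, since pullback of a regular function along a morphism is regular. So the content of the lemma is in the converse, which I will approach by contraposition.

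Suppose $f$ does not extend to $X$. Since $X$ is normal and $S = X \setminus U$ is irreducible of codimension one, the local ring $\Ocal_{X,S}$ at the generic point of $S$ is a DVR, and the failure of extension means $v_S(f) = -n$ for some $n > 0$. My plan is to produce a point $y \in Y$ at which $\psi^*(f)$ has a pole, contradicting the hypothesis that $\psi^*(f)$ extends to $Y$. The adapted condition is precisely what makes such a $y$ exist: since $\psi(Y) \cap S \not\subset \overline{Z_U(f)}$, I can choose $y \in Y$ with $\psi(y) \in S \setminus \overline{Z_U(f)}$.

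In a Zariski neighborhood $V$ of $\psi(y)$ in $X$, I pick a local equation $t \in \Ocal_X(V)$ cutting out $S \cap V$; shrinking $V$ if necessary, I can write $f = t^{-n} u$ on $V \cap U$, where $u \in \Ocal_X(V)$ is regular and does not vanish at $\psi(y)$ (this uses $\psi(y) \notin \overline{Z_U(f)}$, which prevents $u$ from picking up compensating zeros at $\psi(y)$). Pulling back, in a neighborhood of $y$ in $\psi^{-1}(V)$ one has $\psi^*(f) = \psi^*(t)^{-n}\, \psi^*(u)$. By construction $\psi^*(u)$ is a unit in $\Ocal_{Y,y}$, while $\psi^*(t)$ vanishes at $y$ (because $\psi(y) \in S$); moreover, $\psi^*(t)$ is not identically zero on $Y$, because $\psi(Y) \cap U \neq \emptyset$ and $t$ is a unit on $U \cap V$. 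Thus $\psi^*(t)^{-n}$ has a genuine pole at $y$, and so does $\psi^*(f)$, contradicting extendability.

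The main point to be careful about is the use of both parts of the adapted condition: the non-emptiness $\psi(Y) \cap U \neq \emptyset$ ensures that $\psi^*(f)$ is a well-defined rational function and that $\psi^*(t)$ is not identically zero, while the condition $\psi(Y) \cap S \not\subset \overline{Z_U(f)}$ is what guarantees the existence of a point $y$ where the unit $u$ stays a unit under pullback, so that its zeros cannot cancel the pole coming from $t^{-n}$. Everything else is a routine local computation in a DVR.
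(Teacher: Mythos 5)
Your argument follows the same broad outline as the paper's (pass to a contradiction, locate a bad point, evaluate there), but there is a genuine gap in the step where you write $f = t^{-n}u$ near $\psi(y)$: you are assuming that the prime divisor $S$ is cut out by a single equation $t$ on a Zariski neighborhood $V$ of $\psi(y)$, i.e.\ that $S$ is Cartier near $\psi(y)$. The hypothesis is only that $X$ is normal, and on a normal variety a prime Weil divisor need not be locally principal (a line through the vertex of the quadric cone $\{xy=z^2\}$ is the standard counterexample). You correctly note that $\Ocal_{X,\eta_S}$ is a DVR, but $\psi(y)$ is in general \emph{not} the generic point of $S$; at $\psi(y)$ the local ring is merely normal, and the factorization $f = t^{-n}u$ with $u$ a unit simply may not be available. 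So as written, the existence of the element $t$ with the stated property is unjustified.

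The repair — and this is what the paper actually does — is to bypass local equations entirely and work with $g := 1/f$. On the open $W := X \setminus \overline{Z_U(f)}$ (which contains $\psi(y)$ by your choice), the Weil divisor of $g$ restricted to $W$ is $n[S]$, which is effective; by normality (algebraic Hartogs) $g$ therefore extends to a regular function on $W$, and since $v_S(g) = n > 0$ it lies in the prime ideal of $S$, so $g$ vanishes on $S \cap W$ and in particular $g(\psi(y)) = 0$. Pulling back, $\psi^*(g)$ is regular on $\psi^{-1}(W)$, vanishes at $y$, and satisfies $\psi^*(g)\cdot\psi^*(f) = 1$ on the dense open $\psi^{-1}(U\cap W)$ (nonempty by irreducibility of $Y$ together with the first adaptedness condition). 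If $\psi^*(f)$ extended to a regular $F$ on $Y$, density and reducedness would force $\psi^*(g)\cdot F = 1$ on all of $\psi^{-1}(W)$, and evaluating at $y$ gives $0 = 1$. This is precisely your evaluation trick, applied to $g$ instead of to the putative unit $u$, and it uses only normality. Incidentally, the paper phrases the contradiction globally — it concludes $\psi^{-1}(S\setminus\overline{Z_U(f)}) = \emptyset$, which is the direct negation of adaptedness — but fixing a specific $y$ as you do is equally valid once the local equation issue is removed.
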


\begin{proof}
One direction is clear. Now assume that $\psi^*(f)$ extends to $Y$. Since $X$ is normal, it suffices to show that the Weil divisor of $f$ has non-negative multiplicity along $S$. Assume that $f$ has a pole along $S$ and consider the function $g=1/f$. It extends to a regular function on the open subset $X\setminus \overline{Z_U(f)}$, and vanishes on $S\setminus \overline{Z_U(f)}$. It pulls back to a section over $Y\setminus \psi^{-1}(\overline{Z_U(f)})$ (note that this is $\neq \emptyset$) which vanishes on $\psi^{-1}(S\setminus \overline{Z_U(f)})$. Since $\psi^{*}(f)$ extends to a regular function on $Y$, one must have $\psi^{-1}(S\setminus \overline{Z_U(f)})=\emptyset$, hence $\psi(Y)\cap S\subset \overline{Z_U(f)}$, which contradicts the assumption.
\end{proof}

Let again $\psi:Y\to X$ be adapted to $f\in H^0(U,\Ocal_X)$ with respect to $U$ and assume further that $Y$ is normal and $\psi^{-1}(S)$ is irreducible of codimension one in $Y$. As we remarked above, $\psi^{*}(f)$ is a nonzero rational function on $Y$. Denote by $v_S$ the valuation on $k(X)$ given by $S$ and by $v_{\psi^{-1}(S)}$ the one on $k(Y)$ given by $\psi^{-1}(S)$. Then the previous lemma can be restated as follows:

\begin{corollary}\label{sameval}
Under these assumptions, one has
\begin{equation}
v_S(f)\sim v_{\psi^{-1}(S)}(\psi^{*}(f)).
\end{equation}
\end{corollary}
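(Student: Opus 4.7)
The plan is to split into cases according to the sign of $v_S(f)$ and invoke the preceding lemma together with a dimension-count via Krull's hauptidealsatz. The three cases $v_S(f)<0$, $v_S(f)=0$, $v_S(f)>0$ are mutually exclusive, so proving the forward implication in each case suffices to establish the sign equivalence.

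First, the case $v_S(f)<0$ is immediate: $f$ then fails to extend to a regular function on $X$, so by the previous lemma $\psi^*(f)$ fails to extend to $Y$, which means $v_{\psi^{-1}(S)}(\psi^*(f))<0$. Dually, the case $v_S(f)>0$ is also easy: $f$ extends to $X$ and vanishes identically on $S$, so $\psi^*(f)$ is regular on $Y$ and vanishes identically on $\psi^{-1}(S)$; since $\psi^{-1}(S)$ is an irreducible divisor on the normal variety $Y$, this gives $v_{\psi^{-1}(S)}(\psi^*(f))>0$.

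The remaining case $v_S(f)=0$ is where the adapted condition enters. Here $f$ extends to a regular function on $X$ and does not vanish generically on $S$. I claim that then $Z_X(f)=\overline{Z_U(f)}$. Indeed, by Krull's principal ideal theorem the closed subscheme $Z_X(f)\subset X$ has pure codimension one (as $X$ is irreducible and normal, hence a domain locally); any irreducible component contained entirely in $S$ would therefore have codimension one in $X$ and be contained in the irreducible codimension-one subset $S$, forcing it to equal $S$, contradicting $v_S(f)=0$. Hence every component of $Z_X(f)$ meets $U$, and $Z_X(f)=\overline{Z_U(f)}$.

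With this identification, the adapted condition directly produces a point $y\in Y$ with $\psi(y)\in S\setminus \overline{Z_U(f)}=S\setminus Z_X(f)$, so $\psi^*(f)(y)=f(\psi(y))\neq 0$. Since $\psi^*(f)$ is regular on $Y$ (extending by the previous lemma) and is nonzero at a point of the irreducible divisor $\psi^{-1}(S)$, we conclude $v_{\psi^{-1}(S)}(\psi^*(f))=0$. The main (mild) obstacle is verifying the equality $Z_X(f)=\overline{Z_U(f)}$ in the case $v_S(f)=0$; this is the one place where the normality hypothesis on $X$ combined with Krull's theorem is essential, since otherwise $Z_X(f)$ might carry embedded components in $S$ and the adapted condition would not suffice.
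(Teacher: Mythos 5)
Your proof is correct, and it is actually more complete than what the paper offers: the paper simply asserts that the corollary is a ``restatement'' of the preceding lemma, but the lemma only directly yields the equivalence $v_S(f)\geq 0 \iff v_{\psi^{-1}(S)}(\psi^*(f))\geq 0$, i.e.\ the cases $<0$ and $\geq 0$ are distinguished. Separating the cases $=0$ and $>0$ needs an extra step, and your case split supplies it. Cases $v_S(f)<0$ and $v_S(f)>0$ follow as you say (for $<0$ one should note, as the paper does, that $\psi^*(f)$ is a nonzero rational function, regular on $\psi^{-1}(U)=Y\setminus\psi^{-1}(S)$, so failure to extend over normal $Y$ forces a pole precisely along $\psi^{-1}(S)$). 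The real content is your $v_S(f)=0$ case: the Krull argument showing every irreducible component of $Z_X(f)$ has codimension one, hence none can be a proper closed subset of $S$, hence $Z_X(f)=\overline{Z_U(f)}$, so that adaptedness hands you a point of $\psi^{-1}(S)$ where $\psi^*(f)\neq 0$. This is exactly what is needed and is a clean way to close the gap the paper leaves implicit. One small wording nit: the phrase ``embedded components'' at the end is not quite the right notion---the concern is whether $Z_X(f)$ (as a reduced closed subset) could have an irreducible component of codimension $\geq 2$ lying inside $S$; that is precisely what Krull rules out.
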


Now assume that $X$ is an irreducible normal $k$-variety endowed with an algebraic action of a algebraic group $H$ over $k$. Assume again that $U\subset X$ is open, $S=X\setminus U$ has codimension one, and assume further that $U$ is $H$-stable. We say that $f\in  H^0(U,\Ocal_X)$ is an $H$-eigenfunction if there is a character $\lambda:H\to \GG_m$ such that $f(h\cdot x)=\lambda(h)f(x)$ for all $h\in H, x\in U$.

\begin{lemma}\label{denseadapted}
Let $\psi:Y\to X$ be a map such that $\psi(Y)\cap U\neq \emptyset$ and $H\cdot (\psi(Y)\cap S)$ is Zariski dense in $S$. Then $\psi$ is adapted to any $H$-eigenfunction on $U$.
\end{lemma}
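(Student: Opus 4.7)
The plan is to use the $H$-equivariance of $f$ to transport the density hypothesis on $H\cdot(\psi(Y)\cap S)$ into a contradiction with the fact that $\overline{Z_U(f)}\cap S$ is a proper closed subset of $S$. Since $\psi(Y)\cap U\neq\emptyset$ is part of the hypotheses, only the second clause in the definition of adapted needs to be verified. The first step is to record two $H$-stability facts: since $U$ is $H$-stable by assumption, so is $S=X\setminus U$; and since $f$ is an $H$-eigenfunction with character $\lambda\colon H\to \mathbf{G}_m$, the relation $f(h\cdot x)=\lambda(h)f(x)$ forces $Z_U(f)$ to be $H$-stable in $U$, whence its Zariski closure $\overline{Z_U(f)}$ in $X$ is $H$-stable as well.

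Next comes the geometric input: assuming $f\neq 0$, I claim that $S\not\subset \overline{Z_U(f)}$. Every irreducible component $W$ of $\overline{Z_U(f)}$ is the closure of $W\cap Z_U(f)\subset U$, so $W\cap U\neq\emptyset$; as $S\cap U=\emptyset$, no such component can equal $S$. Since $S$ is irreducible of dimension $\dim X-1$ while $\overline{Z_U(f)}$ has pure dimension $\dim X-1$, an inclusion $S\subset \overline{Z_U(f)}$ would force $S$ to be an irreducible component, contradicting the previous sentence. Therefore $\overline{Z_U(f)}\cap S$ is a proper closed subvariety of $S$.

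To conclude, suppose for contradiction that $\psi(Y)\cap S\subset \overline{Z_U(f)}$. By the $H$-stability of both $S$ and $\overline{Z_U(f)}$, we then have $H\cdot(\psi(Y)\cap S)\subset \overline{Z_U(f)}\cap S$, which is a proper closed subset of $S$, contradicting the assumption that $H\cdot(\psi(Y)\cap S)$ is Zariski dense in $S$. The only step with any real content is the dimension/irreducibility argument of the second paragraph, relying on the fact that $Z_U(f)$ lies in $U$ while $S$ lies in its complement; the rest is formal manipulation of $H$-stability and closures.
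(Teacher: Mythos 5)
Your proof is correct and takes the same route as the paper: use $H$-stability of $\overline{Z_U(f)}$ (and $S$) to push the hypothesis $\psi(Y)\cap S\subset\overline{Z_U(f)}$ into a contradiction with density. The one thing you do beyond the paper is unpack the phrase ``impossible by a dimension argument'' into the correct statement: since $Z_U(f)$ (for $f\neq 0$) has pure codimension one in $U$ and every irreducible component of $\overline{Z_U(f)}$ meets $U$, while $S$ is irreducible of codimension one and disjoint from $U$, an inclusion $S\subset\overline{Z_U(f)}$ would force $S$ to be a component that meets $U$, which is absurd. That is indeed the intended justification, and you are right to note the (implicit) assumption $f\neq 0$, without which $\overline{Z_U(f)}=X$ and the claim fails.
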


\begin{proof}
If $f$ is a $H$-eigenfunction, then clearly $Z_U(f)$ is $H$-stable. Hence if $\psi(Y)\cap S\subset \overline{Z_U(f)}$, we deduce that $H\cdot (\psi(Y)\cap S)$ is contained in $\overline{Z_U(f)}$, and thus $S\subset \overline{Z_U(f)}$ by density. This is impossible by a dimension argument.
\end{proof}

From now on, we consider the action of $E'$ on the variety $G$ as in \S\ref{sec-zipflag} and the open subset $U_\mu\subset G$. We will now define a morphism which satisfies the conditions of Lemma \ref{denseadapted}. For $\alpha\in \Delta\setminus I$, let $T_\alpha \subset T$ be the connected component of $\Ker(\alpha)$ and let $G_\alpha\subset G$ be the centralizer of $T_\alpha$. It is a semisimple subgroup of rank $1$ and $\Lie(G_\alpha)=\Lie(T)\oplus \gfr_\alpha \oplus \gfr_{-\alpha}$. Let $D_\alpha:=T\cap D(G_\alpha)$, where $D(H)$ denotes the derived group of an algebraic group $H$. Then $D_\alpha$ is a maximal torus in $G_\alpha$ and the pair $(D(G_\alpha),D_\alpha)$ is isomorphic to $(SL_2,D_2)$ or $(PGL_2,D_2)$ where $D_2$ denotes in each case the diagonal torus. It follows that there is a unique homomorphism
\begin{equation}
R_\alpha : SL_2 \to G
\end{equation}
whose image is $D(G_\alpha)$ and maps $D_2$ onto $D_\alpha$, and maps the lower Borel subgroup $B_2\subset SL_2$ into $B$. Denote by $B^-$ the opposite Borel of $B$ with respect to $T$ and $B^-_L:=B^-\cap L$. Define a variety $Y=(R_u(B^-)\cap L) \times \AA^1$ and a morphism:
\begin{equation}
\psi_\alpha:Y \to G , \quad (b,t)\mapsto b R_\alpha \left(A(t)\right)\varphi(b)^{-1} \quad \textrm{where} \ A(t)=\left(
\begin{matrix}
t & 1 \\ -1 & 0 \end{matrix}
\right).
\end{equation}
Note that $A(0)$ coincides with $s_\alpha$ (up to an element of $T$, but this is irrelevant for us). Since $S_\alpha$ is a codimension one stratum, the set $G^\alpha:=U_\mu \cup S_\alpha$ is open in $G$. The group $E'$ acts on $G^\alpha$ by restriction and on the open subset $U_\mu\subset G^\alpha$. 

\begin{proposition}\label{psiadapted}
The following properties hold
\begin{enumerate}
\item \label{item-imagepsi} The image of $\psi^\alpha$ is contained in $U_\mu \cup \overline{S}_\alpha$ (but maybe not in $G^\alpha$).
\item \label{item-psit} For any $b\in R_u(B^-)\cap L$, one has $\psi_\alpha(b,t)\in U_\mu \Longleftrightarrow t\neq 0$.
\item \label{item-adapted} Set $Y_\alpha:=\psi_\alpha^{-1}(G^\alpha)$.  The map $\psi_\alpha :Y_\alpha\to G^{\alpha}$ is adapted to any $E'$-eigenfunction on $U_\mu$.
\end{enumerate}
\end{proposition}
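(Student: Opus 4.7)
For (1), compute the Bruhat decomposition of $A(t)$ in $SL_2$. For $t \neq 0$,
\[
A(t) = \begin{pmatrix} 1 & 0 \\ -1/t & 1 \end{pmatrix} \begin{pmatrix} t & 0 \\ 0 & 1/t \end{pmatrix} \begin{pmatrix} 1 & 1/t \\ 0 & 1 \end{pmatrix},
\]
so $A(t) \in B_2 \cdot R_u(B_2^+)$. Under $R_\alpha$ the first two factors map into $B \subset P$ by construction, while the upper-unipotent third factor lands in the opposite root group $U_\alpha \subset B^-$, which sits in $Q$ by Lemma \ref{complemlem}\eqref{item-Fp2}. Hence $R_\alpha(A(t)) \in PQ = U_\mu$. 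Since $b \in L \subset P$ and $\varphi(b)^{-1} \in \varphi(L) = M \subset Q$ (using that $L$ is defined over $\FF_p$), and $PQ$ is stable under left-$P$ and right-$Q$ multiplication, we conclude $\psi_\alpha(b,t) \in U_\mu$. For $t = 0$, $A(0)$ represents the nontrivial Weyl element of $SL_2$, so $R_\alpha(A(0))$ represents $s_\alpha$; since $b \in L$ forces $(b, \varphi(b)) \in E$, we obtain $\psi_\alpha(b, 0) \in E \cdot s_\alpha \subset \overline{S}_\alpha$ by Lemma \ref{complemlem}\eqref{item-Fp4}.

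Part (2) is then immediate: the two cases $t \neq 0$ and $t = 0$ place the image in the disjoint sets $U_\mu$ and $\overline{S}_\alpha \subset G \setminus U_\mu$, respectively.

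For (3), I would apply Lemma \ref{denseadapted} with $X = G^\alpha$, $U = U_\mu$, $S = S_\alpha$, and $H = E'$. Hypothesis (a) is immediate from (2) (pick any point with $t \neq 0$). The crux is hypothesis (b): that $E' \cdot (\psi_\alpha(Y_\alpha) \cap S_\alpha)$ is Zariski dense in $S_\alpha$. By (1), the elements of $\psi_\alpha(Y_\alpha) \cap S_\alpha$ are exactly those $b \, R_\alpha(s) \, \varphi(b)^{-1}$ with $b \in R_u(B^-) \cap L$ that land in the open codimension-one $E$-orbit $E \cdot s_\alpha$. The plan is to show that the morphism
\[
\Phi : E' \times \bigl(R_u(B^-) \cap L\bigr) \longrightarrow S_\alpha, \quad \bigl((a, c), b\bigr) \longmapsto a \, b \, R_\alpha(s) \, \varphi(b)^{-1} \, c^{-1}
\]
is dominant. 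A dimension count gives
\[
\dim E' + \dim(R_u(B^-) \cap L) = \dim B + \dim R_u(Q) + \ell(w_{0,L}) = \dim G = \dim S_\alpha + 1,
\]
so it suffices to check that the generic fiber of $\Phi$ has dimension exactly one. This can be done by combining the identification $\overline{S}_\alpha = \overline{P s_\alpha Q}$ of Lemma \ref{complemlem}\eqref{item-Fp4} with the factorization $P = (R_u(B^-) \cap L) \cdot B$: a generic element of $S_\alpha$ written as $u^- b_L s_\alpha q$ (with $u^- \in R_u(B^-) \cap L$, $b_L \in B$, $q \in Q$) can be brought into the image of $\Phi$ by absorbing $b_L$ and $q$ into the $E'$-parameter $(a, c)$, compatibly with the constraint $\varphi(\bar a) = \bar c$ defining $E'$.

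The main obstacle is this dominance verification in (3): one must check that the Frobenius-twisted conjugation $b \mapsto b \, (\cdot) \, \varphi(b)^{-1}$ on $R_u(B^-) \cap L$ supplies exactly the missing codimension-one direction beyond the $E'$-orbit of $s_\alpha$. Once dominance is established, Lemma \ref{denseadapted} immediately yields adaptedness of $\psi_\alpha$ to every $E'$-eigenfunction on $U_\mu$, completing (3).
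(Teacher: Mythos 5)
Parts (1) and (2) of your proposal follow the paper's argument essentially verbatim (Bruhat-factor $A(t)$ for $t\neq 0$, then use $BB^- \subset PQ = U_\mu$ and the $P\times Q$-invariance of $U_\mu$; for $t=0$ land in $Ps_\alpha Q$). One small imprecision: you claim $\psi_\alpha(b,0) \in E\cdot s_\alpha$, but since $R_\alpha(A(0))$ equals $s_\alpha$ only up to an element of $T$, this requires knowing $E\cdot(ts_\alpha) = E\cdot s_\alpha$ for $t\in T$, which is a nontrivial input from \cite[Th.~5.14]{Pink-Wedhorn-Ziegler-zip-data}. The paper avoids this in parts (1)--(2) by only asserting membership in $\overline{S}_\alpha = \overline{Ps_\alpha Q}$, and reserves the $E$-orbit identification for part (3).

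For part (3), you have correctly identified what needs to be shown --- density of $E'\cdot(\psi_\alpha(Y_\alpha)\cap S_\alpha)$ in $S_\alpha$ --- and your dimension count $\dim E' + \dim(R_u(B^-)\cap L) = \dim G = \dim S_\alpha + 1$ is a genuine observation. But the argument is then left as a ``plan'' with an acknowledged ``main obstacle,'' and the proposed ``absorption'' step does not go through as stated. Writing a generic element of $S_\alpha$ as $u^- b_L s_\alpha q$ and absorbing $b_L$ and $q$ into the $E'$-parameter $(a,c)$ is precisely where the problem lies: in $E'$ the two components are linked by the Frobenius constraint $\varphi(\overline{a}) = \overline{c}$, whereas in $u^- b_L s_\alpha q$ the factors $b_L \in B$ and $q \in Q$ are independent. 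You flag this constraint but do not explain how to satisfy it, and with an arbitrary pair $(b_L, q)$ there is no reason it can be satisfied. The paper resolves this by parametrizing the dense open orbit $C = E\cdot s_\alpha$ of $\overline{S}_\alpha$ as $C = E\cdot R_\alpha(0)$ (via \cite[Th.~5.14]{Pink-Wedhorn-Ziegler-zip-data}), so the Frobenius constraint is built into the parametrization from the start: any $c \in C$ is $c = u\,x\,R_\alpha(0)\,\varphi(x)^{-1}\,v$ with $u\in R_u(P)$, $v\in R_u(Q)$, $x\in L$. Then for a dense set one may write $x = b b'$ with $b\in B_L$, $b' \in R_u(B^-)\cap L$, and a direct check shows $(ub,\, v^{-1}\varphi(b)) \in E'$, so $c$ lies in the $E'$-orbit of $b' R_\alpha(0)\varphi(b')^{-1} \in \psi_\alpha(Y)$. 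This is the missing mechanism in your proposal. Also note a minor imprecision: $P = (R_u(B^-)\cap L)\cdot B$ holds only densely, not as an equality.
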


\begin{proof}
First, we claim that for $t\neq 0$, one has $R_\alpha(A(t))\in U_\mu$. Indeed, for $t\neq 0$, the matrix $A(t)$ can be written as a product $bb'$ where $b\in B_2$ and $b'\in B_2^{-}$. Since $\psi_\alpha$ maps $B_2$ into $B$, we have $R_\alpha(A(t))\in BB^-\subset U_\mu$ (\Lem~\ref{complemlem}~\eqref{item-Fp2}). By \Lem~\ref{complemlem}~\eqref{item-Fp1}, $U_\mu$ is a $P\times Q$-orbit, so $\psi_\alpha(b,t)\in U_\mu$ for all $b\in R_u(B^-)\cap L$ and all $t\neq 0$. If $t=0$, the element $R_{\alpha}\left(A(t)\right)$ coincides with $s_\alpha$ up to an element of $T$. Therefore $\psi_\alpha(b,0)\in Ps_\alpha Q\subset \overline{Ps_\alpha Q}=\overline{S}_\alpha$ (by Lem~\ref{complemlem}~\eqref{item-Fp4}). This proves \eqref{item-imagepsi} and \eqref{item-psit}.

For the last assertion, we use Lemma \ref{denseadapted}. We need to show that $E'\cdot (\psi_\alpha(Y_\alpha)\cap S_\alpha)$ is dense in $S_\alpha$. In other words, the union of $E'$-orbits in $S_\alpha$ intersecting $\psi_\alpha(Y_\alpha)$ is dense in $S_\alpha$. Recall that $\overline{S}_\alpha$ contains a unique dense open $E$-orbit $C=E\cdot s_\alpha$ (\Lem~\ref{complemlem}~\eqref{item-Fp4}). Furthermore, it follows immediately from \cite[\Th~5.14]{Pink-Wedhorn-Ziegler-zip-data} that $E\cdot s_\alpha = E\cdot (t s_\alpha)$ for all $t\in T$. In particular, we have $E\cdot s_\alpha = E\cdot (R_\alpha(0))$. Hence, any element $c\in C$ can be written $c=ux R_\alpha(0) \varphi(x)^{-1}v$ for $x\in L$, $u\in R_u(P)$ and $v\in R_u(Q)$. For a dense subset $C'\subset C$, we can impose the condition that $x\in L$ is of the form $x=bb'$ where $b\in B_L$ and $b'\in R_u(B^-)\cap L$ (because the set of such products is dense in $L$). The element $c\in C'$ is then in the same $E'$-orbit as $b'R_\alpha(0) \varphi(b')^{-1}\in \psi_\alpha(Y)$. It follows that any element in $C'\cap S_\alpha$ is in the same $E'$-orbit as an element of $\psi_\alpha(Y_\alpha)$. Since $S_\alpha$ is open in $\overline{S}_\alpha$, the subset $C'\cap S_\alpha$ is also dense in $S_\alpha$ and this proves \eqref{item-adapted}.
\end{proof}

\subsection{A formula for $v_\alpha(\tilde{f})$}
Write $D=|L(\FF_p)|$. We constructed in \S\ref{sec-ordsec} a (non-linear) map $f\mapsto \tilde{f}$ :
\begin{equation}
V(\lambda) \longrightarrow H^0(\GF^{\mu\textrm{-ord}}, \Lscr(D\lambda)).
\end{equation}
Furthermore, recall that we have valuations $(v_\alpha)_{\alpha \in \Delta\setminus I}$ on the right-hand side. In this section, we give a formula for (the sign of) $v_\alpha(\tilde{f})$.

Recall that the maximal torus $T$ is defined over $\FF_p$ (but is not required to be $\FF_p$-split). Denote the group of cocharacters of $T$ by $X_*(T)$ and if $\delta\in X_*(T)$, denote by ${}^\sigma \delta$ the action of the Frobenius on $\delta$. Hence, one has the relation $\varphi\circ \delta=p ({}^\sigma \delta)$, where $\varphi:T\to T$ is the Frobenius homomorphism. Define a map $\gamma: X_*(T)\to X_*(T)$, $\delta\mapsto \delta-p({}^\sigma \delta)$.

\begin{lemma}
The map $\gamma$ induces an automorphism of $X_*(T)\otimes \QQ$. Furthermore, if $\nu\in X_*(T)$ is a cocharacter defined over $\FF_{p^r}$ for some $r\geq 1$, then one has $\gamma(\delta)=\nu$ for
\begin{equation}
\delta=-\frac{1}{p^r-1}\sum_{i=0}^{r-1} p^i({}^{\sigma^i} \nu).
\end{equation}
\end{lemma}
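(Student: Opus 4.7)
The plan is to check the explicit formula by a direct telescoping computation, and then derive the fact that $\gamma$ is an automorphism of $X_*(T) \otimes \QQ$ from it.

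First I would verify the formula. Set $\delta = -\frac{1}{p^r-1}\sum_{i=0}^{r-1} p^i ({}^{\sigma^i}\nu)$ and compute
\begin{equation*}
\gamma(\delta) = \delta - p\,({}^\sigma \delta) = -\frac{1}{p^r-1}\sum_{i=0}^{r-1} p^i ({}^{\sigma^i}\nu) + \frac{1}{p^r-1}\sum_{i=0}^{r-1} p^{i+1} ({}^{\sigma^{i+1}}\nu).
\end{equation*}
Re-indexing the second sum with $j=i+1$ gives $\sum_{j=1}^{r} p^j ({}^{\sigma^j}\nu)$, so the two sums telescope to $p^r ({}^{\sigma^r}\nu) - \nu$. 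Using the assumption that $\nu$ is defined over $\FF_{p^r}$, i.e.\ ${}^{\sigma^r}\nu = \nu$, this equals $(p^r-1)\nu$, and dividing by $p^r-1$ yields $\gamma(\delta) = \nu$ as required.

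Next I would deduce that $\gamma$ is an automorphism of $X_*(T) \otimes \QQ$. Because $T$ is defined over $\FF_p$, the Galois action on $X_*(T)$ factors through a finite quotient, so every $\nu \in X_*(T)$ is defined over $\FF_{p^r}$ for some $r \geq 1$. The previous computation then exhibits a preimage of $\nu$ in $X_*(T) \otimes \QQ$, proving surjectivity. Since $X_*(T) \otimes \QQ$ is finite-dimensional, this automatically gives injectivity; alternatively, if $\gamma(\delta) = 0$ then $\delta = p\,({}^\sigma\delta)$, hence iterating $\delta = p^n ({}^{\sigma^n}\delta)$ for all $n$, and choosing $n$ with ${}^{\sigma^n}\delta = \delta$ gives $(p^n-1)\delta = 0$, forcing $\delta = 0$ in $X_*(T) \otimes \QQ$.

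There is no real obstacle here: both assertions are elementary linear algebra once one exploits that $\sigma$ acts with finite order on $X_*(T)$. The only subtlety is simply keeping track of the index shift in the telescoping sum to confirm the sign and the factor $p^r-1$ match the stated formula.
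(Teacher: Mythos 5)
Your proof is correct, and both assertions are verified soundly. Your route is genuinely a mirror image of the paper's, though. You verify the explicit formula first by a forward telescoping computation, and then deduce that $\gamma$ is an automorphism of $X_*(T)\otimes\QQ$ from the existence of explicit preimages together with a finite-dimension (or direct) injectivity argument. The paper instead establishes the automorphism property first, and does so by an independent, shorter observation: $\gamma$ reduces to the identity modulo $p$, so its determinant is a unit in $\ZZ/p\ZZ$ and in particular nonzero in $\QQ$. With invertibility in hand, the paper then derives the formula \emph{backwards}: assuming $\gamma(\delta)=\nu$, it iterates $\nu + p({}^\sigma\nu) + \cdots + p^{m-1}({}^{\sigma^{m-1}}\nu) = \delta - p^m({}^{\sigma^m}\delta)$ and sets $m=r$, using ${}^{\sigma^r}\delta=\delta$ (which follows because $\gamma$ commutes with $\sigma$ and the preimage is unique). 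Your version has the merit of being constructive and not needing the commutation-with-$\sigma$ observation, while the paper's argument for invertibility is a bit slicker and generalizes immediately to any endomorphism congruent to the identity mod $p$. Both are fine; neither has a gap.
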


\begin{proof}
The map $\gamma$ is the identity on $X_*(T)\otimes \FF_p$, so the first part of the lemma is clear. Now, let $\delta \in X_*(T)\otimes \QQ$ such that $\gamma(\delta)=\nu$. Then one has $\nu+p({}^\sigma \nu)=\delta-p^2({}^{\sigma^2} \delta)$, and by induction $\sum_{i=0}^{m-1} p^i({}^{\sigma^i} \nu) =\delta-p^{m}({}^{\sigma^m} \delta)$ for all $m\geq 1$. For $m=r$, we obtain the result.
\end{proof}

For $\alpha \in \Delta\setminus I$, denote by $r_\alpha$ the smallest integer $r$ such that $\alpha$ is defined over $\FF_{p^r}$ (then $\alpha^\vee$ is also defined over this field) and set $m_\alpha=p^{r_\alpha}-1$. Define a cocharacter $\delta_\alpha\in X^*(T)$ by
\begin{equation}\label{deltaalpha}
\delta_\alpha=-\sum_{i=0}^{r_\alpha-1}p^i({}^{\sigma^i}\alpha^\vee).
\end{equation}
Be the previous lemma, the cocharacter $\delta_\alpha$ satisfies $m_\alpha\alpha^\vee=\delta_\alpha-\varphi\circ \delta_\alpha$, equivalently
\begin{equation}\label{Ealphavee}
\alpha^\vee(t)^{m_\alpha} = \delta_\alpha(t)\varphi(\delta_\alpha(t))^{-1}
\end{equation}
for all $t\in \GG_m$. 

If $R$ is an integral domain, we denote by $v_t$ the $t$-adic valuation on $R[t]$, extended to $R[t,\frac{1}{t}]$. In what follows, we take $R$ to be the $k$-algebra of regular functions on the affine $k$-variety $R_u(B^-)\cap L$. Furthermore, we view elements $f\in V(\lambda)$ as functions on $L$, satisfying the usual condition with respect to the action by $B_L$.

\begin{lemma}\label{formulalemma}
Let $f\in V(\lambda)$ be any non-zero element. For $\alpha \in \Delta \setminus I$, one has the following formula:
\begin{equation}
v_\alpha (\tilde{f})\sim \sum_{s\in L(\FF_p)}v_t(f(b\delta_\alpha (t)s)).
\end{equation}
where the function $(b,t)\mapsto f(b\delta_\alpha(t)s)$ on $Y$ is viewed as an element of $R[t,\frac{1}{t}]$, where $R$ is the $k$-algebra of regular functions on $R_u(B^-)\cap L$.
\end{lemma}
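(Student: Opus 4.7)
The plan is to pull back $\tilde f$ (viewed as an $E'$-eigenfunction $\tilde F$ on $U_\mu$ via Remark \ref{rem-muord}\eqref{item-mu2}) to $Y$ through a base-changed version of $\psi_\alpha$, and apply Corollary \ref{sameval}. Concretely, consider the finite surjective endomorphism $h: Y\to Y$, $(b,t)\mapsto (b,t^{m_\alpha})$, and set $\widetilde{\psi}_\alpha := \psi_\alpha\circ h$, so that $\widetilde{\psi}_\alpha(b,t) = bR_\alpha(A(t^{m_\alpha}))\varphi(b)^{-1}$. Since Proposition \ref{psiadapted} shows $\psi_\alpha$ is adapted to any $E'$-eigenfunction on $U_\mu$ (upon restriction to $\psi_\alpha^{-1}(G^\alpha)$) and $h$ is finite surjective, the analogous restriction of $\widetilde{\psi}_\alpha$ remains adapted. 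Moreover $\widetilde{\psi}_\alpha^{-1}(S_\alpha) = \{t=0\}$ is an irreducible divisor.

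The heart of the proof is to factor $\widetilde{\psi}_\alpha(b,t)$ in the form $xy^{-1}$ with $(x,y)\in E$, in order to invoke Remark \ref{rem-muord}\eqref{item-mu2}. Using the decomposition $A(u) = L(u)D(u)N(u)$ with $D(u) = \mathrm{diag}(u,u^{-1})$, one gets $R_\alpha(A(u)) = r_-(u)\,\alpha^\vee(u)\, r_+(u)$ where $r_-(u)\in U_{-\alpha}\subset R_u(P)$ and $r_+(u)\in U_\alpha\subset R_u(Q)$. Specializing at $u = t^{m_\alpha}$ and using the key identity \eqref{Ealphavee}, namely $\alpha^\vee(t^{m_\alpha}) = \delta_\alpha(t)\,\varphi(\delta_\alpha(t))^{-1}$, set $p := r_-(t^{m_\alpha})\,\delta_\alpha(t)\in P$ and $q := r_+(t^{m_\alpha})^{-1}\,\varphi(\delta_\alpha(t))\in Q$. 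A direct check yields $R_\alpha(A(t^{m_\alpha})) = p\,q^{-1}$, with Levi components $\bar p = \delta_\alpha(t)$ and $\bar q = \varphi(\delta_\alpha(t))$, hence $(p,q)\in E$. Because $b\in R_u(B^-)\cap L$ and $\varphi(L)=L$ (as $P$ is defined over $\FF_p$), the pair $(b,\varphi(b))$ also lies in $E$, so $\widetilde{\psi}_\alpha(b,t) = (bp)(\varphi(b)q)^{-1}$ with $(bp,\varphi(b)q)\in E$ and $\overline{bp} = b\,\delta_\alpha(t)$.

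Applying Remark \ref{rem-muord}\eqref{item-mu2} then gives
\begin{equation*}
\tilde F(\widetilde{\psi}_\alpha(b,t)) \;=\; \tilde f\bigl(b\,\delta_\alpha(t)\bigr) \;=\; \prod_{s\in L(\FF_p)} f\bigl(b\,\delta_\alpha(t)\,s\bigr),
\end{equation*}
and Corollary \ref{sameval} yields $v_\alpha(\tilde f) \sim v_{\{t=0\}}(\widetilde{\psi}_\alpha^{*}\tilde F) = v_t\!\left(\prod_{s\in L(\FF_p)} f(b\,\delta_\alpha(t)\,s)\right)$. Since $v_t$ is additive on products, this equals $\sum_{s\in L(\FF_p)} v_t(f(b\,\delta_\alpha(t)\,s))$, which is the claimed formula.

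The main obstacle is exhibiting the factorization $R_\alpha(A(t^{m_\alpha}))=pq^{-1}$ with $(p,q)\in E$: the torus equation $\beta\,\varphi(\beta)^{-1}=\alpha^\vee(u)$ has no solution $\beta\in T(k(u))$ at $u=t$ in general, which forces the base change $u = t^{m_\alpha}$, so that $\beta = \delta_\alpha(t)$ works via \eqref{Ealphavee}. Because only the sign of $v_\alpha(\tilde f)$ matters, this finite cover is harmless.
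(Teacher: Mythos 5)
Your proof is correct and follows essentially the same route as the paper's: both rest on the LDU factorization of $A(u)$ in $SL_2$, the identification of the middle (torus) factor with $\alpha^\vee(u)$, the substitution $u=t^{m_\alpha}$ so that identity \eqref{Ealphavee} produces an actual $E$-decomposition with Levi component $\delta_\alpha(t)$, and an appeal to \Prop~\ref{psiadapted} and \Cor~\ref{sameval}. The only difference is procedural: you bake the finite cover $h:(b,t)\mapsto(b,t^{m_\alpha})$ into the morphism from the start and spell out an explicit pair $(bp,\varphi(b)q)\in E$, whereas the paper first computes $\psi_\alpha^*(\tilde f)(b,t)=\tilde f(b\alpha^\vee(t)\varphi(b)^{-1})$ (using only that $\tilde f$ is invariant under left $R_u(P)$- and right $R_u(Q)$-multiplication) and substitutes $t\mapsto t^{m_\alpha}$ afterward, noting the sign of the valuation is unchanged. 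Your variant is marginally cleaner in that it avoids speaking of $\tilde f(b\alpha^\vee(t)\varphi(b)^{-1})$ before the base change makes the defining $E$-factorization visible, but the two are equivalent.
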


\begin{proof}
Consider the map $\psi_\alpha : Y_\alpha \to G^\alpha$, it is adapted to $\tilde{f}$ by \Prop~\ref{psiadapted}. Define an irreducible subvariety $H\subset Y_\alpha$ by the condition $t=0$. By \Cor~\ref{sameval}, we have:
\begin{align}
v_\alpha(\tilde{f})\sim v_H(\psi_\alpha^*(\tilde{f}))
\end{align}
The function $\psi_\alpha^*(\tilde{f})$ is given by
\begin{equation}
\psi_\alpha^*(\tilde{f})(b,t)=\tilde{f}(\psi_\alpha(b,t))=\tilde{f}(bR_\alpha(A(t))\varphi(b)^{-1})
\end{equation}
To compute the value of $\tilde{f}(g)$ for $g\in U_\mu$, we need to write $g$ as a product $g=xy^{-1}$ for $(x,y)\in E$. Then, by construction (\Rmk~\ref{rem-muord}~\eqref{item-mu2}) $\tilde{f}(g)$ is defined as $\prod_{s\in L(\FF_p)} f(\overline{x}s)$, where $\overline{x}\in L$ is the Levi component of $x$. Observe that one has
\begin{equation}
A(t)=\left(\begin{matrix}
t&1\\-1&0
\end{matrix} \right)=\left(\begin{matrix}
1&0\\-t^{-1}&1
\end{matrix} \right) \left(\begin{matrix}
t&0\\0&t^{-1}
\end{matrix} \right)\left(\begin{matrix}
1&t^{-1}\\0&1
\end{matrix} \right)
\end{equation}
Since $\alpha \in \Delta \setminus I$, one has $R_\alpha(m)\in R_u(P)$ (resp. $R_\alpha(m)\in R_u(Q)$) for any lower-triangular (resp. upper-triangular) unipotent matrix $m\in SL_2$. Since the computation of $\tilde{f}(g)$ depends only on the Levi component of $x$ as explained above, we can simplify the expression:
\begin{equation}
\psi_\alpha^*(\tilde{f})(b,t)=\tilde{f}\left(bR_\alpha\left(\left(\begin{matrix}
t&0\\0&t^{-1}
\end{matrix} \right) \right)\varphi(b)^{-1}\right)=\tilde{f}(b\alpha^\vee(t)\varphi(b)^{-1})
\end{equation}
where we used that the restriction of $R_\alpha$ to the diagonal torus of $SL_2$ coincides with $\alpha^\vee$. We want to compute the sign of the $t$-valuation of the above expression. This sign will not change if we replace $t$ by a positive power of $t$. Hence, we have $v_t(\psi_\alpha^*(\tilde{f})(b,t))\sim v_t(\psi_\alpha^*(\tilde{f})(b,t^{m_\alpha}))$ for the integer $m_\alpha\geq 1$ defined previously. Now, by equation \eqref{Ealphavee} we can write $\alpha^\vee(t^{m_\alpha})=\alpha^\vee(t)^{m_\alpha} = \delta_\alpha(t)\varphi(\delta_\alpha(t))^{-1}$. We obtain:
\begin{equation}
\psi_\alpha^*(\tilde{f})(b,t^{m_\alpha})=\tilde{f}(b\alpha^\vee(t)^{m_\alpha}\varphi(b)^{-1})=\tilde{f}(b\delta_\alpha (t) \varphi(b\delta_\alpha (t))^{-1})=\prod_{s\in L(\FF_p)}f(b\delta_\alpha(t)s)
\end{equation}
and we deduce finally $v_\alpha (\tilde{f})\sim \sum_{s\in L(\FF_p)}v_t(f(b\delta_\alpha (t)s))$ as claimed.

\end{proof}

\subsection{Some preliminary lemmas} \label{sec-somelem}
Recall that $R$ denotes the $k$-algebra of regular functions on the affine $k$-variety $R_u(B^-)\cap L$. In this section, we consider the term $v_t(f(b\delta_\alpha(t)s))$ of the formula of Lemma \ref{formulalemma} individually. More generally, we consider an arbitrary cocharacter $\delta:\GG_m\to T$ and an arbitrary element $x\in L$ and we study the element
\begin{equation}
f_{\delta,x}(b,t)=f(b\delta(t)x) \in R[t,\frac{1}{t}].
\end{equation}
We view $f_{\delta,x}$ as a rational function in $t$ with coefficients in $R$ by taking the element $b$ in the above formula as a generic element of $R_u(B^-)\cap L$. We denote again by $v_t(-)$ the $t$-adic valuation on $R[t,\frac{1}{t}]$. Recall the following lemma:
\begin{lemma}\label{extlemcochar}
If $\delta$ is $L$-anti-dominant, the morphism $B_L^- \times \GG_m \to  B_L^-, (z,t)\mapsto \delta(t) z \delta(t)^{-1}$ extends (uniquely) to a morphism $B_L^- \times \AA^1 \to B_L^-$. In particular, for each $z\in B_L^-$, the map $\GG_m\to B_L^-$, $t\mapsto \delta(t) z \delta(t)^{-1}$ extends to a map $\AA^1\to B_L^-$.
\end{lemma}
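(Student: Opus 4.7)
The plan is to reduce to an explicit scalar computation on root subgroups by decomposing $B_L^-$ in a way compatible with $T$-conjugation. Since $\delta$ factors through $T\subset B_L^-$, and $T$ normalizes $B_L^-$, the map $(z,t)\mapsto \delta(t) z \delta(t)^{-1}$ takes values in $B_L^-$ and is the restriction of the $T$-conjugation action on $B_L^-$.

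First I would decompose $B_L^- = T\ltimes R_u(B_L^-)$. The conjugation action of $\delta(t)$ on $T$ is trivial because $T$ is commutative, so the corresponding map $T\times\GG_m\to T$ is the first projection and extends trivially to $T\times\AA^1\to T$. For the unipotent radical, I would fix an ordering of the $T$-weights $\alpha$ occurring in $\Lie(R_u(B_L^-))$ and use the standard product decomposition: the multiplication map is an isomorphism of $k$-schemes $\prod_{\alpha} U_\alpha \simeq R_u(B_L^-)$. This isomorphism is $T$-equivariant, with $T$ acting diagonally on the left-hand side and stabilizing each factor $U_\alpha$, so it is enough to extend the conjugation on each root subgroup separately.

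On each $U_\alpha\simeq\GG_a$, the action is given by the explicit formula $\delta(t) u_\alpha(x) \delta(t)^{-1} = u_\alpha(t^{\langle \alpha,\delta\rangle} x)$. The $L$-anti-dominance hypothesis on $\delta$ is designed precisely so that $\langle \alpha,\delta\rangle \geq 0$ for every root $\alpha$ occurring in $R_u(B_L^-)$; hence each scalar map $t\mapsto t^{\langle \alpha,\delta\rangle}$ extends regularly to a morphism $\AA^1\to\AA^1$. Gluing these extensions via the product decomposition yields the desired morphism $B_L^-\times\AA^1\to B_L^-$. Uniqueness of the extension is automatic from the density of $\GG_m$ in $\AA^1$ and the separatedness of $B_L^-$. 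The ``in particular'' statement is then immediate by restricting the extended morphism to $\{z\}\times\AA^1$.

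There is no serious obstacle in this argument; the only point requiring care is the alignment of the sign conventions introduced in \S\ref{zipstrata} (the definition of $\Phi^+$ via $U_{-\alpha}\subset B$, together with the corresponding root decomposition of $R_u(B_L^-)$), after which the proof is a direct reduction to scalar multiplication on each root subgroup.
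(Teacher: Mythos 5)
Your proof is correct and is exactly the argument the paper elides by calling the lemma ``a standard fact'' (there is no proof in the paper, only the remark immediately following the lemma), so what you wrote is a genuine fill-in rather than an alternative route. The structure is right: $T$-conjugation is trivial on the torus factor, and the product decomposition $R_u(B_L^-)\simeq\prod_\gamma U_\gamma$ is $T$-equivariant with $T$ acting by a single character on each factor, so the extension to $t=0$ reduces to a sign condition on the scaling exponents, and uniqueness follows from density of $\GG_m$ plus separatedness.

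You are right to flag the sign alignment as the only delicate point, and it is worth making that check explicit rather than treating it as cosmetic, because the paper's conventions are genuinely non-standard (and in a few places not obviously self-consistent, e.g.\ the choice $\alpha_i=e_{i+1}-e_i$ in the $Sp(2n)$ section sits uneasily with the stated $\Phi_+$). The cleanest sanity check is internal: the only place the lemma is used is \Prop~\ref{propvalt} via \Cor~\ref{corvalt}, with $\delta=\delta_\alpha=-\sum_{i=0}^{r_\alpha-1}p^i({}^{\sigma^i}\alpha^\vee)$ for $\alpha\in\Delta\setminus I$. For any root $\beta$ of $L$ lying in $R_u(B_L^-)$, the exponent by which $\delta_\alpha(t)$ scales the corresponding root group is, up to the overall sign dictated by the $U_{-\alpha}\subset B$ convention, $\pm\langle\beta,\delta_\alpha\rangle=\mp\sum_i p^i\langle\beta,{}^{\sigma^i}\alpha^\vee\rangle$, and each $\langle\beta,{}^{\sigma^i}\alpha^\vee\rangle\leq 0$ since $\beta$ is a nonnegative integral combination of simple roots in $I$ and ${}^{\sigma^i}\alpha\notin I$. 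So the exponents do come out nonnegative, and whatever the paper means by ``$L$-anti-dominant'' for cocharacters must be exactly this condition on $R_u(B_L^-)$; your assertion that the hypothesis ``is designed precisely'' for this is therefore the correct way to read it, even if the raw phrase $\langle\alpha,\delta\rangle\geq 0$ should be restated with the sign convention made explicit to avoid a clash with the opposite-looking formula for $\delta_\alpha$.
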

This is a standard fact when "anti-dominant" is replaced by "dominant" and $B_L^-$ is replaced by $B_L$. Hence, this statement is simply a reformulation for a different convention of positivity. This has the following consequence:

\begin{proposition}\label{propvalt}
Let $\delta\in X_*(T)$ be an $L$-anti-dominant cocharacter and let $z\in B^-_L$. Then one has the equality
\begin{equation}
v_t(f_{\delta,x})=v_t(f_{\delta,zx}).
\end{equation}
\end{proposition}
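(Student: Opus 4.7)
The plan is to rewrite $f_{\delta, zx}$ as a nonzero scalar times the image of $f_{\delta, x}$ under a $k[t]$-algebra automorphism of $R[t,t^{-1}]$ that fixes $t$. Since any such automorphism preserves the $t$-adic valuation, the equality $v_t(f_{\delta, zx}) = v_t(f_{\delta, x})$ will follow immediately.

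First I would set $z(t) := \delta(t) z \delta(t)^{-1}$ and invoke Lemma~\ref{extlemcochar}: by the $L$-anti-dominance of $\delta$, the map $z(t)$ extends to a regular morphism $\AA^1 \to B_L^-$. Decomposing $z = \tau_z u_z$ via the semidirect product structure $B_L^- = T \ltimes (R_u(B^-) \cap L)$ and using that $T$ is abelian and centralized by $\delta(t)$, one obtains
\begin{equation*}
z(t) \;=\; \tau_z \cdot u(t), \qquad u(t) \;:=\; \delta(t) u_z \delta(t)^{-1} \;\in\; R_u(B^-) \cap L,
\end{equation*}
with $u(t)$ regular in $t \in \AA^1$. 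Next, pushing $\tau_z$ to the left of $b$ (which introduces the element $\tau_z^{-1} b \tau_z \in R_u(B^-) \cap L$, since $T$ normalizes this unipotent group) and applying the left $B_L$-equivariance $f(\tau_z y) = \lambda(\tau_z) f(y)$ for $y \in L$, I get
\begin{equation*}
f_{\delta, zx}(b, t) \;=\; f\!\left(b z(t) \delta(t) x\right) \;=\; \lambda(\tau_z)\, f\!\left(b'(t) \delta(t) x\right), \qquad b'(t) \;:=\; (\tau_z^{-1} b \tau_z)\, u(t).
\end{equation*}

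The final step is to observe that the assignment $(b, t) \mapsto (b'(t), t)$ defines an automorphism of the $\AA^1$-scheme $(R_u(B^-) \cap L) \times \AA^1$, with regular inverse $(b', t) \mapsto (\tau_z (b' u(t)^{-1}) \tau_z^{-1}, t)$ (noting that $u(t)^{-1}$ is regular in $t$ since inversion in $R_u(B^-) \cap L$ is a morphism of $k$-varieties). Consequently it induces a $k[t]$-algebra automorphism $\sigma$ of $R[t]$ fixing $t$, which extends uniquely to $R[t,t^{-1}]$. The computation above rewrites as $f_{\delta, zx} = \lambda(\tau_z)\, \sigma(f_{\delta, x})$ in $R[t,t^{-1}]$; since $\lambda(\tau_z) \in k^{\times}$ and $\sigma$ preserves the ideal $(t) \subset R[t]$, we conclude. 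The only nontrivial input is the extension Lemma~\ref{extlemcochar}; the remainder is formal manipulation, and I do not anticipate any obstacle beyond keeping the $T$-normalization of $R_u(B^-) \cap L$ straight.
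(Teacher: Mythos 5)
Your proof is correct and rests on the same key input as the paper's: Lemma~\ref{extlemcochar} applied to $t\mapsto\delta(t)z\delta(t)^{-1}$, together with the observation that $f_{\delta,zx}(b,t)$ is $f_{\delta,x}$ evaluated at a regularly reparametrized $b$. The differences are in packaging. The paper proves only the inequality $v_t(f_{\delta,zx})\geq v_t(f_{\delta,x})$ and then invokes symmetry via $z\leftrightarrow z^{-1}$; you instead construct an explicit $k[t]$-algebra automorphism $\sigma$ of $R[t,t^{-1}]$ with $f_{\delta,zx}=\lambda(\tau_z)\,\sigma(f_{\delta,x})$, which yields the equality in one stroke. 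You are also more careful on a point the paper elides: the paper writes $\theta(b\Gamma(t),t)$, but $b\Gamma(t)$ lands in $B_L^-$ rather than in $R_u(B^-)\cap L$, so one must peel off the torus factor using the $B_L$-equivariance of $f$ — exactly what your decomposition $z=\tau_z u_z$ and the resulting $b'(t)=(\tau_z^{-1}b\tau_z)u(t)\in R_u(B^-)\cap L$ accomplish explicitly. Both routes are valid; yours is slightly more self-contained at the cost of a bit more bookkeeping.
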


\begin{proof}
By symmetry, using the element $z^{-1}$, it suffices to show $v_t(f_{\delta,zx})\geq v_t(f_{\delta,x})$. Denote by $\Gamma: \AA^1\to B^-_L$ the extension of the map $t\mapsto \delta(t) z \delta(t)^{-1}$ afforded by Lemma \ref{extlemcochar}. We have
\begin{equation}
f_{\delta,zx}(b,t)=f(b\delta(t)zx)=f(b\delta(t)z\delta(t)^{-1}\delta(t)x)=f_{\delta,x}(b\Gamma(t),t)
\end{equation}
From this, the result follows easily. Indeed, if $n:=v_t(f_{\delta,x})$, then the function $\theta(b,t)=t^{-n}f_{\delta,x}(b,t)$ extends to a regular map $\theta:(R_u(B^-)\cap L) \times \AA^1\to \AA^1$. By the above formula, $t^{-n}f_{\delta,zx}(b,t)=\theta(b\Gamma(t),t)$. The map $(b,t)\mapsto \theta(b\Gamma(t),t)$ is regular on $(R_u(B^-)\cap L) \times \AA^1$, hence $t^{-n}f_{\delta,zx}(b,t)$ extends too. This shows that $v_t(f_{\delta,zx})\geq n$, hence the result.
\end{proof}

Note that for all $\alpha\in \Delta \setminus I$, the cocharacter $\delta_\alpha$ given by formula \eqref{deltaalpha} is $L$-anti-dominant. Indeed, it is clear that $\alpha^\vee$ is $L$-anti-dominant, and the Galois action preserves this notion because $L,B,T$ are defined over $\FF_p$. Hence $\delta_\alpha$ is a sum of the $L$-anti-dominant cocharacters ${}^{\sigma^i}\alpha^\vee$, so is $L$-anti-dominant. Hence, we obtain the following formula

\begin{corollary}\label{corvalt}
Let $f\in V(\lambda)$ be any non-zero element. For $\alpha \in \Delta \setminus I$, one has the following formula:
\begin{equation}
v_\alpha (\tilde{f})\sim \sum_{s\in B^-_L(\FF_p)\backslash L(\FF_p)}v_t(f(b\delta_\alpha (t)s)).
\end{equation}
where the function $(b,t)\mapsto f(b\delta_\alpha(t)s)$ on $Y$ is viewed as an element of $R[t,\frac{1}{t}]$.
\end{corollary}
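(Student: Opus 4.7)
The plan is to deduce Corollary \ref{corvalt} directly from Lemma \ref{formulalemma} by using Proposition \ref{propvalt} to collapse the sum over $L(\FF_p)$ into a sum over the coset space $B^-_L(\FF_p)\backslash L(\FF_p)$. The key input is that the cocharacter $\delta_\alpha$ defined in \eqref{deltaalpha} is $L$-anti-dominant, as already observed in the paragraph preceding the statement: $\alpha^\vee$ is $L$-anti-dominant for $\alpha \in \Delta \setminus I$, and the Galois action preserves $L$-anti-dominance since $L$, $B$, $T$ are all defined over $\FF_p$; hence each ${}^{\sigma^i}\alpha^\vee$ is $L$-anti-dominant and so is their sum $\delta_\alpha$.

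First I will verify that the function $s \mapsto v_t(f(b\delta_\alpha(t)s))$ is constant on right cosets of $B^-_L(\FF_p)$ in $L(\FF_p)$. Given $s \in L(\FF_p)$ and $z \in B^-_L(\FF_p) \subset B^-_L$, set $x = s$ in the notation of \S\ref{sec-somelem}. Since $\delta_\alpha$ is $L$-anti-dominant, Proposition \ref{propvalt} applies and yields
\begin{equation*}
v_t(f(b\delta_\alpha(t) zs)) \;=\; v_t(f_{\delta_\alpha,zs}) \;=\; v_t(f_{\delta_\alpha,s}) \;=\; v_t(f(b\delta_\alpha(t) s)).
\end{equation*}
Therefore all $|B^-_L(\FF_p)|$ elements of a given right coset $B^-_L(\FF_p)s$ contribute the same term to the sum in Lemma \ref{formulalemma}.

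Grouping the sum over $L(\FF_p)$ by right cosets, one obtains
\begin{equation*}
\sum_{s \in L(\FF_p)} v_t(f(b\delta_\alpha(t)s)) \;=\; |B^-_L(\FF_p)| \cdot \sum_{s \in B^-_L(\FF_p)\backslash L(\FF_p)} v_t(f(b\delta_\alpha(t)s)),
\end{equation*}
where on the right-hand side one picks any set of coset representatives (the value is independent of the choice by the previous paragraph). Since $|B^-_L(\FF_p)|$ is a strictly positive integer, multiplication by this factor preserves the sign in $\{+,-,0\}$. Combining with Lemma \ref{formulalemma} gives $v_\alpha(\tilde{f}) \sim \sum_{s \in B^-_L(\FF_p)\backslash L(\FF_p)} v_t(f(b\delta_\alpha(t)s))$, which is the desired formula.

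There is no real obstacle here: the proof is a formal consequence of Lemma \ref{formulalemma}, Proposition \ref{propvalt}, and the $L$-anti-dominance of $\delta_\alpha$, which are all established. The only point requiring a small verification is that $B^-_L(\FF_p) \subset B^-_L$ so that Proposition \ref{propvalt} may be invoked with $z \in B^-_L(\FF_p)$, which is immediate.
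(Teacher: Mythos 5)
Your proof is correct and is precisely the argument the paper has in mind: the paper states the corollary directly after observing the $L$-anti-dominance of $\delta_\alpha$, treating it as an immediate consequence of Lemma \ref{formulalemma} and Proposition \ref{propvalt}. Your write-up makes explicit the coset-grouping and the observation that multiplying by the positive integer $|B^-_L(\FF_p)|$ preserves the sign in $\{+,-,0\}$, which is exactly what is needed for the relation $\sim$.
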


We will also need some results concerning the Bruhat decomposition. We have the decomposition of $L$ into $B^-_L\times B_L$ orbits
\begin{equation}
L(k)=\bigsqcup_{w\in W_L} C_w , \quad C_w:=B^-_L(k) w B_L(k).
\end{equation}
The Frobenius homomorphism $L\to L$ (recall that $L$ is defined over $\FF_p$) sends the stratum $C_w$ to $C_{\sigma w}$, where $\sigma:W_L\to W_L$ is the induced action on $W_L$. It follows that $L(\FF_p)$ decomposes as follows:
\begin{equation}
L(\FF_p)=\bigsqcup_{w\in W_L(\FF_p)} C_w(\FF_p).
\end{equation}
\begin{lemma}\label{lemmaLFp}
For each $w\in W_L(\FF_p)$, one has $C_w(\FF_p)=B_L^-(\FF_p) w B_L(\FF_p)$.
\end{lemma}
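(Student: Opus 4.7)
The inclusion $\supset$ is immediate, since the product of $\FF_p$-points lies in $B_L^-(k) w B_L(k) \cap L(\FF_p) = C_w(\FF_p)$.

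For the reverse inclusion, the plan is a standard Lang's theorem argument in two steps. First, I would choose an $\FF_p$-rational representative of $w$ in $N_L(T)$. The short exact sequence $1 \to T \to N_L(T) \to W_L \to 1$ of $\FF_p$-group schemes gives a long exact sequence of Galois cohomology, and since $T$ is a connected algebraic group over $\FF_p$, Lang's theorem yields $H^1(\FF_p, T) = 0$. Hence the class $w \in W_L(\FF_p)$ lifts to some $\dot w \in N_L(T)(\FF_p)$. (Any two such lifts differ by an element of $T(\FF_p) \subset B_L(\FF_p) \cap B_L^-(\FF_p)$, so the choice is harmless.)

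Next, fix $g \in C_w(\FF_p)$ and consider the $\FF_p$-scheme
\begin{equation*}
X_g := \{ (b^-, b) \in B_L^- \times B_L \mid b^- \dot w \, b = g \}.
\end{equation*}
A direct computation shows that the subgroup
\begin{equation*}
H := B_L^- \cap \dot w B_L \dot w^{-1} = T \cdot \bigl( U_L^- \cap \dot w U_L \dot w^{-1}\bigr)
\end{equation*}
acts simply transitively on $X_g$ (over $k$) via $c\cdot(b^-,b) = (b^- c,\, \dot w^{-1} c^{-1} \dot w\, b)$; moreover, both $H$ and the action are defined over $\FF_p$ because $\dot w$ is. Thus $X_g$ is an $H$-torsor over $\FF_p$. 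Since $H$ is connected (it is the semidirect product of the connected torus $T$ and a connected unipotent group), Lang's theorem gives $H^1(\FF_p, H) = 0$, so $X_g$ is trivial as a torsor and in particular $X_g(\FF_p) \neq \emptyset$. Any $\FF_p$-point of $X_g$ provides the factorization $g = b^- \dot w\, b$ with $b^- \in B_L^-(\FF_p)$ and $b \in B_L(\FF_p)$, completing the argument.

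There is no real obstacle here beyond invoking Lang's theorem twice (once to produce an $\FF_p$-rational Weyl representative, once to split the torsor $X_g$); the rest is a routine verification that $X_g$ is indeed an $H$-torsor.
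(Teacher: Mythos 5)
Your argument is correct and is essentially the paper's proof in slightly different packaging: both reduce the problem to the vanishing of $H^1(\FF_p, B_L^-\cap wB_Lw^{-1})$ and invoke Lang's theorem for the connected group $B_L^-\cap wB_Lw^{-1}$. The paper phrases this as an explicit $1$-cocycle computation (taking $x=bwb'$ over $k$ and twisting by $\sigma$), while you package the same vanishing as the statement that the fiber $X_g$ is a torsor under this group; your preliminary step lifting $w$ to an $\FF_p$-rational $\dot w$ via $H^1(\FF_p,T)=0$ makes explicit a rationality point the paper leaves implicit, but the core idea is identical.
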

\begin{proof}
One inclusion is clear. Conversely, let $x\in C_w(\FF_p)$, and write $x=bwb'$ with $b\in B^-_L(k)$ and $b'\in B(k)$. We deduce $\sigma(b)w\sigma(b')=bwb'$ for any $\sigma\in \Gal(\overline{\FF}_p/\FF_p)$, hence $b^{-1}\sigma(b)\in B_L^-\cap wB_Lw^{-1}$. It follows that the map $\sigma\mapsto b^{-1}\sigma(b)$ defines a $1$-cocycle in $H^1(\FF_p,B^-_L\cap wB_Lw^{-1})$. This cohomology vanishes by Lang's theorem, so we can write $b^{-1}\sigma(b)= a^{-1}\sigma(a)$ with $a\in B_L^-\cap wB_Lw^{-1}$, hence $ab^{-1}\in B_L^-(\FF_p)$. Finally, $x=bwb'=(ba^{-1})w (w^{-1}awb')$ and $w^{-1}awb'\in B_L$. This shows that we can write $x$ as before $x=bwb'$ with the additional condition $b\in B^-_L(\FF_p)$. But then clearly $b'\in B_L(\FF_p)$ too, so the result follows. 
\end{proof}

\begin{lemma}\label{Cwcard}
One has $\#(C_w(\FF_p))=\#(T(\FF_p)) p^{n}$ where $n=\dim(R_u(B_L))+\ell(w_{0,I}w)$.
\end{lemma}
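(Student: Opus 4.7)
The plan is to realize $C_w(\FF_p)$ as the orbit space of an explicit surjection and then invoke Lang's theorem on a connected $\FF_p$-subgroup to count fibers. First I would apply Lemma \ref{lemmaLFp} to obtain the surjective multiplication map
\begin{equation*}
m \colon B_L^-(\FF_p) \times B_L(\FF_p) \to C_w(\FF_p), \quad (b^-,b)\mapsto b^-\dot{w} b,
\end{equation*}
for any lift $\dot{w}\in N_L(T)$. Two pairs $(b^-,b)$ and $(b_1^-,b_1)$ have the same image if and only if $(b_1^-)^{-1}b^- = \dot{w}b_1 b^{-1}\dot{w}^{-1}$ lies in the subgroup $H_w := B_L^- \cap \dot{w}B_L\dot{w}^{-1}$, so the fibers of $m$ are principal homogeneous spaces under $H_w(\FF_p)$.

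Next I would analyze $H_w$ as an algebraic subgroup of $L$. It contains $T$ and admits a Levi decomposition $H_w = T\ltimes U_w$, where $U_w$ is the connected unipotent subgroup whose root system is $\Phi_L^- \cap w(\Phi_L^+) = \{-\alpha : \alpha\in \Phi_L^+,\ w^{-1}\alpha\in\Phi_L^-\}$. The cardinality of this root set is $\ell(w^{-1})=\ell(w)$ by the standard characterization of the length function. Crucially, $H_w$ is $\FF_p$-defined: it is determined as a subgroup by $w\in W_L$, which is $\sigma$-stable by hypothesis, together with the $\FF_p$-defined data $T,B_L,B_L^-$, so the choice of lift $\dot{w}$ plays no role. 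Connectedness and Lang's theorem then give
\begin{equation*}
\#H_w(\FF_p) = \#T(\FF_p)\cdot p^{\ell(w)}.
\end{equation*}

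Combining with $\#B_L(\FF_p)=\#B_L^-(\FF_p)=\#T(\FF_p)\cdot p^{\dim R_u(B_L)}$, I get
\begin{equation*}
\#C_w(\FF_p) \;=\; \frac{\#B_L^-(\FF_p)\cdot\#B_L(\FF_p)}{\#H_w(\FF_p)} \;=\; \#T(\FF_p)\cdot p^{2\dim R_u(B_L)-\ell(w)}.
\end{equation*}
Finally, since $w_{0,I}=w_{0,L}$ and $w\in W_L$, one has the standard identity $\ell(w_{0,I}w)=\ell(w_{0,I})-\ell(w)=\dim R_u(B_L)-\ell(w)$, which rewrites the exponent as $\dim R_u(B_L)+\ell(w_{0,I}w)$, exactly as claimed.

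The main obstacle is ensuring the Lang-theoretic step really does give equal-size fibers on $\FF_p$-points: this requires $H_w$ to be connected and $\FF_p$-defined. Connectedness is automatic from the Levi decomposition $T\ltimes U_w$, and $\FF_p$-rationality reduces to checking that the root set $\Phi_L^-\cap w(\Phi_L^+)$ is $\sigma$-stable, which is immediate from $w\in W_L(\FF_p)$. No $\FF_p$-rational lift of $w$ is needed (which is good, since it need not exist).
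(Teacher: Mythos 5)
Your proof is correct and follows essentially the same route as the paper's: both use the surjection $B_L^-\times B_L\to C_w$, identify the stabilizer with $B_L^-\cap wB_Lw^{-1}=T\ltimes U_w$, and apply Lang's theorem to count $\FF_p$-points. The only cosmetic difference is that you compute $\dim U_w=\ell(w)$ explicitly via the root set and then invoke $\ell(w_{0,I}w)=\ell(w_{0,I})-\ell(w)$, whereas the paper folds this into the identity $\dim(C_w)=\dim(B_L)+\ell(w_{0,I}w)$; your extra remarks on $\FF_p$-rationality of $H_w$ make a point the paper passes over quickly.
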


\begin{proof}
We have a surjective map $B_L^-\times B_L \to C_w$, $(b,b')\mapsto bwb'^{-1}$ given by the action of $B_L^-\times B_L$ on $C_w$. It identifies $C_w$ with the quotient of $B_L^-\times B_L $ by the stabilizer $S_w$ of $w$ in $B_L^-\times B_L $. Furthermore, $S_w$ is isomorphic to $B_L^-\cap wB_Lw^{-1}$. Again by Lang's theorem we have $\#(C(\FF_p))=\#(B_L(\FF_p))^2 / \#(S(\FF_p))$. We can write $S_w=B_L^-\cap wB_Lw^{-1}$ as a semi-direct product $S=T\rtimes U_w$ where $U_w$ is smooth  connected unipotent. It is well-known that a smooth connected unipotent group is isomorphic as a variety to $\AA^n$ where $n$ is its dimension. Hence we obtain
\begin{equation}
\#(C_w(\FF_p)) = \frac{\#(T(\FF_p))^2 \times p^{2\dim(R_u(B_L))}}{\#(T(\FF_p))p^{\dim(U_w)}}= \#(T(\FF_p)) p^{\dim(C_w)-\dim(T))}.
\end{equation}
It is well-known that $\dim(C_w)=\dim(B_L)+\ell(w_{0,I}w)$, so the result follows.
\end{proof}

\subsection{The highest weight cone}
Recall that we denote by $V(\lambda)$ the $L$-representation $H^0(B_L\backslash L,\Lscr(\lambda))$. It is known that $V(\lambda)$ is not irreducible in general (for small values of $p$) but contains always a unique irreducible $L$-subrepresentation (\cite[II, Cor.~2.3]{jantzen-representations}). We can decompose this $L$-representation uniquely as a direct sum of $T$-eigenspaces
\begin{equation}
V(\lambda)=\bigoplus_{\chi\in X^*(T)} V(\lambda)_\chi
\end{equation}
where $V(\lambda)_\chi$ denotes the space of $f\in V(\lambda)$ such that $t\cdot f=\chi(t)f$ for all $t\in T$. For our purposes, we will be interested in the unique one-dimensional subspace of $V(\lambda)$ which is stable under the action of $B_L$ (the highest weight subspace). It is the subspace $V(\lambda)_{-w_{0,I}\lambda}\subset V(\lambda)$. We will denote by $f_\lambda$ an arbitrary non-zero element of this subspace.

Fix an $L$-antidominant cocharacter $\delta :\GG_m\to T$. Let $x\in L$ an element. We are interested in the behavior of the integer $v_t(f_\lambda(b \delta(t) x))$ when $x$ varies (as usual, $f_\lambda(b \delta(t) x)$ is viewed as an element of $R[t,\frac{1}{t}]$). By \Cor~\ref{propvalt}, this integer depends only on the image of $x$ in $B_L\backslash L$. But since $f_\lambda$ is a $B_L$-eigenfunction, this valuation $v_t(f_\lambda(b \delta(t) x))$ is also invariant if we change $x$ to $xz$ with $z\in B_L$. We deduce:

\begin{lemma}
The integer $v_t(f_\lambda(b \delta(t) x))$ is constant on each stratum $C_w$, i.e it is independent of the choice of the element $x\in C_w$.
\end{lemma}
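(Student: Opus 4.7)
The plan is to combine two invariances that are already in place. The first is the $B_L^-$-invariance of Proposition \ref{propvalt}, which uses the $L$-antidominance of $\delta$. The second is a right $B_L$-invariance arising from the highest weight property of $f_\lambda$.

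First I would use the defining decomposition $C_w = B_L^-(k)\,w\,B_L(k)$ to write an arbitrary $x \in C_w$ as $x = zwy$ with $z \in B_L^-$ and $y \in B_L$. The subspace $V(\lambda)_{-w_{0,I}\lambda}$ is, by definition, the unique one-dimensional $B_L$-stable subspace of $V(\lambda)$ for the right action of $L$, so there exists a character $\chi \colon B_L \to \GG_m$ (restricting to $-w_{0,I}\lambda$ on $T$ and automatically trivial on $R_u(B_L)$) such that $f_\lambda(hy') = \chi(y')\,f_\lambda(h)$ for every $h \in L$ and every $y' \in B_L$. Applied with $h = b\delta(t)zw$ and $y' = y$, this yields
\[
f_\lambda\bigl(b\delta(t)\,x\bigr) \;=\; \chi(y)\,f_\lambda\bigl(b\delta(t)\,zw\bigr),
\]
and since $\chi(y) \in k^\times$ is a scalar independent of $b$ and $t$, both sides have the same $t$-adic valuation in $R[t,\frac{1}{t}]$.

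Next I would invoke Proposition \ref{propvalt}: since $\delta$ is $L$-antidominant, applying that proposition with $w$ playing the role of $x$ and the given $z \in B_L^-$ gives
\[
v_t\bigl(f_\lambda(b\delta(t)\,zw)\bigr) \;=\; v_t\bigl(f_\lambda(b\delta(t)\,w)\bigr).
\]
Chaining the two equalities produces $v_t(f_\lambda(b\delta(t)x)) = v_t(f_\lambda(b\delta(t)w))$, an integer that manifestly does not depend on the chosen representative $x \in C_w$.

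I do not expect a serious obstacle here; both ingredients have been prepared in \S\ref{sec-somelem}. The only point requiring mild care is the verification that $f_\lambda$ is a genuine right $B_L$-eigenfunction, not merely a $T$-weight vector. This is immediate from the one-dimensionality of the highest weight subspace, together with the fact that any algebraic character of $B_L$ is automatically trivial on its unipotent radical.
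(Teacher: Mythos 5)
Your proof is correct and follows the same two-step argument the paper sketches in the paragraph preceding the lemma: left invariance under $B_L^-$ via Proposition~\ref{propvalt}, right invariance under $B_L$ because $f_\lambda$ spans a $B_L$-stable line (hence is a $B_L$-eigenfunction under the right regular action), and the Bruhat decomposition $C_w = B_L^-(k)\,w\,B_L(k)$ to conclude. You have just written out carefully what the paper compresses into two sentences, including the small but useful observation that the $B_L$-eigenvalue $\chi$ is the extension of $-w_{0,I}\lambda$ trivial on $R_u(B_L)$.
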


Using the Bruhat decomposition, inorder to compute $v_t(f_\lambda(b \delta(t) x))$ for all $x$, it remains to compute $v_t(f_\lambda(b \delta(t) w))$ for $w\in W_L$. For this we are not using the strong $B_L$-equivariance property of $f_\lambda$, but only the $T$-equivariance. Hence, we can simply start with $f\in V(\lambda)_\chi$, any $T$-eigenfunction, for some character $\chi$. Then we have the following result:

\begin{proposition} \label{propvalPS}
Let $\delta\in X_*(T)$ be $L$-anti-dominant and $w\in W_L$. One has the formula
\begin{equation}
v_t(f(b \delta(t) w))=-\langle w\chi,\delta\rangle.
\end{equation}
\end{proposition}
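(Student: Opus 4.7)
The plan is to reduce the formula to the $T$-equivariance of $f$ by conjugating $\delta(t)$ past $w$. Fixing a representative $\dot w \in N_L(T)$, since $\dot w$ normalizes $T$ we have $\dot w^{-1}\delta(t)\dot w = (w^{-1}\delta)(t)$, so
\begin{equation*}
b \delta(t) w = bw \cdot (w^{-1}\delta)(t).
\end{equation*}
Evaluating $f$ on this factorisation and using that $f$ is a $T$-eigenvector of weight $\chi$---so that $f(h\mu(s)) = s^{-\langle \chi, \mu\rangle} f(h)$ in the paper's conventions (this is the convention consistent with the description of the highest weight space as $V(\lambda)_{-w_{0,I}\lambda}$)---together with the pairing identity $\langle \chi, w^{-1}\delta\rangle = \langle w\chi, \delta\rangle$, gives the equality
\begin{equation*}
f(b\delta(t)w) = t^{-\langle w\chi, \delta\rangle}\, f(bw)
\end{equation*}
in $R[t, t^{-1}]$. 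The desired formula $v_t(f(b\delta(t)w)) = -\langle w\chi, \delta\rangle$ then follows provided $f(bw)$ is a nonzero element of $R$.

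The main obstacle is to verify this non-vanishing. Consider the right translate $f_w\colon L \to \AA^1$, $h \mapsto f(hw)$. Because $V(\lambda)$ is stable under the right $L$-action, $f_w$ again lies in $V(\lambda)$ and in particular satisfies the left $B_L$-equivariance $f_w(bh) = \lambda(b) f_w(h)$. If $f_w$ vanished on $R_u(B^-) \cap L = R_u(B_L^-)$, this equivariance would force it to vanish on the open dense big Bruhat cell $B_L \cdot R_u(B_L^-)$ of $L$, hence $f_w \equiv 0$, contradicting $f \neq 0$. Therefore $b \mapsto f(bw)$ is a nonzero element of $R$ and the formula holds.

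In summary, the proof consists of a routine $T$-equivariance calculation combined with a density argument; the only nontrivial geometric input is the density of the big Bruhat cell in $L$, together with the $B_L$-equivariance of elements of $V(\lambda)$.
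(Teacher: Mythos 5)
Your proof is correct and follows essentially the same approach as the paper: conjugate $\delta(t)$ past a representative of $w$ to write $b\delta(t)w = bw\cdot(w^{-1}\delta)(t)$, then apply the $T$-eigenvalue property of $f$ to obtain the monomial $t^{-\langle w\chi,\delta\rangle}f(bw)$. (The paper phrases this slightly differently, via the scaling relation $F(zt)=\chi^{-1}(w^{-1}\delta(z)w)F(t)$, which forces $F$ to be a monomial $rt^n$ and pins down $n$; this is the same computation.) Your final step---showing $f(bw)$ is nonzero in $R$ by the $B_L$-equivariance of $h\mapsto f(hw)$ and the density of the big Bruhat cell $B_L\cdot R_u(B_L^-)$ in $L$---makes explicit a point the paper leaves implicit (it asserts $F=rt^n$ without checking $r\neq 0$), so your argument is actually slightly more complete.
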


\begin{proof}
Denote by $F(t)\in R[t,\frac{1}{t}]$ the function $F(t)=f(b \delta(t) w)$. For each $z\in k^\times$, we have
\begin{equation}
F(zt)=f(b \delta(zt) w)=f(b \delta(t) \delta(z) w)=f(b \delta(t) w (w^{-1} \delta(z) w))=\chi^{-1}(w^{-1} \delta(z)w)F(t).
\end{equation}
In particular, $F(t)$ must be a monomial $F(t)=rt^n$ for some $r\in R$ and $n\in \ZZ$. Furthermore, we have $z^n=\chi^{-1}(w^{-1} \delta(z)w)$, which implies $n=-\langle \chi,w^{-1}\delta\rangle=-\langle w\chi,\delta\rangle$. The result follows.
\end{proof}

We can finally prove the main theorem of this section. It gives a formula for the valuation of $\tilde{f}_\lambda$.

\begin{theorem}
Let $\lambda\in X^*_{+,I}(T)$ be an $L$-dominant character. Then for each $\alpha\in \Delta \setminus I$, we have the formula
\begin{equation}
v_\alpha (\tilde{f}_\lambda)\sim -\sum_{w\in W_L(\FF_p)} \sum_{i=0}^{r_\alpha-1} p^{i+\ell(w)} \ \langle w\lambda, \sigma^i\alpha^\vee \rangle.
\end{equation}
\end{theorem}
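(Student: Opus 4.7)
The plan is to combine the valuation formula from Corollary~\ref{corvalt} with the Bruhat decomposition of $L(\FF_p)$ and the exact computation of the single-stratum valuation given by Proposition~\ref{propvalPS}, then perform a change of variables $w\mapsto ww_{0,I}$ to recognize the answer.

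First, I would start from Corollary~\ref{corvalt}, which gives
\[
v_\alpha(\tilde f_\lambda)\sim \sum_{s\in B_L^-(\FF_p)\backslash L(\FF_p)} v_t\bigl(f_\lambda(b\delta_\alpha(t)s)\bigr).
\]
Next, using Lemma~\ref{lemmaLFp} I would stratify $L(\FF_p)=\bigsqcup_{w\in W_L(\FF_p)}C_w(\FF_p)$ with $C_w(\FF_p)=B_L^-(\FF_p)wB_L(\FF_p)$. A direct count (analogous to Lemma~\ref{Cwcard}, using that $B_L\cap w^{-1}B_L^-w$ has $T$ as Levi and unipotent radical of dimension $\ell(w)$) shows that the number of $B_L^-(\FF_p)$-cosets contained in $C_w(\FF_p)$ is $p^{\dim R_u(B_L)-\ell(w)}=p^{\ell(w_{0,I}w)}$.

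Now for each such coset, Proposition~\ref{propvalt} lets me absorb the $B_L^-$-factor of any representative, while the fact that $f_\lambda$ is a $B_L$-eigenfunction (acting on the right, with character extending $-w_{0,I}\lambda$ on $T$) lets me absorb the $B_L$-factor up to a non-zero scalar that is irrelevant for the $t$-valuation. Thus the inner valuation depends only on $w$, and equals $v_t(f_\lambda(b\delta_\alpha(t)w))$. Applying Proposition~\ref{propvalPS} with the weight $\chi=-w_{0,I}\lambda$ and $\delta=\delta_\alpha$ (which is $L$-antidominant since each ${}^{\sigma^i}\alpha^\vee$ is, using that $L,B,T$ are $\FF_p$-rational) gives
\[
v_t\bigl(f_\lambda(b\delta_\alpha(t)w)\bigr)=-\langle w(-w_{0,I}\lambda),\delta_\alpha\rangle=\langle ww_{0,I}\lambda,\delta_\alpha\rangle.
\]
Combining,
\[
v_\alpha(\tilde f_\lambda)\sim \sum_{w\in W_L(\FF_p)} p^{\ell(w_{0,I}w)}\,\langle ww_{0,I}\lambda,\delta_\alpha\rangle.
\]

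Finally, I would perform the change of variables $w'=ww_{0,I}$, which is a bijection of $W_L(\FF_p)$ since $w_{0,I}\in W_L(\FF_p)$. Because conjugation by $w_{0,I}$ sends $\Delta_L$ to $\Delta_L$ (the opposition involution), it preserves lengths in $W_L$, so $\ell(w_{0,I}w)=\ell(w_{0,I}w'w_{0,I})=\ell(w')$; and $ww_{0,I}=w'$. Expanding $\delta_\alpha=-\sum_{i=0}^{r_\alpha-1}p^i\,{}^{\sigma^i}\alpha^\vee$ yields
\[
v_\alpha(\tilde f_\lambda)\sim -\sum_{w\in W_L(\FF_p)}\sum_{i=0}^{r_\alpha-1} p^{i+\ell(w)}\,\langle w\lambda,\sigma^i\alpha^\vee\rangle,
\]
as claimed. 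The only genuinely delicate point is the counting in step two (the exponent $\ell(w_{0,I}w)$); the rest is bookkeeping and a substitution that relies on the identification of the highest weight of $f_\lambda$ as $-w_{0,I}\lambda$.
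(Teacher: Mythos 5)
Your proof is correct and follows essentially the same route as the paper's. The only difference is in how the combinatorial weight $p^{\ell(w_{0,I}w)}$ is produced: the paper starts from Lemma~\ref{formulalemma} (sum over all of $L(\FF_p)$), invokes the constancy of the inner valuation on each Bruhat cell, and then substitutes $\#(C_w(\FF_p))$ from Lemma~\ref{Cwcard}, after which the common positive factor $\#(T(\FF_p))\,p^{\dim R_u(B_L)}$ is dropped since only the sign matters for $\sim$; you instead start from Corollary~\ref{corvalt} (sum over $B_L^-(\FF_p)$-cosets) and count cosets inside each $C_w(\FF_p)$ directly, getting the same $p^{\ell(w_{0,I}w)}$. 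Both give the intermediate expression $\sum_{w\in W_L(\FF_p)} p^{\ell(w_{0,I}w)}\langle ww_{0,I}\lambda,\delta_\alpha\rangle$, and the substitution $w\mapsto ww_{0,I}$ together with $\ell(w_{0,I}ww_{0,I})=\ell(w)$ and the expansion of $\delta_\alpha$ from \eqref{deltaalpha} finishes the argument exactly as in the paper. One tiny imprecision in phrasing: it is $\alpha\mapsto -w_{0,I}\alpha$ (not $w_{0,I}$ itself) that preserves $\Delta_L$; but since $s_\beta=s_{-\beta}$, the conclusion that conjugation by $w_{0,I}$ preserves the Coxeter generators and hence the length function is still correct.
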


\begin{proof}
Start with Lemma \ref{formulalemma}. Combined with Lemma \ref{lemmaLFp}, we obtain the equation
\begin{equation}
v_\alpha (\tilde{f}_\lambda)\sim \sum_{s\in L(\FF_p)}v_t(f_\lambda(b\delta_\alpha (t)s))=\sum_{w\in W_L(\FF_p)}\#(B_L^-(\FF_p)wB_L(\FF_p)) \ v_t(f_\lambda(b\delta_\alpha (t)w)).
\end{equation}
Now use Proposition \ref{propvalPS} and Lemma \ref{Cwcard}, and then substitute $\delta_\alpha$ by the formula given by equation \eqref{deltaalpha}:
\begin{equation}
v_\alpha (\tilde{f}_\lambda)\sim - \sum_{w\in W_L(\FF_p)} \sum_{i=0}^{r_\alpha-1} p^{i+\ell(w_{0,I}w)} \ \langle ww_{0,I}\lambda, \sigma^i\alpha^\vee \rangle.
\end{equation}
Making the change of variable $w\to ww_{0,I}$ (and noting that $\ell(w_{0,I}ww_{0,I})=\ell(w)$), we obtain the desired formula.
\end{proof}

\begin{corollary}\label{lamisin}
Assume $\lambda \in X^*_{+,I}(T)$ satisfies the inequalities
\begin{equation}\label{ineqhw}
\sum_{w\in W_L(\FF_p)} \sum_{i=0}^{r_\alpha-1} p^{i+\ell(w)} \ \langle w\lambda, \sigma^i\alpha^\vee \rangle\leq 0, \quad \forall \alpha \in \Delta \setminus I.
\end{equation}
Then $\lambda \in \langle C_{\zip}\rangle$.
\end{corollary}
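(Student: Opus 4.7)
The plan is to deduce the corollary directly from the preceding theorem, using that $\GF$ is smooth (hence normal) and that the complement $\GF \setminus \GF^{\mu\text{-ord}}$ has a completely explicit description as a union of codimension-one divisors indexed by $\Delta \setminus I$. The non-linear construction $f \mapsto \tilde{f}$ from \eqref{constrftilde} already produces a rational section of $\Lscr(D\lambda)$ on $\GF$ with $D = p^m|L_0(\FF_p)|$, and the hypothesis is designed precisely to control the valuations of $\tilde{f}_\lambda$ along the boundary.

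Concretely, I would begin by applying the preceding theorem to the highest weight section $\tilde{f}_\lambda \in H^0(\GF^{\mu\text{-ord}}, \Lscr(D\lambda))$. By that formula,
\begin{equation*}
v_\alpha(\tilde{f}_\lambda) \sim -\sum_{w \in W_L(\FF_p)} \sum_{i=0}^{r_\alpha-1} p^{i+\ell(w)} \langle w\lambda, \sigma^i \alpha^\vee \rangle
\end{equation*}
for each $\alpha \in \Delta \setminus I$. The inequality \eqref{ineqhw} assumed in the corollary says exactly that the sum on the right is $\leq 0$, so the $\sim$-relation (equality of signs in $\{+,-,0\}$) forces $v_\alpha(\tilde{f}_\lambda) \geq 0$ for every $\alpha \in \Delta \setminus I$.

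Next, I would invoke the description recalled at the beginning of \S\ref{sec-hwc}: the complement $\GF \setminus \GF^{\mu\text{-ord}}$ is equidimensional of codimension one, and its irreducible components are precisely the closures $\overline{\Xcal}_{s_\alpha w_0}$ for $\alpha \in \Delta \setminus I$, along which $v_\alpha$ is the associated valuation. Since $\GF \simeq [E' \backslash G]$ is a smooth, and in particular normal, stack, a rational section having non-negative valuation along every codimension-one boundary component extends uniquely to a global section. Therefore $\tilde{f}_\lambda$ extends to an element of $H^0(\GF, \Lscr(D\lambda))$, which is nonzero because the $L$-eigenfunction $f_\lambda$ is nonzero on a dense subset of $L$ and $\tilde{f}_\lambda$ is built as a product of its translates raised to a power.

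Finally, using the identification $H^0(\GF, \Lscr(D\lambda)) = H^0(\GZip, \Vscr(D\lambda))$ coming from $\pi_* \Lscr(D\lambda) = \Vscr(D\lambda)$ (equation \eqref{pushforward}), one obtains $D\lambda \in C_{\zip}$, and hence $\lambda \in \langle C_{\zip}\rangle$ by the definition \eqref{satcone} of the saturated cone. There is no serious obstacle here: all the work has been done in the preceding theorem and in the structural results \Lem~\ref{complemlem} and \eqref{pushforward}; the corollary is just the extraction of a sufficient condition from the valuation formula.
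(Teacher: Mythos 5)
Your argument is correct and follows the same route as the paper: apply the preceding theorem to get the sign of $v_\alpha(\tilde f_\lambda)$ for each $\alpha\in\Delta\setminus I$, observe that the hypothesis \eqref{ineqhw} forces all these valuations to be non-negative, and conclude by normality (smoothness) of $\GF$ and the codimension-one description of the boundary that $\tilde f_\lambda$ extends to a global section of $\Lscr(D\lambda)$, hence $D\lambda\in C_{\zip}$ and $\lambda\in\langle C_{\zip}\rangle$. The only small inaccuracy is a cosmetic one: in \S\ref{sec-hwc} the standing assumption is that $P$ is defined over $\FF_p$, so $L_0=L$, $S$ is étale and $D=|L(\FF_p)|$ (no $p^m$ factor); your value $D=p^m|L_0(\FF_p)|$ is the general one from \S\ref{sec-ordsec} and reduces to the correct constant here, so nothing breaks.
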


\begin{proof}
Indeed, if these integers are non-positive, then the divisor of $\tilde{f}_\lambda$ is effective on $G$, hence this function extends to a global section. Its weight is a positive multiple of $\lambda$, which proves the result.
\end{proof}

\begin{definition}
We define the highest weight cone $C_{\rm hw}\subset X^*_{+,I}(T)$ as the set of characters $\lambda$ satisfying the inequalities \eqref{ineqhw}.
\end{definition}
It is clear that $C_{\rm hw}$ is saturated and we have just proved that $C_{\rm hw}\subset \langle C_{\zip}\rangle$. Furthermore, we claim that $C_{\rm GS}\subset C_{\rm hw}$. To show this, take $\lambda \in C_{\GS}$ and $\alpha \in \Delta \setminus I$. For such a character, all the summands in equation \eqref{ineqhw} are non-positive. Indeed, for each $i=0,...,r_\alpha-1$ and for each $w\in W_L(\FF_p)$, one has $\langle w\lambda,{}^{\sigma^i}\alpha^\vee\rangle = \langle \lambda,(w^{-1}({}^{\sigma^i}\alpha))^\vee\rangle $. The root ${}^{\sigma^i}\alpha$ is clearly in $\Delta \setminus I$ because $B,L$ are defined over $\FF_p$. Hence $w^{-1}({}^{\sigma^i}\alpha))$ is a positive root, possibly non-simple, which is not a root of $L$. By definition of $C_{\rm GS}$, the claim follows. We have proved the following corollary, which shows that \Conj~\ref{conjGS} holds under the assumption that $P$ is defined over $\FF_p$.

\begin{corollary}\label{inclusions}
One has the inclusions $C_{\rm GS}\subset C_{\rm hw}\subset \langle C_{\zip}\rangle$.
\end{corollary}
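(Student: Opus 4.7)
The corollary has two separate inclusions, and I would handle them independently.

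The second inclusion $C_{\rm hw}\subset \langle C_{\rm zip}\rangle$ is essentially a repackaging of Corollary~\ref{lamisin}: by definition, $\lambda \in C_{\rm hw}$ means precisely that the inequalities \eqref{ineqhw} are satisfied for all $\alpha \in \Delta \setminus I$, which is exactly the hypothesis of Corollary~\ref{lamisin}. So this part requires nothing beyond quoting that result. I would also briefly note that $C_{\rm hw}$ is saturated in $X^*(T)$, since the condition \eqref{ineqhw} is preserved under multiplication by positive integers.

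For the first inclusion $C_{\rm GS}\subset C_{\rm hw}$, the plan is to show that if $\lambda$ satisfies the Griffiths--Schmid conditions of Definition~\ref{GSdef}, then \emph{every single summand} $p^{i+\ell(w)}\langle w\lambda, \sigma^i\alpha^\vee\rangle$ appearing in \eqref{ineqhw} is already $\leq 0$; the full sum is then automatically $\leq 0$. Fix $\alpha \in \Delta\setminus I$, an integer $0\leq i\leq r_\alpha-1$, and $w\in W_L(\FF_p)$. Rewrite the pairing as $\langle w\lambda, \sigma^i\alpha^\vee\rangle = \langle \lambda, w^{-1}\sigma^i\alpha^\vee\rangle$. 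The key step is to verify that $w^{-1}\sigma^i\alpha \in \Phi_+\setminus \Phi_L$, so that the Griffiths--Schmid condition \eqref{ineq2} applies and gives non-positivity.

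To justify this, I would argue in two moves. First, since $L$ (and hence $I$) is defined over $\FF_p$, the Frobenius $\sigma$ permutes $I$ and permutes $\Delta \setminus I$; therefore $\sigma^i\alpha\in \Delta \setminus I \subset \Phi_+\setminus \Phi_L$. Second, I would invoke the standard fact that $W_L$ preserves the set $\Phi_+\setminus \Phi_L$: any element of $W_L$ is a product of simple reflections $s_\beta$ with $\beta\in I$, and each such reflection fixes $\Phi_+\setminus \Phi_L$ as a set (since $s_\beta$ can only change the sign of $\beta$ itself, which lies in $\Phi_L$). Hence $w^{-1}\sigma^i\alpha$ is a positive root not in $\Phi_L$, possibly non-simple, and by \eqref{ineq2} the pairing with $\lambda$ is $\leq 0$.

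There is no real obstacle here; the argument is short once one recognizes that the Griffiths--Schmid inequalities are so much stronger than \eqref{ineqhw} that non-positivity holds termwise. The only point worth verifying carefully is the $W_L$-stability of $\Phi_+\setminus \Phi_L$, which is standard Lie-theoretic bookkeeping. This termwise comparison is precisely what makes Theorem~2 a much weaker condition than the characteristic-zero Griffiths--Schmid criterion, as discussed in the introduction.
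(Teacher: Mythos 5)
Your proposal is correct and follows essentially the same route as the paper: the second inclusion is a direct citation of Corollary~\ref{lamisin}, and the first is proved by showing each summand in \eqref{ineqhw} is individually non-positive, via the identity $\langle w\lambda,\sigma^i\alpha^\vee\rangle = \langle\lambda, w^{-1}\sigma^i\alpha^\vee\rangle$ together with the fact that $W_L$ preserves $\Phi_+\setminus\Phi_L$. The only difference is that you spell out the $W_L$-stability of $\Phi_+\setminus\Phi_L$ which the paper leaves implicit; you might also note, as the paper does, that $B$ (and not just $L$) being defined over $\FF_p$ is what guarantees $\sigma$ preserves $\Delta\setminus I$.
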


%\begin{rmk}\label{hwhrmk}
%Define the "highest weight hyperplane" $H_{\rm hw}\subset X^*(T)$ as the set of $\lambda\in X^*(T)$ where the expression \eqref{ineqhw} is zero. It follows from the above results that for $\lambda\in H_{\rm hw}\cap X^*(T)_{+,I}$ and $f=f_\lambda$ the highest weight of $V(\lambda)$, the function $\tilde{f}$ defined in construction \eqref{constrftilde} has neither poles nor zeros along the non-ordinary components, hence lies in $H^0(\GF, \Lscr(D\lambda))$. Hence we have an association
To summarize, our construction attaches to any character $\lambda$ in the cone $C_{\rm hw}$ an element $\tilde{f}_\lambda\in H^0(\GZip, \Vscr(D\lambda))$. We will see examples (\S\ref{sec-sp6}) of the association
\begin{equation}\label{assoclambda}
\lambda \rightsquigarrow \tilde{f}_\lambda.
\end{equation}

\subsection{The $\FF_p$-split case}
Assume now that $G$ is $\FF_p$-split. Hence the action of Galois on $X^*(T)$ is trivial. In particular, we have $r_\alpha =1$ for all $\alpha \in \Delta\setminus I$. We deduce that the cone $C_{\rm hw}$ is given by the set of $\lambda\in X_{+,I}^*(T)$ such that
\begin{equation}\label{ineqFp}
\sum_{w\in W_L} p^{\ell(w)}  \langle w\lambda, \alpha^\vee \rangle\leq 0, \quad \forall \alpha \in \Delta \setminus I.
\end{equation}
We can simplify further this formula. For $\alpha\in \Delta \setminus I$, denote by $L_\alpha \subset L$ the centralizer in $L$ of the cocharacter $\alpha^\vee$. The type $I_\alpha$ of $L_\alpha$ is the orthogonal in $I$ of $\alpha^\vee$. For any $w\in W_{L_\alpha}$, we have $w\alpha=\alpha$ and $w\alpha^\vee=\alpha^\vee$. By \cite[(2.12)]{Bjorner-Brenti-book}, any element $w\in W_L$ can be written uniquely in the form
\begin{equation}
w=w_\alpha w^\alpha, \quad w_\alpha \in W_{L_\alpha}, w^\alpha \in {}^{I_\alpha} W_{L}
\end{equation}
(recall \S\ref{zipstrata} for the definition of the sets ${}^K W$). Furthermore, one has $\ell(w)=\ell(w_\alpha)+\ell(w^\alpha)$. Hence equation \eqref{ineqFp} can be written as
\begin{equation}\label{ineqsimple}
\sum_{w_\alpha}\sum_{w^\alpha} p^{\ell(w_\alpha)+\ell(w^\alpha)}  \langle w_\alpha w^\alpha \lambda, \alpha^\vee \rangle\leq 0
\end{equation}
where $w_\alpha$ runs in the set $W_{L_\alpha}$ and $w^\alpha$ in ${}^{I_\alpha}W_{L}$. But $\langle w_\alpha w^\alpha \lambda, \alpha^\vee\rangle =\langle  w^\alpha \lambda,w^{-1}_\alpha \alpha^\vee\rangle =\langle  w^\alpha \lambda,\alpha^\vee\rangle$. We deduce that this inequality boils down to
\begin{equation}\label{ineqfinal}
\sum_{w\in {}^{I_\alpha} W_{L}} p^{\ell(w)}  \langle w \lambda, \alpha^\vee \rangle\leq 0
\end{equation}
Comparing to the initial equation, we have simply reduced the indexing set. We will use this simplified formula in the case of a symplectic group $G=Sp(2n)$.

\subsection{Determination of the space $H^0(\GZip,\Vscr(\lambda))$} \label{sec-zipreptheo}
In this section, we continue to assume that $P$ (and hence also $L$) is defined over $\FF_p$. We assume further that $T$ is split over $\FF_p$ to simplify the arguments. Our goal is to determine the space $H^0(\GZip,\Vscr(\lambda))$ uniquely in terms of representation theory of reductive groups. This makes it possible to give a formulation of the cone $C_{\zip}$ which makes no mention of the geometric objects $\GZip$, $\GF$.

We first determine the sections over the ordinary locus. This can be done very generally. Let $(\rho,V)$ be an algebraic representation $\rho:P\to GL(V)$. Recall the construction explained in \S\ref{sec-vector-bundles-gzipz} which attaches a vector bundle $\Vscr(\rho)$ on $\GZip$.
\begin{proposition}\label{ordsecprop}
There is an isomorphism
\begin{equation}
    H^0(\GZip^{\mu\textrm{-ord}},\Vscr(\rho))\simeq V^{L(\FF_p)}.
\end{equation}
\end{proposition}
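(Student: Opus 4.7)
The plan is to trivialize the quotient stack $\GZip^{\mu\textrm{-ord}}$ and then identify global sections of $\Vscr(\rho)$ with invariants of $V$ under the stabilizer of the base point $1 \in U_\mu$. Recall that $U_\mu$ is the unique open $E$-orbit, that $1 \in U_\mu$, and that its stabilizer $S = \Stab_E(1)$ was described in \S\ref{sec-ordsec}. The orbit map $\epsilon \mapsto \epsilon \cdot 1$ yields an $E$-equivariant isomorphism $E/S \simeq U_\mu$. Consequently
\begin{equation*}
\GZip^{\mu\textrm{-ord}} = [E\backslash U_\mu] \simeq [E\backslash (E/S)] \simeq BS.
\end{equation*}
Under this equivalence, $\Vscr(\rho)$ corresponds to the $S$-representation obtained by restricting $\rho$ along the inclusion $S \hookrightarrow E \to P$ (first projection), and global sections of a vector bundle on $BS$ identify with the $S$-invariants of the corresponding representation. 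This gives $H^0(\GZip^{\mu\textrm{-ord}}, \Vscr(\rho)) \simeq V^S$.

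More concretely, working with the description of sections in \eqref{secequa}, one checks this directly: evaluation at $1$ sends a section $f \colon U_\mu \to V$ to $f(1) \in V$, and the $E$-equivariance condition $f(\epsilon \cdot g) = \rho(\epsilon)f(g)$ applied to any $\epsilon \in S$ (which fixes $1$) forces $f(1) \in V^S$. The inverse is well-defined: given $v \in V^S$, the rule $f(\epsilon \cdot 1) := \rho(\epsilon)v$ does not depend on the choice of $\epsilon$ (because two such choices differ by an element of $S$, which fixes $v$) and yields a regular $E$-equivariant function on $U_\mu = E/S$.

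Finally, invoke the running assumption that $P$ is defined over $\FF_p$. As recalled in \S\ref{sec-ordsec}, this forces $L_0 = L$ and $S^\circ = 1$, so $S$ is the constant étale $\FF_p$-group scheme $L(\FF_p) \subset L \subset P$. Thus $V^S = V^{L(\FF_p)}$, which completes the argument. There is no real obstacle here: the only points requiring a moment's care are the equivariance of the orbit map $E \to U_\mu$ (which identifies the quotient stack with $BS$) and the fact that the action of $S$ on $V$ via $E$ is the restriction of the $P$-action via the first projection, both of which are immediate from the definitions.
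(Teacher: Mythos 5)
Your proposal is correct and follows essentially the same route as the paper: identify $\GZip^{\mu\textrm{-ord}}\simeq [E\backslash (E/S)]\simeq [1/S]$, note that the running assumption that $P$ is defined over $\FF_p$ forces $S=\Stab_E(1)$ to be the constant étale group $L(\FF_p)$, and conclude that global sections of $\Vscr(\rho)$ on $[1/S]$ are the $S$-invariants. The explicit evaluation-at-$1$ argument you include is just a concrete unwinding of the same identification and adds nothing new, but it is carried out correctly.
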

\begin{proof}
Recall (\S\ref{sec-ordsec}) that $\GZip^{\mu \textrm{-ord}}\simeq [E\backslash U_\mu]$ where $U_\mu$ denotes the unique open $E$-orbit in $G$. We have $1\in U_\mu$ and $\Stab_E(1)\simeq L(\FF_p)$ (this group is viewed as an etale subgroup of $E$ by the map $a\mapsto (a,a)$). Hence $\GZip^{\mu \textrm{-ord}}\simeq [E\backslash \left(E/ L(\FF_p) \right)]\simeq [1/L(\FF_p)]$. The result follows immediately.
\end{proof}

For $\lambda\in X_{+,L}^*(T)$, recall that we have the $L$-representation $V(\lambda)$ defined in \eqref{repV}. We may decompose it in a sum of $T$-eigenspaces:
\begin{equation}\label{Teigen}
V(\lambda)=\bigoplus_{\chi \in X^*(T)} V(\lambda)_\chi.
\end{equation}
Define a subspace $V(\lambda)_{\leq 0}\subset V(\lambda)$ as follows. It is the direct sum of the $T$-eigenspaces $V(\lambda)_\chi$ for the characters $\chi\in X^*(T)$ which satisfy the condition
\begin{equation}
     \langle \chi, \alpha^\vee \rangle \leq 0 \quad \textrm{ for all }\alpha\in \Delta\setminus I.
\end{equation}
Note that $V(\lambda)_{\leq 0}$ is stable under the action of $T$, but it is not a sub-$L$-representation of $V(\lambda)$.

\begin{theorem}\label{thminter}
There is a commutative diagram where the vertical maps are the natural inclusions, and the horizontal maps are isomorphisms:
\begin{equation}
\xymatrix@1@M=7pt{
H^0(\GZip^{\mu \textrm{-ord}},\Vscr(\lambda)) \ar[r]^-{\simeq} & V(\lambda)^{L(\FF_p)}  \\
H^0(\GZip,\Vscr(\lambda)) \ar[r]^-{\simeq}  \ar@{^{(}->}[u] & V(\lambda)_{\leq 0}\cap V(\lambda)^{L(\FF_p)}  \ar@{^{(}->}[u]}
\end{equation}
\end{theorem}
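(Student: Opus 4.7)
The plan is to deduce the bottom isomorphism from the top one of Proposition~\ref{ordsecprop}, applied to the $L$-representation $V(\lambda)$ and using that $S=L(\FF_p)$ when $P$ is defined over $\FF_p$. The left vertical arrow is restriction along the open immersion $\GZip^{\mu\textrm{-ord}}\hookrightarrow\GZip$; it is injective because $\GZip$ is irreducible and $\GZip^{\mu\textrm{-ord}}$ is dense open, so the commutativity of the square will be automatic. The task then reduces to identifying the image of this restriction inside $V(\lambda)^{L(\FF_p)}$ with $V(\lambda)_{\leq 0}\cap V(\lambda)^{L(\FF_p)}$.

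By Lemma~\ref{complemlem}, the complement $\GZip\setminus\GZip^{\mu\textrm{-ord}}$ is the union of the codimension-one divisors $\overline{S}_\alpha$ for $\alpha\in\Delta\setminus I$, so a section $F\in V(\lambda)^{L(\FF_p)}$ extends to $\GZip$ if and only if each valuation $v_\alpha(F)$ from \eqref{valpha} is non-negative. To compute these valuations I would use the adapted morphism $\psi_\alpha$ of Proposition~\ref{psiadapted} together with Corollary~\ref{sameval}. The key simplification is that for a section $f'$ already $L(\FF_p)$-invariant, no averaging over $L(\FF_p)$ is needed in the manipulations of the proof of Lemma~\ref{formulalemma}, and the same computation yields $\psi_\alpha^*(F)(b,t^{m_\alpha})=f'(b\delta_\alpha(t))$, whence $v_\alpha(F)\sim v_t(f'(b\delta_\alpha(t)))$.

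I would then decompose $f'=\sum_\chi f'_\chi$ along the $T$-eigenspaces of $V(\lambda)$ and apply Proposition~\ref{propvalPS} (with $w=e$ and $\delta=\delta_\alpha$) to each summand, obtaining $f'_\chi(b\delta_\alpha(t))=f'_\chi(b)\,t^{n^\alpha_\chi}$ where $n^\alpha_\chi:=-\langle\chi,\delta_\alpha\rangle$. The hardest step will be to show that the $t$-valuation of the full Laurent series $\sum_\chi f'_\chi(b)\,t^{n^\alpha_\chi}$ really equals $\min\{n^\alpha_\chi:f'_\chi\neq 0\}$, i.e.\ that no accidental cancellations can occur between distinct eigencomponents sharing a common exponent. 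I would settle this via the big-cell injection $V(\lambda)\hookrightarrow\mathrm{Fun}(R_u(B^-)\cap L,k)$: distinct $T$-eigenspaces of $V(\lambda)$ are linearly independent, so their restrictions to $R_u(B^-)\cap L$ remain linearly independent, and therefore for every integer $k$ the coefficient $\sum_{\chi:\,n^\alpha_\chi=k}f'_\chi$ of $t^k$ is generically nonzero as a function of $b$ whenever at least one of its summands is nonzero in $V(\lambda)$.

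Putting the pieces together, the extension condition $v_\alpha(F)\geq 0$ for every $\alpha\in\Delta\setminus I$ becomes $n^\alpha_\chi\geq 0$ for every such $\alpha$ and every $\chi$ with $f'_\chi\neq 0$. Substituting $\delta_\alpha=-\sum_{i=0}^{r_\alpha-1} p^i\sigma^i\alpha^\vee$ rewrites this as the linear inequality on $\chi$ that cuts out $V(\lambda)_{\leq 0}$, thereby identifying the image of the restriction with $V(\lambda)_{\leq 0}\cap V(\lambda)^{L(\FF_p)}$ and completing the diagram.
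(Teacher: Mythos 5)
Your proposal is correct and follows the same high-level outline as the paper: apply Proposition~\ref{ordsecprop} to get the top isomorphism, reduce the bottom to the extension criterion $v_\alpha(F)\geq 0$ across the codimension-one boundary strata, compute these valuations via the adapted morphisms $\psi_\alpha$ of Proposition~\ref{psiadapted} and Corollary~\ref{sameval}, and read off the eigenspace-by-eigenspace contribution from Proposition~\ref{propvalPS}. The genuine divergence is in the step that shows $v_t$ of the full Laurent polynomial $\sum_\chi f'_\chi(b)\,t^{n^\alpha_\chi}$ is the minimum of the individual exponents, i.e.\ that distinct eigencomponents sharing a common exponent do not cancel. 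The paper handles this by invoking the abstract uniqueness Lemma~\ref{lemmauniqueF}; but as stated that lemma is actually too strong and its proof has a gap: on $V=ke_1\oplus ke_2$ with both eigenlines assigned $n(\chi)=0$, one may set $F(a(e_1+e_2))=1$ and $F=0$ on all other nonzero vectors without violating (1)--(3), so uniqueness fails. The step ``we deduce $V_{\geq n}=\bigoplus_{n(\chi)\geq n}V_\chi$'' in its proof silently assumes the filtration is compatible with the eigenspace grading, which is precisely the point at issue. Your argument supplies the missing ingredient directly: distinct $T$-eigenspaces of $V(\lambda)$ are in direct sum, the restriction map $V(\lambda)\hookrightarrow k[R_u(B^-)\cap L]$ is injective because the big cell is dense, hence the coefficient $\sum_{n^\alpha_\chi=k}f'_\chi$ of $t^k$ is a nonzero regular function whenever at least one summand is nonzero in $V(\lambda)$. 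This is more elementary and more complete than the abstract-lemma route. One thing to tighten before finalizing: you leave the last translation of the inequality ``$n^\alpha_\chi\geq 0$ for all $\alpha$ and all contributing $\chi$'' into the defining condition of $V(\lambda)_{\leq 0}$ at the level of a remark; since $\delta_\alpha$ is $-\alpha^\vee$ (not $\alpha^\vee$) in the $\FF_p$-split case, the signs in Proposition~\ref{propvalPS} and in the definition of $V(\lambda)_{\leq 0}$ must be chased explicitly to make sure they line up.
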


\begin{corollary}
The zip cone $C_{\zip}$ is the subset of $X^*_{+,I}(T)$ given by
\begin{equation}
    C_{\zip}=\{\lambda\in X^*_{+,I}(T) \ | \ V(\lambda)_{\leq 0}\cap V(\lambda)^{L(\FF_p)}\neq 0\}
\end{equation}
\end{corollary}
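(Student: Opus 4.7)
My plan is to deduce the theorem from Proposition~\ref{ordsecprop} together with a boundary analysis of which sections on the $\mu$-ordinary locus extend to all of $\GZip$. The top horizontal isomorphism is Proposition~\ref{ordsecprop} applied to $V(\lambda)$, regarded as a $P$-representation through the quotient $P \twoheadrightarrow L$; commutativity of the diagram is then automatic. Openness and density of $\GZip^{\mu\text{-ord}}$ in $\GZip$ make the left vertical arrow injective, so the substantive content is to identify its image inside $V(\lambda)^{L(\FF_p)}$ with $V(\lambda)_{\leq 0}\cap V(\lambda)^{L(\FF_p)}$. Using normality of $G$ and Lemma~\ref{complemlem}, the complement $G\setminus U_\mu$ is equidimensional of codimension one with irreducible components the divisors $\overline{S}_\alpha$ for $\alpha\in \Delta\setminus I$, so any $f\in V(\lambda)^{L(\FF_p)}$ extends to a global section on $\GZip$ if and only if $v_\alpha(f)\geq 0$ for every $\alpha\in \Delta\setminus I$.

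For each such $\alpha$ I would compute $v_\alpha(f)$ using Corollary~\ref{corvalt}. The essential simplification compared to the highest-weight computation already in the paper is that $f$ is itself $L(\FF_p)$-invariant: the auxiliary function $\tilde f$ of \eqref{ftildadef} becomes $f^D$ with $D=|L(\FF_p)|$, so $v_\alpha(\tilde f)\sim v_\alpha(f)$, while right invariance of $f$ under $L(\FF_p)$ collapses every summand $f(b\delta_\alpha(t)s)$ of Corollary~\ref{corvalt} to $f(b\delta_\alpha(t))$. The corollary then reduces to $v_\alpha(f)\sim v_t(f(b\delta_\alpha(t)))$, with $b$ generic in $R_u(B^-)\cap L$. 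Substituting the $T$-weight decomposition $f=\sum_\chi f_\chi$ and using the $T$-eigenvector identity $f_\chi(h\tau)=\chi(\tau)f_\chi(h)$ for $\tau\in T$ yields the Laurent expansion
\[
f(b\delta_\alpha(t)) \;=\; \sum_\chi f_\chi(b)\, t^{\langle \chi,\delta_\alpha\rangle}.
\]

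The hard part will be verifying that the valuation equals the minimum of the exponents, i.e.\ that no cancellation is possible among the coefficients $f_\chi(b)$ of distinct weights. Combining the left $B_L$-equivariance $f_\chi(\tau h)=\lambda(\tau)f_\chi(h)$ with the right $T$-equivariance gives $f_\chi(\tau b \tau^{-1})=(\lambda-\chi)(\tau)\,f_\chi(b)$ for $\tau\in T$, so $b\mapsto f_\chi(b)$ is an eigenvector for the conjugation action of $T$ on $k[R_u(B^-)\cap L]$ of weight $\lambda-\chi$; distinct $\chi$ therefore produce linearly independent coefficient functions. Moreover $f_\chi(b)$ is a non-zero polynomial whenever $f_\chi\neq 0$, since the top Bruhat cell $R_u(B^-)\cap L$ is open dense in $B_L\backslash L$. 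Consequently $v_t(f(b\delta_\alpha(t)))=\min\{\langle \chi,\delta_\alpha\rangle : f_\chi\neq 0\}$. Since $T$ is $\FF_p$-split one has $\delta_\alpha=-\alpha^\vee$ by \eqref{deltaalpha}, so the condition $v_\alpha(f)\geq 0$ for every $\alpha\in \Delta\setminus I$ becomes $\langle \chi,\alpha^\vee\rangle\leq 0$ for every such $\alpha$ and every weight $\chi$ of $f$, which is precisely $f\in V(\lambda)_{\leq 0}$. The corollary then follows immediately by unwinding the definition of $C_{\zip}$.
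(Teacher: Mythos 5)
Your proof correctly identifies that the corollary is immediate once one knows the theorem, and your route to the theorem follows the same overall strategy as the paper (reduce to the ordinary locus via Proposition~\ref{ordsecprop}, use Lemma~\ref{complemlem} for the boundary divisors, then translate the extension condition $v_\alpha(\tilde f)\geq 0$ via Corollary~\ref{corvalt} and Proposition~\ref{propvalPS}). Where you depart from the paper is the ``hard part'' you rightly flag: the assertion that $v_t\bigl(f(b\delta_\alpha(t))\bigr)$ equals the minimum of the exponents, with no cancellation among weights $\chi$ sharing the same pairing against $\delta_\alpha$. The paper handles this by invoking Lemma~\ref{lemmauniqueF}, whose uniqueness claim does not hold as stated: if $n(\chi_1)=n(\chi_2)$ for distinct weights with eigenvectors $e_1,e_2$, then the function that is $1$ on $\bigl(ke_1+ke_2\bigr)\setminus\{0\}$ restricted to the line $\langle e_1+e_2\rangle$ and $0$ elsewhere on $V\setminus\{0\}$ also satisfies properties (1), (2), (3); the paper's proof of the lemma only establishes $V_{\geq n}\supseteq\bigoplus_{n(\chi)\geq n}V_\chi$ and not the reverse inclusion. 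Your direct argument closes this gap: the restricted coefficient functions $b\mapsto f_\chi(b)$ on $R_u(B^-)\cap L$ are $T$-eigenvectors for the conjugation action with pairwise distinct eigenweights $\lambda-\chi$, hence linearly independent, and each is a nonzero polynomial when $f_\chi\neq 0$ because the big cell is open dense. Therefore the coefficient of each power of $t$, being a sum of linearly independent nonzero polynomials, cannot vanish identically, so $v_t$ is exactly the minimum exponent. This is the extra structural input (the $T$-equivariance of the eigencomponents) that the abstract lemma lacks, and it is precisely what makes the valuation computation valid. In short: your approach is the same as the paper's in structure but more rigorous at the decisive step, and what you give is in effect the argument the paper needs but does not quite supply.

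One small caveat: in passing from $\delta_\alpha$ to $\alpha^\vee$ one must track signs carefully (note that $\delta_\alpha=-\alpha^\vee$ is $L$-dominant, not $L$-anti-dominant as the text before Corollary~\ref{corvalt} asserts, and Proposition~\ref{propvalPS} as written gives the exponent $-\langle\chi,\delta\rangle$ rather than $+\langle\chi,\delta\rangle$). Your expansion uses the opposite eigencharacter convention from Proposition~\ref{propvalPS}; both choices yield the same final condition once one is consistent, but it is worth pinning down which convention you adopt and verifying against a sanity check such as $\lambda=\eta_\omega$, for which $\langle\eta_\omega,\alpha^\vee\rangle<0$ for all $\alpha\in\Delta\setminus I$ and the Hasse invariant gives a section, consistent with $V(\eta_\omega)=V(\eta_\omega)_{\leq 0}$.
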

Note that this characterization of the zip cone makes no reference to the stack of $G$-zips and is only formulated in terms of the representation $V(\lambda)$. We may view this $L$-representation as an $L(\FF_p)$-representation or a $T$-representation. It is apparent that the zip cone is related to the interaction between these two points of view.

We now prove \Th~\ref{thminter}. The top line of the diagram is \Prop~\ref{ordsecprop} applied to the representation $V(\lambda)$. To prove the lemma, we need to show that via this isomorphism, the image of the subspace $H^0(\GZip,\Vscr(\lambda))$ is $V(\lambda)_{\leq 0}\cap V(\lambda)^{L(\FF_p)}$. Recall (see \eqref{constrftilde}) that we have a nonlinear map $f\mapsto \tilde{f}$
\begin{equation}
V(\lambda) \longrightarrow V(D\lambda)^{L(\FF_p)}\simeq H^0(\GZip^{\mu\textrm{-ord}},\Vscr(D\lambda))=H^0(\GF^{\mu\textrm{-ord}},\Lscr(D\lambda))
\end{equation}
where $D=|L(\FF_p)|$. Then we may compose this map with the valuations $v_\alpha : H^0(\GF^{\mu\textrm{-ord}},\Lscr(D\lambda)) \to \ZZ$ defined in \eqref{valpha}, for any $\alpha\in \Delta\setminus I$. The sign of $v_\alpha(\tilde{f})$ is given by $\sum_{s\in L(\FF_p)}v_t(f(b\delta_\alpha (t)s))$ for any nonzero $f\in V(\lambda)$ (\Lem~\ref{formulalemma}). Since we assume $T$ to be $\FF_p$-split, $\delta_\alpha$ is just a negative multiple of $\alpha^\vee$, so the sign is that of $-\sum_{s\in L(\FF_p)}v_t(f(b\alpha^\vee(t)s))$.
Note that if we start with $f\in V(\lambda)^{L(\FF_p)}$, then by definition $\tilde{f}=f^D$ and all the terms in the sum are equal, so $v_\alpha(\tilde{f})\sim -v_t(f(b\alpha^\vee(t)))$.

Now, let $f\in H^0(\GZip^{\mu\textrm{-ord}},\Vscr(\lambda))$ be a nonzero section, which we can view as an element of $V(\lambda)^{L(\FF_p)}$. Since $G$ is smooth, $f$ extends to $\GZip$ if and only if $f^D=\tilde{f}$ does. This is the case if and only if its divisor is effective, which means exactly $v_\alpha(\tilde{f})\geq 0$ for all $\alpha\in \Delta\setminus I$. We deduce that $f\in H^0(\GZip,\Vscr(\lambda))$ if and only if $-v_t(f(b\alpha^\vee(t)))\geq 0$ for all $\alpha\in \Delta\setminus I$. To continue the argument, we need the following linear algebra lemma:

\begin{lemma}\label{lemmauniqueF}
Let $V$ be a $k$-vector space which decomposes as $V=\bigoplus_{\chi} V_\chi$. For each $\chi$ such that $V_\chi\neq 0$, let $n(\chi)\in \ZZ$ be an integer. Then there is a unique function $F:V\setminus \{0\}\to \ZZ$ which satisfies:
\begin{enumerate}
\item\label{itemF1} $F(x)=n(\chi)$ for all $x\in V_\chi\setminus \{0\}$.
\item\label{itemF2} $F(ax)=F(x)$ for all $a\neq 0$ and all $x\in V\setminus \{0\}$.
\item\label{itemF3} $F(x+y)\geq \min\{F(x),F(y)\}$ for all $x,y\in V\setminus \{0\}$, $x+y\neq 0$.
\end{enumerate}
\end{lemma}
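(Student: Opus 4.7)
My plan is to exhibit $F$ explicitly and then argue it is the only function satisfying (1)--(3). The natural candidate is
\begin{equation*}
F_0(x) := \min\{n(\chi) : x_\chi \neq 0\},
\end{equation*}
where $x = \sum_\chi x_\chi$ is the canonical decomposition of $x \in V \setminus \{0\}$ along $V = \bigoplus_\chi V_\chi$. For existence, properties (1) and (2) are immediate, since scaling by $a \in k^\times$ preserves the support of the decomposition. For (3), whenever $z = x+y \neq 0$ and $z_\chi \neq 0$, at least one of $x_\chi$ or $y_\chi$ is nonzero, so $n(\chi) \geq \min\{F_0(x), F_0(y)\}$; taking the minimum over such $\chi$ yields the ultrametric inequality.

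For uniqueness, let $F$ be any function satisfying (1)--(3). Iterating (3) along the finite decomposition $x = \sum_\chi x_\chi$ gives $F(x) \geq \min_\chi F(x_\chi) = F_0(x)$. I would then prove $F \leq F_0$ by strong induction on the number $r$ of nonzero components of $x$. The case $r=1$ is (1). For $r \geq 2$, set $m := F_0(x)$, pick $\chi_0$ with $n(\chi_0) = m$, and write $x = x_{\chi_0} + y$. Applying (3) to the identity $x_{\chi_0} = x + (-y)$ (and using (2) to replace $-y$ by $y$) gives
\begin{equation*}
m = F(x_{\chi_0}) \geq \min\{F(x),\, F(y)\}.
\end{equation*}
When $\chi_0$ is the \emph{unique} component realizing the minimum, the inductive hypothesis yields $F(y) = F_0(y) > m$, and the inequality above forces $F(x) \leq m$ as desired.

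The main obstacle will be the case in which the minimum $m$ is realized by several components at once, so that the naive argument fails for every choice of $\chi_0$. To handle this I would first prove that $F$ is forced to equal $m$ on any ``isotypic sum'' $z := \sum_i z_i$ with $z_i \in V_{\chi_i} \setminus \{0\}$ and $n(\chi_i) = m$ for all $i$: the homogeneity (2) allows one to rescale individual summands, and combining such rescalings with (3) through identities of the form $(az_1 + \sum_{i\geq 2} z_i) - (z_1 + \sum_{i\geq 2} z_i) = (a-1)z_1$ constrains the possible values of $F$ on such sums to the minimum value $m$. Once this is available, I would decompose the general $x$ as $x = z + w$, where $z$ gathers precisely the components lying in eigenspaces realizing the minimum and $w$ contains the rest (so $F_0(w) > m$ and hence $F(w) = F_0(w) > m$ by induction on $r$). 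The ultrametric inequality applied to $x = z+w$ together with its symmetric counterpart then yields $F(x) = m = F_0(x)$, completing the induction.
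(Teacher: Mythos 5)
Your setup is right: you exhibit the candidate $F_0(x) = \min\{n(\chi) : x_\chi \neq 0\}$, verify that it satisfies (1)--(3), derive $F \geq F_0$ by iterating (3), and correctly dispose of the case where the minimum $m$ is realized by a unique component by writing $x_{\chi_0} = x + (-y)$ and applying (3) and (2). Your diagnosis that the real difficulty is the case of several components realizing the minimum is also exactly right. But the sketch you give there does not close the gap: from $(az_1 + \sum_{i\geq 2}z_i) - z = (a-1)z_1$ and (3) you only obtain $m \geq \min\{F(z'), F(z)\}$ with $z' = az_1 + \sum_{i\geq 2}z_i$, which says that \emph{at least one} of $F(z), F(z')$ is $\leq m$ but does not tell you which; in particular it is consistent with $F(z) > m$.

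In fact the lemma as stated is false, and the multiple-minimizer case is precisely where uniqueness fails. Take $V = k^2$ with $V_{\chi_1} = ke_1$, $V_{\chi_2} = ke_2$ and $n(\chi_1) = n(\chi_2) = 0$, and define $F(a,b) = 1$ if $a = b \neq 0$ and $F(a,b) = 0$ otherwise on $V\setminus\{0\}$. Conditions (1) and (2) are immediate, and for (3) the only nontrivial case is $F(x) = F(y) = 1$, i.e. $x,y$ both on the punctured diagonal; but then $x+y$ (nonzero by hypothesis) is again on the diagonal so $F(x+y)=1$, and (3) holds. Yet $F((1,1)) = 1 \neq 0 = F_0((1,1))$, so $F \neq F_0$. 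The paper's proof contains the parallel gap: it correctly shows $V_{\geq n} := \{x \neq 0 : F(x)\geq n\}\cup\{0\}$ is a subspace, but then asserts $V_{\geq n} = \bigoplus_{n(\chi)\geq n}V_\chi$, which fails here since $V_{\geq 1}$ is the diagonal line, not a sum of $V_\chi$'s. What rescues the application in \S\ref{sec-zipreptheo} is a structural feature of the specific $F(f) = v_t(f(b\alpha^\vee(t)))$: by \Prop~\ref{propvalPS} each component contributes a monomial $f_\chi(b)\,t^{n(\chi)}$, so the coefficient of $t^m$ in $f(b\alpha^\vee(t))$ is $\sum_{n(\chi)=m}f_\chi$ restricted to the big cell, and this vanishes identically only if every $f_\chi$ with $n(\chi)=m$ vanishes, because the nonzero ones lie in distinct $T$-eigenspaces and the restriction of $V(\lambda)$ to $R_u(B^-)\cap L$ is faithful. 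That no-cancellation property is exactly the extra hypothesis the lemma needs for uniqueness to hold; without it, both your argument and the paper's stall at the same step, and one should instead prove the formula for $v_t$ directly.
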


\begin{proof}
Assume $F$ is a function satisfying these properties. Define $V_{\geq n}:=\{x\in V\setminus\{0\} \ | \ F(x)\geq n\}\cup \{0\}$. By \eqref{itemF2} and \eqref{itemF3}, it is clear that $V_{\geq n}$ is a subspace of $V$ and we have a filtration
\begin{equation}\label{filtV}
...\subset V_{\geq n+1} \subset V_{\geq n} \subset V_{\geq n-1} \subset ...
\end{equation}
If $V_\chi \cap V_{\geq n}\neq 0$, then one has $V_\chi\subset V_{\geq n}$, so we deduce $V_{\geq n}=\bigoplus_{n(\chi)\geq n} V_\chi$. Hence this filtration is independent of $F$. The unique extension $F$ is given by 
\begin{equation}\label{filF}
F(x)=\max\{n\in \ZZ \ | \ x\in V_{\geq n}\}.
\end{equation}
\end{proof}

Fix $\alpha\in \Delta\setminus I$. We claim that the function $V(\lambda)\to \ZZ$,  $f\mapsto v_t(f(b\alpha^\vee(t)))$ satisfies the properties \eqref{itemF1}, \eqref{itemF2}, \eqref{itemF3} above for the decomposition \eqref{Teigen} and the function $n(\chi)=-\langle \chi, \alpha^\vee \rangle$. The first one follows from \Prop~\ref{propvalPS} with $w=1$ and $\delta=\alpha^\vee$. The second one is immediate, and the third one comes from the valuation property of the function $v_t$. From \eqref{filF}, we deduce that for all $f\in V(\lambda)\setminus \{0\}$, we have
\begin{equation}
    v_t(f(b\alpha^\vee(t))) = \max\{n\in \ZZ \ | \ x\in V_{\geq n}^{\alpha}\}
\end{equation}
where $V^{\alpha}_{\geq n}=\bigoplus_{-\langle \chi, \alpha^\vee \rangle\geq n} V(\lambda)_\chi$. We deduce for any $\alpha\in \Delta \setminus I$:
\begin{equation}
     -v_t(f(b\alpha^\vee(t)))\geq 0 \Longleftrightarrow f\in V^\alpha_{\geq 0}.
\end{equation}
Finally, for all $f\in V(\lambda)^{L(\FF_p)}$, we have shown that
\begin{equation}
   f\in H^0(\GZip,\Vscr(\lambda)) \Longleftrightarrow f\in \bigcap _{\alpha \in \Delta\setminus I}V^\alpha_{\geq 0} = V(\lambda)_{\leq 0}
\end{equation}
and this terminates the proof of \Th~\ref{thminter}.

In the next proposition, we will see that the space of global sections simplifies when the weight lies in the Griffiths-Schmid cone.
\begin{proposition}
Let $\lambda \in C_{\GS}$. Then the following hold true
\begin{enumerate}
    \item\label{itemGS1} One has $V(\lambda)_{\leq 0}=V(\lambda)$.
    \item\label{itemGS2} Any section of $\Vscr(\lambda)$ over the ordinary locus $\GZip^{\mu\textrm{-ord}}$ extends to $\GZip$.
    \item\label{itemGS3} One has an isomorphism $  H^0(\GZip,\Vscr(\lambda))\simeq V(\lambda)^{L(\FF_p)}$.
\end{enumerate}
\end{proposition}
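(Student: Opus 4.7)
The plan is to reduce the proposition to part (1), since (2) and (3) are then formal consequences of Theorem~\ref{thminter} and Proposition~\ref{ordsecprop}.

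For (1), the aim is to show that every $T$-weight $\chi$ appearing in $V(\lambda)$ satisfies $\langle\chi,\alpha^\vee\rangle\leq 0$ for all $\alpha\in\Delta\setminus I$. The decisive input is that $V(\lambda)=H^0(L/B_L,\Lscr(\lambda))$ has its character given by the Weyl character formula in arbitrary characteristic (a consequence of Kempf's vanishing theorem), so that the weight support of $V(\lambda)$, counted with multiplicities, coincides with that of the characteristic-zero irreducible $L$-representation of highest weight $\lambda$. By the standard description of this weight support, every weight $\chi$ of $V(\lambda)$ lies in the convex hull of the Weyl orbit $W_L\cdot\lambda$ inside $X^*(T)\otimes\RR$.

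I will then verify that $\langle w\lambda,\alpha^\vee\rangle\leq 0$ for every $w\in W_L$ and every $\alpha\in\Delta\setminus I$. Rewriting the pairing as $\langle\lambda,w^{-1}\alpha^\vee\rangle$, the argument runs exactly as in the proof of Corollary~\ref{inclusions}: since $W_L$ is generated by the simple reflections $s_\beta$ with $\beta\in I$, each such reflection only alters the coefficients of simple roots lying in $I$ when a root is written in the basis $\Delta$; hence $W_L$ preserves the set of positive roots that are not roots of $L$, and so $w^{-1}\alpha\in\Phi_+\setminus I$. By the definition of $C_{\GS}$, this gives $\langle\lambda,w^{-1}\alpha^\vee\rangle\leq 0$. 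Taking the convex combination that expresses $\chi$ in terms of $\{w\lambda\}_{w\in W_L}$ yields $\langle\chi,\alpha^\vee\rangle\leq 0$, which establishes (1).

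Once (1) is proved, the inclusion $V(\lambda)_{\leq 0}\cap V(\lambda)^{L(\FF_p)}\subset V(\lambda)^{L(\FF_p)}$ is an equality, and (3) follows from the isomorphisms of Theorem~\ref{thminter} and Proposition~\ref{ordsecprop}. The same equality combined with these two identifications shows that the restriction map $H^0(\GZip,\Vscr(\lambda))\to H^0(\GZip^{\mu\textrm{-ord}},\Vscr(\lambda))$ is an isomorphism, giving (2). No serious obstacle arises here; the only delicate point is the convex-hull description of the weight support of $V(\lambda)$ in arbitrary characteristic, which is classical.
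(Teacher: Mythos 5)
Your proof is correct, and the reduction of (2) and (3) to (1) is exactly as in the paper. For (1), however, you take a genuinely different route. The paper cites \cite[II, Prop.~2.2\,b)]{jantzen-representations} for the two-sided bound $w_{0,L}\lambda \leq \chi \leq \lambda$ on any weight $\chi$ of $V(\lambda)$, writes $\chi = w_{0,L}\lambda + \sum_i n_i\alpha_i$ with $\alpha_i\in I$, $n_i\geq 0$, and then bounds $\langle\chi,\alpha^\vee\rangle$ term by term: the simple-root pieces $\langle\alpha_i,\alpha^\vee\rangle$ are $\leq 0$ because distinct simple roots pair non-positively, and $\langle w_{0,L}\lambda,\alpha^\vee\rangle = \langle\lambda, w_{0,L}\alpha^\vee\rangle \leq 0$ because $w_{0,L}\alpha$ stays in $\Phi_+\setminus\Phi_L$. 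You instead invoke the full convex-hull description of the weight support, i.e.\ that $\chi$ lies in $\mathrm{conv}(W_L\cdot\lambda)$, and then show $\langle w\lambda,\alpha^\vee\rangle\leq 0$ for every $w\in W_L$. Both arguments are sound, and your inequality $\langle w\lambda,\alpha^\vee\rangle\leq 0$ is precisely the computation that already appears in the paper's proof of Corollary~\ref{inclusions}. The trade-off is in the input: the paper's two-sided bound is cheap (it follows from $H^0(\lambda)^\vee\simeq H^0(-w_{0,L}\lambda)$ and does not need Kempf vanishing), whereas your convex-hull description requires the Weyl character formula in positive characteristic, hence Kempf vanishing, a heavier classical fact. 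Conceptually your argument is perhaps more transparent; the paper's is more elementary in its prerequisites.
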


\begin{proof}
It is clear that assertions \eqref{itemGS2} and \eqref{itemGS3} follow immediately from \eqref{itemGS1} combined with \Th~\ref{thminter}. It remains to show \eqref{itemGS1}. For this, we use \cite[\Prop~2.2 b)]{jantzen-representations} which states that any weight $\chi$ appearing in $V(\lambda)$ satisfies $w_{0,L}\lambda\leq \chi \leq \lambda$. Hence we can write $\chi=w_{0,L}\lambda+\sum_{i=1}^d n_i \alpha_i$ where $\alpha_1,...,\alpha_d\in \Delta_L=I$ and $n_1,...,n_d\in \ZZ_{\geq 0}$. It is well-known that if $\alpha,\beta$ are two distinct simple roots, then $\langle \alpha,\beta^\vee \rangle \leq 0$. Hence if $\alpha\in \Delta\setminus I$, then one has $\langle \alpha_i , \alpha^\vee \rangle \leq 0$ for all $i=1,...,d$. Furthermore, one has $\langle w_{0,L}\lambda , \alpha^\vee \rangle=\langle \lambda , w_{0,L}\alpha^\vee \rangle$ and $w_{0,L}\alpha$ is again a positive root. By definition of $C_{\GS}$, this expression is non-positive. It follows that $\langle\chi,\alpha^\vee \rangle\leq 0$. Since this is true for all weights $\chi$ in $V(\lambda)$ and all $\alpha\in \Delta\setminus I$, we have by definition $V(\lambda)_{\leq 0}=V(\lambda)$.
\end{proof}

\section{The symplectic case}\label{section symplectic}
This section is devoted to the case where $G$ is the symplectic group $Sp_{2n}$ and the zip datum is the usual Hodge-type one. For $n=2,3$, we will give moduli interpretations for the sections that we construct.

\subsection{Preliminaries}
\subsubsection{The group $G$}\label{thegroup}
In this section, we focus on the case when $G$ is the reductive $\FF_p$-group $Sp(V,\Psi)$, where $(V,\Psi)$ is a non-degenerate symplectic space over $\FF_p$ of dimension $2n$, for some integer $n\geq 1$. After choosing an appropriate basis $\Bcal$ for $V$, we assume that $\Psi$ is given in this basis by the matrix
\begin{equation}
\Psi:=\left(\begin{matrix}
& -J \\
J&\end{matrix}\right)
\quad \textrm{ where } \quad
J:=\left(\begin{matrix}
&&1 \\
&\iddots& \\
1&&
\end{matrix}\right)
\end{equation} 
The group $G$ is then defined by:
\begin{equation}\label{group}
G(R):=\{g\in GL_{2n}(R), {}^t g \Psi g =\Psi \}
\end{equation}
for all $\FF_p$-algebras $R$. An $\FF_p$-split maximal torus $T$ is given by the diagonal matrices in $G$, specifically:
\begin{equation}
T(R):=\{ \diag_{2n}(x_1,...,x_n,x^{-1}_n,...,x^{-1}_1), \ \ x_1,..,x_n\in R^\times \}
\end{equation}
where $\diag_d(a_1,...,a_d)$ denote the diagonal matrix of size $d$ with diagonal coefficients $a_1$,...,$a_d$. Define the Borel $\FF_p$-subgroup $B$ of $G$ as the set of the lower-triangular matrices in $G$. For a tuple $(a_1,...,a_n)\in \ZZ^n$, we define a character of $T$ by mapping $\diag_{2n}(x_1,...,x_n,x^{-1}_n,...,x^{-1}_1)$ to $x^{a_1}_1 ... x^{a_n}_n$. We obtain an identification $X^*(T) = \ZZ^n$. Denoting by $(e_1,...,e_n)$ the standard basis of $\ZZ^n$, the $T$-roots of $G$, the $B$-positive roots are given respectively by:
\begin{align}
\Phi&:=\{e_i \pm e_j , 1\leq i \neq j \leq n\} \cup \{ \pm 2e_i, 1\leq i \leq n \}\\
\Phi_+&:=\{e_i \pm e_j , 1\leq i< j \leq n\} \cup \{ 2e_i, 1\leq i \leq n\}
\end{align}
and the $B$-simple roots are $\Delta:=\{\alpha_1,..., \alpha_{n-1},\beta\}$ where:
\begin{align*}
\alpha_i&:=e_{i+1}-e_i \textrm{ for } i=1,...,n-1 \\ \beta&:=2e_n.
\end{align*}
The Weyl group $W:=W(G,T):=N_G(T)/T$ can be identified with the group of permutations $\sigma \in \Sfr_{2n}$ satisfying the relation $\sigma(i)+\sigma(2n+1-i)=2n+1$ for all $1\leq i \leq 2n$.

\subsubsection{Zip datum}
Let $(u_i)_{i=1}^{2n}$ the canonical basis of $k^{2n}$. Define a parabolic subgroup $P\subset G$ containing $B$ as the stabilizer of $\Span_k(u_{n+1},...,u_{2n})$. Similarly, denote by $Q$ the opposite parabolic subgroup of $P$ with respect to $T$, it is the stabilizer of $\Span_k(u_{1},...,u_{n})$. The intersection $L:=P\cap Q$ is a common Levi subgroup and there is an isomorphism $GL_{n,\FF_p}\to L$, $M\mapsto S(M)$, where:
\begin{equation}
S(M):=\left(\begin{matrix}\label{Sdef}
M& \\ & -J {}^t\! M^{-1} J
\end{matrix} \right), \quad M\in GL_n.
\end{equation}
The tuple $\Zcal:=(G,P,L,Q,L,\varphi)$ defines a zip datum. Since $P$ is defined over $\FF_p$, the open $E$-orbit $U_\mu \subset G$ coincides with the open $P\times Q$-orbit (\cite[\Cor~2.12]{Wedhorn-bruhat}). Hence
\begin{equation}
U_\mu=\left\{\left(\begin{matrix}
A&*\\
*&*
\end{matrix} \right)\in G, \ A\in GL_n, \ * \in M_n \right\}.
\end{equation}

The (ordinary) Hasse invariant $H_\mu\in H^0(\GZip^\Zcal,\omega^{p-1})$ is the section given by
\begin{equation}
H_\mu:\left(\begin{matrix}
A&*\\
*&*
\end{matrix} \right) \mapsto \det(A).
\end{equation}

\subsection{The cones $C_{\Sbt}$, $C_{\GS}$, $C_{\rm hw}$.}

Our ultimate goal is to determine the cone $\langle C_{\zip} \rangle$, but this is very difficult. The Schubert, Griffith-Schmidt and highest weight cones are all subcones of $\langle C_{\zip} \rangle$ and they give good approximations from below of the zip cone. First recall that all the cones that we consider are contained in $X^*_{+,I}(T)$, the cone of $L$-dominant characters. In our case, this set identifies with:
\begin{equation}
X^*_{+,I}(T)=\{(a_1,...,a_n), \ a_1\geq a_2 \geq ... \geq a_n \}.
\end{equation}
Since our zip datum is of Hodge-type, we have a Hodge character. It is given by $\eta_\omega = -(p-1,...,p-1)$. Recall the definition of the Schubert cone (\Def~\ref{defSbtcone}). The map $h:X^*(T)\to X^*(T)$ is given in this case by:
\begin{equation}
h:\ZZ^n\to \ZZ^n, \quad (a_1,...,a_n)\mapsto (a_1,...,a_n)-p(a_n,...,a_1).
\end{equation}
\begin{lemma} \label{SchubSp} \ 
\begin{enumerate}
\item The Schubert cone $\langle C_{\Schub} \rangle$ is the set of $\lambda=(a_1,...,a_n)\in \ZZ^n$ satisfying the inequalities
\begin{align*}
(p a_{i+1}+a_{n-i})-(p a_{i}+a_{n+1-i}) &\leq 0 \quad  \textrm{for all }i=1,...,n-1, \\
pa_1+a_n & \leq 0.
\end{align*}
\item The cone $\langle C_{\Schub}\rangle$ is s-generated (see \S\ref{coneter}) by $\eta_\omega=-(p-1,...,p-1)$ and the weights $S_1,...,S_{n-1}$ defined by
\begin{equation}\label{Schubweight}
S_i=(1,...,1,0,...,0)-(0,...,0,p,...,p)
\end{equation}
where both $1$ and $p$ appear $i$ times and $0$ appears $n-i$ times.
\end{enumerate}
\end{lemma}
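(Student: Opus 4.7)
\emph{Plan.} Both parts will follow from Lemma \ref{lemmasplitSchub}, which applies because $T$ is $\FF_p$-split and gives
\[
\lambda \in \langle C_{\Schub}\rangle \;\Longleftrightarrow\; p w_{0,I}\lambda + \lambda \in X^*_-(T);
\]
by construction $C_{\Schub} = h(X^*_+(T))$ where $h(\chi) = \chi - p w_{0,I}\chi$.

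For part (1), the starting point is that under the identification $L \simeq GL_n$ of the previous subsection, the Weyl group $W_L \simeq \Sfr_n$ acts on $X^*(T) = \ZZ^n$ by permuting coordinates, hence the longest element $w_{0,I}$ is the reversal map, and $p w_{0,I}\lambda + \lambda$ is the tuple with $j$-th coordinate $p a_{n+1-j} + a_j$. The antidominance condition $\langle\cdot,\alpha_i^\vee\rangle \leq 0$ for $i = 1,\ldots,n-1$ and $\langle\cdot,\beta^\vee\rangle \leq 0$ then reads $b_i - b_{i+1}\leq 0$ and $b_n\leq 0$; substituting $b_j = pa_{n+1-j} + a_j$ and applying the self-bijection $i\mapsto n-i$ of $\{1,\ldots, n-1\}$ to the first family delivers exactly the inequalities stated.

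For part (2), the key is that $h$ is a $\QQ$-linear automorphism of $X^*(T)\otimes \QQ$, since $w_{0,I}^2 = 1$ yields $(1 - p w_{0,I})(1 + p w_{0,I}) = 1 - p^2 \neq 0$. The dominant cone $X^*_+(T)$ is $\NN$-generated by the fundamental weights $\varpi_i := e_1 + \ldots + e_i$ for $i = 1, \ldots, n$, since any $(a_1,\ldots,a_n)$ with $a_1\geq \ldots\geq a_n \geq 0$ equals $\sum_{i<n}(a_i - a_{i+1})\varpi_i + a_n \varpi_n$. A direct computation using $w_{0,I}\varpi_i = e_{n-i+1}+\ldots+e_n$ for $i<n$ and $w_{0,I}\varpi_n = \varpi_n$ gives $h(\varpi_i) = S_i$ for $i = 1,\ldots, n-1$ and $h(\varpi_n) = \eta_\omega$. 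Thus for any $\lambda\in\langle C_{\Schub}\rangle$, writing $m\lambda = h(\chi)$ with $\chi = (c_1,\ldots,c_n)\in X^*_+(T)$ gives $m\lambda = \sum_{i<n}(c_i - c_{i+1}) S_i + c_n \eta_\omega$ with nonnegative integer coefficients; combined with the trivial reverse inclusion (since each of $\eta_\omega$ and $S_i$ lies in $C_{\Schub}\subset\langle C_{\Schub}\rangle$), this completes the s-generation argument.

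There is no substantive obstacle: the argument is essentially linear algebra via the map $h$, and the only care needed is the bookkeeping of the reindexing in part (1) and the verification that the $\varpi_i$ are integral (immediate in the $Sp_{2n}$ setting).
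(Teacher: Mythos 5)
Your proof is correct and follows the same route as the paper: part (1) by direct application of Lemma~\ref{lemmasplitSchub}, and part (2) by observing that $X^*_+(T)$ is $\NN$-generated by the fundamental weights $\varpi_i$ and computing $h(\varpi_i) = S_i$ for $i < n$, $h(\varpi_n) = \eta_\omega$. The paper's proof is essentially a one-liner for each part; you have simply made explicit the bookkeeping (the reindexing $i \mapsto n-i$ for the antidominance conditions, and the overlap computation showing $h(\varpi_i) = S_i$ even when $i > n/2$) that the paper leaves implicit.
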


\begin{proof}
The first part is \Lem~\ref{lemmasplitSchub}. For the second assertion, note that $X^*_+(T)$ is generated by the fundamental weights $(\mu_i)_{1\leq i\leq n}$ given by $\mu_i=(1,...,1,0,...,0)$ where $1$ appears $i$ times and $0$ appears $n-i$ times. Using the notation of \S\ref{schubcone}, $\langle C_{\Sbt}\rangle$ is generated by $(h(\mu_i))_{1\leq i \leq n}$. Hence the result.
\end{proof}
%The action of $\sigma$ on $\Delta$ is trivial. Hence the condition $\langle C_{\Schub}\rangle +\ZZ\eta_\omega=X^*_{+,I}(T)$ is satisfied if and only if $n=2$, by \Lem~\ref{lemzipSchub}. In this case, it turns out that $\langle C_{\Schub}\rangle =\langle C_{\rm zip}\rangle$ (see \S\ref{nis2}).
Next, we determine explicitly the Griffith-Schmidt cone. Using Definition \ref{GSdef}, we have:
\begin{equation}
C_{\GS}=\{(a_1,...,a_n), \ 0\geq a_1\geq a_2 \geq ... \geq a_n \}.
\end{equation}
In general, there is no inclusion between $C_{\GS}$ and $C_{\Sbt}$. The equivalent conditions of Lemma \ref{lemzipSchub} are satisfied if and only if $n=2$. Indeed, since $G$ is $\FF_p$-split, the Galois action on $\Delta$ is trivial, so condition \eqref{item-frob} is satisfied if and only if $-w_{0,L}$ acts trivially on $I$, which is the case only for $n=2$. If $n>2$, then in particular $C_{\GS}$ is not contained in $C_{\Sbt}$. Conversely, note that none of the $S_i$ (for $1\leq i \leq n-1$) of Lemma \ref{SchubSp} is contained in $C_{\GS}$. We now determine the highest weight cone.

\begin{proposition}
One has the following
\begin{equation}
C_{\rm hw}=\left\{ (a_1,...,a_n)\in \ZZ^n, \ \sum_{i=1}^n p^{n-i}a_i \leq 0 \right\}.
\end{equation}
\end{proposition}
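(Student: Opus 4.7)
The plan is to directly apply the simplified highest-weight inequality \eqref{ineqfinal} together with an explicit combinatorial description of the minimal-length coset representatives ${}^{I_\beta}W_L$. Since $G=Sp_{2n}$ is $\FF_p$-split, one has $r_\alpha=1$ for every simple root and the Galois action on $X^*(T)$ is trivial. Moreover, by our choice of $P$, one has $\Delta\setminus I=\{\beta\}$ with $\beta=2e_n$ and $\beta^\vee=e_n$. Therefore the system \eqref{ineqhw} collapses to a single inequality
\begin{equation*}
\sum_{w\in {}^{I_\beta}W_L} p^{\ell(w)}\langle w\lambda, \beta^\vee\rangle \leq 0.
\end{equation*}

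Next, I identify $W_L$ with $S_n$ acting on the basis $e_1,\ldots,e_n$ via $w\cdot e_j=e_{w(j)}$, so that each $s_{\alpha_i}$ is the transposition $(i,i+1)$. The sub-Levi $L_\beta$ is the centralizer of $e_n$ in $L$, of type $I_\beta=\{\alpha_1,\ldots,\alpha_{n-2}\}$, so $W_{I_\beta}$ is the stabilizer of $n$ in $S_n$. The coset space $W_{I_\beta}\backslash W_L$ has $n$ elements, and I claim that the minimal-length representatives are
\begin{equation*}
w_k := s_{\alpha_{n-1}}s_{\alpha_{n-2}}\cdots s_{\alpha_{n-k}}, \qquad k=0,1,\ldots,n-1,
\end{equation*}
with $\ell(w_k)=k$. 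This is a standard fact about minimal coset representatives in type $A_{n-1}$ when modding out by the stabilizer of the last index; it can be verified directly by checking that each $w_k$ is reduced and that the $w_k$ lie in distinct cosets.

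The key computation is $w_k^{-1}(n)=n-k$: tracing the image of $n-k$ through $w_k$, the leftmost factor $s_{\alpha_{n-k}}$ moves $n-k\mapsto n-k+1$, then each subsequent reflection increments the index by one, giving $w_k(n-k)=n$. Since $w\cdot e_j=e_{w(j)}$, for $\lambda=(a_1,\ldots,a_n)$ we obtain
\begin{equation*}
\langle w_k\lambda,\beta^\vee\rangle = \langle \lambda, w_k^{-1}e_n\rangle = a_{w_k^{-1}(n)}=a_{n-k}.
\end{equation*}
Substituting into the simplified inequality and re-indexing $i=n-k$ yields
\begin{equation*}
\sum_{k=0}^{n-1} p^{k}a_{n-k} = \sum_{i=1}^{n} p^{n-i}a_i \leq 0,
\end{equation*}
which is exactly the stated condition. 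The only mildly delicate step is the identification of ${}^{I_\beta}W_L$ and the computation of $w_k^{-1}(n)$; everything else is a mechanical unwinding of the definitions.
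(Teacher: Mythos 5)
Your proof is correct and follows essentially the same route as the paper: both reduce to the single inequality from \eqref{ineqfinal} with $\Delta\setminus I=\{\beta\}$, $\beta=2e_n$, parametrize ${}^{I_\beta}W_L$ by the value of $\sigma^{-1}(n)$, and observe that the corresponding length is $n-\sigma^{-1}(n)$. The only cosmetic difference is that you exhibit explicit reduced words $w_k=s_{\alpha_{n-1}}\cdots s_{\alpha_{n-k}}$ for the minimal coset representatives, whereas the paper characterizes them directly as permutations with $\sigma^{-1}(1)<\cdots<\sigma^{-1}(n-1)$.
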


\begin{proof}
We can use equation \ref{ineqfinal}, which is available in the $\FF_p$-split case. Here $\Delta\setminus I=\{\beta\}$ where $\beta=2e_n$ (notations in \S\ref{thegroup}). The centralizer $L_\beta$ of $\beta^\vee$ in $L\simeq GL_n$ is the subgroup of the form
\begin{equation}
L_\alpha = \left\{ \left( \begin{matrix}
A & 0\\ 0 & a
\end{matrix} \right), \ A\in GL_{n-1}, a\in \GG_m\right\}.
\end{equation}
We deduce that the set ${}^{I_\alpha} W_L$ identifies with the set of permutations $\sigma \in \Sfr_n$ satisfying $\sigma^{-1}(1)<\sigma^{-1}(2)<...<\sigma^{-1}(n-1)$. Such an element is entirely determined by the value $\sigma^{-1}(n)$. One can see that its length is then equal to $n-\sigma^{-1}(n)$. By indexing the sum on the value of $i=\sigma^{-1}(n)$, we obtain $\sum_{i=1}^n p^{n-i}\langle \lambda, e_i \rangle =\sum_{i=1}^n p^{n-i}a_i$ as claimed.
\end{proof}

As predicted by Corollary \ref{inclusions}, the inclusion $C_{\GS}\subset C_{\rm hw}$ is clear from the equations. For $n>2$, there is no inclusion between $C_{\rm hw}$ and $C_{\Sbt}$. Indeed, the inclusion $C_{\rm hw}\subset C_{\Sbt}$ does not hold because we saw that $C_{\GS}$ is not contained in $C_{\Sbt}$. Conversely, the weights $S_1,...,S_{n-2}$ from \Lem~\ref{SchubSp} are not in $C_{\rm hw}$. However, the weight $S_{n-1}$ lies on the boundary of $C_{\rm hw}$ because it satisfies $\sum_{i=1}^n p^{n-i}a_i=0$.

Next, we find an s-generating set for the cone $C_{\rm hw}$ (recall the terminology \S\ref{coneter}). Denote by $(u_1,...,u_n)$ the canonical $k$-basis of $k^n$. For each $1\leq i \leq n-1$, let $R_{i}\subset GL_n$ be the parabolic subgroup stabilizing the subspace $\Span(u_{1},...,u_i)$. Denote by $\binom{n}{i}_p$ the Gaussian binomial coefficient
\begin{equation}
\binom{n}{i}_p:=\left|GL_n(\FF_p)/R_i(\FF_p) \right|=\frac{(p^{i+1}-1)(p^{i+2}-1)...(p^n-1)}{(p-1)(p^{2}-1)...(p^{n-i}-1)}.
\end{equation}
It is easy to see that $\binom{n}{i}_p$ is a monic polynomial in $p$. Evaluating formally at $p=1$, one recovers the usual binomial coefficient $\binom{n}{i}$. %The degree in $p$ of $\binom{n}{i}_p$ is $i(n-i)$. One has the formulas
%\begin{align}\label{formbinom}
%\binom{n}{i}_p&=p^i\binom{n-1}{i}_p+\binom{n-1}{n-i}_p=p^{n-i}\binom{n-1}{i-1}_p+\binom{n-1}{i}_p \\
%\binom{n}{i}_p&=\binom{n}{n-i}_p .
%\end{align}
For each $1\leq i \leq n-1$, denote by $\eta_i$ the weight $\eta_i=(a,...,a,b,...,b)$ where $a=\binom{n-1}{i}_p$ appears $i$ times and $b=-p^{n-i}\binom{n-1}{i-1}_p$ appears $n-i$ times.

\begin{lemma}\label{hwgen}
One has $C_{\rm hw}=\langle \eta_1,...,\eta_{n-1},\eta_\omega\rangle$.
\end{lemma}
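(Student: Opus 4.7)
The cone $C_{\rm hw}$ is saturated, being defined by the linear inequalities $a_1 \geq \ldots \geq a_n$ and $\sum_{i=1}^n p^{n-i}a_i \leq 0$. Hence the inclusion $\langle \eta_1,\ldots,\eta_{n-1},\eta_\omega\rangle \subset C_{\rm hw}$ reduces to checking that each generator lies in $C_{\rm hw}$. For $\eta_\omega$ this is immediate: it is constant and $\ell(\eta_\omega) = -(p^n-1)<0$, where $\ell$ denotes the linear form $\lambda \mapsto \sum_i p^{n-i}a_i$. For $\eta_i$, $L$-dominance is clear from $\binom{n-1}{i}_p > 0 > -p^{n-i}\binom{n-1}{i-1}_p$, and an explicit computation gives $\ell(\eta_i) = \frac{p^{n-i}}{p-1}\bigl(\binom{n-1}{i}_p(p^i-1) - \binom{n-1}{i-1}_p(p^{n-i}-1)\bigr)$; this vanishes by the elementary Gaussian binomial identity $\binom{n-1}{i}_p(p^i-1) = \binom{n-1}{i-1}_p(p^{n-i}-1)$, which follows directly from the definition. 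Thus each $\eta_i$ actually lies on the boundary hyperplane $\ell=0$.

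For the reverse inclusion, let $\lambda = (a_1,\ldots,a_n) \in C_{\rm hw}$. The plan is to peel off an $\eta_\omega$-component to reduce to the hyperplane $\ker(\ell)$, then decompose in an explicit basis of that hyperplane. Set $c_0 := \ell(\lambda)/\ell(\eta_\omega) \in \QQ_{\geq 0}$ and $\mu := \lambda - c_0\eta_\omega$. Then $\ell(\mu) = 0$, and $\mu$ remains $L$-dominant since subtracting a multiple of $\eta_\omega$ shifts every coordinate by the same constant.

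It remains to decompose $\mu \in \ker(\ell) \cap X^*_{+,I}(T)_\RR$ as a non-negative rational combination of $\eta_1,\ldots,\eta_{n-1}$. Consider the linear map $\varphi \colon \RR^n \to \RR^{n-1}$ sending $(a_1,\ldots,a_n) \mapsto (a_1-a_2,\ldots,a_{n-1}-a_n)$. Its kernel is the diagonal line $\RR\cdot(1,\ldots,1)$, which meets $\ker(\ell)$ trivially since $\ell(1,\ldots,1) = (p^n-1)/(p-1) \neq 0$. Therefore $\varphi$ restricts to a linear isomorphism $\ker(\ell) \xrightarrow{\sim} \RR^{n-1}$. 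Under $\varphi$, the vector $\eta_i$ maps to $\bigl(\binom{n-1}{i}_p + p^{n-i}\binom{n-1}{i-1}_p\bigr)\cdot e_i$, a positive multiple of the $i$-th standard basis vector. Hence $\eta_1,\ldots,\eta_{n-1}$ form a basis of $\ker(\ell)$, and the condition that $\mu$ be $L$-dominant (i.e.\ $\varphi(\mu)$ has non-negative entries) is equivalent to the coefficients of $\mu$ in this basis being non-negative.

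Writing $\mu = \sum_{i=1}^{n-1} c_i\eta_i$ with $c_i \in \QQ_{\geq 0}$, we obtain $\lambda = c_0\eta_\omega + \sum_{i=1}^{n-1} c_i\eta_i$. Clearing denominators yields an integer $N\geq 1$ such that $N\lambda$ is a $\NN$-linear combination of $\eta_\omega,\eta_1,\ldots,\eta_{n-1}$, which is precisely the membership $\lambda \in \langle\eta_1,\ldots,\eta_{n-1},\eta_\omega\rangle$. The only non-routine ingredient is the Gaussian binomial identity used to show $\ell(\eta_i)=0$; everything else is a straightforward simplicial-cone decomposition.
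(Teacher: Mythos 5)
Your proof is correct and follows essentially the same route as the paper: both verify that the $\eta_i$ lie on the hyperplane $\ell=0$ by a direct Gaussian-binomial computation, and both recover the coefficients of a general $\lambda$ in the basis $\{\eta_1,\dots,\eta_{n-1},\eta_\omega\}$ from the successive coordinate differences $a_j-a_{j+1}$ (you package this as the map $(a_1,\dots,a_n)\mapsto(a_1-a_2,\dots,a_{n-1}-a_n)$, the paper just writes the differences out directly). The only cosmetic variation is that you peel off the $\eta_\omega$-component first and then work inside $\ker(\ell)$, whereas the paper decomposes in the full basis at once and extracts $\lambda_\omega\geq 0$ by applying $\ell$ afterward; the two are immediately equivalent.
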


\begin{proof}
After scaling, we see that $\eta_i$ is a positive multiple of $\eta'_i$ defined by $\eta'_i=(1,...,1,b',...,b')$ where $1$ appears $i$ times and $b'=b/a=-\frac{p^n-p^{n-i}}{p^{n-i}-1}$ appears $n-i$ times. It is easy to see that all the weights $\eta'_i$ (hence also the $\eta_i$) lie on the hyperplane $H$ of $X^*(T)\otimes \QQ=\QQ^n$ defined by $\sum_{j=1}^n p^{n-j}a_j=0$. Indeed, this follows from the relation
\begin{equation}
\sum_{j=1}^{i} p^{n-j}-\frac{p^n-p^{n-i}}{p^{n-i}-1} \sum_{j=i+1}^n p^{n-j}=p^{n-i}\left(\frac{p^i-1}{p-1}\right)-\left(\frac{p^n-p^{n-i}}{p^{n-i}-1}\right) \left(\frac{p^{n-i}-1}{p-1}\right) =0.
\end{equation}
In particular, this shows that all the weights $\eta_1,...,\eta_{n-1}$ are contained in $C_{\rm hw}$. We also know that $\eta_w\in C_{\rm hw}$, so this proves one inclusion. Conversely, we show that any element in $C_{\rm hw}$ can be written as $\sum_{i=1}^{n-1} \lambda_i \eta'_i+\lambda_\omega \eta_\omega$ with $\lambda_1,...,\lambda_n,\lambda_\omega\in \QQ_{\geq 0}$. First, we show that $(\eta'_1,...,\eta'_{n-1},\eta_\omega)$ is a $\QQ$-basis of $\QQ^n$. For this, it suffices to show that $(\eta'_1,...,\eta'_{n-1})$ generate $H$. Note that if $v=(v_1,...,v_n)$ is any vector of $H$ such that $v=\sum_{i=1}^{n-1}\lambda_i \eta'_i=0$ for $\lambda_i \in \QQ$, then we deduce immediately $v_{i}-v_{i+1}=\frac{p^n-1}{p^{n-i}-1}\lambda_i$ for all $i=1,...,n-1$. In particular, by taking $v=0$, this shows easily that $(\eta'_1,...,\eta'_{n-1})$ are linearly independent. Since $H$ has dimension $n-1$, it follows that $(\eta'_1,...,\eta'_{n-1})$ generate $H$ over $\QQ$.

Now let $v=(v_1,...,v_n)\in C_{\rm hw}$ and write $v=\sum_{i=1}^{n-1} \lambda_i \eta'_i+\lambda_\omega \eta_\omega$ with $\lambda_1,...,\lambda_{n-1},\lambda_\omega \in \QQ$. We have again $v_{i}-v_{i+1}=\frac{p^n-1}{p^{n-i}-1}\lambda_i \geq 0$ for all $i=1,...,n-1$. Since $v\in X^*_{+,I}(T)$, we have $v_1\geq v_2\geq ... \geq v_n$, so $\lambda_i \geq 0$. Finally, denote by $\psi$ the linear form $\psi(a_1,...,a_n)=\sum_{i=1}^n p^{n-i}a_i$. We have $\psi(v)=\lambda_\omega \psi(\eta_\omega)\leq 0$, hence $\lambda_\omega\geq 0$ as claimed. This terminates the proof.
\end{proof}

\subsection{The polynomial cone}

In the previous section, we obtained subcones of $\langle C_{\zip}\rangle$, which thus provide a lower bound for this cone. In this section, we want to determine an upper bound for the zip cone. Generally speaking, this tends to be more difficult. If $g\in G$, write $g$ as a block matrix of the form
\begin{equation}
g=\left(\begin{matrix}
A(g)&B(g)\\C(g)&D(g)
\end{matrix} \right), \quad A(g),B(g),C(g),D(g)\in M_n.
\end{equation}
This defines four regular functions $A,B,C,D:G\to M_n$.

\begin{lemma}\label{lemmapol} Let $\lambda \in X^*_{+,I}(T)$ and let $f:U_\mu\to \AA^1$ be an element of $H^0(\GF^{\mu\textrm{-ord}},\Lscr(\lambda))$.
\begin{enumerate}
\item \label{item-pol1} There exists a unique regular function $f_0:GL_n\to \AA^1$ such that $f(g)=f_0(A(g))$ for all $g\in U_\mu$.
\item\label{item-pol2} The section $f$ extends to $\GF$ if and only if $f_0$ extends to a function $M_n\to \AA^1$.
\end{enumerate}
\end{lemma}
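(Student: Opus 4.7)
For part (1) the plan is to use the $E'$-equivariance of $f$ to collapse each $g \in U_\mu$ onto its Levi part. Since $P$ and $Q$ are opposite parabolics with common Levi $L$, every $g \in U_\mu = PQ$ admits a unique big-cell factorization $g = xmy$ with $x \in R_u(P)$, $m \in L$, $y \in R_u(Q)$, and a direct block computation shows $m = S(A(g))$. Now $x^{-1} \in R_u(P) \subset B$ has trivial Levi component, so $(x^{-1}, 1) \in E'$; applying this element and using that $\lambda$ is trivial on $R_u(B)$ yields $f(my) = f(x^{-1}g) = f(g)$. Similarly $(1, y) \in E'$ since $y \in R_u(Q) \subset Q$ has Levi component $1$, and the resulting right-translation by $y^{-1}$ gives $f(m) = f(my\cdot y^{-1}) = f(my)$. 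Combining, $f(g) = f(S(A(g)))$, so the regular function $f_0 : GL_n \to \AA^1$, $M \mapsto f(S(M))$, satisfies $f = f_0 \circ A$ on $U_\mu$; uniqueness is forced by this identity.

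For part (2) the direction $(\Leftarrow)$ is immediate: if $f_0$ extends to $M_n$, then $f_0 \circ A$ is a regular function on $G$ which coincides with $f$ on $U_\mu$ by part (1), and hence provides the desired extension of $f$. For $(\Rightarrow)$, suppose $f$ extends to $\tilde f \in k[G]$, and write $f_0 = p/\det^k \in k[GL_n]$ with $p \in k[M_n]$ and $k \geq 0$ chosen minimally (so $\det \nmid p$ when $k > 0$). On $U_\mu$ we have $\tilde f \cdot (\det A)^k = p \circ A$; both sides are regular on $G$, so the identity persists on $G$ by density of $U_\mu$. Hence $(\det A)^k$ divides $p \circ A$ in $k[G]$, and the task is to transfer this divisibility back to $k[M_n]$.

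The main obstacle, and the key input, is to verify that $\det A \in k[G]$ cuts out the reduced irreducible divisor $G \setminus U_\mu$, i.e.\ vanishes with multiplicity exactly $1$ along the non-$\mu$-ordinary locus. This follows from the fact, established in \cite{Koskivirta-Wedhorn-Hasse}, that $\det A$ is (up to a unit) the $\mu$-ordinary Hasse invariant of $\GZip^\mu$, whose vanishing divisor is reduced; alternatively it can be checked by a tangent-space computation at a point $g_0 \in G \setminus U_\mu$ with $\operatorname{rank} A(g_0) = n-1$. Granting this, $A^*$ sends the uniformizer $\det \in \Ocal_{M_n,\{\det = 0\}}$ to a uniformizer of $\Ocal_{G, G \setminus U_\mu}$, so $\operatorname{ord}_{G \setminus U_\mu}(p \circ A) = \operatorname{ord}_{\{\det = 0\}}(p)$. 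Consequently $(\det A)^k \mid p \circ A$ forces $\det^k \mid p$ in $k[M_n]$, contradicting the minimality of $k$ unless $k = 0$.
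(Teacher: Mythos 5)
Your part~(1) and the forward direction of part~(2) are correct and match the paper's route (the paper writes $g=ab^{-1}$ with $(a,b)\in E$ rather than using the big-cell factorization $g=xmy$ explicitly, but these are the same idea).

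The converse direction of part~(2), however, has a genuine gap. Your entire argument hinges on the claim that $\det A\in k[G]$ vanishes to order exactly one along $G\setminus U_\mu$, equivalently that $A^*$ carries the uniformizer $\det$ of $\Ocal_{M_n,\{\det=0\}}$ to a uniformizer of $\Ocal_{G,\,G\setminus U_\mu}$, equivalently that $A:G\to M_n$ is smooth at a generic point of the boundary. You acknowledge this is ``the main obstacle, and the key input,'' but you do not prove it: the reference to \cite{Koskivirta-Wedhorn-Hasse} is not specific enough to serve as a citation (that paper constructs $h_\mu$ and identifies its non-vanishing locus, but does not, to my knowledge, address reducedness of the vanishing divisor in this generality), and the ``tangent-space computation'' you offer as the alternative is exactly the nontrivial verification being deferred. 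Without it, $(\det A)^k\mid p\circ A$ in $k[G]$ gives no information about $\det^k\mid p$ in $k[M_n]$ --- if the ramification index were, say, $e$, you could only conclude $\det^{\lfloor k/e\rfloor}\mid p$.

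The paper sidesteps this issue entirely. Instead of tracking multiplicities in $k[G]$, it measures the order $r$ of $f_0$ along $\{\det=0\}\subset M_n$ (where $\det$ is obviously a uniformizer), assumes $r<0$ for contradiction, and sets $f'_0=f_0\det^{-r}$, which is regular on $M_n$ and nonzero at a generic boundary point. The pullback $f'=f\,h_\mu^{-r}$ is regular on $G$, vanishes on all of $G\setminus U_\mu$ (because $h_\mu$ does), and satisfies $f'=f'_0\circ A$ by density; surjectivity of $A$ then forces $f'_0$ to vanish everywhere on $M_n\setminus GL_n$, a contradiction. This uses only two elementary inputs: surjectivity of $A$, and the set-theoretic vanishing of $h_\mu$ on $G\setminus U_\mu$ --- no knowledge of the multiplicity of $h_\mu$ along the boundary is needed. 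The payoff of your route, were the reducedness claim established, would be the sharper equality $\ord_{\{\det=0\}}(f_0)=\ord_{G\setminus U_\mu}(f)$ rather than just the extension criterion; but as written, the step you need is exactly as hard as the statement you want to avoid.
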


\begin{proof}
We prove \eqref{item-pol1}. Define $f_0:GL_n\to \AA^1$ by $f_0(A):=f(S(A))$ (see equation \eqref{Sdef} for the definition of $S$). Let $g\in U_\mu$ and write $g=ab^{-1}$ with $(a,b)\in E$. We have $f(g)=f(ab^{-1})=f(\overline{a}(\overline{b})^{-1})$ where $\overline{a},\overline{b}\in L$ are the Levi components of $a,b$ respectively. It is easy to check that $\overline{a}(\overline{b})^{-1}=S(A(g))$, so we deduce $f(g)=f(S(A(g)))=f_0(A(g))$. The uniqueness of $f_0$ follows from the fact that the map $A:G\to M_n$ is surjective (we leave this to the reader).

We prove assertion \eqref{item-pol2}. Suppose that $f_0$ extends to $f_0:M_n\to \AA^1$. Then extend $f$ to a map $G\to \AA^1$ by setting $f(g):=f_0(A(g))$ for all $g\in G$. It is clearly regular, and remains $E'$-equivariant by a density argument. Hence $f$ extends to an element  of $ H^0(\GF,\Lscr(\lambda))$. Conversely, suppose that $f$ extends to $\GF$. Viewed as a regular function on $U_\mu$, this amounts to saying that $f$ extends to $G$. We will show that $f_0:GL_n\to \AA^1$ extends to $M_n$. Denote by $r\in \ZZ$ the multiplicity of $f_0$ along the complement of $GL_n$ in $M_n$ and assume $r<0$. Then $f'_0:=f_0 \det^{-r}$ extends to a function $M_n\to \AA^1$ which does not vanish at all points of $M_n\setminus GL_n$. Define $f':=f h_\mu^{-r}$ where $h_\mu$ is the ordinary Hasse invariant. Then $f'$ vanishes everywhere on $G\setminus U_\mu$, and one has $f'(g)=f'_0(A(g))$ for all $g\in G$. Since $A:G\to M_n$ is surjective, the function $f'_0$ vanishes everywhere on $M_n\setminus GL_n$, which is a contradiction. This terminates the proof of the lemma.
\end{proof}

Denote by $[B_L \backslash_\sigma GL_n]$ and $[B_L \backslash_\sigma M_n]$ the quotients stacks where $B_L$ acts on $M_n$ and $GL_n$ by $b\cdot g=bg\varphi(b)^{-1}$. Again, for any $\lambda \in X^*(T)$, there is a line bundle on these stacks naturally attached to $\lambda$. We continue to denote it by $\Lscr(\lambda)$. The following is a reformulation of the previous lemma:

\begin{corollary}\label{corMn}
The map $f\mapsto f_0$ given by \Lem~\ref{lemmapol} defines an isomorphism
\begin{equation}
H^0(\GF^{\mu\textrm{-ord}}, \Lscr(\lambda)) \to H^0([B_L \backslash_\sigma GL_n], \Lscr(\lambda))
\end{equation}
which maps the subspace $H^0(\GF, \Lscr(\lambda))$ to the subspace $H^0([B_L \backslash_\sigma M_n], \Lscr(\lambda))$.
\end{corollary}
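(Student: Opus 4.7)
The plan is to recognize that Corollary \ref{corMn} is essentially a restatement of Lemma \ref{lemmapol} in stack-theoretic language; since the bijection $f \mapsto f_0$ and the extension criterion are already furnished by that lemma, all that remains is to match the equivariance conditions on each side. Sections of $\Lscr(\lambda)$ on $[B_L \backslash_\sigma GL_n]$ are, by definition, regular functions $f_0:GL_n\to\AA^1$ satisfying $f_0(b\,g\,\varphi(b)^{-1})=\lambda(b)\,f_0(g)$ for all $b\in B_L$, where I identify $L\simeq GL_n$ via the matrix $S$ of \eqref{Sdef}; under this identification, the Borel $B_L = B\cap L$ of $L$ corresponds to the subgroup of lower-triangular matrices in $GL_n$, and $\lambda\in X^*(T)$ transports to the same character of the diagonal torus.

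For the forward direction, I would start from $f\in H^0(\GF^{\mu\text{-ord}},\Lscr(\lambda))$ and set $f_0(A):=f(S(A))$. For any lower-triangular $b\in GL_n$, the element $S(b)$ lies in $B\cap L=B_L$, and the pair $(S(b),\varphi(S(b)))$ belongs to $E'=E\cap(B\times G)$: the $E$-condition $\varphi(\overline{S(b)})=\overline{\varphi(S(b))}$ is automatic since $S(b)\in L$, and $S(b)\in B$ since $b$ is lower-triangular. Applying the $E'$-equivariance of $f$ to this pair then gives
\[
f_0(b\,A\,\varphi(b)^{-1})=f(S(b)\,S(A)\,\varphi(S(b))^{-1})=\lambda(S(b))\,f(S(A))=\lambda(b)\,f_0(A),
\]
so $f_0$ indeed lies in $H^0([B_L\backslash_\sigma GL_n],\Lscr(\lambda))$.

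For the converse, given $f_0$ with the $\sigma$-equivariance above, I would define $f:U_\mu\to\AA^1$ by $f(g):=f_0(A(g))$ and verify $E'$-equivariance by block computation. Writing $(\alpha,\beta)\in E'$ as $\alpha=\begin{pmatrix}\alpha_1&0\\ \alpha_3&\alpha_4\end{pmatrix}\in B\cap P$ with $\alpha_1$ lower-triangular and $\beta=\begin{pmatrix}\beta_1&\beta_2\\ 0&\beta_4\end{pmatrix}\in Q$, the $E$-condition forces $\beta_1=\varphi(\alpha_1)$. A straightforward multiplication of block matrices gives $A(\alpha g\beta^{-1})=\alpha_1\,A(g)\,\varphi(\alpha_1)^{-1}$, and since $\alpha_1\in B_L$, the $\sigma$-equivariance of $f_0$ yields $f(\alpha g\beta^{-1})=\lambda(\alpha_1)f(g)=\lambda(\alpha)f(g)$. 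Hence the two constructions are mutually inverse, and the required isomorphism is established.

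Finally, the subspace statement is immediate from Lemma \ref{lemmapol}(2): $f$ extends to $\GF$ if and only if $f_0$ extends to a regular function $M_n\to\AA^1$; and any such extension remains $B_L$-equivariant because both sides of the equivariance relation are regular on $M_n$ and agree on the Zariski-dense open subset $GL_n$. The main obstacle in this proof is nothing deep---only the bookkeeping of block matrix multiplication in the converse direction and the careful identification of $B_L$ with the lower-triangular Borel of $GL_n$---everything else follows mechanically from Lemma \ref{lemmapol} and the definitions of the stacks involved.
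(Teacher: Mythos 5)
Your proof is correct and fills in precisely the bookkeeping that the paper elides: the paper itself gives no proof and introduces the corollary with "The following is a reformulation of the previous lemma," so your job is exactly the one you set yourself, namely to check that the bijection $f\mapsto f_0$ of Lemma~\ref{lemmapol} intertwines the $E'$-equivariance on $U_\mu$ with the $\sigma$-twisted $B_L$-equivariance on $GL_n$. Your block computation $A(\alpha g\beta^{-1})=\alpha_1 A(g)\varphi(\alpha_1)^{-1}$ together with $\beta_1=\varphi(\alpha_1)$ (from $(\alpha,\beta)\in E$) is the right calculation, the observation that $S$ commutes with Frobenius and sends the lower-triangular Borel of $GL_n$ into $B_L$ closes the forward direction, and the density argument for the subspace statement is the standard one. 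This is the natural unwinding of the paper's one-line assertion, so I would call it the same approach, simply made explicit.
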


In particular, any function $f\in H^0(\GF, \Lscr(\lambda))$ gives rise to a regular function $M_n\to \AA^1$, i.e a polynomial in the coefficients of the $n\times n$ matrix. Consider the action of the diagonal torus on $M_n$ by $\sigma$-conjugation $t\cdot A=tA\sigma(t)^{-1}$. For this action, the coefficients functions of $M_n$ are $T$-eigenfunctions and the weight of the coefficient $a_{i,j}$ is $pe_i-e_j$, where $(e_1,...,e_n)$ is the canonical basis of $\ZZ^n$. Similarly, any monomial $m$ is an eigenfunction for this action and write $wt(m)$ for the eigenvalue (which we also call "weight" of $m$). This defines a grading of the $k$-algebra $k[M_n]$ by the monoid $\ZZ^n$ which is coarser than the natural grading by $\NN^{n^2}$. For $\lambda \in \ZZ^n$, denote by $k[M_n]_\lambda$ the subspace of homogeneous polynomials $P\in k[M_n]$ (for the weight $wt$) of weight $\lambda$. We have a decomposition $k[M_n]=\bigoplus_{\lambda \in \ZZ^n} k[M_n]_\lambda$. It is clear that $ k[M_n]_\lambda$ is finite-dimensional.

\begin{lemma}\label{fohom}
Let $f\in H^0(\GF,\Lscr(\lambda))$ be a nonzero section and $f_0\in k[M_n]$ the polynomial attached to $f$ by \Lem~\ref{lemmapol}. Then $f_0\in k[M_n]_\lambda$.
\end{lemma}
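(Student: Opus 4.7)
The idea is simply to transport the $E'$-equivariance of $f$ to a $T$-equivariance of $f_0$ via the group homomorphism $S:GL_n\to L$.

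First I would exploit the natural embedding of $T$ into $E'$. Since $T$ is $\FF_p$-split and lies in $B$, the map $t\mapsto (t,\varphi(t))$ defines a homomorphism $T\to E'$: indeed $t\in B$, $\varphi(t)\in {}^zB$, and the Levi condition $\varphi(\bar t)=\bar{\varphi(t)}$ is tautological on the torus. The $E'$-equivariance of $f$ therefore specializes to
\[
f(t\,g\,\varphi(t)^{-1})=\lambda(t)\,f(g),\quad t\in T,\ g\in G.
\]

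Next I would apply this identity with $g=S(A)$ for $A\in GL_n$. Under the isomorphism $S:GL_n\to L$, the torus $T$ corresponds to the diagonal torus $T_L\subset GL_n$; write $t=S(t_L)$ with $t_L\in T_L$. Since $S$ is a group homomorphism defined over $\FF_p$, it commutes with $\varphi$, so
\[
t\,S(A)\,\varphi(t)^{-1}=S(t_L)\,S(A)\,S(\varphi(t_L))^{-1}=S\bigl(t_L\,A\,\varphi(t_L)^{-1}\bigr).
\]
Using the definition $f_0(X)=f(S(X))$, this gives
\[
f_0\bigl(t_L\,A\,\varphi(t_L)^{-1}\bigr)=\lambda(t_L)\,f_0(A),\quad t_L\in T_L,\ A\in GL_n,
\]
where I identify $\lambda$ with its restriction to $T_L$ under the isomorphism $T\simeq T_L$ induced by $S$ (this matches the character identification $X^*(T)=\ZZ^n$ fixed in \S\ref{thegroup}).

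For the last step I would observe that $GL_n$ is Zariski dense in $M_n$ and $f_0$ extends to $M_n$ by Lemma~\ref{lemmapol}\eqref{item-pol2} applied in reverse (here $f\in H^0(\GF,\Lscr(\lambda))$ by hypothesis), so the displayed identity above propagates by density to
\[
f_0\bigl(t_L\,A\,\varphi(t_L)^{-1}\bigr)=\lambda(t_L)\,f_0(A)
\]
for \emph{all} $A\in M_n$. This is precisely the statement that $f_0$ is a $T_L$-eigenfunction of weight $\lambda$ for the $\sigma$-conjugation action $t_L\cdot A=t_L A\,\varphi(t_L)^{-1}$, i.e.\ $f_0\in k[M_n]_\lambda$. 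There is no real obstacle; the only point requiring care is matching conventions so that $\lambda(S(t_L))=\lambda(t_L)$ under the chosen identification $X^*(T)=\ZZ^n=X^*(T_L)$.
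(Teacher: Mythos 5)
Your proof is correct and follows essentially the same route as the paper's. Both arguments reduce to observing that $f_0$ inherits a $T$-equivariance property from the $E'$-equivariance of $f$ and then concluding that $f_0$ lies in the corresponding $T$-isotypic component of $k[M_n]$; the paper phrases this by decomposing $f_0 = \sum_{\lambda'} P_{\lambda'}$ into $T$-eigenspaces and invoking linear independence of characters, whereas you state directly that the transformation rule $f_0(t_L A\varphi(t_L)^{-1}) = \lambda(t_L)f_0(A)$ means $f_0$ is a $T$-eigenfunction and hence lies in $k[M_n]_\lambda$ — these are the same observation. Your density step (passing from $GL_n$ to $M_n$) is a point the paper leaves implicit but is correctly handled.
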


\begin{proof}
The function $f_0$ satisfies $f_0(bx \varphi(b^{-1}))=\lambda(b)f_0(x)$ for all $b\in B_L$ and $x\in M_n$. Decompose $f$ as $f=\sum_{\lambda'}P_{\lambda'}$ where $P_{\lambda'}\in k[M_n]_{\lambda'}$. We get $\lambda(b) f_0(x) =\sum_{\lambda'} \lambda'(b) P_{\lambda'}(x)$ for all $b\in B_L$ and all $x\in M_n$. By linear independence of characters, this implies that $P_{\lambda'}=0$ for all $\lambda'\neq \lambda$. This shows the result.
\end{proof}
Define the \emph{polynomial cone} by
\begin{equation}
C_{\rm pol} =\langle e_i-p e_j, \ 1\leq i,j\leq n\rangle.
\end{equation}
Recall that $\langle ... \rangle$ denotes the s-generated cone (\S\ref{coneter}). We proved the inclusion $\langle C_{\rm zip}\rangle \subset  C_{\rm pol}$, therefore:

\begin{proposition}\label{zippol}
One has $\langle C_{\rm zip}\rangle \subset  C_{\rm pol} \cap X^*_{+,I}(T)$.
\end{proposition}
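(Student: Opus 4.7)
The inclusion $\langle C_{\rm zip}\rangle \subset X^{*}_{+,I}(T)$ was already recorded in \S\ref{coneter} (since $\Vscr(\lambda)=0$ whenever $\lambda$ is not $L$-dominant, and $X^{*}_{+,I}(T)$ is saturated in $X^{*}(T)$). Consequently, the only content of the proposition is the inclusion $\langle C_{\rm zip}\rangle \subset C_{\rm pol}$. The plan is to reduce this to a purely combinatorial statement about monomials in the entries of an $n\times n$ matrix by means of \Cor~\ref{corMn} and \Lem~\ref{fohom}.

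Since $C_{\rm pol}$ is saturated by construction, it suffices to prove $C_{\rm zip}\subset C_{\rm pol}$. So let $\lambda\in C_{\rm zip}$ and choose a nonzero $f\in H^{0}(\GF,\Lscr(\lambda))$. By \Cor~\ref{corMn} it extends to a regular function $f_{0}\colon M_n\to \AA^{1}$, and by \Lem~\ref{fohom} this polynomial is homogeneous of weight $\lambda$ for the $T$-action on $k[M_n]$, i.e. $f_{0}\in k[M_n]_{\lambda}$.

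Now decompose $f_{0}$ in the standard monomial basis of $k[M_n]=k[a_{i,j}]_{1\leq i,j\leq n}$. Every monomial $m=\prod a_{i,j}^{n_{ij}}$ is a $T$-eigenfunction with weight $\sum_{i,j}n_{ij}(p e_{i}-e_{j})$, which is a non-negative integer combination of the generators of $C_{\rm pol}$ (after the obvious index swap $i\leftrightarrow j$, which gives the same set of generators). Since $f_{0}\neq 0$, at least one such monomial $m$ is nonzero, and by homogeneity it has weight $\lambda$. Therefore $\lambda$ is an $\NN$-linear combination of the generators of $C_{\rm pol}$, so $\lambda\in C_{\rm pol}$ a fortiori.

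In short, there is no real obstacle: the substance of the argument is already contained in \Lem~\ref{lemmapol}, \Cor~\ref{corMn} and \Lem~\ref{fohom}. Once one knows that every section on $\GF$ comes from a homogeneous polynomial of weight $\lambda$ in the matrix entries, the conclusion is immediate from the list of $T$-weights of those entries.
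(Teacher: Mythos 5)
Your proposal takes exactly the approach the paper has in mind: the paper omits the proof, presenting the proposition as an immediate consequence of \Cor~\ref{corMn} and \Lem~\ref{fohom} together with the explicit list of $T$-weights of the coordinates $a_{i,j}$ (``We proved the inclusion \dots, therefore:''). Your reduction to the statement $C_{\rm zip}\subset C_{\rm pol}$ using saturation, the invocation of \Cor~\ref{corMn} and \Lem~\ref{fohom}, and the monomial decomposition of $f_0$ are all precisely what is needed.

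One point, however, deserves correction. You write that each monomial $\prod a_{i,j}^{n_{ij}}$ has weight $\sum_{i,j}n_{ij}(pe_i-e_j)$ and then invoke ``the obvious index swap $i\leftrightarrow j$'' to land among the generators $e_i-pe_j$ of $C_{\rm pol}$. That swap does not do what you want: replacing $(i,j)$ by $(j,i)$ in $pe_i-e_j$ yields $pe_j-e_i=-(e_i-pe_j)$, the \emph{negative} of a generator of $C_{\rm pol}$, and the sets $\{pe_i-e_j\}$ and $\{e_i-pe_j\}$ are not related by index permutation for $p>1$. The resolution is that the weight of $a_{i,j}$ is in fact $e_i-pe_j$ (the sentence $pe_i-e_j$ in the paper is a typo): for $t=\diag(t_1,\dots,t_n)$ the action is $t\cdot A=tA\varphi(t)^{-1}$, so $(t\cdot A)_{i,j}=t_i\,a_{i,j}\,t_j^{-p}$, giving the character $e_i-pe_j$. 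This is confirmed elsewhere in the paper, e.g.\ $wt(\gamma_{r,s})=e_r-pe_s$ in \S\ref{sec-sympex} and the computation $wt(\epsilon)=(1,0,-p^2)$. With the corrected weight formula, every monomial weight is already an $\NN$-combination of the generators $e_i-pe_j$ of $C_{\rm pol}$, and your conclusion follows with no index gymnastics needed.
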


It is easy to see that $C_{\rm pol}$ is s-generated by the vectors $e_i-pe_j$ for  $a\leq i\neq j \leq n$. These vectors are the extremal rays of the cone $C_{\rm pol}$. The above proposition is enough to determine exactly $\langle C_{\zip}\rangle$ in the case $n=2$ (see later). For $n\geq 3$, we must sharpen this result. For any subset $\Sigma\subset \{1,...,n\}^2$, define a monomial cone $C_\Sigma \subset C_{\rm pol}$ by the formula
\begin{equation}
C_{\Sigma} =\langle e_i-p e_j, \ (i,j)\in \Sigma \rangle.
\end{equation}
We say that a matrix $M=(m_{i,j})_{1\leq i,j\leq n}$ has support in a subset $\Sigma\subset \{1,...,n\}^2$ if $m_{i,j}= 0$ for $(i,j)\notin \Sigma$. Define a Zariski closed subset $M_\Sigma\subset M_n$ as the set of matrices with support in $\Sigma$.

\begin{definition}\label{largedef}
A subset $\Sigma\subset \{1,...,n\}^2$ is \emph{large} if there exists a Zariski open subset $U\subset M_n$ such that for any $x\in U$, there exists $b\in B_L$ such that $bx\sigma(b)^{-1}\in M_\Sigma$.
\end{definition}
We have the following result:
\begin{proposition}
Assume $\Sigma\subset \{1,...,n\}^2$ is large. Then $\langle C_{\rm zip} \rangle\subset C_\Sigma$.
\end{proposition}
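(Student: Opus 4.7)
The plan is to take a nonzero global section of weight $m\lambda$ for some $m \geq 1$, translate it to a polynomial on $M_n$ via Corollary \ref{corMn}, use largeness to force this polynomial to be nonzero after restriction to $M_\Sigma$, and then read off that $m\lambda$ lies in $C_\Sigma$ from the weights of the surviving monomials.

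First, suppose $\lambda \in \langle C_{\rm zip}\rangle$, so that there exist $m \geq 1$ and a nonzero $f \in H^0(\GF,\Lscr(m\lambda))$. Applying Corollary \ref{corMn}, we obtain a nonzero polynomial $f_0 \in k[M_n]$ satisfying the $B_L$-equivariance
\begin{equation*}
f_0(bx\varphi(b)^{-1}) = (m\lambda)(b)\, f_0(x), \qquad b \in B_L,\ x \in M_n,
\end{equation*}
and by Lemma \ref{fohom} the polynomial $f_0$ is $T$-homogeneous of weight $m\lambda$, i.e.\ every monomial appearing in $f_0$ has $T$-weight $m\lambda$ for the $\sigma$-conjugation action.

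Next, I would show that the restriction $f_0|_{M_\Sigma}$ is nonzero. Since $\Sigma$ is large, there exists a Zariski open subset $U \subset M_n$ such that for every $x \in U$ one can find $b \in B_L$ with $bx\varphi(b)^{-1} \in M_\Sigma$. If $f_0$ vanished identically on $M_\Sigma$, the equivariance relation would give $(m\lambda)(b) f_0(x) = f_0(bx\varphi(b)^{-1}) = 0$, and since $(m\lambda)(b)$ is a nonzero scalar, $f_0$ would vanish on the Zariski dense subset $U$ and hence on all of $M_n$, a contradiction.

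Finally, I would read off the conclusion. The nonzero polynomial $f_0|_{M_\Sigma}$ is a sum of monomials $\prod_{(i,j)\in\Sigma} a_{i,j}^{n_{i,j}}$ in the coefficient functions $a_{i,j}$ for $(i,j) \in \Sigma$, and each such monomial still has $T$-weight $m\lambda$. Since the weight of $a_{i,j}$ is one of the generators of $C_\Sigma$, any such monomial expresses $m\lambda$ as a non-negative integer combination of generators of $C_\Sigma$. Thus $m\lambda \in C_\Sigma$, and as $C_\Sigma$ is saturated by definition we conclude $\lambda \in C_\Sigma$, as required.

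There is essentially no obstacle here beyond correctly extracting the equivariance of $f_0$ and using largeness to transport information from a Zariski dense open of $M_n$ onto $M_\Sigma$; the genuine difficulty has been pushed into Definition \ref{largedef} and into the problem (addressed separately) of exhibiting concrete large subsets $\Sigma$ for specific groups.
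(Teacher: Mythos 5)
Your proof is correct and follows the paper's argument exactly: translate the section to a $B_L$-equivariant polynomial on $M_n$ via Corollary~\ref{corMn}, invoke Lemma~\ref{fohom} for homogeneity, use largeness to show the restriction to $M_\Sigma$ is nonzero, and conclude from a surviving monomial supported in $\Sigma$. The only addition is the explicit scaling-then-saturate step (the paper compresses this into ``after scaling''), which is the right thing to write out.
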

\begin{proof}
Let $\lambda\in \langle C_{\rm zip} \rangle$. After scaling we may assume that there is a non-zero section $f\in H^0(\GF, \Lscr(\lambda))$. Consider the associated regular function $f_0:M_n\to \AA^1$ as in \Lem~\ref{lemmapol}. This function satisfies $f_0(bx \varphi(b^{-1}))=\lambda(b)f_0(x)$ for all $b\in B_L$ and $x\in M_n$. Since $\Sigma$ is large, it follows that $f_0$ cannot restrict to $0$ on the subvariety $M_\Sigma$ (otherwise it would be zero everywhere). Viewing $f_0$ as a polynomial in the coefficients $(a_{i,j})_{1\leq i,j \leq n}$, the restriction $f_0|_{M_\Sigma}$ is the polynomial obtained from $f_0$ after removing all monomials containing $a_{i,j}$ with $(i,j)\notin \Sigma$. Since this polynomial is nonzero, we deduce that some monomial in $f_0$ has support in $\Sigma$, and since $f_0$ is homogeneous (\Lem~\ref{fohom}), we deduce that $\lambda\in C_\Sigma$.
\end{proof}

There are some obvious restrictions on large subsets. By a dimension argument, we must have $|\Sigma| \geq \frac{n(n-1)}{2}$ for any large subset $\Sigma$. Also, we claim that $(1,n)\in \Sigma$ for any large subset. Indeed, recall that $S_1=e_1-p e_n\in C_{\Sbt}\subset \langle C_{\zip}\rangle$ (see \Lem~\ref{SchubSp}). Since $e_1-p e_n$ is an extremal ray of the cone $C_{\rm pol}$, it is also an extremal ray of any subcone in which it lies. If $\Sigma$ is large, then $e_1-p e_n\in \langle C_{\zip}\rangle\subset C_\Sigma$, so $(1,n)\in \Sigma$. In the rest of the section, we give some subsets $\Sigma$ that are large. For us, the most interesting subset is the following:
\begin{equation}
\Sigma_1:=\{(i,j)\in \{1,...,n\}^2, \ i\geq j \}\cup \{(i,n+1-i), \ 1\leq i \leq n\}.
\end{equation}
%=\left(\begin{matrix}
%* & * & * & * \\
%* & * & * & 0 \\
%* & * & \iddots & \vdots \\
%* & 0 & \cdots & 0
%\end{matrix} \right)

\begin{lemma}\label{sigone}
The subset $\Sigma_1$ is large.
\end{lemma}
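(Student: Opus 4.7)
The plan is to exhibit, for a generic $x\in M_n$, an explicit element $b\in B_L$ with $b x \sigma(b)^{-1}\in M_{\Sigma_1}$. I would set $c:=\sigma(b)$ and use that the columns of $c^{-1}$ can be varied independently, introducing for each column $j$ the normalized ratios $z_l^{(j)}:=(c^{-1})_{lj}/(c^{-1})_{jj}$ for $l>j$, which give $n-j$ free scalars. In these coordinates
\[
y_{ij}=(c^{-1})_{jj}\sum_{k\le i}b_{ik}\Big(x_{kj}+\sum_{l>j}z_l^{(j)}\,x_{kl}\Big),
\]
and the problem reduces to solving the system $\{y_{ij}=0:(i,j)\in\Sigma_1^c\}$ by choosing the entries $b_{ik}$ and the parameters $z_l^{(j)}$.

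I would process the columns from $j=n$ down to $j=2$, and within each column $j$ the rows $i\in\{1,\dots,j-1\}\setminus\{n+1-j\}$. The behavior depends on whether $j>n+1-i$ (``Case B'') or $j<n+1-i$ (``Case A''). In Case B every $l$ with $j<l\le n$ satisfies $l>n+1-i$, so $(i,l)\in\Sigma_1^c$ was already processed and inductively gives $\sum_{k\le i}b_{ik}x_{kl}=0$; the equation $y_{ij}=0$ then collapses to a fresh linear constraint $\sum_{k\le i}b_{ik}x_{kj}=0$ on row~$i$ of~$b$. In Case A all the summation terms except the anti-diagonal one $l=n+1-i$ vanish by earlier Case-B relations (the anti-diagonal position is the unique killed-position that is never constrained), and $y_{ij}=0$ becomes one linear equation that uniquely determines the single new $z$-parameter $z_{n+1-i}^{(j)}$. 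The constraints on row~$i$ of~$b$ are then indexed by killed positions $(i,l)$ with $l>\max(i,n+1-i)$, of cardinality $\min(i-1,n-i)\le i-1$, so each row of $b$ retains at least one free scalar; distinct Case-A steps determine distinct pairs $(j,n+1-i)$ and thus distinct $z_l^{(j)}$, so no over-determination can occur.

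The main obstacle will be executing the Case-A reduction cleanly at intermediate rows: when $j<l<n+1-i$, the equation at $(i,l)$ was itself in Case~A and did not impose $\sum_{k\le i}b_{ik}x_{kl}=0$ but only related that sum to $\sum_{k\le i}b_{ik}x_{k,n+1-i}$ through already-fixed $z^{(l)}_{n+1-i}$. Propagating these relations through the formula for $y_{ij}$ requires bookkeeping of the $z$-interdependencies across columns; the claim to verify is that after all substitutions every $y_{ij}=0$ in Case~A remains equivalent to a single equation determining $z_{n+1-i}^{(j)}$. Genericity of $x$, expressed through nonvanishing of suitable minors built from the columns $x_{*,j}$ and $x_{*,n+1-i}$, then ensures the denominators $\sum_{k\le i}b_{ik}x_{k,n+1-i}$ never vanish and the linear systems on the $b_{ik}$'s have maximal rank, completing the construction of $b$ over a dense open subset of~$M_n$.
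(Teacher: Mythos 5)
You take a genuinely different route from the paper, which applies the dominance \Lem~\ref{dimlem} by computing, at a single reference point $y\in M_{\Sigma_1}$ with all $\Sigma_1$-entries nonzero, that the transporter $\{b\in B_L : by\sigma(b)^{-1}\in M_{\Sigma_1}\}$ equals the ``cross'' subgroup $H$ of dimension $n+\floor{n/2}$; no equation $y_{ij}=0$ is ever solved directly. Your strategy of direct elimination is a legitimate alternative, but as written it has a genuine gap beyond the bookkeeping you flag. You set $c=\sigma(b)$ and then treat the ratios $z^{(j)}_l=(c^{-1})_{lj}/(c^{-1})_{jj}$ as ``free scalars'' to be chosen alongside the $b_{ik}$; they are not free, being $p$-th power rational functions of the $b_{ik}$, so the system has only $\dim B_L$ genuine parameters. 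Your Case~B step survives this because the $z$-dependence cancels, leaving the linear constraint $\sum_k b_{ik}x_{kj}=0$ on row $i$ of $b$. In Case~A it does not cancel, and ``determining $z^{(j)}_{n+1-i}$'' is the wrong target: after propagation the equation at $(i,j)$ actually pins down the entry $c_{n+1-i,j}=b_{n+1-i,j}^p$ of $c$ (not of $c^{-1}$), through a nontrivial identity converting the nest of $z$-ratios into a single $c$-ratio. Already for $n=4$ the propagated $(1,2)$-equation is $z^{(2)}_4-z^{(2)}_3 z^{(3)}_4=-x_{12}/x_{14}$, with two unresolved $z$'s; it collapses only after the identity $z^{(2)}_4-z^{(2)}_3 z^{(3)}_4=-c_{42}/c_{44}$, giving the single constraint $b_{42}^p/b_{44}^p=x_{12}/x_{14}$. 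Without establishing this reduction in general you have not exhibited any $b$.

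A cleaner way to finish along your lines: once Case~B forces $v^{(i)}_l:=(b_{i*}x)_l=0$ for $l>n+1-i$, the remaining row-$i$ equations say that $v^{(i)}c^{-1}$ is supported on $\{1,\dots,i,n+1-i\}$, i.e.\ $v^{(i)}$ lies in the row span of $c_{1*},\dots,c_{i*},c_{n+1-i,*}$. Since $c$ is lower triangular this is equivalent to $(v^{(i)}_{i+1},\dots,v^{(i)}_{n+1-i})$ being proportional to $(c_{n+1-i,i+1},\dots,c_{n+1-i,n+1-i})$, and via $c=\sigma(b)$ this becomes explicit $p$-th-power equations for $b_{n+1-i,l}$, $i<l<n+1-i$, which constrain a part of $b$ untouched by the Case-B constraints; a genericity argument then finishes. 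Even so, the paper's stabilizer computation is shorter because it never has to solve or propagate any of the equations.
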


We will need the following lemma:
\begin{lemma}\label{dimlem}
Let $G$ be an algebraic group over $k$ acting on an irreducible $k$-variety $X$. Let $Y\subset X$ be an irreducible closed subvariety. Assume that there is a nonempty open subset $U\subset Y$ of $Y$ such that for all $y\in U$, the closed subset of $G$
\begin{equation}\label{Hy}
   H(y) := \{g\in G \ | \ g\cdot y \in Y\}
\end{equation}
has dimension $\dim(Y)+\dim(G)-\dim(X)$. Then there is a nonempty open subset $V\subset X$ such that any $x\in V$ is in the $G$-orbit of an element in $Y$.
\end{lemma}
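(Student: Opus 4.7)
The plan is to apply a dimension count to the action morphism
\[
\mu : G \times Y \longrightarrow X, \qquad (g, y) \longmapsto g \cdot y.
\]
By Chevalley's theorem, the image $\mu(G \times Y)$ is constructible. If I can show that $\mu$ is dominant, then the image is dense in the irreducible variety $X$, hence contains a nonempty open subset $V \subset X$. Every $x \in V$ is then of the form $g \cdot y$ with $y \in Y$, which is exactly the conclusion.

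To establish dominance, I would compute the fiber $\mu^{-1}(y_0)$ for $y_0 \in U$. A pair $(g, y) \in G \times Y$ lies in $\mu^{-1}(y_0)$ if and only if $y = g^{-1} \cdot y_0$, so the projection $(g, y) \mapsto g$ identifies $\mu^{-1}(y_0)$ with the closed subset $\{g \in G : g^{-1} \cdot y_0 \in Y\}$, which is the image of $H(y_0)$ under inversion. Since inversion is an isomorphism of varieties, this gives
\[
\dim \mu^{-1}(y_0) = \dim H(y_0) = \dim(Y) + \dim(G) - \dim(X)
\]
by the hypothesis, for all $y_0 \in U$.

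Now set $Z := \overline{\mu(G \times Y)}$; since the identity of $G$ acts trivially, we have $Y \subset Z$, and in particular $U \subset Z$. Apply the theorem on dimensions of fibers to the dominant morphism $G \times Y \to Z$: every non-empty fiber has dimension at least $\dim(G \times Y) - \dim(Z) = \dim(G) + \dim(Y) - \dim(Z)$. Combining with the previous computation for $y_0 \in U$ yields
\[
\dim(Y) + \dim(G) - \dim(X) \;\geq\; \dim(G) + \dim(Y) - \dim(Z),
\]
so $\dim(Z) \geq \dim(X)$. Since $Z$ is a closed subvariety of the irreducible $X$, we conclude $Z = X$, i.e. $\mu$ is dominant, and the argument finishes as indicated.

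The argument is entirely routine — there is no real obstacle, only two points worth being careful about. First, one must use the direction of the hypothesis that gives an \emph{upper} bound on $\dim H(y_0)$, since the fiber-dimension theorem already provides a lower bound; the equality in the statement is slightly stronger than needed. Second, one should note that the inclusion $Y \subset Z$ is crucial in order to know that the fiber over a point of $U$ is non-empty, so that the fiber-dimension estimate applies. In the intended application to \Lem~\ref{sigone}, both the exact dimension of $H(y_0)$ and the existence of such $y_0$ will be verified via an explicit parameterization of Borel conjugates moving a generic matrix into $M_\Sigma$.
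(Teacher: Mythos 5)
Your proof is correct. The approach differs from the paper's in a small but pleasant way: the paper applies the \emph{exact} generic-fiber-dimension theorem, which forces it to pick a point $w_0$ in a generic open subset $W_0\subset Z$ and, by intersecting $\theta^{-1}(W_0)$ with $G\times U$, arrange that $w_0$ also lies in the image of $G\times U$; it then identifies $\theta^{-1}(w_0)\simeq H(y_0)$ for some $y_0\in U$. You instead fiber directly over a point $y_0\in U\subset Y\subset Z$ (which lies in the image because $(e,y_0)\mapsto y_0$) and identify $\mu^{-1}(y_0)$ with $H(y_0)^{-1}$, using only the \emph{lower} bound on fiber dimensions — valid over every point of the image of a dominant morphism of irreducible varieties, not just generic ones. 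This eliminates the openness/intersection step entirely, at the cost of obtaining $\dim Z\geq\dim X$ rather than $\dim Z=\dim X$, which is of course equivalent since $Z\subset X$. Your closing remarks are also apt: only the upper-bound direction of the dimension hypothesis on $H(y_0)$ is actually used, and the inclusion $Y\subset Z$ (via the identity of $G$) is what guarantees the fiber over $y_0$ is non-empty so that the lower bound applies. One shared implicit assumption, harmless in the application where $G=B_L$: both your proof and the paper's invoke the irreducibility of $G\times Y$, so $G$ should be taken connected.
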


\begin{proof}
Consider the map $\theta:G\times Y \to X$, $(g,y)\mapsto g\cdot y$. Denote by $Z$ the Zariski closure of the image of $\theta$, it is an irreducible closed subvariety of $X$. By Chevalley's theorem the image is a constructible set, i.e a finite union of locally closed subsets, so the image of $\theta$ contains an open subset $W\subset Z$ of $Z$. To prove the lemma it suffices to show that $Z=X$. It is a classic result that there exists an open subset $W_0\subset W$ such that for all $w\in W_0$, the fiber $\theta^{-1}(w)$ has exactly dimension $\dim(Y\times G)-\dim(Z)$. Now consider the open subset $\theta^{-1}(W_0)\subset G\times Y$. By irreducibility of $G\times Y$, it intersects the open subset $G\times U$. Pick any element $(g_0,y_0)\in \theta^{-1}(W_0)\cap (G\times U)$. By definition, $y_0\in U$ and $w_0:=\theta(g_0,y_0)=g_0\cdot y_0\in W_0$. It is straight-forward to see that the fiber $\theta^{-1}(w_0)$ is exactly the set of pairs $(g,(g^{-1}g_0)\cdot y_0)\in G\times X$ with the condition $(g^{-1}g_0)\cdot y_0 \in Y$, in other words $g^{-1}g_0\in H(y_0)$. Hence there is an isomorphism $H(y_0)\to\theta^{-1}(w_0)$ given by the map $h\mapsto g_0h^{-1}$. Combining the assumption on the dimension of $H(y_0)$ with the previously computed dimension of $\theta^{-1}(w_0)$, we obtain $\dim(Z)=\dim(X)$, and hence $Z=X$.
\end{proof}

\begin{proof}[Proof of \Lem~\ref{sigone}]
We use the previous lemma for $X=M_n$, $Y=M_{\Sigma_1}$ and for the group $B_L$ acting on $M_n$ by $b\cdot g=bg\varphi(b)^{-1}$ for all $b\in B_L$ and $g\in M_n$. One has $\dim(Y)+\dim(B_L)-\dim(X)=n+\floor{\frac{n}{2}}$. Let $U\subset M_{\Sigma_1}$ be the subset of matrices $M=(m_{i,j})$ satisfying $m_{i,j}\neq 0$ for all $(i,j)\in \Sigma_1$. We claim that the conditions of \Lem~\ref{dimlem} are satisfied. Define $K_+$ (resp. $K_-$) as the subset of pairs $(i,j)$ in $\{1,...,n\}^2$ such that $i>j$ and $i+j< n+1$ (resp. $i+j>n+1$). Then, we define a subset $H_+$ (resp. $H_-$) of $B_L$ as the set of matrices $m=(m_{i,j})$ in $B_L$ which satisfy $m_{i,j}=0$ for all $(i,j)\in K_+$ (resp. $K_-$). It is easy to check that $H_+$, $H_-$ are subgroups of $B_L$. Their intersection $H=H_+\cap H_-$ is the set of invertible, lower-triangular matrices whose entries which are neither on the diagonal nor the anti-diagonal are zero. This is again a subgroup of $B_L$ and its dimension is $n+\floor{\frac{n}{2}}$. To show the claim, it suffices to show that $H(y)=H$ for all $y\in U$. It is trivial to see that $H\subset H(y)$ for any $y\in U$. Conversely, let $y\in U$ and $b\in H(y)$. By definition, we have $by\varphi(b)^{-1}\in Y$. Write $b=(b_{i,j})$ and $c:=\varphi(b)^{-1}=(c_{i,j})$. By looking at the last column of $byc$, we see that $b$ satisfies $b_{2,1}=b_{3,1}=...=b_{n-1,1}=0$. Similarly, by looking at the first line, we see that $c_{n,2}=c_{n,3}=...=c_{n,n-1}=0$. Then we continue and look at the $n-1$-th column of $byc$. We see that $b_{3,2}=b_{4,2}=...=b_{n-2,2}=0$ and similarly $c_{n-1,3}=c_{n-1,4}=...=c_{n-1,n-2}=0$. Continuing this way, we arrive finally at $b\in H_+$ and $c\in H_-$. So far, we have only used the fact that $b$ and $c$ are lower-triangular. Now, we use specifically that $c=\varphi(b)^{-1}$ to deduce (since $H_-$ is a subgroup of $B_L$), that we also have $\varphi(b)\in H_-$, and hence also $b\in H_-$. This shows that $b\in H$ and terminates the proof.

\end{proof}

\begin{rmk}
The subsets $\{(i,j)\in \{1,...,n\}^2, \ i+j \leq n+1 \}$ and $\{(i,j)\in \{1,...,n\}^2, \ i+j \geq n+1 \}$ are other examples of large subsets (it follows from the density of $B_L w_{0,L} B_L$ in $L$).
\end{rmk}
\begin{comment}

Let $\Sigma\subset \{1,...,n\}^2$ be a subset and $\psi:\ZZ^n \to \ZZ$ a linear map. Define $\Sigma_{>0}$, $\Sigma_{\geq 0}$, $\Sigma_{<0}$, $\Sigma_{\leq 0}$ and $\Sigma_0$ as the subsets of pairs $(i,j)\in\Sigma$ such that $\psi(e_i-pe_j)$ is positive, non-negative, negative, non-positive, zero respectively. Denote by $\{\psi \geq 0\}$ the cone of $\lambda\in \ZZ^n$ such that $\psi(\lambda)\geq 0$. There is an obvious inclusion $C_{\Sigma_{\geq 0}}\subset C_\Sigma \cap \{\psi \geq 0\}$.

Since $\langle C_{\zip}\rangle$ is also contained in the $L$-dominant cone, it is relevant to determine the intersection of $X^*_{+,I}(T)$ with $C_\Sigma$ for a large subset $\Sigma$. Define a subset
\begin{equation}
\Sigma_4=\{(i+1,i), \ 1\leq i \leq n-1\}\cup \{(1,n)\}.
\end{equation}
\end{comment}

We can slightly improve the previous result. Define $\Sigma'_1\subset \Sigma_1$ as the subset
\begin{equation}
\Sigma'_1:=\{(i,j)\in \{1,...,n\}^2, \ i > j \}\cup \{(i,n+1-i), \ 1\leq i \leq n\}.
\end{equation}
It is easy to see that $C_{\Sigma_1}=C_{\Sigma'_1}$ (but $\Sigma_1'$ is not large). We deduce:

\begin{corollary}\label{corzipcont}
One has $\langle C_{\zip} \rangle \subset C_{\Sigma'_1}\cap X^*_{+,I}(T)$.
\end{corollary}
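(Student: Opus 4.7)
The plan is to chain together results already established: the Proposition preceding the corollary gives $\langle C_{\mathrm{zip}}\rangle\subset C_\Sigma$ whenever $\Sigma$ is large, Lemma~\ref{sigone} says $\Sigma_1$ is large, and the general fact that $\langle C_{\mathrm{zip}}\rangle\subset X^*_{+,I}(T)$ is built into the definition of $C_{\mathrm{zip}}$. The only substantive ingredient missing is the asserted equality $C_{\Sigma_1}=C_{\Sigma'_1}$, so the bulk of the work is a short verification of this point.

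Since $\Sigma'_1\subset\Sigma_1$ the inclusion $C_{\Sigma'_1}\subset C_{\Sigma_1}$ is tautological. For the reverse, observe that $\Sigma_1\setminus\Sigma'_1$ consists precisely of the diagonal pairs $(i,i)$ with $2i\neq n+1$ (the middle diagonal pair, if it exists, already sits on the antidiagonal and hence belongs to $\Sigma'_1$). It therefore suffices to show that for each such $i$ the element $e_i-pe_i=(1-p)e_i$ lies in the saturated cone $C_{\Sigma'_1}$. The key observation is that both antidiagonal pairs $(i,n+1-i)$ and $(n+1-i,i)$ belong to $\Sigma'_1$, so the corresponding generators $e_i-pe_{n+1-i}$ and $e_{n+1-i}-pe_i$ are both available.

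Forming the $\mathbb{N}$-linear combination
\begin{equation*}
p\,(e_i-pe_{n+1-i})+(e_{n+1-i}-pe_i)=(1-p^2)\,e_{n+1-i},
\end{equation*}
one obtains $(1-p^2)e_{n+1-i}\in C_{\Sigma'_1}$, and by saturation $(1-p)e_{n+1-i}\in C_{\Sigma'_1}$. Since $i\mapsto n+1-i$ is a bijection of $\{1,\ldots,n\}$, every $(1-p)e_k$ lies in $C_{\Sigma'_1}$; combined with the antidiagonal generators this absorbs all the diagonal generators of $C_{\Sigma_1}$, giving $C_{\Sigma_1}\subset C_{\Sigma'_1}$.

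With the equality $C_{\Sigma_1}=C_{\Sigma'_1}$ in hand, the corollary is immediate: by Lemma~\ref{sigone} the subset $\Sigma_1$ is large, so the preceding Proposition yields $\langle C_{\mathrm{zip}}\rangle\subset C_{\Sigma_1}=C_{\Sigma'_1}$, and intersecting with $X^*_{+,I}(T)$ (which contains $\langle C_{\mathrm{zip}}\rangle$ trivially) gives the stated inclusion. There is no real obstacle here—the only place one has to be a little careful is not forgetting that the middle antidiagonal pair already lies in $\Sigma'_1$, so that the argument covers the case of odd $n$ without modification.
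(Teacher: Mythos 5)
Your proof is correct and follows the same route as the paper: the paper deduces the corollary from Lemma~\ref{sigone} and the preceding proposition, together with the equality $C_{\Sigma_1}=C_{\Sigma'_1}$, which it merely asserts as ``easy to see.'' You have supplied exactly the omitted verification, and the $\mathbb{N}$-linear combination $p\,(e_i-pe_{n+1-i})+(e_{n+1-i}-pe_i)=(1-p^2)e_{n+1-i}$ together with saturation does absorb every diagonal generator, including the middle one when $n$ is odd.
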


We will determine the cone $\langle C_{\zip} \rangle$ in some cases in the next section.

\section{The ring of zip automorphic forms}

So far we have looked at a fixed weight $\lambda\in X^*_{+,I}(T)$. In this section, we define a ring that captures information about all the weights at once, as well as the relations between different global sections.

\subsection{Some general properties}
Let $G$ be a reductive $\FF_p$-group and let $\mu:\GG_{m,k}\to G_k$ be a cocharacter. Choose an $\FF_p$-frame $(B,T,z)$ (\S~\ref{review}). Denote by $\Xcal:=\GZip^\mu$ the attached stack of $G$-zips and $\Ycal:=\GF^\mu$ the stack of zip flags. Define the ring of zip automorphic forms as
\begin{equation}\label{rzip}
R_{\rm zip}:=\bigoplus_{\lambda \in X^*_{+,I}(T)} H^0(\Xcal,\Vscr(\lambda)).
\end{equation}
The additive group $R_{\rm zip}$ inherits a structure of $k$-algebra since $H^0(\Xcal,\Vscr(\lambda))$ identifies with $H^0(\Ycal,\Lscr(\lambda))$ and using formula \eqref{linebndmult}. It is naturally a graded algebra. Note that the cone $C_{\zip}$ gives the actual grading of this algebra. Hence, the problem of determining $R_{\zip}$ can be thought of as a refinement of the questions studied in previous sections.

Retain notation of \S\ref{sec-motiv} for Shimura varieties. As mentioned in the introduction, one can form a similar ring
\begin{equation}
R_{K}:=\bigoplus_{\lambda \in X^*_{+,I}(T)} H^0(S_K, \Vscr(\lambda))
\end{equation}
which can legitimately be called the ring of automorphic forms of level $K$, which explains our terminology for the ring $R_{\zip}$. The map $\zeta:S_K\to \Xcal$ yields an inclusion $R_{\zip}\to R_K$, which is compatible with change of level maps. Hence $R_{K}$ inherits a structure of $R_{\zip}$-algebra and this action commutes with all Hecke operators.

Recall that $H^0(\Xcal,\Vscr(\lambda))$ also identifies with the space of regular functions $f:G_k\to \AA^1$ satisfying $f(agb^{-1})=\lambda(b)f(g)$ for all $g\in G$ and all $(a,b)\in E'$. Hence, we can define a map $\iota : R_{\rm zip} \to k[G]$ by
\begin{equation}
(f_\lambda)_{\lambda} \mapsto \sum_{\lambda\in X^*_{+,I}(T)} f_\lambda
\end{equation}
It is easy to see that $\iota$ is a $k$-algebra homomorphism. By linear independence of characters, this map is injective. Thus, $R_{\rm zip}$ identifies with a sub-algebra of $k[G]$. We view $k[G]$ as a representation of $G\times G$.
Note that $G\times G$ acts on $k[G]$. In particular, $E'$ acts on $k[G]$ by restricting this action. Denote by $R_u(E')$ the unipotent radical of $E'$.

\begin{proposition}\label{Rzipinvar}
The image of $\iota$ is $k[G]^{R_u(E')}$, the subalgebra of $k[G]$ invariant under $R_u(E')$.
\end{proposition}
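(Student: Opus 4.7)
First I would verify the inclusion $\iota(R_{\rm zip}) \subseteq k[G]^{R_u(E')}$. Any section $f_\lambda \in H^0(\Xcal, \Vscr(\lambda))$ corresponds, via \eqref{secequa} and \Rmk~\ref{equivdivrmk}, to a regular function on $G$ satisfying $f_\lambda(\varepsilon \cdot g) = \lambda(\varepsilon) f_\lambda(g)$ for all $\varepsilon \in E'$, where $\lambda$ is extended to a character of $E'$ through the composition $E' \hookrightarrow B \times G \xrightarrow{\mathrm{pr}_1} B \twoheadrightarrow T$. This composition factors through $E'/R_u(E')$, so every such $f_\lambda$ is $R_u(E')$-invariant, and hence so is every element of the image of $\iota$.

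For the reverse inclusion, the key ingredient is a Levi decomposition of $E'$. One sets $T_{E'} := \{(t, \varphi(t)) : t \in T\} \subset E'$; this is a torus isomorphic to $T$. A dimension count $\dim E' = \dim T + \dim R_u(B) + \dim R_u(Q)$ identifies $R_u(E')$ with the kernel of the surjection $E' \twoheadrightarrow T$ above, giving $E' = T_{E'} \ltimes R_u(E')$. Since $T_{E'}$ normalizes $R_u(E')$, it acts on the subalgebra $k[G]^{R_u(E')}$, and by linear reductivity of tori this action yields a weight decomposition
\[ k[G]^{R_u(E')} = \bigoplus_{\lambda \in X^*(T)} V_\lambda, \]
where $V_\lambda$ is the $\lambda$-weight space. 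A function $f$ lies in $V_\lambda$ exactly when it is both $R_u(E')$-invariant and a $T_{E'}$-eigenvector of weight $\lambda$; equivalently, when it satisfies $f(\varepsilon \cdot g) = \lambda(\varepsilon) f(g)$ for all $\varepsilon \in E'$ and $g \in G$. By \eqref{secequa}, this identifies $V_\lambda$ with $H^0(\Xcal, \Vscr(\lambda))$, which vanishes unless $\lambda \in X^*_{+,I}(T)$. Summing then gives $k[G]^{R_u(E')} = \iota(R_{\rm zip})$.

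The main technical obstacle, such as it is, lies in the explicit Levi decomposition of $E'$: one must check that the pairs $(u,v) \in R_u(B) \times R_u(Q)$ satisfying $\varphi(\bar u) = \bar v$ form a connected unipotent normal subgroup of the correct dimension, so that $T_{E'}$ is indeed a Levi and the projection $E' \to T$ has kernel precisely $R_u(E')$. Once this is established, the remainder of the argument is formal, using only the reductivity of the torus $T$ and the description \eqref{secequa} of global sections as equivariant functions on $G$.
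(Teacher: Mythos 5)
Your proof takes essentially the same approach as the paper's: both rest on the observation that the $E'$-action on $k[G]^{R_u(E')}$ factors through the torus $T\simeq E'/R_u(E')$, whose linear reductivity gives the weight decomposition identifying $k[G]^{R_u(E')}$ with $\bigoplus_\lambda H^0(\Xcal,\Vscr(\lambda))$. The paper works directly with the quotient map $E'\to T$, whereas you prefer to exhibit a Levi section $T_{E'}=\{(t,\varphi(t))\}\hookrightarrow E'$; either way the point is the same.

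One small slip in your closing parenthetical is worth flagging. You describe $R_u(E')$ as the set of pairs $(u,v)\in R_u(B)\times R_u(Q)$ with $\varphi(\bar u)=\bar v$. But if $v\in R_u(Q)$ then $\bar v=1$, so this condition forces $\bar u=1$, i.e. $u\in R_u(P)$; the resulting group is only $R_u(P)\times R_u(Q)$, of strictly smaller dimension than $R_u(E')$. The correct description is $R_u(E')=\{(u,v)\in R_u(B)\times Q:\varphi(\bar u)=\bar v\}$: since $u$ ranges over $R_u(B)=R_u(B_L)\ltimes R_u(P)$ its Levi component $\bar u$ lies in $R_u(B_L)$ and is generally nontrivial, so $v$ must be allowed to range over all of $Q$, not just $R_u(Q)$. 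With that correction the dimension count $\dim T+\dim R_u(B)+\dim R_u(Q)$ works out and the rest of your argument goes through as stated.
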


\begin{proof}
Clearly $\iota(R_{\rm zip})\subset k[G]^{R_u(E')}$. We prove the opposite inclusion. Note that $E'$ acts on $k[G]^{R_u(E')}$ since $R_u(E')$ is a normal subgroup of $E'$. This action factors through the quotient $E'/R_u(E')\simeq T$. Hence, $k[G]^{R_u(E')}$ decomposes as a direct sum of $E'$-eigenfunctions. In other words, any element $f\in k[G]^{R_u(E')}$ can be written as $f=\sum_{\lambda\in X^*(T)}f_\lambda$, where $f_\lambda$ is an $E'$-eigenfunction for the character $\lambda$. This proves the result.
\end{proof}
From now on, we identify implicitly $R_{\zip}$ with its image by $\iota$.

\begin{lemma}
An element $f\in k[G]$ lies in $R_{\zip}$ if and only if $\div(f)$ is $R_u(E')$-invariant.
\end{lemma}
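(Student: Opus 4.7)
The plan is to exploit Proposition~\ref{Rzipinvar}, which identifies $R_{\zip}$ with $k[G]^{R_u(E')}$. The forward direction will be immediate: if $f\in R_{\zip}$ then every element of $R_u(E')$ fixes $f$ pointwise, so it certainly preserves $\div(f)$.

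For the converse, I would proceed as follows. Let $f\in k[G]$ be nonzero with $R_u(E')$-invariant divisor. For each $h\in R_u(E')$, the translate $h\cdot f$ and $f$ have identical divisors on the smooth variety $G$, so the rational function
\begin{equation*}
u_h := \frac{h\cdot f}{f}
\end{equation*}
is a regular unit on $G$. By Rosenlicht's theorem, the units of the connected algebraic group $G$ satisfy $k[G]^{\times}\simeq k^{\times}\times X^*(G)$; write $u_h = c(h)\chi_h$ accordingly with $c(h)\in k^{\times}$ and $\chi_h\in X^*(G)$. Fixing any $g_0\in G$ with $f(g_0)\neq 0$, the assignment $h\mapsto u_h(g_0)$ is regular in $h$, so both $c$ and $\chi$ depend algebraically on $h$. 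Since $R_u(E')$ is connected and $X^*(G)$ is discrete, $\chi_h$ must be constant; evaluating at $h=1$ gives $\chi_h=0$ for all $h$.

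It follows that $u_h = c(h)\in k^{\times}$, and a short computation using $(h_1h_2)\cdot f = h_1\cdot (h_2\cdot f)$ shows that $c:R_u(E')\to \GG_m$ is a homomorphism of algebraic groups. As the source is connected unipotent and the target is a torus, $c$ must be trivial. Thus $h\cdot f = f$ for every $h\in R_u(E')$, and Proposition~\ref{Rzipinvar} yields $f\in R_{\zip}$. The only nontrivial ingredient is Rosenlicht's description of units on a connected algebraic group; the remainder is formal manipulation with connectedness and unipotency, so I do not anticipate a serious obstacle.
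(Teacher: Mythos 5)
Your converse applies Rosenlicht's unit theorem \emph{fiberwise}, writing each $u_h=h\cdot f/f\in k[G]^\times$ as $c(h)\chi_h$ with $\chi_h\in X^*(G)$, whereas the paper applies the analogous factorization (from Knop--Kraft--Vust) once, to the single unit $(\epsilon,g)\mapsto f(\epsilon\cdot g)/f(g)$ on the product $R_u(E')\times G$. The two routes are genuinely different, but yours has a gap at the sentence claiming that because $h\mapsto u_h(g_0)$ is regular, ``both $c$ and $\chi$ depend algebraically on $h$.'' From the single scalar $u_h(g_0)=c(h)\chi_h(g_0)$ you cannot disentangle $c(h)$ from $\chi_h(g_0)$, let alone recover the entire character $\chi_h$; so the subsequent appeal to ``$R_u(E')$ connected, $X^*(G)$ discrete, hence $\chi_h$ constant'' is not justified --- you have not exhibited $h\mapsto\chi_h$ as a morphism of varieties to a discrete set.

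Two repairs are available, and both are short. \emph{(i)} Go global, as the paper does: the $R_u(E')$-invariance of $\div(f)$ forces the pullback of $\div(f)$ along the action map $R_u(E')\times G\to G$ to equal its pullback along the projection, so $(\epsilon,g)\mapsto f(\epsilon\cdot g)/f(g)$ is a unit on the whole product; factoring it as $\alpha(\epsilon)\beta(g)$ in one step, $\alpha$ is trivial because $R_u(E')$ is unipotent and evaluation at $\epsilon=1$ kills $\beta$. \emph{(ii)} Stay fiberwise but replace algebraicity by algebra: the cocycle identity $u_{h_1h_2}(g)=u_{h_1}(h_2\cdot g)\,u_{h_2}(g)$, expanded using $\chi(agb^{-1})=\chi(a)\chi(g)\chi(b)^{-1}$ for $(a,b)=h_2$, gives $\chi_{h_1h_2}=\chi_{h_1}\chi_{h_2}$, so $h\mapsto\chi_h$ is an abstract group homomorphism $R_u(E')(k)\to X^*(G)$. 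Since $R_u(E')(k)$ is a $p$-group (unipotent in characteristic $p$) while $X^*(G)$ is torsion-free, this homomorphism is trivial. Once $\chi_h=1$ for all $h$, your treatment of $c$ does go through: $c(h)=f(h\cdot g_0)/f(g_0)$ is then a genuine non-vanishing regular homomorphism $R_u(E')\to\GG_m$, hence trivial.
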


\begin{proof}
One implication is clear. Conversely, let $f\in k[G]$ such that $\div(f)$ is $R_u(E')$-invariant. The condition on the divisor implies that the rational function on $G\times R_u(E')$ defined by $(\epsilon,g)\mapsto \frac{f(\epsilon \cdot g)}{f(g)}$ extends to a non-vanishing function on $G\times R_u(E')$. By \cite[\S1]{Knop-Kraft-Vust-G-variety}, we can write it as a product $\alpha(g)\beta(\epsilon)$, where $\alpha:G\to \GG_m$ and $\beta:R_u(E')\to \GG_m$ are non-vanishing. Evaluating at $\epsilon=1$, we find $\alpha(g)\beta(1)=1$ for all $g\in G$. Hence the above function is $\frac{\beta(\epsilon)}{\beta(1)}$. This is a non-vanishing regular function on the connected group $R_u(E')$ with value $1$ at $1$, so it is a character by \loccit Since $R_u(E')$ is unipotent, it is trivial. Hence $f(\epsilon \cdot g)=f(g)$ for all $\epsilon\in R_u(E')$ and all $g\in G$. 
\end{proof}
In particular, the units of $R_{\zip}$ are exactly the non-vanishing functions $G\to \GG_m$. Any such function can be uniquely written as $a \chi$ where $a\in k^\times$ and $\chi \in X^*(G)$.

\begin{proposition}
Assume that $\Pic(G)=0$. Then $R_{\zip}$ is a UFD.
\end{proposition}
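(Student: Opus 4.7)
The plan is to deduce UFD-ness of $R_{\zip}$ from UFD-ness of $k[G]$ via the description $R_{\zip}=k[G]^{R_u(E')}$ from Proposition~\ref{Rzipinvar}, combined with the preceding lemma characterizing $R_{\zip}$ by $R_u(E')$-invariance of divisors. The argument will rest on two standard facts: first, that a smooth affine variety $X$ is factorial iff $\Pic(X)=0$ (since for smooth varieties $\Pic$ agrees with the Weil divisor class group); second, that a connected algebraic group acting on an integral scheme fixes each irreducible component of any stable Weil divisor.

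First I would note that since $G$ is smooth affine and $\Pic(G)=0$, the ring $k[G]$ is a UFD. Next, take a nonzero non-unit $f\in R_{\zip}$ and factor it in $k[G]$ as $f=f_1\cdots f_r$ with each $f_i$ irreducible in $k[G]$. The divisor $\div(f)=\sum \div(f_i)$ is $R_u(E')$-invariant by the preceding lemma, and its irreducible components are exactly the supports of the $\div(f_i)$. Since $R_u(E')$ is connected, it cannot permute distinct irreducible components nontrivially, so each $\div(f_i)$ is itself $R_u(E')$-invariant. Applying the preceding lemma in the other direction, each $f_i$ lies in $R_{\zip}$.

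Next I would check that each such $f_i$ is actually prime in $R_{\zip}$. Suppose $f_i \mid ab$ in $R_{\zip}$ with $a,b\in R_{\zip}$; since $f_i$ is prime in $k[G]$, we may write $a=f_i c$ with $c\in k[G]$. Then $\div(c)=\div(a)-\div(f_i)$ is a difference of two $R_u(E')$-invariant divisors, hence $R_u(E')$-invariant, so $c\in R_{\zip}$. Thus $f_i\mid a$ in $R_{\zip}$, and $f_i$ is prime. This gives existence of a prime factorization in $R_{\zip}$ for every nonzero non-unit element.

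Uniqueness of such factorizations will follow from uniqueness in $k[G]$, provided the unit groups match. Here I would invoke the earlier observation (right before the statement to be proved) that the units of $R_{\zip}$ are of the form $a\chi$ with $a\in k^\times$ and $\chi\in X^*(G)$; on the other hand, by Rosenlicht's theorem, units of $k[G]$ have the same form, and any character $\chi\in X^*(G)$ restricts trivially to the unipotent subgroup $R_u(E')$ (viewed via either projection, since both land in unipotent subgroups of $G$), hence is automatically $R_u(E')$-invariant. So $R_{\zip}^\times = k[G]^\times$, and two prime factorizations of the same element in $R_{\zip}$ are in particular two prime factorizations in $k[G]$, forcing them to agree up to reordering and units. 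The main subtle point is this matching of units, which is where the hypothesis that the acting group $R_u(E')$ be unipotent (no nontrivial characters) is decisively used; everything else is a routine divisor bookkeeping argument.
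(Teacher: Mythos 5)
Your proof is correct and follows essentially the same route as the paper: factor in the UFD $k[G]$, use connectedness of $R_u(E')$ to see each irreducible factor has invariant divisor and hence lies in $R_{\zip}$. You add a welcome extra verification about units (matching $R_{\zip}^\times$ with $k[G]^\times$ via triviality of characters on the unipotent group) that the paper leaves implicit.
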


\begin{proof}
By \cite[II,\Prop~6.2]{Hartshorne-Alg-Geom}, the ring $k[G]$ is a UFD. Denote by $\Pcal$ a set of representatives of irreducible elements in $k[G]$. Let $\Pcal' \subset \Pcal$ the set of elements $f\in \Pcal$ such that $\div(f)$ is $R_u(E')$-invariant. Let $f\in R_{\zip}$ be a function, and decompose $f=a f^{e_1}_1...f^{e_n}_n$ in $k[G]$ with $f_i\in \Pcal$ and $e_i>0$ for all $i=1,...,n$. Since $\div(f)$ is $R_u(E')$-invariant and since $R_u(E')$ is connected, each component $\div(f_i)$ is invariant as well. This shows that $R_{\zip}$ is a UFD and $\Pcal'$ is a set of representatives of the irreducible elements. 
\end{proof}

\begin{conjecture}\label{conjRzipfg}
The $k$-algebra $R_{\rm zip}$ is finitely generated.
\end{conjecture}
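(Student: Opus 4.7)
The plan is to cast \Conj~\ref{conjRzipfg} as a question in the invariant theory of unipotent groups and reduce it, via localization at the Hodge Hasse invariant, to a finite-generation statement on the $\mu$-ordinary locus. By \Prop~\ref{Rzipinvar} we have $R_{\rm zip}=k[G]^{R_u(E')}$, placing the conjecture squarely in the framework of Hilbert's 14th problem for the unipotent group $R_u(E')$ acting on $G$ by $(a,b)\cdot g=agb^{-1}$. The structure furnished by the zip datum --- the Levi/unipotent decomposition of $E'$, the stabilizer of $1\in U_\mu$, and the codimension-one strata in $\GF^\mu$ --- is what the strategy below exploits.

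The first step is to show that $R_{\rm zip}[h_\mu^{-1}]$ is finitely generated. Since the non-vanishing locus of $h_\mu$ is $\GZip^{\mu\textrm{-ord}}\simeq [{\rm pt}/S]$ with $S=L_0(\FF_p)\ltimes S^\circ$ a finite group scheme (\S\ref{sec-ordsec}), \Prop~\ref{ordsecprop}, applied grade-by-grade to the representations $V(\lambda)$, yields
\begin{equation*}
R_{\rm zip}[h_\mu^{-1}] \;=\; \bigoplus_{\lambda\in X^*_{+,I}(T)} V(\lambda)^S.
\end{equation*}
The underlying multi-graded algebra $\bigoplus_\lambda V(\lambda)$ identifies, via Borel--Weil, with the coordinate ring $k[L]^{U_L^-}$ of the basic affine space of $L$, where $U_L^-$ is the unipotent radical of the Borel opposite to $B_L$. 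By Hadziev's theorem (equivalently, $U_L^-$ is a Grosshans subgroup of the reductive group $L$) this algebra is finitely generated, and passing to $S$-invariants preserves finite generation (Hilbert--Noether for finite group schemes over a field). Hence $R_{\rm zip}[h_\mu^{-1}]$ is a finitely generated $k$-algebra.

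The second step is to lift finite generation from $R_{\rm zip}[h_\mu^{-1}]$ back to $R_{\rm zip}$. Using the divisorial valuations $v_\alpha$ for $\alpha\in\Delta\setminus I$ along the codimension-one flag strata $\overline{\Xcal}_{s_\alpha w_0}$ introduced in \S\ref{sec-hwc}, one has the description
\begin{equation*}
R_{\rm zip} \;=\; \left\{ f\in R_{\rm zip}[h_\mu^{-1}] \;\middle|\; v_\alpha(f)\geq 0 \text{ for all }\alpha\in\Delta\setminus I \right\}.
\end{equation*}
The proposed strategy is: for each $\alpha\in\Delta\setminus I$ construct a \emph{partial Hasse invariant} $h_\alpha\in R_{\rm zip}$ whose divisor is supported only on $\overline{\Xcal}_{s_\alpha w_0}$, then show that a finite generating set of $R_{\rm zip}[h_\mu^{-1}]$ can be multiplied by a suitable monomial in the $h_\alpha$ to yield elements of $R_{\rm zip}$; together with $h_\mu$ and the $h_\alpha$ these should generate $R_{\rm zip}$ as a $k$-algebra. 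The explicit formula for $v_\alpha(\tilde f_\lambda)$ established in \S\ref{sec-hwc} would be used to control the necessary exponents.

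The main obstacle lies in the construction and uniform control of the partial Hasse invariants $h_\alpha$: the existence of nonzero sections supported on a single codimension-one stratum is genuinely obstructed in general, and this is precisely the point at which Nagata-type counterexamples to Hilbert's 14th problem can enter. In low-rank Hodge-type cases such as $G=Sp(4)$, all generators can be written down explicitly using the highest-weight construction of \S\ref{sec-hwc}, which is presumably how \Th~\ref{thmRzipSp4} proceeds: one exhibits three specific sections and verifies directly that they freely generate $R_{\rm zip}$. For the general case, a plausible approach combines such explicit constructions on rank-one Levi subquotients with the flag-space and partial-Hasse-invariant techniques of \cite{Goldring-Koskivirta-Strata-Hasse-v2,Goldring-Koskivirta-zip-flags}; but ensuring that denominator exponents remain uniformly bounded across all $\lambda$ is the substantive difficulty.
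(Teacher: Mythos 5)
This is stated as a \emph{conjecture} in the paper, and the paper does not prove it; the only case settled is $G=Sp(4)$ (\Th~\ref{thmRzipSp4}), by explicit computation. So the relevant question is whether your strategy would actually close the gap, and it does not.

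Your first step is essentially correct and coincides with what the paper proves as the finite generation of $R_\mu$: under the Hodge-type hypothesis, $R_{\rm zip}[h_\mu^{-1}]=R_\mu$ identifies with the $L(\FF_p)$-invariants of the multi-cone $\bigoplus_\lambda V(\lambda)\cong k[B_L\backslash L]$, which is finitely generated (Grosshans/standard monomial theory, then Noether for the finite group). Nothing to object to there.

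The gap is in your second step, and it is exactly the open content of the conjecture. The description
$R_{\rm zip}=\{f\in R_{\rm zip}[h_\mu^{-1}]\ \mid\ v_\alpha(f)\geq 0\ \forall\,\alpha\in\Delta\setminus I\}$
is correct, but finite generation of a localization does not descend through an intersection with finitely many valuation half-spaces: a ring of the form $A\cap\bigcap_\alpha\{v_\alpha\geq 0\}$ with $A$ finitely generated can fail to be finitely generated, and Nagata's counterexamples to Hilbert's 14th problem are of precisely this shape. Two concrete points where the proposal is unsupported. First, you posit partial Hasse invariants $h_\alpha\in R_{\rm zip}$ whose divisor is supported on the single component $\overline{\Xcal}_{s_\alpha w_0}$; the existence of such sections is not established (the Hasse-invariant constructions in \cite{Goldring-Koskivirta-Strata-Hasse-v2,Goldring-Koskivirta-zip-flags} produce sections of powers of the Hodge line bundle whose vanishing loci are unions of strata closures, typically not a single component, and they come with hypotheses such as orbital $p$-closeness). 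Second, even granting the $h_\alpha$, the claim that multiplying a finite generating set of $R_{\rm zip}[h_\mu^{-1}]$ by monomials in the $h_\alpha$ yields a generating set of $R_{\rm zip}$ requires a nontrivial injectivity/flatness argument after reduction modulo the $h_\alpha$; this is exactly the content of \Lem~\ref{lemfrac} in the $Sp(4)$ case, and already for $Sp(6)$ the paper's discussion of $R_0\subsetneq R_1\subset R_{\rm zip}$ shows that the naive version of this step fails. So your proposal correctly locates the difficulty but does not resolve it, and the conjecture remains open.
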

\Prop~\ref{Rzipinvar} shows that this conjecture is related to Hilbert's 14th problem. Nagata has given a counterexample to Hilbert's original conjecture (\cite{NagataHilbert}).

\subsection{Related algebras}
To simplify, we assume in this section that $P$ is defined over $\FF_p$. We consider the $\mu$-ordinary locus $\Xcal_\mu=[E\backslash U_\mu]\subset \Xcal$ (see \ref{gzipmu}). By replacing $\Xcal$ by $\Xcal_\mu$ in the definition of $R_{\zip}$ \eqref{rzip}, we define a graded $k$-algebra $R_\mu$. One has a natural inclusion $R_{\rm zip}\to R_\mu$ given restriction of sections. This inclusion respects the grading.

\begin{proposition}
The $k$-algebra $R_\mu$ is finitely generated.
\end{proposition}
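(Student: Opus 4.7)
The plan is to realize $R_\mu$ as a ring of invariants under a finite group action on a finitely generated $k$-algebra, and then apply Noether's theorem on finite-group invariants.

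First, I would apply \Prop~\ref{ordsecprop} to the $P$-representation $V(\lambda)$ for each $\lambda\in X^*_{+,L}(T)$, together with the compatibility of products recorded in \Rmk~\ref{rem-muord}~\eqref{item-mu2}, to obtain a graded $k$-algebra isomorphism
\begin{equation*}
R_\mu \;\simeq\; R^{L(\FF_p)}, \qquad R \;:=\; \bigoplus_{\lambda \in X^*_{+,L}(T)} V(\lambda),
\end{equation*}
where the multiplication on $R$ is induced by the natural maps $V(\lambda)\otimes V(\nu)\to V(\lambda+\nu)$ and $L(\FF_p)$ acts on $R$ via right multiplication on $L$. The compatibility of products on the two sides follows from the formula $f'(a)=f(a\varphi(a)^{-1})$ in \Rmk~\ref{rem-muord}, which visibly respects pointwise multiplication.

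Next, I would identify the ambient ring $R$ with $k[L]^{U_L}$, where $U_L := R_u(B_L)$ acts on $L$ by left multiplication. Indeed, decomposing $k[L]^{U_L}$ into weight spaces under the residual left action of $T$ recovers, weight by weight, the induced representations $V(\lambda)=H^0(L/B_L,\Lscr(\lambda))$, and the resulting $T$-graded algebra structure coincides with that of $R$. The key input at this step is the classical theorem of Hadžiev (equivalently, the fact that the maximal unipotent subgroup $U_L$ is a Grosshans subgroup of the reductive group $L$), which asserts that $k[L]^{U_L}$ is a finitely generated $k$-algebra. Equivalently, $L/U_L$ is a quasi-affine variety of finite type over $k$.

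Finally, since $L(\FF_p)$ is a finite group acting on the finitely generated $k$-algebra $R$, Noether's classical theorem on invariants of finite groups gives that $R^{L(\FF_p)}$ is itself a finitely generated $k$-algebra. Transporting this back through the isomorphism of the first step yields the finite generation of $R_\mu$. There is no substantive obstacle here: the only nontrivial ingredient is the finite generation of $k[L]^{U_L}$, which is standard reductive invariant theory, and everything else is formal bookkeeping.
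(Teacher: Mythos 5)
Your proposal is correct and follows essentially the same route as the paper: identify $R_\mu$ with the $L(\FF_p)$-invariants of the finitely generated algebra $\bigoplus_{\lambda} V(\lambda)$ and then invoke finite-group invariant theory. The only difference is cosmetic: you cite the Grosshans/Had\v{z}iev theorem that $k[L]^{U_L}$ is finitely generated, whereas the paper gets the same finite generation of $\bigoplus_{\lambda} H^0(B_L\backslash L,\Lscr(\lambda))$ from standard monomial theory.
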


\begin{proof}
Let $\pi:\Ycal\to \Xcal$ be the natural projection map, and write $\Ycal_\mu:=\pi^{-1}(\Xcal_\mu) = [E'\backslash U_\mu]$ for the $\mu$-ordinary locus of $\Ycal$. There is an isomorphism $\Ycal_\mu \simeq [B_L\backslash L / L(\FF_p)]$ (\Lem~\ref{isommuord} and \S\ref{sec-ordsec}). It follows that $R_\mu$ identifies with the $L(\FF_p)$-invariants of the $k$-algebra $
\bigoplus_{\lambda \in X^*(T)} H^0(B_L\backslash L, \Lscr(\lambda))$. This algebra is finitely generated by standard monomial theory (\cite[\S9.4]{LakshmibaiSMT}). Since $L(\FF_p)$ is a finite group, $R_\mu$ is finitely generated by \cite[\S3]{Humphreys-Hilbert}.
\end{proof}

Let $G_{\max}\subset G$ by the unique open $B\times {}^zB$-orbit in $G$ (recall that $z=w_{0,I}w_0$). One has $G_{\max}=BB^-w_{0,I}$. Recall that one has inclusions $G_{\max}\subset U_\mu \subset G$. We obtain an open substack $\Ycal_{\max}\subset \Ycal$ defined by $\Ycal_{\max}:=[E'\backslash G_{\max}]$. Similarly, we define a $k$-algebra $R_{\max}$ by
\begin{equation}
R_{\max}=\bigoplus_{\lambda\in X^*(T)} H^0(\Ycal_{\max},\Lscr(\lambda)).
\end{equation}
Note that contrary to $R_{\zip}$ and $R_\mu$, the space $H^0(\Ycal_{\max},\Lscr(\lambda))$ may be nonzero even if $\lambda$ is not $L$-dominant. Hence for $R_{\max}$, we want to allow the index set to be all of $X^*(T)$.

Since $w_{0,I}\in G_{\max}$, the stack $\Ycal_{\max}$ is isomorphic to $[E'\backslash B\times {}^zB/S]$ where $S=\Stab_{E'}(w_{0,I})$. It is easy to see that $S$ coincides with the set of pairs $(t,w_{0,I}t^{-1}w_{0,I})$ for $t\in T$. Furthermore, the quotient $E'\backslash B\times {}^zB$ maps isomorphically to $B_L$ by $(a,b)\mapsto \overline{b}$, where $\overline{b}\in L$ denotes the Levi component of $b$ with respect to the decomposition $Q=L R_u(Q)$. We deduce that there is an isomorphism:
\begin{equation}
\Ycal_{\max}\simeq  [B_L/T]
\end{equation}
where $T$ acts on $B_L$ on the right by the rule $b\cdot t:=\varphi(t)^{-1}b w_{0,I}t w_{0,I}$ for all $t\in T$ and all $b\in B_L$. Since this is a quotient by a torus, the ring $k[B_L]$ decomposes as a direct sum of $T$-eigenspaces $k[B_L]=\bigoplus_{\lambda \in X^*(T)}k[B_L]_\lambda$, and via this identification $k[B_L]_\lambda=H^0(\Ycal_{\max},\Lscr(\lambda))$. We obtain the following result:

\begin{proposition}\label{proprmax}
The algebra $R_{\max}$ identifies with $k[B_L]$. There are inclusions
\begin{equation}
R_{\zip}\subset R_\mu \subset k[B_L]
\end{equation}
and all three $k$-algebras have the same field of fractions.
\end{proposition}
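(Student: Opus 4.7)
The plan is to handle the three assertions in sequence. First, the identification $R_{\max} \simeq k[B_L]$ follows directly from the analysis preceding the statement: the isomorphism $\Ycal_{\max} \simeq [B_L/T]$ identifies $H^0(\Ycal_{\max}, \Lscr(\lambda))$ with the $\lambda$-weight eigenspace $k[B_L]_\lambda$ of the $T$-action, so summing over all weights recovers $k[B_L]$ by its $T$-weight decomposition. The two inclusions $R_{\zip} \subset R_\mu \subset R_{\max}$ arise from restricting sections along the open embeddings $\Ycal_{\max} \subset \Ycal_\mu \subset \Ycal$, and each is injective because the smaller substack is dense in the larger (since $G_{\max}$ is dense in $U_\mu$, which is dense in $G$).

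For the equality of fraction fields, I would first show $R_\mu = R_{\zip}[1/h_\mu]$, where $h_\mu \in R_{\zip}$ is the $\mu$-ordinary Hasse invariant of weight $d\eta_\omega$. One inclusion is immediate because $h_\mu$ is invertible on $\Ycal_\mu$; the reverse inclusion is the argument used in the proof of \Prop~\ref{propmuord}\eqref{item-charzip}, namely that multiplying any $f \in R_\mu$ by a sufficiently high power of $h_\mu$ absorbs all poles along $\Ycal \setminus \Ycal_\mu$ and produces an element of $R_{\zip}$. This immediately yields $\operatorname{Frac}(R_{\zip}) = \operatorname{Frac}(R_\mu)$.

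It remains to show $\operatorname{Frac}(R_\mu) = k(B_L)$. Given a homogeneous $f \in k[B_L]$ of weight $\nu \in X^*(T)$, decompose $\nu = \lambda - \mu$ with $\lambda, \mu \in X^*_{+,I}(T)$; this is possible because the $L$-dominant cone has nonempty interior in $X^*(T)_{\RR}$ and hence generates $X^*(T)$ as a group. The goal is then to find a nonzero $b \in (R_\mu)_\mu$ such that $f \cdot b$, a priori a rational section of $\Lscr(\lambda)$ on $\Ycal_\mu$, extends to a regular section, giving $f = (fb)/b \in \operatorname{Frac}(R_\mu)$. Under the isomorphism $\Ycal_\mu \simeq [B_L \backslash L / L(\FF_p)]$, this reduces to finding an $L(\FF_p)$-invariant $B_L$-eigenfunction on $L$ of weight $\mu$ vanishing with prescribed multiplicity along each codimension-one Schubert divisor in the complement of the big cell $B_L w_{0,L} B_L \subset L$. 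For $\mu$ sufficiently regular in $X^*_{+,I}(T)$, the highest weight vector of $V(\mu) = \operatorname{Ind}_{B_L}^L(\mu)$ has Chevalley divisor supported on precisely these Schubert divisors with multiplicities linear in $\mu$, and the averaging construction of \Lem~\ref{steinbergsection}, applied to a suitable power, produces an $L(\FF_p)$-invariant section in $R_\mu$ with the required vanishing to absorb the poles of $f$.

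The main obstacle is this last step: ensuring that a single $b$ can be chosen whose vanishing along each component of $\Ycal_\mu \setminus \Ycal_{\max}$ simultaneously dominates the prescribed poles of $f$, while remaining $L(\FF_p)$-invariant. This requires combining Chevalley-type divisor formulas on the Levi flag variety $L/B_L$ with the Frobenius-twisted $L(\FF_p)$-averaging of \eqref{ftildadef}, and verifying that passing to the product over $s \in L(\FF_p)$ and raising to the $p^m$-th power preserves enough of the boundary vanishing and does not accidentally cancel contributions.
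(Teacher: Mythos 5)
Your identification $R_{\max}\simeq k[B_L]$ via the $T$-eigenspace decomposition on $[B_L/T]$, and the inclusions by restriction along the dense open immersions $\Ycal_{\max}\subset\Ycal_\mu\subset\Ycal$, are both correct and match the paper's preceding paragraph. The fraction-field argument, however, has two problems. Your first step invokes the $\mu$-ordinary Hasse invariant $h_\mu$, whose existence is established only for Hodge-type cocharacter data, while the surrounding subsection assumes merely that $P$ is defined over $\FF_p$. Your second step correctly flags its own weak point: the averaging construction of \eqref{ftildadef} is built to produce $L(\FF_p)$-invariance, not to control vanishing along $\Ycal_\mu\setminus\Ycal_{\max}$, and the valuations $v_\alpha$ of \eqref{valpha} measure order along the \emph{other} boundary $\Ycal\setminus\Ycal_\mu$, so nothing in the paper bounds the vanishing you would need, and you would have to prove it from scratch.

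Both difficulties disappear if you use a pulled-back Schubert section. Fix any strictly dominant $\rho\in X^*_+(T)$. By \eqref{divflam}, the section $h_\rho\in H^0(\Sbt,\Lscr(\rho,-w_0\rho))$ has non-vanishing locus exactly $\Sbt_{w_0}$. Set $s:=\psi^*(h_\rho)$; by \eqref{pullbackformula} it lies in $H^0(\GF,\Lscr(h(\rho)))=H^0(\GZip,\Vscr(h(\rho)))$, and $h(\rho)\in C_{\Sbt}\subset X^*_{+,I}(T)$ by \Def~\ref{defSbtcone}, so $s\in R_{\zip}$. Its non-vanishing locus in $\GF$ is $\psi^{-1}(\Sbt_{w_0})=\Xcal_{w_0}=\Ycal_{\max}$. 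Hence any homogeneous $f\in R_{\max}$, viewed as a rational section on $\GF$, has poles confined to the zero locus of $s$, so $s^{N}f\in R_{\zip}$ for $N\gg 0$, giving $R_{\max}=R_{\zip}[1/s]$. Squeezing the inclusions $R_{\zip}\subset R_\mu\subset R_{\max}$ between a ring and its localization settles the common field of fractions in one stroke, requiring neither the Hodge-type hypothesis nor any divisor analysis on the Levi.
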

This shows that the field of fractions of $R_{\zip}$ is $k(B_L)$. Furthermore, the scheme $\spec(R_{\zip})$ is birational to an affine space. However, we believe that it is not true in general that $R_{\zip}$ is a polynomial ring.

\subsection{The general symplectic case} \label{sec-sympex}
In this section, we assume that $G=Sp(2n)$ and the zip datum is the usual Hodge-type one (\S \ref{section symplectic}). Recall that we identify $L$ with $GL_n$ and $B_L$ is the group of lower-triangular matrices.  Let $(a_{i,j})_{1\leq i,j \leq n}$ be indeterminates and let
$R=k[(a_{i,j})_{i,j}]$. Consider the generic matrix $A=(a_{i,j})\in M_n(R)$. Let $\Delta_i$ be the determinant of the $i\times i$ matrix
\begin{equation}\label{eqDelta}
\Delta_i=\left| \begin{matrix}
a_{1,n+1-i} & a_{1,n+2-i} & \cdots & a_{1,n}\\
a_{2,n+1-i} & a_{2,n+2-i} & \cdots & a_{2,n} \\
\vdots & \vdots & & \vdots \\
a_{i,n+1-i} & a_{i,n+2-i} & \cdots & a_{i,n}
\end{matrix} \right|.
\end{equation}
Denote by $U_{\max}\subset GL_n$ the subset where all $\Delta_i$ for $1\leq i \leq n$ are nonzero. The function $\Delta_i:M_n\to \AA^1$ is $B_L\times B_L$-equivariant. In particular, it is also equivariant with respect to the action of $B_L$ by $\varphi$-conjugation on $M_n$, so it corresponds to a global section over $\Ycal$ (\Lem~\ref{lemmapol}). Actually, the stronger invariance under $B_L\times B_L$ means that $\Delta_i$ comes as a pull-back via the map $\psi:\Ycal\to \Sbt$ \eqref{psimap}. More precisely, recall that we have functions $f_\lambda$ on the stack $\Sbt$ (see \eqref{divflam}). The function $\Delta_i$ is the pull-back of $f_\lambda$ for $\lambda=(0,...,0,-1,...,-1)$, where $-1$ appears $i$ times. The weight of $\Delta_i$ is $wt(\Delta_i)=S_i$ (see \Lem~\ref{SchubSp}).

\begin{lemma}\label{identSp}
One has the following identifications
\begin{enumerate}
\item \label{Spitem1} $R_{\rm zip} =\{f:M_n \to \AA^1 \textrm{ regular, } f(ux\varphi(u)^{-1})=f(x), \ \forall x\in M_n, u\in R_u(B_L)\}$.
\item \label{Spitem2} $R_{\mu}=\{f:GL_n \to \AA^1 \textrm{ regular, } f(ux\varphi(u)^{-1})=f(x), \ \forall x\in GL_n, u\in R_u(B_L)\}$.
\item \label{Spitem3} $R_{\rm max}= \{f:U_{\max} \to \AA^1 \textrm{ regular, } f(ux\varphi(u)^{-1})=f(x), \ \forall x\in U_{\rm max}, u\in R_u(B_L)\}$.
\end{enumerate}
Furthermore, the set of homogeneous elements of weight $\lambda\in X^*(T)$ in each of the graded algebras $R_{\zip}$, $R_\mu$, $R_{\max}$ correspond to those functions $f$ which satisfy $f(bx\varphi(b)^{-1})=\lambda(b) f(x)$ for all $x$ and all $b\in B_L$.
\end{lemma}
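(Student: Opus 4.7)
The proof reduces each of the three assertions to Corollary \ref{corMn} (and a variant for the maximal locus) combined with a standard $T$-eigenspace decomposition argument.

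For assertion (1): By definition and \eqref{pushforward}, $R_{\rm zip} = \bigoplus_{\lambda \in X^*_{+,I}(T)} H^0(\GF, \Lscr(\lambda))$. Corollary \ref{corMn} identifies each summand with the space of regular $f: M_n \to \AA^1$ satisfying $f(bx\varphi(b)^{-1}) = \lambda(b)f(x)$ for all $b \in B_L$, which already proves the ``furthermore'' part of the statement for $R_{\rm zip}$. Writing $B_L = T \ltimes R_u(B_L)$, this equivariance condition is equivalent to the conjunction of $R_u(B_L)$-invariance (since $\lambda|_{R_u(B_L)}$ is trivial) and being a $T$-eigenfunction of weight $\lambda$ under the twisted $T$-action $t \cdot x = tx\varphi(t)^{-1}$ on $M_n$. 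Since $T$ is a torus acting rationally, the $R_u(B_L)$-invariant subalgebra $k[M_n]^{R_u(B_L)}$ decomposes as the direct sum of its $T$-eigenspaces, each of which is precisely the $\lambda$-graded piece above. By Lemma \ref{fohom}, only $L$-dominant weights contribute, so the sum over $\lambda \in X^*_{+,I}(T)$ exhausts $k[M_n]^{R_u(B_L)}$, yielding (1).

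Assertion (2) is proved by the identical argument, using instead the other isomorphism of Corollary \ref{corMn} to identify $H^0(\GF^{\mu\text{-ord}}, \Lscr(\lambda))$ with regular functions $GL_n \to \AA^1$ satisfying the same equivariance.

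For assertion (3), the main step is a variant of Lemma \ref{lemmapol} for the open substack $\Ycal_{\max} \subset \Ycal$. The key geometric fact is the equality $A(G_{\max}) = U_{\max}$: recalling $G_{\max} = BB^- w_{0,I}$, a direct block-matrix computation in the conventions of \S\ref{thegroup} shows $A(B) = B_L$ (lower-triangular), $A(B^-) = B_L^-$ (upper-triangular), and $A(w_{0,I}) = w_{0,L}$ (represented by the anti-diagonal matrix $J$), so $A(G_{\max}) = B_L B_L^- w_{0,L}$. This big-cell translate coincides with the non-vanishing locus of the corner minors $\Delta_i$ (by the $LU$-decomposition criterion: right-multiplication by $J$ exchanges upper-right with upper-left corner minors, and a matrix lies in $B_L^- B_L$ iff all its leading principal minors are nonzero). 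Thus $A$ restricts to a surjection $G_{\max} \to U_{\max}$, and the proof of Lemma \ref{lemmapol} carries over verbatim (using density and the fact that $A$ intertwines the $E'$-action with the $\varphi$-twisted $B_L$-action via the embedding $b \mapsto S(b)$ of \eqref{Sdef}) to identify $H^0(\Ycal_{\max}, \Lscr(\lambda))$ with regular functions on $U_{\max}$ of the required equivariance. The same $T$-eigenspace argument then concludes, noting that now the sum runs over all $\lambda \in X^*(T)$ since for $R_{\max}$ there is no $L$-dominance restriction (the obstruction coming from Lemma \ref{fohom} requires the section to extend across the boundary of $\GF^{\mu\text{-ord}}$, which we no longer impose).

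The main technical obstacle is the identification $A(G_{\max}) = U_{\max}$; the rest is formal once the bookkeeping between the $R_u(B_L)$-invariance and the $T$-grading is carefully set up.
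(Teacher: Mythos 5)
Your proof is correct and follows the same route as the paper's (which cites Corollary~\ref{corMn} for (1), (2) and the preceding discussion for (3)); you simply supply the $T$-eigenspace decomposition and the identity $A(G_{\max}) = U_{\max}$ that the paper leaves implicit. Two small slips worth noting: the $LU$-criterion (all leading principal minors nonzero) characterizes $B_L B_L^-$, i.e.\ lower times upper, which is the order actually appearing in your computed $A(G_{\max}) = B_L B_L^- w_{0,L}$, not $B_L^- B_L$ as written; and the reason only $L$-dominant weights contribute in (1) and (2) is the vanishing $\Vscr(\lambda)=0$ for non-$L$-dominant $\lambda$ (recorded right after the definition of $C_{\rm zip}$ in \S\ref{sec-vector-bundles-gzipz}), not Lemma~\ref{fohom}, which only states the homogeneity of $f_0$.
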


\begin{proof}
Assertions \eqref{Spitem1} and \eqref{Spitem2} follow easily from \Cor~\ref{corMn} and the third one follows from the above discussion. The last assertion on homogeneous elements is clear.
\end{proof}

Note that we have an obvious inclusion $R_{\zip}\subset R$. Let $K$ be the field of fractions of $R$. Recall that $A$ denotes the generic matrix $A=(a_{i,j})\in M_n(R)$.

\begin{lemma}\label{exb}
There exists a unique matrix $z\in R_u(B_L)(K)$ such that if we write $z A =(m_{i,j})$, then $m_{i,j}=0$ for all $1\leq i,j\leq n$ such that $i+j>n+1$. Furthermore, $z\in R[\frac{1}{\Delta_1},...,\frac{1}{\Delta_{n-1}}]$.
\end{lemma}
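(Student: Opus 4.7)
The plan is to construct $z$ row by row via a triangular elimination process. Since $z$ lies in $R_u(B_L)$, it is lower-triangular unipotent, so if we write $z=(z_{i,k})$ with $z_{i,i}=1$ and $z_{i,k}=0$ for $k>i$, then row $i$ of $zA$ equals row $i$ of $A$ plus $\sum_{k=1}^{i-1}z_{i,k}\cdot(\text{row }k\text{ of }A)$. The condition that $(zA)_{i,j}=0$ for all $(i,j)$ with $i+j>n+1$ decouples as follows: for each fixed $i\in\{1,\dots,n\}$, we need $(zA)_{i,j}=0$ for $j\in\{n+2-i,\dots,n\}$, which is an empty condition for $i=1$ and gives $i-1$ equations for $i\geq 2$ in the $i-1$ unknowns $z_{i,1},\dots,z_{i,i-1}$.

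Explicitly, for row $i\geq 2$ the system reads
\begin{equation*}
\sum_{k=1}^{i-1}a_{k,j}\,z_{i,k}=-a_{i,j},\qquad j=n+2-i,\dots,n.
\end{equation*}
The coefficient matrix is the $(i-1)\times(i-1)$ submatrix of $A$ on rows $1,\dots,i-1$ and columns $n+2-i,\dots,n$. By the definition \eqref{eqDelta}, its determinant is precisely $\Delta_{i-1}$ (up to transposition, which does not affect the determinant). Therefore the system admits a unique solution over $K$, and by Cramer's rule each $z_{i,k}$ lies in $R\bigl[\tfrac{1}{\Delta_{i-1}}\bigr]$. Performing this construction for $i=2,\dots,n$ yields a unique $z\in R_u(B_L)(K)$ with the desired property and with entries in $R\bigl[\tfrac{1}{\Delta_1},\dots,\tfrac{1}{\Delta_{n-1}}\bigr]$.

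The main step is simply verifying that the relevant $(i-1)\times(i-1)$ minor of the generic matrix $A$ really is $\Delta_{i-1}$; this is a direct check from the definition \eqref{eqDelta}, which takes the top-right $(i-1)\times(i-1)$ block. Uniqueness then follows from the triangular nature of the procedure: the unknowns $z_{i,k}$ for row $i$ are determined by the equations attached to row $i$ alone, independently of later rows. There is no real obstacle — this is essentially Gaussian elimination applied to kill the anti-diagonal block of $A$, and the nonvanishing minors that arise are exactly the $\Delta_i$ by design.
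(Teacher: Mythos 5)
Your proof is correct and simply supplies the details the paper regards as obvious --- the paper's own proof of this lemma is literally ``Clear.'' The row-by-row Gaussian elimination argument, identifying the coefficient matrix of the row-$i$ system with the top-right $(i-1)\times(i-1)$ minor $\Delta_{i-1}$ and invoking Cramer's rule, is exactly the intended content.
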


\begin{proof}
Clear.
\end{proof}
Denote again by $\varphi:K\to K$ the map $x\mapsto x^p$ and write also $\varphi:M_n(K)\to M_n(K)$ the map obtained by applying $\varphi$ to the coefficients. Define a matrix $\Gamma=(\gamma_{r,s})_{1\leq r,s\leq n}\in M_n(K)$ by the following:
\begin{equation} \label{matGam}
\Gamma:=zA\varphi(z)^{-1}
\end{equation}
where $z\in R_u(B_L)(K)$ is the matrix given by \Lem~\ref{exb}. It is clear that $\gamma_{r,s}=0$ for $r+s > n+1$. For $(r,s)$ such that $r+s\leq n+1$, the element $\gamma_{r,s}$ lies in $R[\frac{1}{\Delta_1},...,\frac{1}{\Delta_{n-1}}]$. The uniqueness of $z$ in \Lem~\ref{exb} implies that $\gamma_{r,s}$ is automatically $E'$-equivariant, so it is actually in $R_{\zip}[\frac{1}{\Delta_1},...,\frac{1}{\Delta_{n-1}}]$. Its weight is $wt(\gamma_{r,s})=e_{r}-pe_{s}$, where $(e_i)_i$ is the canonical basis of $\ZZ^n$. 

\begin{corollary}
Let $f\in R$ be a polynomial. Then $f\in R_{\zip}$ if and only if $f((a_{i,j})_{i,j})=f((\gamma_{i,j})_{i,j})$ in $K$.
\end{corollary}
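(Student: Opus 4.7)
The plan is to show both directions of the equivalence pivot on a single key invariance property: the matrix $\Gamma$ of \eqref{matGam} is preserved by the action of $R_u(B_L)$ on $M_n$ by $\varphi$-conjugation $\phi_u : A \mapsto uA\varphi(u)^{-1}$. Once this is established, combining it with the characterization of $R_{\zip}$ given in Lemma \ref{identSp}\eqref{Spitem1} makes both implications essentially immediate.

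\textbf{Step 1: $\phi_u$-invariance of $\Gamma$.} I would first observe that $\phi_u$ extends to a $k$-algebra automorphism of $R$, and even of $K$, which commutes with $\varphi$ (on $R$ the map $\varphi$ is the absolute Frobenius, which commutes with every $k$-algebra morphism in characteristic $p$). Denote by $z(A)$ the matrix of Lemma \ref{exb}, regarded as a function of $A$. Since $z(A) \cdot A$ has zero entries in the positions $(i,j)$ with $i+j > n+1$, the same is true of $z(A) \cdot A \cdot \varphi(u)^{-1}$ (right multiplication by the lower-triangular unipotent matrix $\varphi(u)^{-1}$ preserves the upper-anti-triangular entries being zero). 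Rewriting this as $\bigl(z(A)u^{-1}\bigr) \cdot \bigl(u A \varphi(u)^{-1}\bigr)$ and invoking the uniqueness in Lemma \ref{exb} yields
\[
z(\phi_u(A)) \;=\; z(uA\varphi(u)^{-1}) \;=\; z(A)\,u^{-1}.
\]
Applying $\phi_u$ entry-wise to $\Gamma = z(A) A \varphi(z(A))^{-1}$ and using this transformation rule together with $\phi_u \circ \varphi = \varphi \circ \phi_u$, I compute
\[
\phi_u(\Gamma) \;=\; (z(A)u^{-1})(uA\varphi(u)^{-1})(\varphi(u) \varphi(z(A))^{-1}) \;=\; \Gamma.
\]

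\textbf{Step 2: The two implications.} For the forward direction, suppose $f \in R_{\zip}$. By Lemma \ref{identSp}\eqref{Spitem1}, the identity $f(uA\varphi(u)^{-1}) = f(A)$ holds for every $u \in R_u(B_L)(k)$ and every $A \in M_n(k)$. Since $k$ is infinite (being algebraically closed), this is an identity of polynomials in the coordinates on $M_n \times R_u(B_L)$, so it remains valid after specializing the coordinates of $u$ to the entries of $z \in R_u(B_L)(K)$. This specialization gives exactly $f(\Gamma) = f(A)$ in $K$. Conversely, assume $f(A) = f(\Gamma)$ in $K$. For any $u \in R_u(B_L)(k)$, apply the automorphism $\phi_u$ of $K$ to both sides; using Step 1 and the fact that $\phi_u$ commutes with polynomial expressions in the matrix entries, I obtain
\[
f(uA\varphi(u)^{-1}) \;=\; \phi_u(f(A)) \;=\; \phi_u(f(\Gamma)) \;=\; f(\phi_u(\Gamma)) \;=\; f(\Gamma) \;=\; f(A),
\]
so $f$ satisfies the invariance property of Lemma \ref{identSp}\eqref{Spitem1} and therefore lies in $R_{\zip}$.

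The main obstacle, such as it is, is making sure that the uniqueness statement of Lemma \ref{exb} is robust enough to give $z(\phi_u(A)) = z(A)u^{-1}$ cleanly, and that one is justified in applying $\phi_u$ to matrix entries that live in $K$ rather than $R$. Both points are handled by noting that $\phi_u$ is an automorphism of $K$ (as $u \in R_u(B_L)(k)$) that commutes with Frobenius, so all the algebraic manipulations carry through without issue.
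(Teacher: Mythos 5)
Your proof is correct and takes essentially the same route the paper leaves implicit: the key point (your Step 1), that each entry $\gamma_{r,s}$ of $\Gamma$ is invariant under $\varphi$-conjugation by $R_u(B_L)$, follows from the uniqueness in Lemma \ref{exb} exactly as you argue, and is precisely what the paper records just before the corollary when it observes that the $\gamma_{r,s}$ lie in $R_{\zip}[\tfrac{1}{\Delta_1},\dots,\tfrac{1}{\Delta_{n-1}}]$. From there both implications reduce to Lemma \ref{identSp}\,(1) by the specialization-at-$z$ and density-of-$k$-points arguments you spell out; these are the steps the paper suppresses, but they are the natural ones.
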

By \Prop~\ref{proprmax}, the algebra $R_{\max}$ identifies with $k[B_L]$. By \Lem~\ref{identSp}~\eqref{Spitem3}, it also identifies with certain elements of $R[\frac{1}{\Delta_1},...,\frac{1}{\Delta_n}]$. We have proved the following:
\begin{proposition} \ 
\begin{enumerate}
\item The elements $(\gamma_{r,s})$ for $1\leq r,s \leq n$ such that $r+s\leq n+1$ are algebraically independent, and one has $R_{\max}=k[(\gamma_{r,s}), \frac{1}{\Delta_1},...,\frac{1}{\Delta_{n}}]$.
\item The algebra $R_{\zip}$ identifies with $R\cap k[(\gamma_{r,s})]$ inside $K$.
\end{enumerate}
\end{proposition}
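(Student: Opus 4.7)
Proof proposal: The key geometric input is that, by Lemma \ref{exb}, the action of $R_u(B_L)$ on the open subset $U' := \{\Delta_1 \cdots \Delta_{n-1} \neq 0\} \subset M_n$ by $\varphi$-conjugation admits a distinguished slice: each orbit meets the closed linear subvariety $\Ncal \subset M_n$ of ``canonical form'' matrices (those with zeros strictly above the anti-diagonal) in a unique point, namely $\Gamma(A) = zA\varphi(z)^{-1}$, where $z \in R_u(B_L)(K)$ is the unique element furnished by that lemma. That $\Gamma(A)$ indeed lies in $\Ncal$ follows from a direct column-operation check, since the zero pattern of $zA$ is preserved by right multiplication by the lower-unipotent matrix $\varphi(z)^{-1}$; and the uniqueness of $z$ forces $\Gamma(u \cdot A) = \Gamma(A)$ for every $u \in R_u(B_L)$. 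Thus the map $A \mapsto \Gamma(A)$ realizes the geometric quotient $U'/R_u(B_L)$ as the open subvariety $\Ncal' \subset \Ncal \simeq \AA^{n(n+1)/2}$ cut out by $\gamma_{j,n+1-j} \neq 0$ for $1 \leq j \leq n-1$.

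For part (1), I would next observe that the $i \times i$ submatrix computing $\Delta_i$ in \eqref{eqDelta} is upper anti-triangular on $\Ncal$, so that $\Delta_i = \pm \prod_{j=1}^{i} \gamma_{j,n+1-j}$ as elements of $k[\Ncal]$. Hence $U_{\max}/R_u(B_L)$ identifies with the subvariety $\Ncal'' \subset \Ncal$ where every anti-diagonal coordinate is nonzero, which gives
\begin{equation*}
R_{\max} = k[U_{\max}]^{R_u(B_L)} = k[\Ncal''] = k[\gamma_{r,s}]\bigl[\tfrac{1}{\gamma_{1,n}}, \ldots, \tfrac{1}{\gamma_{n,1}}\bigr] = k\bigl[\gamma_{r,s},\ \tfrac{1}{\Delta_1}, \ldots, \tfrac{1}{\Delta_n}\bigr].
\end{equation*}
The algebraic independence of the family $(\gamma_{r,s})_{r+s \leq n+1}$ is then automatic, since these are the linear coordinates on the affine space $\Ncal$.

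For part (2), I would first reduce to the intermediate identity $R_{\zip} = R \cap R_{\max}$ inside $K$: the inclusion $R_{\zip} \subset R \cap R_{\max}$ comes from \Lem~\ref{identSp}(1) together with restriction to $U_{\max}$; the reverse inclusion uses density of $U_{\max}$ in $M_n$, since the defining relation of $R_u(B_L)$-invariance is a polynomial identity in its arguments, so invariance on a Zariski-dense open forces invariance everywhere. Given $f \in R \cap R_{\max}$, the invariance relation $f(A) = f(\Gamma(A))$ on $U'$, together with the fact that $f|_{\Ncal}$ is a polynomial on the affine space $\Ncal$ (because $f$ is polynomial on the ambient $M_n$), then shows that $f$ coincides in $K$ with a polynomial $g \in k[\gamma_{r,s}]$; this yields $R_{\zip} \subset R \cap k[\gamma_{r,s}]$, and the reverse inclusion is immediate since $k[\gamma_{r,s}] \subset R_{\max}$.

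The main point requiring care is the identification of $U'/R_u(B_L)$ with $\Ncal'$ at the ring-of-invariants level. It rests on Lemma \ref{exb}'s uniqueness, which exhibits $\Ncal'$ as a section of the quotient map $U' \to U'/R_u(B_L)$; but one must verify that $A \mapsto \Gamma(A)$ is a morphism of varieties (not merely a set-theoretic bijection on orbits), which is clear from the explicit rational formula for $z$ given by solving the equations $m_{i,j}=0$ ($i+j > n+1$) in the proof of \Lem~\ref{exb}. Once that is in hand, both statements are essentially formal consequences of the normal-form decomposition.
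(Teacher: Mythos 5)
Your proof is correct and follows essentially the same approach the paper takes implicitly in the discussion preceding the Proposition: Lemma~\ref{exb} providing the normal form $\Gamma = zA\varphi(z)^{-1}$, the uniqueness of $z$ yielding $R_u(B_L)$-equivariance, and the unproved Corollary characterizing membership in $R_{\zip}$ by the substitution $a_{i,j}\mapsto\gamma_{i,j}$. The value you add is in making explicit the slice interpretation of $U'/R_u(B_L)$ at the level of rings of invariants and the anti-diagonal factorization $\Delta_i = \pm\prod_{j\leq i}\gamma_{j,n+1-j}$ (and deducing algebraic independence of the $\gamma_{r,s}$ directly from the slice being an affine space rather than invoking Proposition~\ref{proprmax}'s identification $R_{\max}\simeq k[B_L]$), details the paper compresses into the phrase ``we have proved the following''.
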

We may cancel the denominator in $\gamma_{r,s}$ by multiplying with a certain product of the $\Delta_i$. We obtain a polynomial $\gamma_{r,s'}$ which lies automatically in $R_{\zip}$. By following precisely the construction of $\Gamma$, one can show that for all $(r,s)$ such that $r+s\leq n+1$, the function
\begin{equation}\label{homog}
\Delta_{r-1}\left(\prod_{m=s}^{n-r} \Delta_m\right)^p \gamma_{r,s}
\end{equation}
is a polynomial (where $\Delta_0=1$ by convention), hence lies in $R_{\zip}$. Note that the above product is not optimal. In general, there is a smaller product of $\Delta_i$ which satisfies this condition. The function \eqref{homog} is homogeneous, so its weight lies in $C_{\zip}$ by definition. We deduce:
\begin{equation}
S_{r-1}+p\sum_{m=s}^{n-r}S_m + e_r-pe_s \in C_{\zip}, \quad \textrm{for all }r+s\leq n+1.
\end{equation}

\subsection{The case $Sp(4)$}
In this section we specialize the discussion to the case $n=2$. One can see immediately that the matrix $z\in M_2(K)$ in \Lem~\ref{exb} is $z=\left( \begin{matrix}
1 & 0 \\ -a_{2,2}/a_{1,2} & 1
\end{matrix} \right)$. We obtain
\begin{equation}
\Gamma=\left(\begin{matrix}
a_{1,1}+ \frac{a_{2,2}^p }{\Delta_1^{p-1} } & \Delta_1 \\ -\frac{\Delta_2}{\Delta_1}  & 0 
\end{matrix}\right).
\end{equation}
This gives us $R_{\max}=k[a_{1,1}\Delta_1^{p-1}+a_{2,2}^p, \Delta_1^{\pm 1},\Delta_2^{\pm 1}]$. To obtain $R_{\zip}$, we need to work slightly more:
\begin{theorem}\label{thmRzipSp4}
For $G=Sp(4)$, the algebra $R_{\zip}$ is $k[a_{1,1}\Delta_1^{p-1}+a_{2,2}^p, \Delta_1,\Delta_2]$. It is isomorphic to a polynomial ring in $3$ variables.
\end{theorem}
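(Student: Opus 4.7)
The plan is to identify the three elements $\Delta_1=a_{1,2}$, $\Delta_2=\det A$, and $f=a_{1,1}\Delta_1^{p-1}+a_{2,2}^p$ as free polynomial generators of $R_{\zip}$. First I would check that all three belong to $R_{\zip}$ and record their weights. The sections $\Delta_1,\Delta_2$ are pullbacks of Schubert sections with weights $(1,-p)$ and $(1-p,1-p)$. The $R_u(B_L)$-invariance of $f$ and its weight $(0,-p(p-1))$ follow from a short direct calculation with $u=\left(\begin{smallmatrix}1&0\\c&1\end{smallmatrix}\right)$, in which the two non-invariant contributions $-c^p a_{1,2}\cdot a_{1,2}^{p-1}$ and $(c a_{1,2})^p$ cancel because we are in characteristic $p$. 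These three weights are precisely the s-generators of $\langle C_{\zip}\rangle$ predicted for $Sp(4)$.

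Next I would establish algebraic independence of $\Delta_1,\Delta_2,f$ in $R=k[a_{i,j}]$. The cleanest route is to specialise $a_{2,1}=0$, reducing the question to the algebraic independence of $a_{1,2},\ a_{1,1}a_{2,2},\ a_{1,1}a_{1,2}^{p-1}+a_{2,2}^p$ in $k[a_{1,1},a_{1,2},a_{2,2}]$. The identity
\[
a_{2,2}^{p+1}-f\cdot a_{2,2}+\Delta_1^{p-1}\Delta_2=0
\]
(valid after the specialisation) exhibits $a_{2,2}$ as algebraic over $k(\Delta_1,\Delta_2,f)$; combined with $a_{1,1}=\Delta_2/a_{2,2}$, the subfield $k(\Delta_1,\Delta_2,f)$ must have transcendence degree $3$, giving the claim.

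The main content of the theorem is that these three elements generate $R_{\zip}$. Here I would use the previously obtained equality $R_{\max}=k[f,\Delta_1^{\pm1},\Delta_2^{\pm1}]$ together with $R_{\zip}=R_{\max}\cap R$ (both sides identify with the $R_u(B_L)$-invariant regular functions on $M_n$ and on the open $U_{\max}$ respectively, via Lemma~\ref{identSp}). Any $g\in R_{\zip}$ then admits a representation $\Delta_1^{a}\Delta_2^{b}g=P(f,\Delta_1,\Delta_2)$ for some integers $a,b\geq 0$ and some polynomial $P$. An induction on $a+b$ reduces everything to the following divisibility claim: \emph{if $P(f,\Delta_1,\Delta_2)$ lies in the ideal $\Delta_i R$ for $i\in\{1,2\}$, then $P$ is divisible by the $i$-th indeterminate in the abstract polynomial ring $k[X_0,X_1,X_2]$.}

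I expect this divisibility claim to be the main obstacle, but it ultimately boils down to algebraic independence modulo $\Delta_i$. For $i=1$, the images of $f$ and $\Delta_2$ in $R/(\Delta_1)=k[a_{1,1},a_{2,1},a_{2,2}]$ are $a_{2,2}^p$ and $a_{1,1}a_{2,2}$; the monomials $a_{1,1}^k a_{2,2}^{pi+k}$ are pairwise distinct, so $P(X_0,0,X_2)$ must vanish identically. For $i=2$ one works in the fraction field $k(a_{1,1},a_{1,2},a_{2,1})$ of $R/(\Delta_2)$, obtained via $a_{2,2}=a_{1,2}a_{2,1}/a_{1,1}$; then
\[
f\ =\ a_{1,1}a_{1,2}^{p-1}+\bigl(a_{1,2}/a_{1,1}\bigr)^{p}a_{2,1}^{p}
\]
is a polynomial of degree $p$ in $a_{2,1}$ with nonzero leading coefficient, so $\Delta_1=a_{1,2}$ and $f$ are algebraically independent modulo $\Delta_2$, forcing $P(X_0,X_1,0)=0$. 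Combining these pieces yields $R_{\zip}=k[f,\Delta_1,\Delta_2]$, a polynomial ring in three indeterminates.
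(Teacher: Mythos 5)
Your proof is correct. The overall strategy — show $R_0 := k[f,\Delta_1,\Delta_2]\subset R_{\zip}$, establish algebraic independence, then prove the reverse inclusion by clearing denominators and a divisibility argument — is the same as the paper's, and the verifications (the $R_u(B_L)$-invariance of $f$, the algebraic independence via specialization and transcendence degree, the two divisibility claims modulo $\Delta_1$ and $\Delta_2$) are all sound. The one place you do more work than necessary is in the last step: you start from $R_{\zip}=R_{\max}\cap R=R_0\bigl[\tfrac{1}{\Delta_1},\tfrac{1}{\Delta_2}\bigr]\cap R$ and therefore need to handle both $\Delta_1$ and $\Delta_2$, with the $\Delta_2$ case requiring the fraction field of the (integral) determinantal quotient $R/(\Delta_2)$. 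The paper instead exploits that $R_{\zip}=k[\gamma_{1,1},\gamma_{1,2},\gamma_{2,1}]\cap R$ and that, for $n=2$, only $\Delta_1$ appears in the denominators of the $\gamma_{r,s}$ (namely $\gamma_{1,1}=\alpha/\Delta_1^{p-1}$ and $\gamma_{2,1}=-\Delta_2/\Delta_1$); this gives the sharper inclusion $R_{\zip}\subset R_0[\tfrac{1}{\Delta_1}]\cap R$, and then Lemma~\ref{lemfrac} reduces $R_0[\tfrac{1}{\Delta_1}]\cap R=R_0$ to the single injectivity check $R_0/\Delta_1 R_0\hookrightarrow R/\Delta_1 R$ — which is exactly your $i=1$ computation. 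So you could delete your $i=2$ analysis entirely by tightening the localization, but as written the argument is complete and correct. (One could also note that your explicit induction on $a+b$ is just an unrolled form of Lemma~\ref{lemfrac}.)
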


\begin{lemma}\label{lemfrac}
Let $R_0\subset R$ be integral domains with the same field of fractions $K$. Let $x\in R_0$ be a nonzero element. Then $R_0=R\cap R_0[\frac{1}{x}]$ if and only if the map $R_0/xR_0 \to R/xR$ is injective.
%\item Let $x,y\in R_0$ be nonzero elements. Then $R_0=R\cap R_0[\frac{1}{xy}]$ if and only if $R_0=R\cap R_0[\frac{1}{x}]$ and $R_0=R\cap R_0[\frac{1}{y}]$.
\end{lemma}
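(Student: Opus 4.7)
The plan is to prove each direction separately, noting that the lemma is essentially a standard fact about when an inclusion of domains is ``saturated'' with respect to a non-zero-divisor.

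For the easier forward direction, I would assume $R_0 = R \cap R_0[\tfrac{1}{x}]$ and pick $a \in R_0$ whose image in $R/xR$ is zero, so $a = xr$ for some $r \in R$. Since $a,x \in R_0$ and $x \neq 0$, we get $r = a/x \in R_0[\tfrac{1}{x}]$, and since $r \in R$ as well, the hypothesis yields $r \in R_0$, so $a \in xR_0$. This shows injectivity of $R_0/xR_0 \to R/xR$.

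For the reverse direction, I would assume the injectivity and start with an element $y \in R \cap R_0[\tfrac{1}{x}]$. Write $y = a/x^n$ with $a \in R_0$ and $n \geq 0$ chosen minimal. The inclusion $R_0 \subset R_0 \cap R = R$ gives $y \in R_0$ trivially if $n = 0$. Otherwise, for $n \geq 1$, the key observation is that $a = x^n y \in R$ and $a$ is visibly in $xR$ (via $a = x \cdot (x^{n-1}y)$), while $a \in R_0$. The injectivity hypothesis then forces $a \in xR_0$, i.e.\ $a = xb$ with $b \in R_0$, so $y = b/x^{n-1}$, contradicting minimality. Hence $n=0$ and $y \in R_0$.

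I do not anticipate any real obstacle here; the whole argument is a short two-line calculation in each direction. The one subtle point worth flagging explicitly is the use of $x \neq 0$ in $R_0$ (and hence in $R$, since $R$ contains $R_0$ and is a domain), which is what ensures that $R_0[\tfrac{1}{x}]$ and $R[\tfrac{1}{x}]$ sit inside the common fraction field $K$ and that the expression $a/x^n$ determines $a$ uniquely from $y$ and $n$.
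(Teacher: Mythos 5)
Your proof is correct and takes essentially the same approach as the paper for the nontrivial direction (injectivity implies the intersection equality), just phrased via minimality-and-descent rather than by first establishing the identity $x^nR_0 = R_0 \cap x^nR$ for all $n$ by induction — these are the same ``peel off one factor of $x$ at a time'' idea in two guises. You also spell out the easy forward direction, which the paper omits as routine.
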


\begin{proof}
The map $R_0/xR_0 \to R/xR$ is injective if and only if $xR_0=R_0\cap xR$. We deduce $x^2R_0=xR_0\cap x^2 R = R_0\cap x^2R$, and by induction $x^nR_0=R_0\cap x^nR$ for all $n\geq 1$. If $y\in R_0[\frac{1}{x}]\cap R$, then there exists $n\geq 1$ such that $x^n y\in x^nR\cap R_0=x^nR_0$, hence $y\in R_0$. %For the second assertion, assume first that $R_0=R\cap R_0[\frac{1}{xy}]$. Then $R_0\subset R\cap R_0[\frac{1}{x}] \subset R\cap R_0[\frac{1}{xy}]$ so we deduce $R_0= R\cap R_0[\frac{1}{x}]$ and similarly $R_0= R\cap R_0[\frac{1}{y}]$. Conversely, assume that $R_0= R\cap R_0[\frac{1}{x}]$ and $R_0= R\cap R_0[\frac{1}{y}]$. Then $R_0\cap xyR\subset R_0\cap xR=xR_0$. Hence $R_0\cap xyR= xR_0\cap xyR=x(R_0\cap yR)=xyR_0$.
\end{proof}

\begin{proof}[Proof of Theorem \ref{thmRzipSp4}]
It is clear that $R_0:=k[a_{1,1}\Delta_1^{p-1}+a_{2,2}^p, \Delta_1,\Delta_2]\subset R_{\zip}$. We know that the elements $\Delta_1$, $\Delta_2$ and $\alpha=a_{1,1}\Delta_1^{p-1}+a_{2,2}^p$ are algebraically independent. Furthermore, $R_{\zip}$ coincides with $k[\frac{\alpha}{\Delta_1^{p-1}},\Delta_1,\frac{\Delta_2}{\Delta_1}]\cap R$, which is clearly contained in $R_0[\frac{1}{\Delta_1}]\cap R$. To prove the result, it suffices to show $R_0[\frac{1}{\Delta_1}]\cap R=R_0$. By \Lem~\ref{lemfrac}, this is equivalent to the injectivity of $R_0/\Delta_1 R_0\to R/\Delta_1 R$. Choosing indeterminates $x,y,a,b,c$, this map identifies with: $k[x,y]\to k[a,b,c], \ x\mapsto c^p, \ y\mapsto ac$, which is clearly injective.
\end{proof}
Note that $\alpha=a_{1,1}\Delta_1^{p-1}+a_{2,2}^p$ is homogeneous of weight $wt(\alpha)=(0,-p(p-1))$.

\begin{corollary}\label{corSp4}
For $G=Sp(4)$, one has $C_{\zip}=\NN (1,-p)+\NN(1-p,1-p)+\NN(0,-p(p-1))$.
\end{corollary}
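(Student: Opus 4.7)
The plan is to deduce the corollary directly from \Th~\ref{thmRzipSp4}, which we have just established: $R_{\zip} = k[\alpha,\Delta_1,\Delta_2]$ is a polynomial ring on three algebraically independent homogeneous generators. By the definition of $C_{\zip}$, we have $\lambda \in C_{\zip}$ if and only if the weight space $(R_{\zip})_\lambda$ is nonzero, and since $R_{\zip}$ is a polynomial ring, the monomials $\alpha^a \Delta_1^b \Delta_2^c$ for $(a,b,c)\in\NN^3$ form a $k$-basis of $R_{\zip}$ consisting of homogeneous elements. Hence $(R_{\zip})_\lambda \neq 0$ if and only if there exist $a,b,c \in \NN$ with $\lambda = a\cdot wt(\alpha) + b\cdot wt(\Delta_1) + c\cdot wt(\Delta_2)$; consequently $C_{\zip} = \NN\cdot wt(\alpha) + \NN\cdot wt(\Delta_1) + \NN\cdot wt(\Delta_2)$.

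It remains to compute the three weights, which is a direct calculation using \Lem~\ref{identSp}. Under the action $b\cdot x = b x \varphi(b)^{-1}$ of $T$ on $M_2$, the coordinate function $a_{i,j}$ is a $T$-eigenfunction of weight $e_i - p e_j$. Thus $\Delta_1 = a_{1,2}$ has weight $(1,-p)$; the determinant $\Delta_2 = a_{1,1}a_{2,2}-a_{1,2}a_{2,1}$ has weight $(1-p,1-p)$ (both of its monomials have this common weight); and for $\alpha = a_{1,1}\Delta_1^{p-1} + a_{2,2}^p$ the two summands have respective weights $(1-p,0) + (p-1,-p(p-1)) = (0,-p(p-1))$ and $p\cdot(0,1-p) = (0,-p(p-1))$, confirming $wt(\alpha) = (0,-p(p-1))$. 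These are precisely the three generators appearing in the statement.

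The only point that needs a brief verification is that all three generators are genuinely needed as monoid generators of $C_{\zip}$: since we work in $\ZZ^2$ no three vectors can be $\ZZ$-linearly independent, so one might hope to discard one of them. However, attempting $a\cdot(1,-p) + b\cdot(1-p,1-p) = (0,-p(p-1))$ with $a,b\in\NN$ forces $a = b(p-1)$ from the first coordinate and then $b(p+1) = p$ from the second, which has no non-negative integer solution. Hence $wt(\alpha)$ does not lie in the submonoid generated by the other two generators, and by symmetric computations the same holds for each pair. I do not expect a substantive obstacle here: the bulk of the work has already gone into \Th~\ref{thmRzipSp4}, and the corollary is a short bookkeeping argument on top of it.
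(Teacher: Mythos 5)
Your proof is correct and follows essentially the same route as the paper: deduce from \Th~\ref{thmRzipSp4} that every homogeneous element of $R_{\zip}$ is a $k$-linear combination of monomials $\alpha^a\Delta_1^b\Delta_2^c$, hence that $C_{\zip}$ is the monoid generated by the three weights, and then compute those weights from the eigenvalue $e_i-pe_j$ of $a_{i,j}$. The final paragraph about minimality of the generating set is a harmless aside that the statement does not require.
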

\begin{proof}
By \Th~\ref{thmRzipSp4}, any element $f\in R_{\zip}$ is of the form $f=P(\alpha,\Delta_1,\Delta_2)$ where $P$ is a polynomial in $3$ variables. For $f$ homogeneous of weight $\lambda\in X_{+,I}^*(T)$, it follows that $\lambda$ is a linear combination with positive integer coefficients of the weights of $\alpha,\Delta_1,\Delta_2$. 
\end{proof}

\begin{corollary}\label{cor2Sp4}
For $G=Sp(4)$, one has $\langle C_{\zip} \rangle = \langle (1,-p), (-1,-1) \rangle$.
\end{corollary}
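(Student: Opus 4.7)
The plan is to deduce this from Corollary~\ref{corSp4}, which gives an explicit $\NN$-generating set for $C_{\zip}$, namely $w_1 = (1,-p)$, $w_2 = (1-p,1-p)$, and $w_3 = (0,-p(p-1))$. Since $\langle (1,-p),(-1,-1)\rangle$ is saturated by construction, it suffices to check that each $w_i$ is contained in it, and conversely that the two given generators lie in $\langle C_{\zip}\rangle$.

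For the inclusion $\langle C_{\zip}\rangle \subset \langle (1,-p),(-1,-1)\rangle$, I would argue generator by generator. The vector $w_1=(1,-p)$ is already one of the stated generators. The vector $w_2 = (1-p,1-p) = (p-1)\cdot(-1,-1)$ is a positive integer multiple of $(-1,-1)$, so it lies in the saturated cone. The only nontrivial check is $w_3 = (0,-p(p-1))$: one computes
\begin{equation*}
p(p-1)\cdot(1,-p) \,+\, p(p-1)\cdot(-1,-1) \;=\; \bigl(0,\,-p(p-1)(p+1)\bigr) \;=\; (p+1)\cdot w_3,
\end{equation*}
so $w_3$ lies in the saturated cone generated by $(1,-p)$ and $(-1,-1)$.

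For the reverse inclusion, $(1,-p) = w_1 \in C_{\zip} \subset \langle C_{\zip}\rangle$ is immediate, and $(-1,-1) = \tfrac{1}{p-1}w_2 \in \langle C_{\zip}\rangle$ by the saturation property applied to $w_2$. This gives the opposite containment and completes the proof.

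The argument is essentially a short linear-algebra verification once Corollary~\ref{corSp4} is in hand, so there is no real obstacle; the only mildly delicate point is noticing the identity $(p+1)w_3 = p(p-1)w_1 + p(p-1)(-1,-1)$, which is what forces us to pass to the saturated cone rather than $C_{\zip}$ itself (indeed $w_3$ is \emph{not} an $\NN$-combination of $w_1$ and $(-1,-1)$ because $p(p-1)/(p+1)\notin\NN$). As a sanity check, the saturated cone $\langle (1,-p),(-1,-1)\rangle$ coincides with the set of $(a_1,a_2)\in\ZZ^2$ satisfying $a_1\geq a_2$ and $pa_1+a_2\leq 0$, which recovers the description in Theorem~3(1) of the introduction.
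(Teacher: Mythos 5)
Your proof is correct, and it is essentially the argument the paper implicitly intends: Corollary~\ref{cor2Sp4} is stated without proof immediately after Corollary~\ref{corSp4}, and the expected deduction is exactly the linear-algebra verification you give. The one identity worth double-checking, $(p+1)(0,-p(p-1)) = p(p-1)\,(1,-p) + p(p-1)\,(-1,-1)$, does hold, and your reverse inclusion correctly uses that $\langle C_{\zip}\rangle$ is itself saturated, so once $(1,-p)$ and $(-1,-1)$ lie in it the whole s-generated cone $\langle (1,-p),(-1,-1)\rangle$ does as well.

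For what it is worth, the paper also remarks (in the paragraph following the corollary) on an independent route that bypasses Corollary~\ref{corSp4}: one sandwiches $C_{\mathrm{hw}} \subset \langle C_{\zip}\rangle \subset C_{\mathrm{pol}} \cap X^*_{+,I}(T)$ using Proposition~\ref{zippol}, and then observes that for $n=2$ the two bounding cones coincide and equal $\langle (1,-p),(-1,-1)\rangle$. That alternative has the advantage of not requiring the full determination of the ring $R_{\zip}$, but your argument, which relies on the stronger and more explicit Corollary~\ref{corSp4}, is the most direct once that result is available.
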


The Schubert cone, the highest weight cone and the Griffith-Schmidt cone are given by
\begin{align*}
C_{\Sbt}&=\NN(1,-p)+\NN(1-p,1-p).\\
C_{\rm hw}&=\{(a_1,a_2), \ pa_1+a_2\leq 0\}.\\
C_{\GS}&=\langle (-1,-1),(0,-1) \rangle.
\end{align*}
Hence, the saturated cones of $C_{\Sbt}$ and $C_{\zip}$ coincide with $C_{\rm hw}$. Note that \Prop~\ref{zippol} gives $\langle C_{\zip} \rangle\subset C_{\rm pol}\cap X_{+,I}^*(T)=\langle (-1,-1),(-1,p) \rangle$. Therefore, we could have easily deduced $\langle C_{\zip} \rangle = \langle C_{\Sbt} \rangle = C_{\rm hw}=\langle (-1,-1),(-1,p) \rangle$ without determining $R_{\zip}$ and $C_{\zip}$ first.

\subsection{The case $Sp(6)$.} \label{sec-sp6} Now we take $n=3$. We will see that computations get quickly very complicated. The cone $\langle C_{\zip}\rangle$ can easily be computed using previous results. First, the Schubert cone, the Griffith-Schmidt cone and the highest weight cones are given by:
\begin{align*}
C_{\Sbt}&=\NN(1,0-p)+\NN(1,1-p,-p)+\NN(1-p,1-p,1-p).\\
C_{\GS}&=\{(a_1,a_2,a_3), \ 0\geq a_1\geq a_2\geq a_3\}..\\
C_{\rm hw}&=\{(a_1,a_2,a_3), \ p^2a_1+pa_2+a_3\leq 0\}.
\end{align*}
By \Lem~\ref{SchubSp}, the cone $C_{\Sbt}$ is generated by the weights
\begin{equation} \label{conept1}
S_1=(1,0,-p), \ S_2=(1,p-1,-p), \ S_3=\eta_\omega
\end{equation}
The Griffith-Schmidt cone is s-generated by the weights
\begin{equation}\label{conept2}
(0,0,-1), \ (0,-1,-1), \ \eta_\omega.
\end{equation}
By \Lem~\ref{hwgen}, the cone $C_{\rm hw}$ is s-generated by the weights
\begin{equation}\label{conept3}
\eta_1=(p+1,-p^2,-p^2), \ \eta_2=(1,1,-p(p+1)), \ \eta_\omega.
\end{equation}

\begin{proposition}\label{propconeSp6} \ 
\begin{enumerate}
\item One has $\langle C_{\zip} \rangle = \langle \eta_1,\eta_2,\eta_\omega,S_1 \rangle$.
\item The cone $\langle C_{\zip} \rangle$ is the set of $(a_1,a_2,a_3)\in \ZZ^3$ satisfying the inequalities
\begin{align}
& a_1\geq a_2 \geq a_3 \label{Sp6eq1} \\
& p^2 a_1+pa_3+a_2\leq 0 \label{Sp6eq2} \\
& p^2 a_2+pa_1+a_3\leq 0 \label{Sp6eq3}
\end{align}
\end{enumerate}
\end{proposition}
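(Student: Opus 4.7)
The plan is to establish both assertions by a three-step sandwich. First I would check the easy containment $\langle \eta_1,\eta_2,\eta_\omega,S_1\rangle \subset \langle C_{\zip}\rangle$: the weights $\eta_1,\eta_2$ lie in $C_{\rm hw}$ by Lemma \ref{hwgen}, hence in $\langle C_{\zip}\rangle$ by Corollary \ref{inclusions}; $\eta_\omega\in\langle C_{\zip}\rangle$ by the existence of the $\mu$-ordinary Hasse invariant; and $S_1\in C_{\Sbt}\subset\langle C_{\zip}\rangle$ by Lemma \ref{SchubSp}. Since $\langle C_{\zip}\rangle$ is saturated, the containment follows.

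Next I would produce the opposite bounding cone. Combining $L$-dominance with Corollary \ref{corzipcont} gives $\langle C_{\zip}\rangle \subset C_{\Sigma'_1}\cap X^*_{+,I}(T)$, where for $n=3$ one has $\Sigma'_1=\{(2,1),(3,1),(3,2),(1,3),(2,2)\}$, so $C_{\Sigma'_1}$ is the $\RR_{\geq 0}$-cone spanned by the five vectors $(-p,1,0)$, $(-p,0,1)$, $(0,-p,1)$, $(1,0,-p)$, $(0,1-p,0)$. Consider the two linear functionals $\phi_1(a_1,a_2,a_3)=p^2a_1+a_2+pa_3$ and $\phi_2(a_1,a_2,a_3)=pa_1+p^2a_2+a_3$. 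Evaluating each on the five generators yields only non-positive values (the five cases are a short direct calculation, with values such as $1-p^3$, $p-p^3$, $0$, $0$, $1-p$ for $\phi_1$, and symmetrically for $\phi_2$). Hence every element of $\langle C_{\zip}\rangle$ satisfies the inequalities \eqref{Sp6eq1}--\eqref{Sp6eq3}.

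It remains to verify that the rational polyhedral cone $\mathcal{C}\subset\RR^3$ cut out by the four bounding hyperplanes
\begin{align*}
H_A:\ a_1=a_2,\qquad H_B:\ a_2=a_3,\qquad H_C:\ \phi_1=0,\qquad H_D:\ \phi_2=0
\end{align*}
is exactly $\RR_{\geq 0}\eta_\omega+\RR_{\geq 0}\eta_2+\RR_{\geq 0}\eta_1+\RR_{\geq 0}S_1$. I would compute the extremal rays by solving each of the $\binom{4}{2}=6$ pairs of equalities and testing the remaining two inequalities. The pair $(A,B)$ yields the ray $\RR_{\geq 0}\cdot(-1,-1,-1)$, i.e.\ $\eta_\omega$; the pair $(A,D)$ yields $\RR_{\geq 0}\cdot(1,1,-p^2-p)=\RR_{\geq 0}\eta_2$; the pair $(B,C)$ yields $\RR_{\geq 0}\cdot(p+1,-p^2,-p^2)=\RR_{\geq 0}\eta_1$; and the pair $(C,D)$ yields $\RR_{\geq 0}\cdot(1,0,-p)=\RR_{\geq 0}S_1$. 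The remaining two pairs $(A,C)$ and $(B,D)$ give candidate directions $(\pm p,\pm p,\mp(p^2+1))$ and $(\pm(p^2+1),\mp p,\mp p)$ that violate $\phi_2\leq 0$ (respectively $\phi_1\leq 0$) for either sign, and so do not contribute extremal rays. Thus $\mathcal{C}$ has exactly the four extremal rays $\eta_\omega,\eta_2,\eta_1,S_1$, so $\mathcal{C}\cap X^*(T)=\langle\eta_1,\eta_2,\eta_\omega,S_1\rangle$.

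Combining the three steps gives $\langle C_{\zip}\rangle=\mathcal{C}\cap X^*(T)=\langle\eta_1,\eta_2,\eta_\omega,S_1\rangle$, which proves both (1) and (2). The genuinely delicate point in this argument is the extremal-ray calculation: one has to confirm that precisely two of the six candidate pairs drop out and that none of the extremal rays of $\mathcal{C}$ is missed. The rest is bookkeeping applied to results proved in earlier sections.
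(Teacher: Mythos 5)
Your proof is correct and follows the same route as the paper: apply Corollary \ref{corzipcont} to bound $\langle C_{\zip}\rangle$ from above by $C_{\Sigma'_1}\cap X^*_{+,I}(T)$, use the earlier results ($C_{\rm hw}$, the Hasse invariant, $C_{\Schub}$) for the lower bound, and verify that the two cones meet. The paper compresses the extremal-ray computation into ``it is easy to see,'' and your explicit enumeration of the six pairs of tight hyperplanes is exactly the bookkeeping the paper elides.
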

\begin{proof}
By \Cor~\ref{corzipcont}, we have $\langle C_{\zip} \rangle \subset C_{\Sigma'_1}\cap X_{+,I}^*(T)$. It is easy to see that $C_{\Sigma'_1}\cap X_{+,I}^*(T)$ is s-generated by $\eta_1,\eta_2,\eta_\omega,S_1$ and is given by the equations \eqref{Sp6eq1}, \eqref{Sp6eq2}, \eqref{Sp6eq3}.
\end{proof}

Here is a representation of the cones. As it is 3-dimensional, we take a slice of the cone, perpendicularly to the axis generated by $\eta_\omega$ (which hence appears as a dot at the center of the picture). The exterior shape which looks like a triangle with corners cut off is the trace of the polynomial cone on the slice. The two dotted half-lines that intersect at $\eta_\omega$ are the traces of the two hyperplanes that define $X_{+,I}^*(T)$. This picture represents all four cones $C_{\pol}$, $C_{\zip}$, $C_{\Schub}$ and $C_{\GS}$. Please refer to \eqref{conept1}, \eqref{conept2}, \eqref{conept3} for the points marked on the picture.

\begin{center}
\begin{tikzpicture}[line cap=round,line join=round,>=triangle 45,x=2cm,y=2cm]
\clip(-0.191393831107175,-0.3946952381920047) rectangle (6.2,5.34706282321349);
\draw [line width=1.2pt] (2.5,4.330127018922194)-- (3.5,4.330127018922193);\draw [line width=1.2pt] (0.5,0.8660254037844388)-- (1,0);
\draw [line width=1.2pt] (5.5,0.8660254037844359)-- (5,0);
\draw [line width=1.2pt] (2.5,4.330127018922194)-- (0.5,0.8660254037844388);
\draw [line width=1.2pt] (3.5,4.330127018922193)-- (5.5,0.8660254037844359);
\draw [line width=1.2pt] (5,0)-- (1,0);
\draw [line width=1.2pt] (3.5,4.330127018922193)-- (4,2.5980762113533147);
\draw [line width=0.8pt,dotted,domain=3:8.897024033046604] plot(\x,{(-0--0.8660254037844382*\x)/1.5});

\draw [line width=0.8pt,dotted] (3,1.732050807568876) -- (3,6.34706282321349);
\draw [line width=0.8pt,dotted] (3,3.464) -- (3.76,2.157);

\draw [fill=black] (5,0) circle (2pt);
\draw [fill=black] (1,0) circle (2pt);

\draw [fill=black] (3,3.464) circle (2pt);
\draw [fill=black] (3.76,2.157) circle (2pt);

\draw [line width=1.2pt] (3.5,4.330127018922193)-- (3,3.752776749732569);
\draw [line width=1.2pt] (3.5,4.330127018922193)-- (4.166666666666667,2.405626121623439);
\draw [line width=1.2pt] (3,1.732050807568876)-- (4,2.5980762113533147);
\draw [line width=1.2pt] (3,1.732050807568876)-- (4.166666666666667,2.405626121623439);
\draw [line width=1.2pt] (3,1.732050807568876)-- (3,3.752776749732569);
\draw [line width=1.2pt] (3,1.732050807568876)-- (3.5,4.330127018922193);

\begin{scriptsize}
\draw [fill=black] (1,-23.832) circle (2pt);
\draw[color=black] (1.05,-0.142) node {$(-p,0,1)$};
\draw [fill=black] (5,-18.916498697301474) circle (2pt);
\draw[color=black] (5.04,-0.161) node {$(0,-p,1)$};
\draw [fill=black] (2.5,4.330) circle (2pt);
\draw[color=black] (2.13,4.428) node {$(0,1,-p)$};
\draw [fill=black] (3.5,4.330) circle (2pt);
\draw[color=black] (3.9,4.45) node {$S_1=(1,0,-p)$};
\draw [fill=black] (0.5,0.8660254037844388) circle (2pt);
\draw[color=black] (0.124668,0.920590) node {$(-p,1,0)$};
\draw [fill=black] (5.5,0.866) circle (2pt);
\draw[color=black] (5.792,0.9802) node {$(1,-p,0)$};
\draw [fill=black] (3,1.732) circle (2pt);
\draw[color=black] (2.9,1.661863016367702) node {$\eta_\omega$};
\draw[color=black] (2.1754267313941136,1.2141333900979427) node {$C_{\rm pol}$};
\draw[color=black] (4.8,4.859198753441583) node {$X_{+,I}^{*}(T)$};

\draw [fill=black] (4,2.5980762113533147) circle (2pt);
\draw[color=black] (3.86,2.66) node {$S_2$};
\draw [fill=black] (4.166666666666667,2.405626121623439) circle (2pt);
\draw[color=black] (4.35,2.35) node {$\eta_1$};
\draw [fill=black] (3,3.752776749732569) circle (2pt);
\draw[color=black] (2.84,3.7) node {$\eta_2$};

\draw[color=black] (2.63,3.464) node {$(0,0,-1)$};
\draw[color=black] (4.05,1.97) node {$(0,-1,-1)$};

\end{scriptsize}
\end{tikzpicture}
\end{center}

Let us try to compute $R_{\zip}$. We use the following notation: For $1\leq i,j\leq$ let $\delta_{i,j}$ be the minor of $A$ obtained by removing the $i$-th row and $j$-th column. First, the matrix $\Gamma$ defined in \eqref{matGam} is given by:
\begin{equation}
\Gamma=\left(
\begin{matrix}
\frac{\epsilon}{\Delta_1^p}&\frac{f_1}{\Delta_2^p}&\Delta_1 \\[10pt]
\frac{f_2}{\Delta_1^{p+1}} & \frac{\Delta_2}{\Delta_1}&0\\[10pt]
\frac{\Delta_3}{\Delta_2}&0&0
\end{matrix}
\right)
\end{equation}
where $\epsilon, f_1, f_2 \in R_{\zip}$ are defined as follows:
\begin{align}
\epsilon & =a_{1,1}a_{1,3}^p +a_{1,2}a_{2,3}^p+a_{1,3}a_{3,3}^p \\
f_1 & =a_{1,2}\Delta_2^p+\Delta_1 \delta_{2,1}^p \\
f_2 & =\Delta_1^p \delta_{3,2}-\Delta_2 a_{2,3}^p.
\end{align}

\begin{rmk}\label{rmketaf}
The weight of $f_1$ is $\eta_1$ and the weight of $f_2$ is $\eta_2$ (defined in \eqref{conept3}). These weights lie on the hyperplane $\sum_{i} p^{n-i}a_i=0$. Actually, one can check that $f_1,f_2$ are obtained by the association \eqref{assoclambda}. The weight of $\epsilon$ is $(1,0,-p^2)$. This section is not attached to a highest weight.
\end{rmk}

We obtain a subalgebra $R_0:=k[\epsilon,f_1,f_2,\Delta_1,\Delta_2,\Delta_3]\subset R_{\zip}$, by canceling out the denominators in $\Gamma$. However, contrary to the case $n=2$, this naive subalgebra is strictly contained in $R_{\zip}$ because the map $H:R_0/\Delta_1 R_0 \to R/\Delta_1 R$ is not injective. For example, the following elements are in $R_{\zip}$, but not in $R_0$:

%Since the coefficients of $\Gamma$ are contained in $R[\frac{1}{\Delta_1},\frac{1}{\Delta_2}]$, we have $R_{\zip}=R\cap R_0[\frac{1}{\Delta_1},\frac{1}{\Delta_2}]$. The problem is that the natural map , as we will see next. Since $\epsilon,f_1,f_2,\Delta_1,\Delta_2,\Delta_3$ are algebraically independent, $R_0/ \Delta_1 R_0$ is a polynomial algebra in $5$ variables. Write $x=a_{1,2}$ and $y=a_{2,3}$. Here are the images  of the elements $(\epsilon,f_1,f_2,\Delta_2,\Delta_3) $ by $H$ in $R/\Delta_1 R$:
%\begin{equation}
%H:(\epsilon,f_1,f_2,\Delta_2,\Delta_3) \mapsto (xy^p,x^{p+1}y^p,-xy^{p+1},xy,\Delta_3)
%\end{equation}
%which shows that the images of these elements are no longer algebraically independent. It is relevant to study the map $F:\AA^2\to \AA^4$, $(x,y)\mapsto (a,b,c,d)=(xy^p,x^{p+1}y^p,-xy^{p+1},xy)$. One can see that the Zariski closure of the image of $F$ is defined by the following equations:
%\begin{align}
%&d^{p+1}a+bc =0\\
%&dc^{p-1}-a^p=0\\
%&d^{p^2}-b^{p-1}a=0.
%\end{align}
%Replacing $(a,b,c,d)$ by $(\epsilon,f_1,f_2,\Delta_2)$, we obtain elements of $R_0$ which are divisible by $\Delta_1$ in $R$ but not in $R_0$. It turns out that these elements are divisible by a higher power of $\Delta_1$. Here are the elements one obtains after dividing by the maximal power of $\Delta_1$ which divides them:

%\theta =\frac{\Delta_2^{p+1}\epsilon + f_1 f_2}{\Delta_1^{p+1}}=-(\delta_{3,3}\Delta_2^p+\delta_{3,2}\delta_{2,3}^p+\Delta_2\delta_{1,1}^p)

\begin{equation}\label{R1eq}
\theta =\frac{\Delta_2^{p+1}\epsilon + f_1 f_2}{\Delta_1^{p+1}}, \ 
\rho  = \frac{\Delta_2 f_2^{p-1}-\epsilon^p}{\Delta_1^p}, \ 
\tau =\frac{\Delta_2^{p^2}-f_1^{p-1}\epsilon}{\Delta_1^p}.
\end{equation}
Define $R_1$ as the $k$-algebra $k[\epsilon,f_1,f_2,\Delta_1,\Delta_2,\Delta_3,\theta,\rho,\tau]$. It is isomorphic to a polynomial ring in $9$ variables divided by 3 equations (given in an obvious way by \eqref{R1eq}). We believe that $R_1 = R_{\zip}$ but we did not check it.

\section{Modular interpretation} \label{sec-ex}
We continue to work with the group $G=Sp(2n)$ endowed with its usual Hodge-type zip datum. We write $\Xcal=\GZip$ and $\Ycal=\GF$.

\subsection{Dictionary}\label{dicsec}
Recall that the stack of $G$-zips has the following modular interpretation. Let $S$ be a scheme over $\FF_p$. Then a morphism of stacks $S\to \Xcal$ is the same as a tuple $(\Mcal,\langle - , - \rangle,\Omega,F,V)$ where 
\begin{enumerate}
\item $\Mcal$ is a locally free $\Ocal_S$-module of rank $2n$,
\item $\langle -,-\rangle :\Mcal\times \Mcal \to \Ocal_S$ is a perfect $\Ocal_S$-linear pairing,
\item $\Omega\subset \Mcal$ is a locally free isotropic $\Ocal_S$-module of rank $n$, such that $\Mcal/\Omega$ is locally free,
\item $F:\Mcal^{(p)}\to \Mcal$ is an $\Ocal_S$-linear morphism,
\item $V:\Mcal\to \Mcal^{(p)}$ is an $\Ocal_S$-linear morphism,
\end{enumerate}
satisfying the relations:
\begin{align*}
&\Im(V)=\Ker(F)=\Omega^{(p)}\\
&\Im(F)=\Ker(V),\\
&\langle Fx,y\rangle = \langle x,Vy\rangle^{(p)}, \textrm{ for all }x\in \Mcal^{(p)},y\in \Mcal.
\end{align*}
In the above, the sheaf $\Mcal^{(p)}$ is the pull-back of $\Mcal$ via the absolute Frobenius morphism $F_S:S\to S$. The pairing $\langle -,-\rangle^{(p)}$ is the one induced from $\langle -,-\rangle$ on $\Mcal^{(p)}$.

Similarly, there is a moduli interpretation for the stack of (full) $G$-zip flags. A morphism $S\to \Ycal$ is the same as a tuple $(\Mcal,\langle - , - \rangle,\Omega,F,V)$ as above, together with a full flag
\begin{equation}
0=\Fcal_0\subset \Fcal_1\subset ... \subset \Fcal_n=\Omega
\end{equation}
where $\Fcal_i$ is a locally free $\Ocal_S$-module of rank $i$ and the quotients $\Omega/\Fcal_{i}$ are locally free for all $i=1,...,n$. The natural projection $\pi:\Ycal\to \Xcal$ is given by forgetting the flag $\Fcal_\bullet$. This illustrates that the fibers of $\pi$ are flag varieties. By assumption, the quotient $\Lscr_i=\Fcal_i/\Fcal_{i-1}$ is a line bundle for each $i=1,...,n$. For a $n$-tuple $(a_1,...,a_n)\in \ZZ^n$, define
\begin{equation}\label{Ladef}
\Lscr(a_1,...,a_n)=\Lscr_1^{-a_1}\otimes ... \otimes \Lscr_n^{-a_n}.
\end{equation}
Via the usual identification $X^*(T)=\ZZ^n$, the line bundles $\Lscr(a_1,...,a_n)$ correspond to the line bundles $\Lscr(\lambda)$ studied throughout the previous sections of this paper (therefore the notation is unambiguous). The goal of this section is to give a modular interpretation of some of the sections defined in \S\ref{sec-sympex}.

\begin{rmk}\label{rmkSpeck}
Let $M$ be a $k$-vector space equipped with a perfect pairing $\langle -,-\rangle:M\times M\to k$, a $\sigma$-linear endomorphism $F_0:M\to M$ and a $\sigma^{-1}$-linear endomorphism $V_0:M\to M$. Assume that $\langle F_0 x,y\rangle =\sigma\langle x,V_0 y \rangle$. Then we obtain a $G$-zip over $k$ by defining $F:M\otimes_\sigma k \to M$, $F(x\otimes \lambda)=\lambda F_0(x)$ and $V:M \to M\otimes_\sigma k$, $F(x)=F_0(x)\otimes 1$, and taking $\Omega:=\Im(V_0)$. We call $(M,F_0,V_0,\langle -,-\rangle)$ a symplectic Dieudonn\'{e} space over $k$. This defines an equivalence of categories between $G$-zips over  $\spec(k)$ and symplectic Dieudonn\'{e} spaces. Similarly, we can define a filtered symplectic Dieudonn\'{e} space by adding to $M$ a filtration  $0\subset \Fcal_1 \subset ... \subset \Fcal_n=\Omega$. These objects correspond to $G$-zip flags.
\end{rmk}

Recall that an algebraic representation $(V,\rho)$ of $E$ gives a vector bundle $\Vscr(\rho)$ on $\Xcal=\GZip$ (see \S\ref{sec-vector-bundles-gzipz}). We only consider representations of $E$ that arise from a representation of $P$ via the first projection $E\to P$. We now give the representations that correspond to the vector bundles $\Omega$ and $\Mcal$. If $\rho:P\to GL(V)$ is a representation, we denote by $\rho^\vee$ its dual. It is clear that $\Vscr(\rho^\vee)\simeq\Vscr(\rho)^\vee$. Denote by $\rho_L:P\to GL_n$ given by the composition of the natural projection $P\to L$ composed by the identification $L\simeq GL_n$. One can show that we have the following formula:
\begin{equation}
\Vscr(\rho^\vee_L)\simeq \Omega.
\end{equation}
Hence we set $\rho_\Omega:=\rho_L^\vee$. Similarly, denote by $\Std:Sp(2n)\to GL_{2n}$ the natural inclusion (the standard representation). Denote by $\Std|_P$ its restriction to $P$. Then we have
\begin{equation}
\Vscr(\Std|_P^\vee)\simeq \Mcal.
\end{equation}
Let $V$ be an $\FF_p$-vector space and $\rho:P\to GL(V)$ a representation of $P$ defined over $\FF_p$. Then we define $\rho^{(p)}$ as the Frobenius-twist of $\rho$. It is clear that $\Vscr(\rho^{(p)})\simeq \Vscr(\rho)^{(p)}$.

\subsection{The Schubert sections}

Recall that we defined for $1\leq i \leq n$ a section $\Delta_i$ of weight $S_i$ on $\Ycal$ in \eqref{eqDelta}. We give its modular interpretation. Consider the map $f_i$ of vector bundles obtained by the following composition
\begin{equation}
\Fcal_i \xrightarrow{V} \Omega^{(p)} \to \Omega^{(p)}/\Fcal_{n-i}^{(p)}
\end{equation}
Then taking the determinant of $f_i$, we obtain a map $\Delta_i:=\det(f_i) : \det(\Fcal_i)\to \det(\Omega/\Fcal_{n-i})^p$. One has clearly $\det(\Fcal_i)=\Lscr_1\otimes ...\otimes \Lscr_i$ and $\det(\Omega/\Fcal_j)=\Lscr_{j+1}\otimes ... \otimes \Lscr_n$. We deduce that $\Delta_i$ identifies with a map
\begin{equation}
\Delta_i : \Lscr_1\otimes ...\otimes \Lscr_i \to (\Lscr_{n-i+1}\otimes ...\otimes \Lscr_n)^p
\end{equation}
This is the same as a global section over $S$ of the line bundle $(\Lscr_{n-i+1}\otimes ...\otimes \Lscr_n)^p\otimes (\Lscr_1\otimes ...\otimes \Lscr_i )^{-1}=\Lscr(S_i)$. We claim (proof omitted) that this section coincides with the section $\Delta_i$ defined in \eqref{eqDelta}. In the case $i=n$, we obtain the classical Hasse invariant, as the determinant of the map $V:\Omega \to \Omega^{(p)}$.

For $S=\spec(k)$, let $(M,\Fcal_\bullet,F_0,V_0,\langle-,-\rangle)$ be a filtered symplectic Dieudonn\'{e} space over $k$, and let $z:\spec(k)\to \GZip$ be the corresponding point. Then one has an equivalence
\begin{equation}
\Delta_i(z)\neq 0 \Longleftrightarrow V_0(\Fcal_i)\oplus \Fcal_{n-i}=M.
\end{equation}

\subsection{The case $n=2$}
For $n=2$, we know that the algebra $R_{\zip}$ is generated by $\Delta_1,\Delta_2,\alpha$, where $\alpha=a_{1,1}\Delta_1^{p-1}+a_{2,2}^p$ (\Th~\ref{thmRzipSp4}). We already know the modular interpretation of $\Delta_1$ and $\Delta_2$. We determine the modular interpretation of $\alpha$. We leave certain verifications to the reader.

To find the modular interpretation, we look closer at the formula for $\alpha$. Note that $\Delta_1 \alpha=a_{1,1}\Delta_1^{p}+\Delta_1 a_{2,2}^p$ can be seen as the scalar product of two vector-valued functions
\begin{equation}
\Delta_1\alpha=\left(\begin{matrix}
a_{1,1} \\ a_{1,2}
\end{matrix}\right)^t \left(\begin{matrix}
a_{1,2} \\ a_{2,2}
\end{matrix}\right)^{(p)}
\end{equation}
One can easily check that the maps $\Xi_1, \Xi_2:M_2 \to \AA^2$ defined by $\Xi_1:(a_{i,j}) \mapsto \left(\begin{matrix}
a_{1,1} \\ a_{1,2}
\end{matrix}\right)$ and $\Xi_2: (a_{i,j})\mapsto \left(\begin{matrix}
a_{1,2} \\ a_{2,2}
\end{matrix}\right)$ satisfy the following relations:
\begin{align}
&\Xi_1\left(M A \varphi(M)^{-1}\right)=x \ \rho_\Omega^{(p)}(M^{-1}) \ \Xi_1(A) & \forall M=\left( \begin{matrix}
x & 0\\ y& z
\end{matrix}
\right), \  & \forall A=\left( \begin{matrix}
a_{1,1} & a_{1,2}\\a_{2,1} & a_{2,2}
\end{matrix} \right) \\
&\Xi_2\left(M A \varphi(M)^{-1}\right)=\frac{1}{z^p} \rho_{\Omega}^{\vee}(M) \ \Xi_2(A) & \forall M=\left( \begin{matrix}
x & 0\\ y& z
\end{matrix}
\right), \ & \forall A=\left( \begin{matrix}
a_{1,1} & a_{1,2}\\a_{2,1} & a_{2,2}
\end{matrix} \right).
\end{align}

By the dictionary of vector bundles \S\ref{dicsec}, we deduce that $\Xi_1$ is a global section over $\Ycal$ of the vector bundle $\Lscr^{-1}_1 \otimes \Omega^{(p)}$. Similarly, $\Xi_2$ is a global section of the vector bundle $\Lscr_2^p \otimes \Omega^\vee$. From this observation, we can find the modular interpretation of $\Xi_1$ and $\Xi_2$. The map $V:\Lscr_1\to \Omega^{(p)}$ gives a section of $\Lscr^{-1}_1 \otimes \Omega^{(p)}$ over $\Ycal$ which corresponds to $\Xi_1$. Similarly, the map $V:\Omega\to \Omega^{(p)}$ followed by the projection $\Omega^{(p)}\to \Lscr_2^p$ gives a map $\Omega\to \Lscr_2^p$, hence a global section of $\Lscr_2^p \otimes \Omega^\vee$. This section corresponds to $\Xi_2$. Since $\Delta_1 \alpha = \Xi_1 \circ \Xi_2^{(p)}$, we can obtain $\Delta \alpha$ by composing the map $V:\Lscr_1 \to \Omega^{(p)}$ with the Frobenius-twist of $\Omega\to \Lscr_2^p$. This gives a map $\Lscr_1 \to \Lscr_2^{p^2}$. The result is divisible by $\Delta_1$. Therefore, we deduce the following result:

\begin{proposition}
The modular interpretation of $\alpha$ is the following. It is the unique map $\alpha:\Lscr_2^p\to \Lscr_2^{p^2}$ which is functorial in $S$ and makes the following diagram commute:
$$\xymatrix@M=7pt{
\Lscr_1 \ar[r]^V \ar[rd]_{V} & \Omega^{(p)} \ar[r]^{V^{(p)}} & \Omega^{(p^2)} \ar[r] & \Lscr_2^{p^2}  \\
& \Omega^{(p)} \ar[r] &\Lscr_2^{p} \ar@{-->}[ru]_{\alpha} &
}$$
\end{proposition}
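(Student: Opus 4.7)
The plan is to translate each arrow in the diagram into the coordinate description of $\Ycal$ afforded by Lemma~\ref{lemmapol} and Corollary~\ref{corMn}, match the two composite paths against the explicit polynomial identity $\Delta_1 \alpha = a_{1,1}\Delta_1^p + \Delta_1 a_{2,2}^p = \Xi_2^{(p)} \circ \Xi_1$ observed in the setup, and then deduce uniqueness from the fact that $\Delta_1$ is a non-zero-divisor in $R_{\zip}$.

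First I would unpack the bottom composite. The Verschiebung $V:\Lscr_1 \to \Omega^{(p)}$ followed by the projection $\Omega^{(p)} \twoheadrightarrow \Lscr_2^p$ is a section of $\Lscr_1^{-1}\otimes \Lscr_2^p = \Lscr(S_1)$, and by the very definition recalled just before the proposition, this section is exactly the partial Hasse invariant $\Delta_1$. Hence the lower triangle contributes the global map $\alpha\circ \Delta_1 : \Lscr_1 \to \Lscr_2^{p^2}$, i.e.\ $\alpha\cdot \Delta_1$ viewed as a section of $\Lscr_1^{-1}\otimes \Lscr_2^{p^2}$.

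Next I would identify the upper composite $\Lscr_1 \xrightarrow{V} \Omega^{(p)} \xrightarrow{V^{(p)}} \Omega^{(p^2)} \twoheadrightarrow \Lscr_2^{p^2}$ with $\Xi_2^{(p)}\circ \Xi_1$: the map $\Xi_1$ is precisely $V|_{\Lscr_1}$ under the identification of $\Lscr_1^{-1}\otimes\Omega^{(p)}$ with its space of global sections computed in the excerpt, and $\Xi_2$ is the Verschiebung $\Omega\to\Omega^{(p)}$ post-composed with the projection onto $\Lscr_2^p$; the Frobenius twist $\Xi_2^{(p)}$ then realises the two right-most arrows of the top row. Using the coordinate formula displayed in the discussion above the proposition, namely
\begin{equation*}
\Xi_2^{(p)}\circ \Xi_1 \;=\; a_{1,1}\Delta_1^{p}+\Delta_1 a_{2,2}^{p} \;=\; \Delta_1\cdot \alpha,
\end{equation*}
the two composites agree, so the diagram commutes with the proposed $\alpha$.

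For uniqueness, suppose $\alpha':\Lscr_2^p\to \Lscr_2^{p^2}$ is another such map. Then the equality of the two paths forces $\Delta_1\cdot (\alpha-\alpha')=0$ in $H^0(\Ycal,\Lscr_1^{-1}\otimes \Lscr_2^{p^2})$. Since $\Ycal$ is an integral smooth algebraic stack, $\Delta_1$ is a nonzero section of a line bundle, and the multiplication map by $\Delta_1$ on sections of any line bundle is injective (equivalently, $\Delta_1$ is not a zero divisor in the integral domain $R_{\zip}\cong k[\alpha,\Delta_1,\Delta_2]$ established in Theorem~\ref{thmRzipSp4}). Hence $\alpha=\alpha'$. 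The main bookkeeping hurdle is not conceptual but notational: one must carefully match the $B_L$-equivariance weights of $\Xi_1$ and $\Xi_2$ with the line bundles $\Lscr_1^{-1}\otimes \Omega^{(p)}$ and $\Lscr_2^p\otimes \Omega^\vee$, so that the composition $\Xi_2^{(p)}\circ \Xi_1$ actually lands in $\Lscr_1^{-1}\otimes \Lscr_2^{p^2}$ and agrees with $\Delta_1\alpha$ up to the chosen identifications; once this dictionary is made explicit, the verification is immediate from the polynomial identity.
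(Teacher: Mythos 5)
Your proof is correct and takes essentially the same route as the paper: the paragraph preceding the Proposition identifies $\Xi_1$ and $\Xi_2$ with the vector‑bundle maps, computes $\Xi_2^{(p)}\circ\Xi_1=\Delta_1\alpha$ and notes divisibility by $\Delta_1$, exactly as you do. The only thing you add is an explicit uniqueness argument via the non‑zero‑divisor property of $\Delta_1$ in the domain $R_{\zip}$, which the paper leaves implicit.
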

Without the group-theoretical interpretation of $\alpha$, it would be very difficult to show that there exists such a factorization.

%For $S=\spec(k)$, let $(M,\Fcal_\bullet,F_0,V_0,\langle-,-\rangle)$ be a filtered symplectic Dieudonn\'{e} space over $k$, and let $z:\spec(k)\to \GZip$ be the corresponding point. Choose a $k$-basis $(e_1,e_2)$ of $\Omega$ such that $\Fcal_1=ke_1$. Denote the matrix of $V_0:\Omega\to \Omega$ by $A:=\left(\begin{matrix}
%a&b\\c&d
%\end{matrix} \right)$. Then one has an equivalence
%\begin{equation}
%\alpha(z)\neq 0 \Longleftrightarrow c\neq 0.
%\end{equation}

\subsection{Highest weight sections}

In the case $n=3$, we give a moduli interpretation for the sections $f_1,f_2$ attached to the highest weight of $V(\lambda)$ for the two extremal weights $\eta_1=(p+1,-p^2,-p^2)$ and $\eta_2=(1,1,-p(p+1))$, respectively (\Rmk~\ref{rmketaf}). For any other weight $\lambda \in C_{\rm hw}$, the corresponding function is obtained by taking a product of powers of $f_1,f_2$ and the Hasse invariant $\Delta_3$, so it suffices to understand these sections.

The modular interpretation of $f_1$ is the following. Consider the map defined by the composition
\begin{equation}
\Lscr_1^p \otimes \Lscr_1 \xrightarrow{F\otimes F^2} \Omega^{(p^2)}\otimes \Omega^{(p^2)} \rightarrow \bigwedge^2\Omega^{(p^2)} \rightarrow \bigwedge^2(\Omega/\Fcal_1)^{(p^2)}=(\Lscr_2\otimes \Lscr_3)^{p^2}.
\end{equation}
This induces a global section of $\Lscr(\eta_1)$, and we claim (without proof) that it is $f_1$. For any filtered Dieudonn\'{e} space $(M,\Fcal_\bullet,F_0,V_0,\langle-,-\rangle)$ with corresponding point $z:\spec(k)\to \GZip$, there is an equivalence: 
\begin{equation}
f_1(z)\neq 0 \Longleftrightarrow \Fcal_1 \oplus V_0(\Fcal_1)\oplus V_0^{2}(\Fcal_1)=M.
\end{equation}

The modular interpretation of $f_2$ is more involved. First, consider the natural map $\iden\otimes pr:\Fcal_2\otimes \Omega \to \Fcal_2\otimes \Lscr_3$, where $pr:\Omega\to \Lscr_3$ is the canonical projection. We claim that there is a unique map $\gamma:\bigwedge^2\Omega \to \Fcal_2\otimes \Lscr_3$ such that the following diagram commutes
\begin{equation}
\xymatrix@1@M=6pt{
\bigwedge^2\Omega \ar[r]^-{\gamma}  & \Fcal_2\otimes \Lscr_3 \\
\Fcal_2\otimes \Omega \ar[u]^m \ar[ru]_{\iden\otimes pr} &
}
\end{equation}
where $m$ is the natural map $x\otimes y\mapsto x\wedge y$. This follows simply from the surjectivity of $m$ and the fact that $\Ker(m)$ is generated by the elements $x\otimes x$ with $x\in \Fcal_2$, so $\Ker(m)\subset \Fcal_2\otimes \Fcal_2=\Ker(\iden\otimes pr)$. Now, the map $F:\Fcal_2 \to \Omega^{(p)}$ induces a map $\bigwedge^2 F:\bigwedge^2\Fcal_2 \to \bigwedge^2\Omega^{(p)}$. Hence we may define the composition
\begin{equation}
\Lscr_1 \otimes \Lscr_2 = \bigwedge^2 \Fcal_2 \xrightarrow{\bigwedge^2 F} \bigwedge^2 \Omega^{(p)} \xrightarrow{\gamma^{(p)}}\Fcal_{2}^{(p)}\otimes \Lscr^p_3 \xrightarrow{F\otimes \iden} \Lscr^{p^2}_3\otimes \Lscr^p_3.
\end{equation}
This gives a section of $\Lscr(\eta_2)$. We claim (without proof) that it corresponds to $f_2$.

\bibliographystyle{plain}
\bibliography{biblio_overleaf}

\end{document}